\documentclass[12pt, a4paper, reqno, titlepage, onecolumn, oneside, openany]{book}

\usepackage[english]{babel}

\usepackage{srcltx}\textheight=25.0cm \textwidth=16.5cm
\topmargin=-1.8cm \oddsidemargin=0.75cm
 
\evensidemargin=0cm

\newlength{\defbaselineskip}
\setlength{\defbaselineskip}{\baselineskip}
\newcommand{\setlinespacing}[1]%
           {\setlength{\baselineskip}{#1 \defbaselineskip}}


\pagestyle{plain}

\usepackage[compact]{titlesec}

\titleformat{\chapter}[display]{\bfseries\Large}{\filleft\MakeUppercase{\chaptertitlename} \Huge\thechapter}
{4ex}{\vspace{2ex}\filright}[\vspace{2ex}]

\titlespacing*{\chapter}{0pt}{-10pt}{10pt}

\usepackage{amsfonts}
\usepackage{amsmath}
\usepackage{amscd}
\usepackage{amsthm}
\usepackage{amssymb}
\usepackage{mathrsfs}
\usepackage{pspicture}
\usepackage{graphics}
\usepackage{pstricks}
\usepackage{pst-plot}
\usepackage{upgreek}
\usepackage{dirtytalk}





\newcommand{\length}{\operatorname{length}}
\newcommand{\diam}{\operatorname{diam}}

\newtheorem{theorem}{Theorem}[chapter]
\newtheorem{lemma}{Lemma}[chapter]
\newtheorem{corollary}{Corollary}[chapter]
\newtheorem{definition}{Definition}[chapter]

\newtheorem*{question}{Question}
\newtheorem{OldTheorem}{Theorem}

\newenvironment{customtheorem}[1]
  {\innercustomtheorem}
  {\endinnercustomtheorem}

\newenvironment{customdefinition}[1]
  {\innercustomdefinition}
  {\endinnercustomdefinition}

\newenvironment{customcorollary}[1]
  {\innercustomcorollary}
  {\endinnercustomcorollary}

\DeclareMathOperator{\sgn}{sgn}
\newcommand {\V }[1]{{\rm V}\left(#1\right)}
\def\BV{{\rm BV\,}}
\def\OSC{{\rm OSC\,}}

\def\PI{\Uppi}
\def\TPI{\tilde\Uppi}
\def\FI{\varphi_*}

\def\ZM{\ensuremath{\mathcal B}} 
\def\ZC{\ensuremath{\mathbb C}}
\def\DR{\ensuremath{\mathcal{DR}}}
\def\DQ{\ensuremath{\mathcal{DQ}}}

\def\ZF{\ensuremath{\mathcal F}}

\def\ZI{\ensuremath{\mathbb I}}

\def\ZN{\ensuremath{\mathbb N}}
\def\zP{\ensuremath{\mathcal P}}
\def\ZQ{\ensuremath{\mathcal Q}}
\def\ZR{\ensuremath{\mathbb R}}
\def\MR{\ensuremath{\mathcal R}}
\def\zR{\ensuremath{\mathcal R}}

\def\ZT{\ensuremath{\mathbb T}}
\def\ZZ{\ensuremath{\mathbb Z}}

\def\deff{\overset{\text{def}}{=}}

\def\supp{{\rm supp\,}}

\def\width{{\rm wd}}
\def\length{{\rm len}}
\def\a{\textsc{\char13}}
\def\q{\usefont{T1}{fxb}{o}{b}\fontsize{0.4cm}{0.4cm}\selectfont?}

\newcommand {\intmean}[2]{\frac{1}{|#2|}\int_{#2}#1(u)\,du}

\def\md#1#2\emd{
\ifx0#1\begin{equation*} #2 \end{equation*}\fi  
\ifx1#1\begin{equation}#2\end{equation}\fi   
\ifx2#1\begin{align*}#2\end{align*}\fi   
\ifx3#1\begin{align}#2\end{align}\fi    
\ifx4#1\begin{gather*}#2\end{gather*}\fi  
\ifx5#1\begin{gather}#2\end{gather}\fi   
\ifx6#1\begin{multline*}#2\end{multline*}\fi  
\ifx7#1\begin{multline}#2\end{multline}\fi  
\ifx8#1\begin{equation*}\begin{split}#2\end{split}\end{equation*}\fi
\ifx9#1\begin{equation}\begin{split}#2\end{split}\end{equation}\fi
}
\makeatletter
\def\env@cases{%
  \let\@ifnextchar\new@ifnextchar
  \left\lbrace
  \def\arraystretch{0.9}%
  \array{@{}l@{\quad}l@{}}}
\makeatother
\newcommand{\e }[1]{(\ref{#1})}
\newcommand{\lem }[1]{Lemma \ref{#1}}
\newcommand{\trm }[1]{Theorem \ref{#1}}
\newcommand{\otrm }[1]{Theorem \ref{#1}}
\newcommand{\sect }[1]{Section \ref{#1}}
\newcommand{\chap }[1]{Chapter \ref{#1}}

\begin{document}

\thispagestyle{empty}

\thispagestyle{empty}

    \centerline{\bf Yerevan State University}

    \kern 160pt

    \centerline{\large Mher H. Safaryan}

    \kern 20pt

    \centerline{\Large\bf On estimates for maximal operators}
    \centerline{\Large\bf associated with tangential regions}

    \kern 20pt

    \centerline{\large \it (A.01.01---Mathematical Analysis)}

    \kern 20pt

    \centerline{\Huge \bf THESIS}

    \kern 30pt

    \leftline{for the degree of candidate of}

    \leftline{physical mathematical sciences}

    \kern 80pt
    \rightline{Scientific advisor}

    \rightline{Doctor of phys.-math. sciences}

    \rightline{G.~A.~Karagulyan}

    \kern 70pt

    \centerline{Yerevan  - 2018}

\newpage
\setcounter{page}{1}

\clearpage

\setcounter{tocdepth}{1}
\tableofcontents

\numberwithin{equation}{chapter}

\chapter*{Introduction}
\addcontentsline{toc}{chapter}{Introduction}
The following remarkable theorems of Fatou \cite{Fat} play significant role in the study
of boundary value problems of analytic and harmonic functions.

\begin{OldTheorem}[Fatou, 1906]\label{OldFatou-1}
Any bounded analytic function on the unit disc $D=\{z\in \mathbb{C}:\, |z|<1\}$ has non-tangential limit for almost all boundary points.
\end{OldTheorem}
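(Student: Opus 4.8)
The plan is to convert the problem about bounded analytic functions into a maximal-function estimate, via the Poisson representation. First I would note that the dilations $f_r(e^{i\theta}):=f(re^{i\theta})$, $0<r<1$, are uniformly bounded in $L^\infty(\partial D)$ and hence in $L^2(\partial D)$; since the closed ball of $L^2(\partial D)$ is weakly sequentially compact, there is a sequence $r_n\uparrow 1$ with $f_{r_n}\rightharpoonup F$ in $L^2$, and $F\in L^\infty$ by lower semicontinuity of the norm. Writing $f(z)=\sum_{n\ge 0}a_nz^n$, so that $\widehat{f_r}(n)=a_nr^n$ for $n\ge 0$ and $\widehat{f_r}(n)=0$ for $n<0$, the weak convergence forces $\widehat F(n)=a_n$ for $n\ge 0$ and $\widehat F(n)=0$ for $n<0$; comparing with the Fourier expansion of the Poisson kernel gives $f=P[F]$, the Poisson integral of the bounded boundary function $F$. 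Thus the theorem reduces to the claim: \emph{if $g\in L^\infty(\partial D)\subset L^1(\partial D)$, then $P[g](z)\to g(e^{i\theta})$ as $z\to e^{i\theta}$ non-tangentially, for almost every $\theta$.}

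To prove this reduced claim I would fix an aperture $\alpha>1$, let $\Gamma_\alpha(\theta)$ be the Stolz cone at $e^{i\theta}$, and introduce the non-tangential maximal function $N_\alpha g(\theta)=\sup_{z\in\Gamma_\alpha(\theta)}|P[g](z)|$. The central step is the pointwise domination $N_\alpha g(\theta)\le C_\alpha\,\mathrm{M}g(\theta)$, where $\mathrm{M}$ is the Hardy--Littlewood maximal operator on $\partial D$. This follows by observing that for $z=re^{i\varphi}\in\Gamma_\alpha(\theta)$ one has $|\varphi-\theta|\lesssim_\alpha(1-r)$, so the Poisson kernel satisfies $P_z(t)\lesssim_\alpha \frac{1-r}{(1-r)^2+|t-\theta|^2}$; splitting the integral $\int P_z(t)g(t)\,dt$ over the arc $|t-\theta|<1-r$ and the dyadic annuli $2^{j}(1-r)\le|t-\theta|<2^{j+1}(1-r)$ and estimating each piece by an average of $|g|$ over an arc centred at $e^{i\theta}$ yields a convergent geometric series times $\mathrm{M}g(\theta)$. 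Since $\mathrm{M}$ is of weak type $(1,1)$, so is $N_\alpha$.

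Finally I would run the standard density argument. For continuous $h$ the non-tangential convergence $P[h](z)\to h(e^{i\theta})$ is immediate from uniform continuity and the approximate-identity property of the Poisson kernel. Given $g\in L^1$ and $\varepsilon>0$, pick continuous $h$ with $\|g-h\|_{L^1}<\varepsilon$; the non-tangential oscillation $\Omega g(\theta):=\limsup_{z\to e^{i\theta},\,z\in\Gamma_\alpha(\theta)}P[g](z)-\liminf_{z\to e^{i\theta},\,z\in\Gamma_\alpha(\theta)}P[g](z)$ obeys $\Omega g\le\Omega(g-h)\le 2N_\alpha(g-h)$, so $|\{\Omega g>\lambda\}|\le C_\alpha\varepsilon/\lambda$ for every $\lambda>0$, whence $\Omega g=0$ a.e.; the non-tangential limit therefore exists a.e. Identifying it with $g(e^{i\theta})$ at every Lebesgue point of $g$ (again by the dyadic kernel estimate), and then taking a countable sequence of apertures $\alpha_k\to\infty$, gives the full non-tangential conclusion. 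I expect the main technical obstacle to be precisely the pointwise bound $N_\alpha g\le C_\alpha\mathrm{M}g$: one must organise the dyadic decomposition of the Poisson kernel so that the dependence on the cone aperture is explicit and uniform, after which the weak-type inequality, the density argument, and the Lebesgue-point identification are routine.
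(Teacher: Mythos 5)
Your proposal is correct, and it is the classical maximal-function proof; but it follows a genuinely different route from the one this thesis takes. You reduce Theorem \ref{OldFatou-1} to the Poisson representation $f=P[F]$ with $F\in L^\infty(\ZT)$ (your weak-compactness argument is fine: the ball $\{\|g\|_\infty\le M\}$ is convex and closed, hence weakly closed in $L^2$), and then obtain a.e.\ non-tangential convergence from the pointwise bound $N_\alpha g\le C_\alpha Mg$, the weak $(1,1)$ inequality, and a density argument with continuous functions. The paper instead treats Fatou's theorem as a special case of \trm{1.T1}: write the bounded analytic (or harmonic) function as a Poisson--Stieltjes integral of $\mu\in\BV(\ZT)$, observe that $\{P_r\}$ is a regular approximate identity with $\|P_r\|_\infty\asymp(1-r)^{-1}$, so that non-tangential approach corresponds exactly to $\PI(\lambda,P)<\infty$ with $\lambda(r)=c(1-r)$, and conclude convergence at \emph{every} point where $\mu$ is differentiable, hence almost everywhere. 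The engine there is not a maximal inequality but \lem{1.L1-}, which approximates the translated monotone kernel by a signed combination $\sum_j\varepsilon_j\ZI_{I_j}$ of indicators of intervals anchored at the origin with total length $O(\varepsilon^{-1})$, so that $\Phi_r(\theta(r),d\mu)$ is controlled directly by difference quotients of $\mu$; no weak-type estimate or density argument is needed, and no Lebesgue-point identification step arises since the convergence is proved pointwise at differentiability points. What your route buys is the standard $H^\infty/L^p$ machinery (your bound $N_\alpha\lesssim M$ is essentially the Poisson-kernel case of \lem{1.Phi*ineq1} and \trm{1.weakpp}, which the paper proves for general approximate identities and general regions); what the paper's route buys is the sharper pointwise statement (Fatou's second theorem, \otrm{OldFatou-2}) and a proof scheme that generalizes verbatim to arbitrary regular approximate identities and arbitrary $\lambda(r)$-regions satisfying $\PI(\lambda,\varphi)<\infty$.
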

\begin{OldTheorem}[Fatou, 1906]\label{OldFatou-2}
If a function $\mu$ of bounded variation is differentiable at $x_0\in \ZT$, then the Poisson integral
\md0
\zP_r(x,d\mu) \deff \frac{1}{2\pi }\int_\ZT \frac{1-r^2}{1-2r\cos (x-t)+r^2} d\mu(t)\
\emd
converges non-tangentially to $\mu'(x_0)$ as $r\to 1$.
\end{OldTheorem}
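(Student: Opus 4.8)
The plan is to reduce to the case of a vanishing derivative, convert the Stieltjes integral against the Poisson kernel into a Lebesgue integral against the kernel's derivative by integration by parts, and then combine the pointwise bound coming from differentiability with the non-tangential restriction $|x-x_0|\le c(1-r)$. First I would normalize: by rotation assume $x_0=0$, set $a=\mu'(0)$, and put $\nu(t)=\mu(t)-\mu(0)-a\sin t$. Then $\nu$ is of bounded variation and $2\pi$-periodic, $\nu(0)=0$, and since $\sin h=h+O(h^3)$ differentiability of $\mu$ gives $\nu(h)=a h+o(h)-a\sin h=o(h)$, i.e. $\nu'(0)=0$. Because $d\mu=d\nu+a\cos t\,dt$ and the Poisson integral of $\cos t\,dt$ is the harmonic extension $r\cos x$ of $\cos$, we have $\zP_r(x,d\mu)=\zP_r(x,d\nu)+a\,r\cos x$, and $a\,r\cos x\to a=\mu'(0)$ as $(r,x)\to(1,0)$, even unrestrictedly. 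So it suffices to prove $\zP_r(x,d\nu)\to 0$ non-tangentially.

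Next I would integrate by parts. Since $s\mapsto P_r(s):=\frac{1-r^2}{1-2r\cos s+r^2}$ is smooth and $\nu$ has bounded variation, they share no discontinuity, and the Riemann--Stieltjes integration-by-parts formula on the circle (no boundary term, by periodicity, choosing the base point where $\nu$ is continuous) gives
\begin{equation*}
2\pi\,\zP_r(x,d\nu)=\int_\ZT P_r(x-t)\,d\nu(t)=\int_\ZT \nu(t)\,P_r'(x-t)\,dt .
\end{equation*}
For $r\in(1/2,1)$ and $|s|\le\pi$ one has $1-2r\cos s+r^2\asymp(1-r)^2+s^2$, hence $|P_r'(s)|\lesssim \dfrac{(1-r)\,|s|}{\bigl((1-r)^2+s^2\bigr)^2}\lesssim\dfrac{1-r}{\bigl((1-r)^2+s^2\bigr)^{3/2}}$.

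Now the main estimate. Fix $\varepsilon>0$ and choose $\delta\in(0,\pi/2)$ with $|\nu(t)|\le\varepsilon|t|$ for $|t|\le\delta$. Taking $r$ close to $1$ and $|x|\le c(1-r)\le\delta/2$, split $\int_\ZT=\int_{|t|\le\delta}+\int_{\delta<|t|\le\pi}$. On the far range $|x-t|\ge\delta/2$, so $|P_r'(x-t)|\lesssim(1-r)\delta^{-3}$, and since $\nu$ is bounded the far integral is $O\bigl(\|\nu\|_\infty(1-r)\delta^{-3}\bigr)=o(1)$. On the near range, using $|t|\le|x-t|+|x|\le|x-t|+c(1-r)$ and substituting $u=x-t$,
\begin{equation*}
\Bigl|\int_{|t|\le\delta}\nu(t)P_r'(x-t)\,dt\Bigr|
\;\lesssim\;\varepsilon\!\int_\ZR\!\bigl(|u|+(1-r)\bigr)\frac{(1-r)|u|}{\bigl((1-r)^2+u^2\bigr)^2}\,du
\;=\;\varepsilon\Bigl(\tfrac{\pi}{2}+c_0\Bigr),
\end{equation*}
since after the rescaling $u=(1-r)v$ both resulting integrals are finite constants independent of $r$. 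Letting $r\to1$ and then $\varepsilon\to0$ yields $\zP_r(x,d\nu)\to0$, hence $\zP_r(x,d\mu)\to\mu'(x_0)$ along any non-tangential approach.

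The step I expect to carry the weight is the last one, and precisely the cross term $|x|\int_\ZR|P_r'(x-t)|\,dt$: because $\int_\ZR|P_r'|\asymp(1-r)^{-1}$, this term is controlled only when $|x-x_0|\lesssim 1-r$, which is exactly the non-tangential hypothesis (if $x$ approached $x_0$ tangentially this contribution would diverge). The remaining points --- justifying the Stieltjes integration by parts for a possibly discontinuous $\nu$ against the continuous kernel $P_r$, and the two-sided estimate for the denominator --- are routine.
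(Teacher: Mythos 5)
Your argument is correct, but it follows the classical kernel-specific route rather than the paper's. The paper never works with $P_r$ directly: it recovers Theorem \ref{OldFatou-2} as the special case $\varphi_r=P_r$, $\lambda(r)=c(1-r)$ of \trm{1.T1}, since $\|P_r\|_\infty\asymp(1-r)^{-1}$ makes the hypothesis $\PI(\lambda,P)<\infty$ exactly the non-tangential condition. The proof of \trm{1.T1} avoids integration by parts altogether (the kernel need not be differentiable): by \lem{1.L1-} the shifted kernel $\varphi_r(\theta(r)-t)$ is uniformly approximated by a step function $\sum_j\varepsilon_j\ZI_{I_j}$ whose intervals $I_j$ contain $0$ in their closures, have lengths at most $2\sup\{|t|:\varphi_r(t)\ge\varepsilon_r\}$, and have total length $O\bigl(\varepsilon_r^{-1}\max\{1,|\theta|\,\|\varphi_r\|_\infty,\|\varphi_r\|_1\}\bigr)$; differentiability of $\mu$ at $x_0$ then controls each average $\frac{1}{|I_j|}\int_{I_j}d\mu$, while the bound $|\theta(r)|\,\|\varphi_r\|_\infty=O(1)$ from \e{1.2-10} keeps $\varepsilon_r\sum_j|I_j|$ bounded via \e{1.2-13}. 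Your route — reduce to $\nu$ with $\nu'(0)=0$ via $\mu(t)-\mu(0)-a\sin t$, integrate by parts, and use $|P_r'(s)|\lesssim(1-r)|s|\bigl((1-r)^2+s^2\bigr)^{-2}$ with the near/far splitting — exploits the smoothness and explicit decay of the Poisson kernel, which is all the stated classical theorem needs and yields a short self-contained proof; the paper's level-set decomposition buys the general statement for arbitrary regular approximate identities, where only monotonicity and the quantity $\lambda(r)\|\varphi_r\|_\infty$ enter. Your closing observation pinpoints the same mechanism in both proofs: your cross term $|x|\int_\ZR|P_r'|\asymp|x|/(1-r)$ is precisely the role played by the product $|\theta(r)|\cdot\|\varphi_r\|_\infty$ in the paper's estimate of $\sum_j|I_j|$.
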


These two fundamental theorems, have many applications in different mathematical theories including analytic functions, Hardy spaces, harmonic analysis, differential equations and etc. There are various generalization of these theorems in different aspects. Almost everywhere convergence over some tangential regions investigated by Nagel and Stein \cite{NaSt}, Di Biase \cite{DiBi}, Di Biase-Stokolos-Svensson-Weiss \cite{BSSW}.
Sj\"{o}gren \cite{Sog1,Sog2,Sog3}, R\"{o}nning \cite{Ron1,Ron2,Ron3}, Katkovskaya-Krotov \cite{Kro, Kro4}, Krotov \cite{Kro2, Kro3}, Brundin \cite{Bru}, Mizuta-Shimomura \cite{MizShi}, Aikawa \cite{Aik3} studied fractional Poisson integrals with respect to the fractional power of the Poisson kernel and obtained some tangential convergence properties for such integrals. More precisely they considered the integrals

\md0
\zP_r^{(1/2)}(x,f) \deff \int_\ZT P^{(1/2)}_r(x-t) f(t)\,dt = \frac{1}{c(r)}\int_\ZT [P_r(x-t)]^{1/2} f(t)\,dt,
\emd
where
\md0
P_r(x) = \frac{1-r^2}{1-2r\cos x+r^2}, \quad 0<r<1, \quad x\in\ZT
\emd
is the Poisson kernel for the unit disk and
\md0
c(r)=\int_\ZT [P_r(t)]^{1/2}\,dt \asymp (1-r)^{1/2}\log\frac{1}{1-r}
\emd
is the normalizing coefficient. Here, the notation $A\asymp B$ means double inequality $c_1 A \le B\le c_2 A$ for some positive absolute constants $c_1$ and $c_2$, which might differ in each case.

\begin{OldTheorem}[see \cite{Sog1,Ron1,Ron2}]\label{OldRonningSogren}
For any $f\in L^p(\ZT),\, 1\le p\le \infty$
\md1\label{0.frac-poisson}
\lim_{r\to 1}\zP_r^{(1/2)} (x+\theta(r),f) = f(x)
\emd
almost everywhere $x\in\ZT$, whenever
\md1\label{0.theta-bound-P0.5}
|\theta(r)|\le
\begin{cases}
c(1-r)\left(\log\frac{1}{1-r}\right)^p & \quad\text{if}\quad 1\le p<\infty,\\
c_\alpha(1-r)^\alpha,\,\text{for any }\,0<\alpha<1 & \quad\text{if}\quad p=\infty,
\end{cases}
\emd
where $c_\alpha>0$ is a constant, depended only on $\alpha$.
\end{OldTheorem}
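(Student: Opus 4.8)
The plan is to prove \e{0.frac-poisson} by the standard two-step scheme: a maximal inequality for
\[
\M^{(1/2)}f(x)\deff\sup_{0<r<1}\bigl|\zP_r^{(1/2)}(x+\theta(r),f)\bigr|,
\]
together with convergence on the dense class $C(\ZT)$. The latter is immediate: $P^{(1/2)}_r\ge0$, $\int_\ZT P^{(1/2)}_r=1$, and $\int_{|t|>\rho}P^{(1/2)}_r(t)\,dt\to0$ as $r\to1$ for each $\rho>0$, so $\zP_r^{(1/2)}(\cdot,f)\to f$ uniformly for $f\in C(\ZT)$; since $|\theta(r)|\to0$ in both cases of \e{0.theta-bound-P0.5}, also $\zP_r^{(1/2)}(x+\theta(r),f)\to f(x)$ uniformly. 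Granting the maximal bound, for $1\le p<\infty$ the a.e.\ convergence of $\zP_r^{(1/2)}(x+\theta(r),f)$ for general $f\in L^p$ then follows from the Banach principle — approximate $f$ in $L^p$ by continuous functions and use that $\M^{(1/2)}$ and the identity are weak type $(p,p)$; the case $p=\infty$ (where $C(\ZT)$ is not dense) will instead be settled by a direct argument at Lebesgue points.

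For the maximal estimate I would begin from the two-sided bound $P^{(1/2)}_r(x)\asymp\frac1{\log\frac1{1-r}}\cdot\frac1{(1-r)+|x|}$, which follows from $1-2r\cos x+r^2\asymp(1-r)^2+x^2$ together with the stated asymptotics of $c(r)$. Writing $\delta=1-r$, $\theta=\theta(r)$, split the integral defining $\zP_r^{(1/2)}(x+\theta,f)$ at $|x+\theta-t|=|\theta|$. On the ``radial'' part $|x+\theta-t|\ge|\theta|$ one has $\delta+|x+\theta-t|\gtrsim\delta+|x-t|$, so this contribution is $\lesssim\frac1{\log(1/\delta)}\int_\ZT\frac{|f(t)|}{\delta+|x-t|}\,dt\lesssim Mf(x)$, obtained by dyadically summing the $\sim\log(1/\delta)$ annuli about $x$ — harmless, since the Hardy--Littlewood operator $M$ is weak type $(p,p)$. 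The ``tangential'' part lives on the shrinking interval $\{|t-x|<2|\theta(r)|\}$ but concentrates at the off-centre point $x+\theta(r)$ on the much finer scale $\delta$; after $t=x+\theta(r)w$ it becomes a model operator with kernel $\asymp\frac1{\log(1/\delta)}\cdot\frac{\chi_{\{0<w<2\}}}{\epsilon+|1-w|}$, where $\epsilon=\delta/|\theta(r)|$, whose total mass is $\asymp\frac{\log(1/\epsilon)}{\log(1/\delta)}$. Here is exactly where \e{0.theta-bound-P0.5} enters: it forces $\log(1/\epsilon)=\log\frac{|\theta(r)|}{\delta}$ to be $O\bigl(\log\log\frac1{1-r}\bigr)$ when $1\le p<\infty$ and $O\bigl((1-\alpha)\log\frac1{1-r}\bigr)$ when $p=\infty$, so that the tangential kernel has total mass $o(1)$, respectively $\asymp1-\alpha$.

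It then remains to prove a weak type $(p,p)$ inequality for this tangential model operator, which I would do directly, estimating its distribution function level by level by a Calder\'on--Zygmund/covering argument; the bound on $|\theta(r)|$ is precisely what makes the resulting series of measures converge against the correct power $\lambda^{-p}$. For $p=\infty$ one argues at a Lebesgue point $x$: splitting $f(t)-f(x)$ over the bad set $E_\eta(x)=\{t:|f(t)-f(x)|>\eta\}$, whose density at $x$ is $0$, one uses the logarithmic profile $\frac1{\epsilon+|1-w|}$ of the kernel together with the tangency rate $|\theta(r)|\le c_\alpha(1-r)^\alpha$ to absorb its contribution, and then lets $\eta\to0$. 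The main obstacle I anticipate is exactly this weak type $(p,p)$ estimate for the tangential piece: pointwise domination by $Mf$ genuinely fails, because the relevant averages are centred at the moving point $x+\theta(r)$ and not at $x$, so a more delicate distributional argument is unavoidable.
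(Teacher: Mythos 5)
Your architecture (dense class plus a maximal estimate, kernel split into a ``radial'' part controlled by $Mf$ and a ``tangential'' part concentrated at the moving point, with the bound \e{0.theta-bound-P0.5} entering exactly through the mass $\frac{\log(|\theta(r)|/(1-r))}{\log(1/(1-r))}$ of the tangential piece) is sound and parallels the quantities $\PI$, $\PI_p$, $\PI_\infty$ on which the thesis runs. But the decisive step is missing: you reduce everything to a weak type $(p,p)$ bound for the tangential model operator and then only announce that a ``Calder\'on--Zygmund/covering argument'' would give it, listing it yourself as the main obstacle. That estimate is precisely the content of the theorem (it is R\"onning's \otrm{OldRonning} and \trm{1.weakpp} here), so as written the proof has a genuine gap. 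Moreover, your diagnosis that ``a more delicate distributional argument is unavoidable'' because pointwise domination by $Mf$ fails is not quite right: domination by $Mf$ indeed fails for $p>1$, but the pointwise bound $\Phi_\lambda^*(x,f)\le C\left(M|f|^p(x)\right)^{1/p}$ does hold and immediately gives weak $(p,p)$. This is how the paper closes exactly the point you flag: by Jensen's inequality $m_f^p(y,t)\le m_{f^p}(y,t)$, the off-centre averages at scale $\mu(r)$ with $2^{k-1}A\mu(r)<|x-y|\le 2^kA\mu(r)$ are bounded by $2^kM|f|^p(x)$, while the condition $\lambda(r)\le C\mu(r)\FI(r)^{-p}$ forces the kernel factor $\FI^p(r)\lesssim 2^{-k}A^{-1}$, so the two compensate (\lem{1.lemma-TAbound}); a dyadic summation over scales between $\mu(r)=\FI(r)/\|\varphi_r\|_\infty$ and $\lambda(r)$ (\lem{1.lemma-supmulambda}), together with the analogue of your radial estimate (\lem{1.Phi*ineq1}), yields \trm{1.weakpp}. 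For $p=1$ the paper in fact gets more than your scheme: \trm{1.T1}--\trm{1.T2} give convergence at every Lebesgue point directly, with no density argument.

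Your $p=\infty$ sketch also does not close as stated. If $\alpha$ is fixed, the tangential mass is $\asymp 1-\alpha$, not $o(1)$, and density zero of the bad set $E_\eta(x)$ at $x$ cannot absorb it: the bad set may fill an interval of length $m=o(|\theta(r)|)$ around the off-centre point $x+\theta(r)$, and the kernel then charges it with mass $\approx\frac{\log(m/(1-r))}{\log(1/(1-r))}$, which can stay near $1-\alpha$. Indeed, for a fixed $\alpha$ one has $\PI_\infty(\lambda,P^{(1/2)})=1-\alpha>0$, and \trm{1.T6} produces a characteristic function whose integrals are $\lambda(r)$-divergent everywhere, so any argument of Lebesgue-point type that is uniform over the region must fail. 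The statement is read here with the bound holding for every $0<\alpha<1$ (constants $c_\alpha$ depending on $\alpha$), which is equivalent to $\PI_\infty(\lambda,P^{(1/2)})=0$; under that reading the tangential mass is $o(1)$ and the paper concludes via the decomposition in \trm{1.T5}, splitting $\varphi_r$ into a small-mass inner piece and a renormalized kernel to which \trm{1.T1} applies. You should either adopt that reading (in which case your absorption step becomes trivial but your ``$\asymp 1-\alpha$'' computation should be corrected), or be aware that the fixed-$\alpha$, single-curve statement requires a substantially harder argument than Lebesgue points.
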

The case of $p=1$ is proved in \cite{Sog1}, $1<p\le \infty$ is considered in \cite{Ron1}, \cite{Ron2}. Moreover, in \cite{Ron1} weak type inequalities for the maximal operator of square root Poisson integrals are established.
\begin{OldTheorem}[R\"{o}nning, 1997]\label{OldRonning}
Let $1<p<\infty$. Then the maximal operator
\md0
\zP^*_{1/2}(x,f) \deff \sup_{\substack{|\theta|<c(1-r)\left(\log\frac{1}{1-r}\right)^p\\1/2<r<1}} \zP_r^{(1/2)}(x+\theta,|f|)
\emd
is of weak type $(p,p)$.
\end{OldTheorem}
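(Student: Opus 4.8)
The plan is to replace $\zP^*_{1/2}$, up to the ordinary Hardy--Littlewood maximal operator, by a genuinely tangential maximal operator, and then to prove the weak type bound for that operator by a covering argument in which the normalization $1/\log\frac1{1-r}$ precisely cancels the dilation of the covering intervals. For $1/2<r<1$ one has $1-2r\cos x+r^2\asymp(1-r)^2+x^2$ on $\ZT$, hence $[P_r(x)]^{1/2}\asymp\frac{(1-r)^{1/2}}{(1-r)+|x|}$; together with $c(r)\asymp(1-r)^{1/2}\log\frac1{1-r}$ this gives, writing $h=1-r$ and $\lambda=\log\frac1h$,
\md0
\zP_r^{(1/2)}(x+\theta,|f|)\asymp\frac1\lambda\int_\ZT\frac{|f(t)|}{h+|x+\theta-t|}\,dt,
\emd
so that $\zP^*_{1/2}f(x)\lesssim\sup_{0<h<1/2}\,\sup_{|\theta|<\rho}\,\frac1\lambda\int_\ZT\frac{|f(t)|}{h+|x+\theta-t|}\,dt$ with $\rho=\rho(h)=ch\lambda^p$. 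I would split this integral at $|x-t|=2\rho$. On $\{|x-t|\ge2\rho\}$ one has $h+|x+\theta-t|\ge\frac12|x-t|$ for every admissible $\theta$, so a routine dyadic decomposition bounds this part by $C\lambda\,\M f(x)$, where $\M$ is the Hardy--Littlewood maximal operator; dividing by $\lambda$, it contributes $\lesssim\M f(x)$, which is of weak type $(p,p)$.

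For the part over $I_h(x):=(x-2\rho,x+2\rho)$ I would apply Hölder's inequality with exponents $p$ and $p'$:
\md0
\int_{|x-t|<2\rho}\frac{|f(t)|}{h+|x+\theta-t|}\,dt\le\|f\|_{L^p(I_h(x))}\Big(\int_\ZR\frac{du}{(h+|u|)^{p'}}\Big)^{1/p'}\le C_p\,h^{-1/p}\,\|f\|_{L^p(I_h(x))},
\emd
uniformly in $\theta$, because $\int_\ZR(h+|u|)^{-p'}\,du=\frac2{p'-1}\,h^{1-p'}$; this is the only place where the hypothesis $p>1$ (that is, $p'<\infty$) is used. Consequently $\zP^*_{1/2}f\lesssim\M f+\mathcal Mf$, with
\md0
\mathcal Mf(x):=\sup_{0<h<1/2}\frac{C_p}{\lambda(h)}\,h^{-1/p}\,\|f\|_{L^p(I_h(x))},
\emd
and everything reduces to showing that $\mathcal M$ is of weak type $(p,p)$.

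To this end fix $\alpha>0$ and set $E=\{\mathcal Mf>\alpha\}$. For each $x\in E$ choose $h=h(x)<1/2$ with $C_p\lambda(x)^{-1}h^{-1/p}\|f\|_{L^p(I(x))}>\alpha$, where $I(x):=I_{h(x)}(x)$ and $\lambda(x):=\log\frac1{h(x)}$; raising to the $p$-th power gives $h(x)<C_p^p\,\alpha^{-p}\,\lambda(x)^{-p}\,\|f\|_{L^p(I(x))}^p$, hence
\md0
|I(x)|=4c\,h(x)\,\lambda(x)^p<4c\,C_p^p\,\alpha^{-p}\,\|f\|_{L^p(I(x))}^p.
\emd
The point is that $\lambda(x)^p$ has cancelled --- this is precisely where the normalization of $\zP_r^{(1/2)}$ is exploited. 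Since the intervals $\{I(x)\}_{x\in E}$ are centred at the points of $E$ and of uniformly bounded length, Vitali's covering lemma on $\ZT$ gives a disjoint subfamily $\{I(x_k)\}_k$ with $E\subseteq\bigcup_k5I(x_k)$, and therefore
\md0
|E|\le5\sum_k|I(x_k)|<20c\,C_p^p\,\alpha^{-p}\sum_k\|f\|_{L^p(I(x_k))}^p\le20c\,C_p^p\,\alpha^{-p}\,\|f\|_p^p
\emd
by disjointness of the $I(x_k)$. This is the weak type $(p,p)$ estimate for $\mathcal M$, which together with the reduction above proves the theorem.

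The main obstacle is that $\zP^*_{1/2}f$ cannot be dominated pointwise by $\M f$: the approach window has width $\asymp(1-r)\big(\log\frac1{1-r}\big)^p$, much larger than the kernel scale $1-r$, and pulling the supremum in $\theta$ inside the integral costs an unrecoverable power of $\log\frac1{1-r}$. The device that avoids this is to keep the integral over the tangential window intact and estimate it by Hölder; it succeeds because of a two-fold coincidence --- the norm $\|(h+|\cdot|)^{-1}\|_{L^{p'}}\asymp h^{-1/p}$ does not feel the length of the window (valid exactly for $p>1$), and the prefactor $\lambda^{-1}$ of the fractional integral exactly cancels the $\lambda^p$ produced by $|I_h(x)|\asymp h\lambda^p$ in the packing step. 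The remaining ingredients --- the kernel asymptotics, the dyadic bound for the non-tangential tail, and the covering lemma --- are routine.
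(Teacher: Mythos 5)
Your proof is correct, and its skeleton --- discard the part of the convolution outside the tangential window by a dyadic decomposition dominated by $Mf$, and treat the window itself by H\"older with exponents $p,p'$ --- is essentially the route the paper indicates for this theorem, though packaged differently. The paper does not prove R\"onning's theorem from scratch: it obtains it as a corollary of \trm{1.weakpp}, verifying $\TPI_p(\lambda,P^{(1/2)})<\infty$ from $\|P^{(1/2)}_r\|_\infty\asymp\left((1-r)\log\frac{1}{1-r}\right)^{-1}$ and $P^{(1/2)}_*(r)\asymp\left(\log\frac{1}{1-r}\right)^{-1}$; and it also records the variant you are in effect reproducing, namely \lem{1.Phi*ineq1} combined with \lem{1.rough-bound}, where the window term is handled by H\"older under $\sup_r\lambda(r)\|\varphi_r\|_q^p<\infty$ --- for the square root Poisson kernel this is exactly your cancellation of $\left(\log\frac{1}{1-r}\right)^p$. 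The genuine difference is your endgame: the paper converts the H\"older bound at once into the pointwise estimate $\Phi^*_\lambda(x,f)\le C\left(M|f|^p(x)\right)^{1/p}$ (since $\|f\|^p_{L^p(I_h(x))}\le|I_h(x)|\,M|f|^p(x)$ and $|I_h(x)|\asymp h\lambda^p$) and then invokes the weak $(1,1)$ bound for $M$ applied to $|f|^p$, whereas you keep the localized norm and prove the weak $(p,p)$ bound for your auxiliary operator by a Vitali covering argument exploiting the same cancellation of $\lambda^p$ in $|I_h(x)|$. Both endgames are valid; the paper's is a one-line shortcut of your covering step, yields the slightly stronger pointwise majorization, and works for arbitrary approximate identities under the sharper condition $\TPI_p<\infty$, while yours is self-contained for this specific kernel. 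The points that needed care --- that the number of dyadic annuli beyond the window is $O\left(\log\frac{1}{1-r}\right)$, that $p>1$ enters exactly through $\int_\ZR(h+|u|)^{-p'}\,du\asymp h^{1-p'}$, and that $h\left(\log\frac1h\right)^p$ is bounded on $0<h<1/2$ so the intervals have uniformly bounded length and Vitali applies --- you have all addressed.
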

In \cite{Kro} weighted strong type inequalities for the same operators are established. Related questions were considered also in higher dimensions. Saeki \cite{Sae} studied Fatou type theorems for non-radial kernels. Kor\'anyi \cite{Kor} extended Fatou\a s theorem for the Poisson-Szeg\"{o} integral. In \cite{NaSt} Nagel and Stein proved that the Poisson integral on the upper half space of $\ZR^{n+1}$ has the boundary limit at almost every point within a certain approach region, which is not contained in any non-tangential approach regions. Sueiro \cite{Sue} extended Nagel-Stein\a s result for the Poisson-Szeg\"{o} integral. Almost everywhere convergence over tangential tress (family of curves) were investigated by Di Biase \cite{DiBi}, Di Biase-Stokolos-Svensson-Weiss \cite{BSSW}. In \cite{Kro} and \cite{Aik3} higher dimensional cases of fractional Poisson integrals are studied as well.

In \chap{chapter-fatou} we thoroughly investigate the connection between approximate identities and convergence regions. In particular, how the non-tangential convergence is connected to Poisson kernel and bounds \e{0.theta-bound-P0.5} to the square root Poisson kernel.

We introduce $\lambda(r)-$convergence, which is a generalization of non-tangential convergence in the unit disc, where $\lambda(r)$ is a function
\md1\label{0.lambdar-def}
\lambda:(0,1)\to(0,\infty) \quad\text{with}\quad \lim_{r\to1}\lambda(r)=0.
\emd
Let $\ZT = \ZR/2\pi \ZZ$ be the unit circle. For a given $x\in \ZT$ we define $\lambda(r,x)$ to be the interval $[x-\lambda(r), x+\lambda(r)]$. If $\lambda(r)\ge \pi$ we assume that $\lambda(r,x)=\ZT$. Let $F_r(x)$ be a family of functions from $L^1(\ZT)$, where $r$ varies in $(0,1)$. We say $F_r(x)$  is $\lambda(r)-$convergent at a point $x\in \ZT$ to a value $A$, if
\md0
\lim_{r\to 1}\sup_{\theta\in \lambda(r,x)}|F_r(\theta)-A|=0.
\emd
Otherwise this relation will be denoted by
\md1\label{1.lambdar-def-1}
\lim_{\stackrel{r\to 1}{\theta\in\lambda(r,x)}}F_r(\theta)=A.
\emd
We say $F_r(x)$ is $\lambda(r)-$divergent at $x\in\ZT$ if \e{1.lambdar-def-1} does not hold for any $A\in\ZR$.

There are at least two ways to interpret $\lambda(r)-$convergence. First, we can associate the function $\lambda(r)$ with regions
\md0
\Omega^x_\lambda \deff \{r e^{i\theta}\in\ZC\colon r\in(0,1),\,|\theta-x|<\lambda(r)\}\subset D,\quad x\in\ZT.
\emd
Then $\lambda(r)-$convergence for $F_r(x)$ at some point $x\in\ZT$ becomes convergence over the region $\Omega^x_\lambda$ for $\tilde{F}(r e^{ix})=F_r(x)$. It is clear, that the non-tangential convergence in the unit disc is the case of $\lambda(r)=c(1-r)$. Second, we can think of it as one dimensional \say{pointwise-uniform} convergence on $\ZT$, meaning that $\lambda(r)-$convergence at a point $x\in\ZT$ depends only on values of functions on $\lambda(r,x)$ which contracts to $x$.

Denote by $\BV(\ZT)$ the functions of bounded variation on $\ZT$. Any given function of bounded variation $\mu\in\BV(\ZT)$ defines a Borel measure on $\ZT$. We consider the family of integrals
\md1\label{1.Phi}
\Phi_r(x,d\mu) \deff \int_\ZT \varphi_r(x-t)\,d\mu(t), \quad \mu\in\BV(\ZT),
\emd
where $0<r<1$ and kernels $\varphi_r\in L^\infty (\ZT)$ form an \emph{approximate identity}defined as follows:

\begin{definition}
We define an approximate identity as a family $\left\{\varphi_r\right\}_{0<r<1}\subset L^{\infty}(\ZT)$ of functions satisfying the following conditions:
\begin{itemize}
\item[$\Phi1.$] $\int_\ZT \varphi_r(t)\,dt\to 1$  as $r\to 1$,
\item[$\Phi2.$] $\varphi_r^*(x) \deff \sup\limits_{|x|\le |t|\le \pi}|\varphi_r(t)|\to 0$ as $r\to 1,\quad 0<|x|\le\pi,$
\item[$\Phi3.$] $C_\varphi \deff \sup\limits_{0<r<1}\|\varphi_r^*\|_1<\infty.$
\end{itemize}
\end{definition}

Approximate identities with the above definition were investigated in \cite{Ben, Katz, KarSaf1}.
Notation $\varphi_r$ should not be confused with the classical dilation approximate identities \cite{Ste}.
In case of $\mu$ is absolutely continuous and $d\mu(t)=f(t)dt$ for some $f\in L^p(\ZT),\,1\le p\le\infty$, then the integral \e{1.Phi} will be denoted as $\Phi_r(x,f)$.


Carlsson \cite{Car} obtained almost everywhere convergence result for non-negative approximate identities with regular level sets, which is defined by the following condition:
\begin{equation*}
\sup\left\{|x| \colon x\in L(r,s)\right\} \le C|L(r,s)|, \text{ for all } 0<r<1,\,s>0,
\end{equation*}
where $C$ is some constant and $L(r,s) = \left\{x\in\ZT \colon \varphi_r(x)>s \right\}$.

\begin{OldTheorem}[Carlsson, 2008]\label{OldCarlsson}
Let $\{\varphi_r(x)\ge 0\}$ be a non-negative approximate identity with regular level sets and $\rho(r) = \|\varphi_r\|_q^{-p}$, where $1\le p<\infty$ and $q=p/(p-1)$ is the conjugate index of $p$. Then for any $f\in L^p(\ZT)$
\md0
\lim_{\substack{r\to1\\|\theta|<c \rho(r)}} \Phi_r(x+\theta,f) = f(x),
\emd
almost everywhere $x\in\ZT$.
\end{OldTheorem}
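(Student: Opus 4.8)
The plan is to run the classical scheme for almost everywhere convergence: reduce the statement to a weak‑type $(p,p)$ bound for the tangential maximal operator
\md0
\Phi^*f(x)\deff\sup_{r_0<r<1}\ \sup_{|\theta|<c\rho(r)}\bigl|\Phi_r(x+\theta,f)\bigr|
\emd
(with a fixed $r_0$ close to $1$, which is harmless for a statement about the limit $r\to1$), and combine it with convergence on the dense class $C(\ZT)\subset L^p(\ZT)$ via the standard density/Banach‑principle argument. Convergence for $f\in C(\ZT)$ I would get from the approximate‑identity axioms $\Phi1$–$\Phi3$ in the usual way: writing $\Phi_r(x+\theta,f)-f(x+\theta)=\int_\ZT\varphi_r(x+\theta-t)\bigl(f(t)-f(x+\theta)\bigr)\,dt+f(x+\theta)\bigl(\int_\ZT\varphi_r-1\bigr)$ and splitting the integral over $\{|x+\theta-t|<\delta\}$ and its complement, one controls the first piece by the modulus of continuity of $f$ together with $\int_{|s|<\delta}\varphi_r^*\le C_\varphi$ and the second by $2\|f\|_\infty\cdot2\pi\,\varphi_r^*(\delta)\to0$; since $\Phi1$ and $\Phi2$ force $\|\varphi_r\|_q\to\infty$ and hence $\rho(r)\to0$, one also has $|f(x+\theta)-f(x)|\le\omega_f\bigl(c\rho(r)\bigr)\to0$, uniformly in $x$ and $\theta$.

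The core of the argument is a \emph{pointwise} domination of $\Phi^*f$ by Hardy–Littlewood type maximal operators. One starts from $|\Phi_r(x+\theta,f)|\le(\varphi_r^*\ast|f|)(x+\theta)$, where $\varphi_r^*$ is even and non‑increasing in $|\cdot|$, and cuts the kernel at the scale $c\rho(r)$ of the tangential window: $\varphi_r^*=\alpha_r+\beta_r$ with $\alpha_r=\varphi_r^*\mathbf 1_{\{|s|<c\rho(r)\}}$ and $\beta_r=\varphi_r^*\mathbf 1_{\{|s|\ge c\rho(r)\}}$. For the outer part, if $|\theta|<c\rho(r)$ then after the substitution $u=x+\theta-s$ one has $|x+\theta-u|\ge|x-u|/2$ on the support of $\beta_r$, so $(\beta_r\ast|f|)(x+\theta)\le\bigl(\varphi_r^*(|\cdot|/2)\ast|f|\bigr)(x)\le 2C_\varphi\,\M f(x)$ by the standard bound for convolution with a radially decreasing kernel, uniformly in $r$ and $\theta$. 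For the inner part, $\alpha_r$ is supported where $|s|<c\rho(r)$, so $x+\theta-s$ stays in $B\bigl(x,2c\rho(r)\bigr)$, and H\"older's inequality gives
\md0
(\alpha_r\ast|f|)(x+\theta)\le\|\varphi_r^*\|_q\Bigl(\int_{B(x,2c\rho(r))}|f|^p\Bigr)^{1/p}\le\|\varphi_r^*\|_q\bigl(4c\rho(r)\bigr)^{1/p}\bigl[\M(|f|^p)(x)\bigr]^{1/p}.
\emd
Here the two hypotheses interact exactly right: the regular level sets give $|\{\varphi_r^*>s\}|\le 2C\,|\{\varphi_r>s\}|$ (the level set of $\varphi_r^*$ at height $s$ is the interval of radius $\sup\{|t|:\varphi_r(t)>s\}\le C|L(r,s)|$), hence $\|\varphi_r^*\|_q\le(2C)^{1/q}\|\varphi_r\|_q$; and the normalization $\rho(r)=\|\varphi_r\|_q^{-p}$ is precisely the one for which $\rho(r)^{1/p}=\|\varphi_r\|_q^{-1}$. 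Thus $\|\varphi_r^*\|_q\bigl(4c\rho(r)\bigr)^{1/p}\le(4c)^{1/p}(2C)^{1/q}$, a constant independent of $r$, and altogether
\md0
\Phi^*f(x)\le 2C_\varphi\,\M f(x)+(4c)^{1/p}(2C)^{1/q}\bigl[\M(|f|^p)(x)\bigr]^{1/p}\qquad\text{for every }x\in\ZT.
\emd

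Both terms on the right are of weak type $(p,p)$ on $L^p(\ZT)$ — the first by the Hardy–Littlewood maximal theorem (strong $(p,p)$ for $p>1$, weak $(1,1)$ for $p=1$), the second because $f\mapsto\M(|f|^p)$ carries $L^p$ into weak‑$L^1$ — so $\Phi^*$ is of weak type $(p,p)$ and the reduction closes. The step I expect to be the real obstacle is the treatment of the inner piece $\alpha_r$: the naive radial majorant obtained by ``fattening'' $\varphi_r^*$ by $c\rho(r)$, i.e. replacing $\varphi_r^*(x+\theta-t)$ by $\varphi_r^*\bigl((|x-t|-c\rho(r))_+\bigr)$, can have $L^1$ norm tending to $\infty$ as $r\to1$ (it smears a thin tall spike of $\varphi_r^*$ over the whole window), so one cannot bound $\alpha_r\ast|f|$ by $\|\cdot\|_1\,\M f$; the point is to keep the convolution localized in $B(x,2c\rho(r))$ and to spend the $L^p$‑integrability of $f$ in place of an $L^1$ bound on the kernel — which is exactly what pins down the exponent in $\rho(r)=\|\varphi_r\|_q^{-p}$ and what forces the use of the regularity of the level sets to pass from $\|\varphi_r^*\|_q$ to $\|\varphi_r\|_q$. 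Verifying carefully that $\rho(r)\to0$ and that the truncation in $r$ near $1$ is legitimate are the only other points needing attention, and both follow quickly from $\Phi1$–$\Phi3$.
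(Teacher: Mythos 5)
Your argument is correct and follows essentially the same route as the paper: split the convolution at the window scale $c\rho(r)$, estimate the inner piece by H\"older's inequality (the normalization $\rho(r)=\|\varphi_r\|_q^{-p}$ making that bound uniform in $r$) and the tail by a constant multiple of $Mf(x)$, deduce the pointwise bound $\Phi^*f(x)\le C\bigl(Mf(x)+(M|f|^p(x))^{1/p}\bigr)$, hence weak type $(p,p)$, and finish by the standard density argument through continuous functions --- exactly the combination of \lem{1.Phi*ineq1}, \lem{1.rough-bound} and \lem{1.lemma-fatouC} that the paper indicates. The only real difference is that you first majorize $\varphi_r$ by $\varphi_r^*$ and therefore need the regular-level-sets hypothesis to pass from $\|\varphi_r^*\|_q$ to $\|\varphi_r\|_q$, whereas \lem{1.rough-bound} applies H\"older directly to $\varphi_r$ on $\{|t|\le\lambda(r)\}$, which is precisely how the paper obtains Carlsson's conclusion without the regularity (or non-negativity) assumption.
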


Here $Mf(x)$ is the Hardy-Littlewood maximal function of $f\in L^1(\ZT)$ defined as
\md0
Mf(x) \deff \sup_{t>0}\frac{1}{2t}\int_{x-t}^{x+t}|f(u)|\,du \quad x\in\ZT.
\emd
It is well known that the maximal operator $M$ is of weak type $(1,1)$ and stong type $(p,p)$ for $1<p\le\infty$.

Although \trm{OldCarlsson} gives a general connection between approximate identities and convergence regions, we will see that it can be extended to any approximate identity without regular level sets assumption. Moreover, obtained convergence regions are shown to be optimal for a wide class of kernels.
Here, the optimality of convergence regions is considered within the regions $\Omega_\lambda^x$ with $x\in\ZT$ and $\lambda(r)$ satisfying (\ref{0.lambdar-def}). More precisely, the optimality of convergence regions is understood as the optimality of the rate of $\lambda(r)$ (when $r\to1$) ensuring almost everywhere $\lambda(r)-$convergence. The central question of \chap{chapter-fatou} is the following:
\begin{question}
For a given approximate identity $\{\varphi_r\}$ what is the necessary and sufficiant condition on $\lambda(r)$ for which
\begin{itemize}
\item $\lim\limits_{\stackrel{r\to 1}{y\in\lambda(r,x)}}\Phi_r\left(x,d\mu\right)=\mu'(x)$ almost everywhere for any $\mu\in\BV(\ZT)\,\q$
\item $\lim\limits_{\stackrel{r\to 1}{y\in\lambda(r,x)}}\Phi_r\left(x,f\right) = f(x)$ almost everywhere for any $f\in L^p(\ZT),\, 1\le p\le\infty\,\q$
\end{itemize}
\end{question}

An analogous question can also be formulated for $f\in C(\ZT)$. However, in this case \lem{1.lemma-fatouC} shows that \e{0.lambdar-def} already sufficient for everywhere $\lambda(r)-$convergence.

In \sect{fatou-BVL1} we prove that the condition
\md0
\PI(\lambda,\varphi) \deff \limsup_{r\to 1}\lambda(r) \|\varphi_r\|_\infty<\infty
\emd
is necessary and sufficient for almost everywhere $\lambda(r)-$convergence of the integrals $\Phi_r(x,d\mu)$, $\mu\in\BV(\ZT)$ as well as $\Phi_r(x,f),\,f\in L^1(\ZT)$. Moreover, we prove that convergence holds at any point where $\mu$ is differentiable for the integrals $\Phi_r(x,d\mu)$ and at any Lebesgue point of $f\in L^1(\ZT)$ for the integrals $\Phi_r(x,f)$.

\begin{customdefinition}{\ref{1.regular-kernel-def}}
We say that a given approximate identity $\{\varphi_r\}$ is regular if each $\varphi_r(x)$ is non-negative, decreasing on $[0,\pi]$ and increasing on $[-\pi, 0]$.
\end{customdefinition}
Clearly, in this case the property $\mathit{\Phi3}$ is unnecessary, since it immediately follows from $\mathit{\Phi1}$.

\begin{customtheorem}{\ref{1.T1}}[see \cite{KarSaf1}]
Let $\{\varphi_r\}$  be a regular approximate identity and $\lambda(r)$ satisfies the condition $\PI(\lambda, \varphi)<\infty$. If $\mu\in \BV( \ZT)$  is differentiable at $x_0$, then
\md0
\lim_{\stackrel{r\to 1}{x\in\lambda(r,x_0)}}\Phi_r\left(x,d\mu\right)=\mu'(x_0).
\emd
\end{customtheorem}
An analogous theorem holds as well in the non-regular case of kernels, but at this time the points where \e{1.Phi} converges satisfy strong differentiability condition.
\begin{customdefinition}{\ref{1.strong-diff-mu}}
We say a given function of bounded variation $\mu$ is strong differentiable at $x_0\in \ZT$, if
there exist a number $c$ such that the variation of the function $\mu(x)-cx$ has zero derivative at $x=x_0$.
\end{customdefinition}
If $\mu$ is absolutely continuous and $d\mu(t)=f(t)dt$ then this property means that $x_0$ is a Lebesgue point for $f(x)$, i.e.
\md0
\lim_{h\to 0}\frac{1}{2h}\int_{-h}^h|f(x)-f(x_0)|dx=0.
\emd
It is well-known that strong differentiability at $x_0$ implies the existence of $\mu'(x_0)$, and any function of bounded variation is strong differentiable almost everywhere.
\begin{customtheorem}{\ref{1.T2}}[see \cite{KarSaf1}]
Let $\{\varphi_r\}$  be an arbitrary approximate identity and $\lambda(r)$ satisfies the condition $\PI(\lambda, \varphi) <\infty$. If $\mu\in\BV(\ZT)$ is strong differentiable at $x_0\in \ZT$, then
\md0
\lim_{\stackrel{r\to 1}{x\in\lambda(r,x_0)}}\Phi_r\left(x,d\mu \right)=\mu'(x_0).
\emd
\end{customtheorem}


The following theorem implies the sharpness of the condition $\PI(\lambda, \varphi) <\infty$ in \trm{1.T1} and \trm{1.T2}.
\begin{customtheorem}{\ref{1.T3}}[see \cite{KarSaf1}]
If $\{\varphi_r\}$ is an arbitrary approximate identity and the function $\lambda(r)$ satisfies the condition $\PI(\lambda, \varphi)=\infty$,
then there exist a function $f\in L^1(\ZT)$  such that
\md0
\limsup_{\stackrel{r\to 1}{y\in\lambda(r,x)}}\Phi_r\left(y,f\right)=\infty
\emd
for all $x\in \ZT$.
\end{customtheorem}

Thus, the condition $\PI(\lambda, \varphi)<\infty$ determines the exact rate of $\lambda(r)$ function, ensuring such convergence. It is interesting, that this rate depends only on the values $\|\varphi_r\|_\infty$. Notice that, if the kernel $\varphi_r$ coincides with the Poisson kernel $P_r$ (which is a regular approximate identity), then $\|P_r\|_\infty \asymp \frac{1}{1-r}$ and the bound $\PI(\lambda,P)<\infty$ coincides with the well-known condition
\md1\label{0.non-tangential-condition}
\limsup_{r\to1}\frac{\lambda(r)}{1-r} < \infty,
\emd
guaranteeing non-tangential convergence in the unit disk. So, \trm{1.T1} implies and generalizes Fatou\a s theorem. Furthermore, if we take the fractional Poisson kernel $P^{(1/2)}_r$ (which is regular as well), then
\md0
\|P^{(1/2)}_r\|_\infty = \frac{1}{c(r)}\|P^{1/2}_r\|_\infty \asymp \left((1-r)\log\frac{1}{1-r}\right)^{-1}
\emd
and from \trm{1.T1} we deduce \e{0.frac-poisson} when $p=1$ with an additional information about the points where the convergence occurs.

Additionally, some weak type inequalities are established for the associated maximal operator $\Phi_{\lambda}^*$, which is defined as


\md1
\Phi_{\lambda}^*(x,f) \deff \sup_{\substack{|x-y|<\lambda(r)\\0<r<1}}|\Phi_r(y,f)|
= \sup_{\substack{|x-y|<\lambda(r)\\0<r<1}}\left|\int_\ZT \varphi_r(y-t)f(t)\,dt \right|.
\emd
\begin{customtheorem}{\ref{1.weakpp}}
Let $\{\varphi_r\}$ be an arbitrary approximate identity and for some $1\le p < \infty$ the function $\lambda(r)$ satisfies
\md0
\TPI_p(\lambda, \varphi) \deff \sup_{0<r<1} \lambda(r)\|\varphi_r\|_{\infty}\FI(r)^{p-1} < \infty,
\emd
where
\md0
\FI(r) \deff \sup_{x\in\ZT}|x \varphi^*_r(x)|.
\emd
Then for any $f\in L^p(\ZT)$
\md0
\Phi_{\lambda}^*(x,f) \le C \left(M|f|^p(x)\right)^{1/p},\quad x\in\ZT,
\emd
where the constant $C$ does not depend on function $f$. In particular, the operator $\Phi_{\lambda}^*$ is of weak type $(p,p)$, i.e.
\md0
\left|\left\{x\in\ZT \colon \Phi_{\lambda}^*(x,f) > t\right\}\right| \le \frac{\tilde{C}}{t^p}\|f\|_p^p
\emd
holds for any $t>0$, where constant $\tilde{C}$ does not depend on function $f$ and $t$.
\end{customtheorem}


Using the standard methods, it can be shown that these weak type inequalities imply almost everywhere $\lambda(r)-$convergence with the condition
\md0
\PI_p(\lambda,\varphi) \deff \limsup_{r\to 1}\lambda(r) \|\varphi_r\|_\infty \FI^{p-1}(r)<\infty.
\emd

As we will see in \lem{1.phi-r-bound}, the function $\FI(r)$ satisfies
\md1\label{0.phi-r-bound-ineqs}
\frac{c}{\log\|\varphi_r\|_\infty} \le \FI(r) \le C_\varphi, \quad r_0<r<1,
\emd
where $c$ is a positive absolute constant. Note that both bounds in \e{0.phi-r-bound-ineqs} are accessible. For instance, if we take the Poisson kernel $P_r(t)$ then it can be checked that $P_*(r)\asymp1$. On the other hand, if we take the square root Poisson kernel $P^{(1/2)}_r(t)$, then one can show that
\md1\label{0.square-root-poisson}
P^{(1/2)}_*(r) \asymp \left(\log\frac{1}{1-r}\right)^{-1} \asymp \frac{1}{\log\|P^{(1/2)}_r\|_\infty}.
\emd

From the first inequality of \e{0.phi-r-bound-ineqs} it follows that for any $1\le p<\infty$ the condition $\PI_p(\lambda,\varphi)<\infty$ on $\lambda(r)$ cannot be weaker than
\md0
\limsup_{r\to 1}\lambda(r) \|\varphi_r\|_\infty \left(\frac{1}{\log\|\varphi_r\|_\infty}\right)^{p-1}<\infty.
\emd
The second inequality of \e{0.phi-r-bound-ineqs} ensures that the multiplier $\FI(r)$ in condition $\PI_p(\lambda,\varphi)<\infty$ can only weaken that condition (in other words can only enlarge the associated region of convergence in the unit disk) if we increase $p$, i.e. condition $\PI_{p_1}<\infty$ imples $\PI_{p_2}<\infty$, whenever $1\le p_1 \le p_2 < \infty$.

Taking into account \e{0.square-root-poisson}, note that these results imply \e{0.frac-poisson} when $1<p<\infty$ as well as \otrm{OldRonning}.
Moreover, combining \lem{1.Phi*ineq1} and \lem{1.rough-bound}, we get that \trm{1.weakpp} holds if we replace the condition $\TPI_p(\lambda,\varphi)<\infty$ by
\md1\label{0.sup-rough-bound}
\sup_{0<r<1}\lambda(r)\|\varphi_r\|_q^p < \infty,
\emd
where $q=p/(p-1)$ is the conjugate index of $p$. Thus, we obtain \otrm{OldCarlsson} for general approximate identities, not necessarily non-negative.

In \sect{fatou-Linfty} an analogous necessary and sufficient condition will be established also for almost everywhere $\lambda(r)-$convergence of $\Phi_r(x,f),\,f\in L^\infty(\ZT)$, and this condition looks like
\md0
\PI_\infty(\lambda,\varphi) \deff \limsup_{\delta \to 0}\limsup_{r\to 1} \int_{-\delta \lambda(r)}^{\delta \lambda(r)}\varphi_r(t)dt = 0,
\emd
which contains more information about $\{\varphi_r\}$ than $\PI(\lambda,\varphi)$ does.

\begin{customtheorem}{\ref{1.T5}}[see \cite{KarSaf1}]
If $\{\varphi_r\}$ is a regular approximate identity consisting of even functions and the function $\lambda(r)$ satisfies $\PI_\infty(\lambda,\varphi)=0$, then for any $f\in L^\infty( \ZT)$ the relation
\md0
\lim_{\stackrel{r\to 1}{y\in\lambda(r,x)}}\Phi_r\left(y,f\right)=f(x)
\emd
holds at any Lebesgue point $x\in\ZT$.
\end{customtheorem}
\begin{customtheorem}{\ref{1.T6}}[see \cite{KarSaf1}]
If $\{\varphi_r\}$ is a regular approximate identity consisting of even functions and the function $\lambda(r)$ satisfies $\PI_\infty(\lambda,\varphi)>0$, then there exists a set $E\subset \ZT$, such that $\Phi_r\left(x,\ZI_E\right)$ is $\lambda(r)-$divergent at any $x\in\ZT$.
\end{customtheorem}

One can easily check that in the case of Poisson kernel $P_r(t)$, for a given function $\lambda(r)$ with \e{0.lambdar-def}, the value of $\PI_\infty(\lambda,P)$ can be either $0$ or $1$. Besides, the condition $\PI_\infty(\lambda,P)=0$ is equivalent to \e{0.non-tangential-condition}, and $\PI_\infty(\lambda,P)=1$ coincides with
\md0
\limsup_{r\to1}\frac{\lambda(r)}{1-r} = \infty.
\emd
Now suppose that $\lambda(r)$ satisfies the condition \e{0.theta-bound-P0.5} with $p=\infty$. Simple calculations show that for such $\lambda(r)$ and for the square root Poisson kernel $P^{(1/2)}_r(t)$ we have $\PI_\infty(\lambda,P^{(1/2)})=0$. Hence \trm{1.T5} implies \e{0.frac-poisson} when $p=\infty$ with an additional information about the points where the convergence occurs. Taking $\lambda(r)=(1-r)^\alpha$ with a fixed $0<\alpha<1$ we will get $\PI_\infty(\lambda,P^{(1/2)})=1-\alpha>0$, and applying \trm{1.T6} we conclude the optimality of the bound \e{0.theta-bound-P0.5} in the case $p=\infty$ too.

In the definition of $\lambda(r)-$convergence the range of the parameter $r$ is $(0,1)$ with the limit point $1$, that is, we consider the convergence or divergence properties when $r\to1$. We do this way in order to compare our results with the boundary properties of analytic and harmonic functions in the unit disc. Certainly it is not essential in the theorems. We could take any set $Q\subset\ZR$ with limit point $r_0$ which is either a finite number or $\infty$. We may define an approximate identity on the real line to be a family of functions $\varphi_r\in L^\infty(\ZR)\cap L^1(\ZR)$, $r>0$, which satisfies the same conditions $\mathit{\Phi1-\Phi3}$ as approximate identity on $\ZT$ does. We just need to make a little change in the condition $\mathit{\Phi2}$, that is to add $\left\|\varphi_r^*\cdot \ZI_{\{|t|\ge \delta\}}\right\|_1\to 0$ as $r\to 0$ for any $\delta>0$. In this case usually convergence is considered while $r\to 0$. Analogously, all the results \trm{1.T1}$-$\trm{1.T6} can be formulated and proved for the integrals
\md1\label{0.4-1}
\Phi_r(x,d\mu) = \int_\ZR \varphi_r(x-t)\,d\mu(t), \quad \mu\in\BV(\ZR), \quad r>0,
\emd
and they can be done just repeating the proofs with miserable changes.

Any function $\Phi\in L^\infty(\ZR)\cap L^1(\ZR)$ with $\|\Phi\|_1=1$ and $\Phi^*\in L^1(\ZR)$ defines an approximate identity by
\md0
\varphi_r(x)=\frac{1}{r}\Phi\left(\frac{x}{r}\right) \quad\text{as}\quad r\to 0. 
\emd
Operators corresponding to such kernels in higher dimensional case were investigated by Stain (\cite{Ste}, p. 57). Note for such kernels we have
\md0
\|\varphi_r\|_\infty=\frac{1}{r}\|\Phi\|_\infty, \quad \FI(r)=\sup_{x\in\ZR}\left|x\Phi^*(x)\right| \le \|\Phi^*\|_1
\emd
and therefore, for $1\le p<\infty$, the condition $\PI_p(\lambda,\varphi)<\infty$ takes the form $\lambda(r)\le c\cdot r$. The case $p=\infty$ can be done in the same way as we did it for the Poisson kernel. The value $\PI_\infty(\lambda,\Phi)$ can be either $0$ or $1$, the condition $\PI_\infty(\lambda,\Phi)=0$ is equivalent to $\lambda(r)\le c\cdot r$ and the condition $\PI_\infty(\lambda,\Phi)=1$ is equivalent to $\limsup_{r\to0}\lambda(r)/r=\infty$. The bound $\lambda(r)\le c\cdot r$ characterizes the non-tangential convergence in the upper half plane and  it turns out to be a necessary and sufficient condition for almost everywhere $\lambda(r)-$convergence of the integrals \e{0.4-1}.

In addition, we would like to bring one consequence of our results, that we consider interesting.
\begin{customcorollary}{1.1}
If $\sigma_n(x,f)$ are the Fejer means of Fourier series of a function $f\in L^1(\ZT)$ and $\theta_n=O(1/n)$, then $\sigma_n(x+\theta_n,f)\to f(x)$ at any Lebesgue point $x\in \ZT$.
\end{customcorollary}


Littlewood \cite{Lit} made an important complement to the theorem of Fatou, proving essentiality of non-tangential approach in that theorem. The following formulation of Littlewood\a s theorem fits to the further aim of the thesis.

\begin{OldTheorem}[Littlewood, 1927]\label{OldLittlewood}
If a continuous function $\lambda:[0,1]\to \ZR$ satisfies the conditions
\md1\label{0.0-1}
\lambda(1)=0,\quad  \lim_{r\to 1}\frac{\lambda(r)}{1-r} = \infty,
\emd
then there exists a bounded analytic function $f(z),\,z\in D$, such that the boundary limit
\md0
\lim_{r\to 1}f\left(re^{i\left(x+\lambda(r)\right)}\right)
\emd
does not exist almost everywhere on $\ZT$.
\end{OldTheorem}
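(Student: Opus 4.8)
The plan is to carry out Littlewood's original strategy: build $f$ explicitly as a Blaschke product whose zeros accumulate at the unit circle in a pattern calibrated to $\lambda$, so that the tangential curves $\gamma_x(r):=re^{i(x+\lambda(r))}$ repeatedly swing into regions where $|f|$ is small, for almost every $x$. Two preliminary remarks drive the set-up. First, any Blaschke product $B$ is a bounded analytic function, $|B|\le1$, and being inner it has non-tangential boundary values of modulus $1$ at a.e.\ point; hence it suffices to produce $B$ with
\md0
\liminf_{r\to1}|B(\gamma_x(r))| < \limsup_{r\to1}|B(\gamma_x(r))|
\emd
for a.e.\ $x\in\ZT$, which forbids the limit along $\gamma_x$ from existing. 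Second, using that $\lambda$ is continuous with $\lambda(1)=0$ and, crucially, $\lambda(r)/(1-r)\to\infty$, one may select an increasing sequence $r_n\uparrow1$ with $t_n:=1-r_n\to0$ so that the angular offset $\lambda(r_n)$ dwarfs $t_n$; this gap between $\lambda(r)$ and $1-r$ is exactly what is unavailable in the non-tangential regime and is what the construction feeds on.

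The heart of the matter is the choice of zeros. At level $n$ I would place a finite family $a_{n,j}=\rho_{n,j}e^{i\psi_{n,j}}$ with $1-\rho_{n,j}$ comparable to a scale $\delta_n\to0$ and arguments $\psi_{n,j}$ forming a net on $\ZT$, tuned (in spacing, in number of zeros, and possibly by a level-dependent rotation) so that at the probing radius $r_n$ the point $\gamma_x(r_n)$ is forced pseudohyperbolically close to the level-$n$ zeros for every $x$ in a set $G_n\subset\ZT$ whose measure can be bounded from below — equivalently $|B(\gamma_x(r_n))|$ is uniformly small on $G_n$. Here $\lambda(r_n)\gg t_n$ is what lets the net be coarse enough to respect the Blaschke condition while still being "resolved" by the tangential displacement. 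Making the scales $\delta_n$ lacunary then guarantees that for $r$ away from the $r_n$'s the point $\gamma_x(r)$ is pseudohyperbolically far from every zero, so $|B(\gamma_x(r))|$ returns close to $1$. Consequently, for any $x$ lying in infinitely many of the $G_n$ the modulus $|B(\gamma_x(r))|$ oscillates between values near $0$ and near $1$, giving the divergence. The remaining arithmetic is to check $\sum_{n,j}(1-\rho_{n,j})<\infty$ while preserving all the estimates.

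The main obstacle — and the genuinely delicate point of the whole argument — is to ensure that $\{x:\, x\in G_n \text{ for infinitely many }n\}$ has full Lebesgue measure, which is in direct tension with Blaschke summability: the level-$n$ cost $\sum_j(1-\rho_{n,j})$ must be summable in $n$, a crude estimate ties it to $|G_n|$, and then a naive first Borel–Cantelli argument would conclude the opposite. Overcoming this requires exploiting the level-to-level rotations so that the events $\{x\in G_n\}$ become quasi-independent and a divergent-sum Borel–Cantelli applies, and/or arranging that the smallness of $|B|$ along $\gamma_x$ is produced by the accumulated effect of zeros from several comparable scales at once rather than by a single nearby zero; the trivial observation that replacing $f$ by $f(e^{i\alpha}\,\cdot\,)$ merely rotates the whole picture is a useful bookkeeping aid. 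I would keep in reserve the alternative of taking $f$ to be a singular inner function $\exp\bigl(-\int_\ZT\frac{e^{it}+z}{e^{it}-z}\,d\sigma(t)\bigr)$ for a carefully built continuous singular measure $\sigma$ of full support, so that $|f(\gamma_x(r))|=\exp\bigl(-\int P_r(x+\lambda(r)-t)\,d\sigma(t)\bigr)$ and the oscillation is traced to $\sigma$ alternately concentrating and thinning at the scales swept out by $\lambda$ near $x$; the estimates are of the same flavour but without the discrete bookkeeping. Finally, note that this statement genuinely lies outside the scope of the divergence theorems \trm{1.T3} and \trm{1.T6}: those produce an $L^1$ (resp.\ $L^\infty$) datum whose Poisson integral — a bounded \emph{harmonic}, not analytic, function — is $\lambda(r)$-divergent, and manufacturing a bounded \emph{analytic} example is precisely the extra difficulty Littlewood's theorem addresses.
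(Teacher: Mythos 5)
Your overall strategy (a Blaschke product with zeros arranged on circular nets at lacunary scales calibrated to $\lambda$) is the right family of constructions, but the proposal has a genuine gap at exactly the point you flag: you never actually resolve the tension between making $\{x\colon x\in G_n\text{ infinitely often}\}$ of full measure and keeping $\sum_{n,j}(1-\rho_{n,j})<\infty$. With the mechanism you primarily describe — $|B(\gamma_x(r_n))|$ made small by a \emph{single} pseudohyperbolically close zero — the set $G_n$ has measure controlled by the level-$n$ Blaschke sum, so summability really does push you toward the convergent Borel--Cantelli conclusion, as you yourself observe. The proposed escapes (``level-to-level rotations making the events quasi-independent so a divergent-sum Borel--Cantelli applies'') are stated as hopes, with no construction and no estimate showing quasi-independence is achievable under the Blaschke constraint; as written, the argument cannot be completed. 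Your fallback via a singular inner function is likewise only a sketch: the whole content of the theorem is hidden in the unproved claim that $\sigma$ can be built so that the Poisson integral of $\sigma$ oscillates along $\gamma_x$ for a.e.\ $x$.

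For comparison, the route taken in \chap{chapter-littlewood} (following Lohwater--Piranian) avoids the measure/summability tension entirely, and it is your parenthetical ``accumulated effect of many zeros'' idea pushed to its logical end. The level-$k$ building block is the finite Blaschke product $b(n,\delta,z)=(z^{n}-\rho^{n})/(\rho^{n}z^{n}-1)$ with $\rho=e^{-\sqrt{\delta}/n}$, i.e.\ $n$ equally spaced zeros whose total Blaschke cost is only about $\sqrt{\delta}$; by \lem{2.L1} its boundary values are within $C\sqrt{\delta}$ of $-1$ on the union $U(n,\delta)$ of $n$ tiny arcs and within $C\sqrt[4]{\delta}$ of $+1$ off $U(n,\sqrt[4]{\delta})$, so the oscillation is a collective effect of all $n$ zeros rather than proximity to one of them, and no Borel--Cantelli argument is needed. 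The hypothesis $\lambda(r)/(1-r)\to\infty$ enters twice, deterministically: it makes $\PI^*(\lambda,P)=1$, so at the probing radii the Poisson kernel puts almost all its mass inside an arc of $U(n_k,6\delta_k)$, and, together with the continuity of $\lambda$ and the intermediate value theorem, it lets one find for \emph{every} $x$ radii $r',r''\in[u_k,v_k]$ with $\lambda(r')=2\pi j_0/n_k-x$ and $\lambda(r'')=2\pi j_0/n_k+\pi/n_k-x$, i.e.\ the curve lands first on an arc center and then midway between arcs, forcing $\Phi_{r'}(x+\lambda(r'),B)\approx-B_{k-1}(2\pi j_0/n_k)$ and $\Phi_{r''}(x+\lambda(r''),B)\approx+B_{k-1}(2\pi j_0/n_k)$. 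This yields divergence at every point (\trm{2.T2}), which is stronger than the a.e.\ statement you aim for. The thesis also records the alternative you gesture at in your first remark: \trm{2.T1} gives a set $E$ whose Poisson integral $u$ oscillates between $0$ and $1$ along the curve everywhere (Corollary 2.1), and then $\exp\bigl(-(u+iv)\bigr)$, $v$ the harmonic conjugate, is a bounded analytic function with $|f|=e^{-u}$, which already proves \otrm{OldLittlewood} without any Blaschke bookkeeping.
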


There are various generalization of these theorems in different aspects. A simple proof of this theorem was given by Zygmund \cite{Zyg}. In \cite{LoPi} Lohwater and Piranian proved, that in Littlewood\a s theorem almost everywhere divergence can be replaced to everywhere and the example function can be a Blaschke product. That is

\begin{OldTheorem}[Lohwater and Piranian, 1957]\label{2.OT1}
If $\lambda(r)$ is a continuous function  with \e{0.0-1}, then there exists a Blaschke product $B(z)$ such that the limit
\md0
\lim_{r\to 1}B\left(re^{i\left(x+\lambda(r)\right)}\right)
\emd
does not exist for any $x\in \ZT$.
\end{OldTheorem}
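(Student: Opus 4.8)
The plan is to build $B$ as a Blaschke product whose zeros are organised into ``levels'' accumulating on $\ZT$, so that along every curve $\Gamma_x(r)=re^{i(x+\lambda(r))}$ the modulus $|B\circ\Gamma_x|$ oscillates between values tending to $0$ and values tending to $1$; then $\lim_{r\to1}B(\Gamma_x(r))$ cannot exist. Write $\rho(z,w)=\left|\frac{z-w}{1-\bar wz}\right|$ for the pseudohyperbolic distance, so $|B(z)|=\prod_n\rho(z,a_n)$, and use the two elementary facts that $|B(z)|\le\rho(z,a_k)$ for every $k$ (one nearby zero forces $|B|$ small) and $1-|B(z)|^2\le\sum_n\bigl(1-\rho(z,a_n)^2\bigr)=\sum_n\frac{(1-|z|^2)(1-|a_n|^2)}{|1-\bar a_nz|^2}$ (being far, in a summable sense, from all zeros forces $|B|$ close to $1$). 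The zero set will satisfy $\sum_n(1-|a_n|)<\infty$, so $B$ is a genuine convergent Blaschke product.

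\textbf{Choice of scales and a pigeonhole step.} Using $\lim_{r\to1}\lambda(r)/(1-r)=\infty$ together with $\lambda(1)=0$ and continuity, fix a rapidly increasing sequence $t_1<t_2<\cdots\to1$ with $\lambda(t_k)\to0$, with the oscillation of $\lambda$ across $[t_k,t_{k+1}]$ at least $\tfrac12|\lambda(t_k)|$, and with $|\lambda(t_k)|\ge k^{8}(1-t_k)$; by additionally letting the $1-t_k$ shrink fast one separates consecutive levels radially. The key observation is then purely combinatorial: across $[t_k,t_{k+1}]$ the total oscillation of $\lambda$ is $\ge\tfrac12 k^{8}(1-t_k)$, whereas the widths of the Whitney layers inside $[t_k,t_{k+1}]$ sum to less than $1-t_k$; since oscillation is subadditive over a partition, some Whitney layer $A_k=\{\rho_k\le|z|\le\sigma_k\}\subset[t_k,t_{k+1}]$, $1-\sigma_k=\tfrac12(1-\rho_k)$, must carry oscillation $\omega_k:=\mathrm{osc}_{A_k}\lambda\gtrsim k^{2}(1-\rho_k)$. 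Note $\omega_k\gg1-\rho_k$, and by the intermediate value theorem, for every $x$ the set $\{x+\lambda(r):\rho_k\le r\le\sigma_k\}$ is an arc of length $\ge\omega_k$ whose preimage meets every radius of $A_k$.

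\textbf{The zero set and the oscillation along every curve.} At level $k$ place $N_k\asymp\omega_k^{-1}$ zeros (choosing $N_k$ so the angular gap $g_k:=2\pi/N_k$ is $\le\tfrac13\omega_k$), equally spaced in argument with a staggered offset, and distributed over $\lfloor1/\varepsilon_k\rfloor$ concentric circles inside $A_k$ spaced pseudohyperbolically $\asymp\varepsilon_k$ apart, where $\varepsilon_k\to0$ slowly (say $\varepsilon_k=1/\log k$). The level-$k$ Blaschke mass is $\asymp\frac{N_k}{\varepsilon_k}(1-\rho_k)\asymp\frac{1-\rho_k}{\varepsilon_k\omega_k}\lesssim\frac1{\varepsilon_k k^{2}}$, which is summable, so $B$ converges. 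Now fix $x\in\ZT$. Since the arc $\{x+\lambda(r):r\in A_k\}$ has length $\ge\omega_k\ge3g_k$ it contains a level-$k$ zero-argument, and at the radius in $A_k$ where $\Gamma_x$ realises that argument it lies within pseudohyperbolic distance $\lesssim\varepsilon_k$ of an actual level-$k$ zero; hence $|B(\Gamma_x(r))|\lesssim\varepsilon_k$ for some $r\in A_k$, so $\liminf_{r\to1}|B(\Gamma_x(r))|=0$. On the other hand, the same arc of length $\ge3g_k$ contains a point at angular distance $\ge\tfrac13g_k$ from every level-$k$ zero-argument; at the corresponding point $z_k^\sharp$ of $\Gamma_x$ one estimates $\sum_n\bigl(1-\rho(z_k^\sharp,a_n)^2\bigr)$ using angular decay within level $k$ ($|1-\bar a z_k^\sharp|\gtrsim g_k$, summed against $N_k$ equally spaced zeros, gives $\lesssim(1-\rho_k)^2/g_k^2\asymp k^{-4}$) and radial decay from the other (radially separated, offset-staggered) levels, obtaining that this sum tends to $0$ as $k\to\infty$; hence $|B(\Gamma_x(r_k^\sharp))|\to1$ along a sequence $r_k^\sharp\to1$. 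Thus $\liminf_{r\to1}|B(\Gamma_x(r))|=0<1=\limsup_{r\to1}|B(\Gamma_x(r))|$, so $\lim_{r\to1}B(re^{i(x+\lambda(r))})$ does not exist, for every $x\in\ZT$.

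\textbf{Expected main obstacle.} The routine part is the pseudohyperbolic geometry; the delicate part is making the ``$|B|\to1$ along a subsequence'' estimate hold \emph{uniformly in $x$} (this is what upgrades Littlewood's a.e.\ statement to Lohwater--Piranian's ``every $x$''). Because $\lambda$ is only continuous, a curve $\Gamma_x$ may re-enter a given radial layer several times, and zeros of a neighbouring level can a priori sit at the same radius as $z_k^\sharp$; one must therefore genuinely separate the levels radially (fast-shrinking $1-t_k$, or explicit buffer annuli) and stagger the angular offsets so that no single $x$ has its curve permanently pinned next to zeros, while simultaneously keeping $[t_k,t_{k+1}]$ long enough for the pigeonhole step to force a high-oscillation layer and keeping the total Blaschke mass summable. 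Balancing these three requirements is the crux of the argument.
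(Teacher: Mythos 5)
Your strategy is a genuinely different route from the one this thesis takes: the paper never proves Lohwater--Piranian directly, but derives its generalization (Theorem \ref{2.T2}) by working with boundary functions and the convolutions $\Phi_r$, using the explicit finite Blaschke products $b(n,\delta,z)=(z^n-\rho^n)/(\rho^n z^n-1)$ of \e{2.p1} together with Lemma \ref{2.L1} ($b\approx-1$ on $U(n,\delta)$, $b\approx 1$ off $U(n,\sqrt[4]{\delta})$), so that at every $x$ two radii $r',r''$ produce cluster values $\pm B_{k-1}$ of modulus $1$. You instead build $B$ inside the disc and track $|B|$ along the curves by pseudohyperbolic estimates, which is closer to the original Lohwater--Piranian argument. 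The elementary inequalities you invoke are correct, the pigeonhole producing a Whitney layer $A_k$ with $\omega_k\gtrsim k^2(1-\rho_k)$ is sound, and the Blaschke condition is verified; in outline the plan can be completed. However, as written there is a genuine gap, and it sits exactly where you place the ``main obstacle'': the lower bound for $|B|$ along every curve is not established, and the two devices you offer do not repair it in the form stated.

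Concretely: (i) fixing all $t_k$ in advance and pigeonholing inside $[t_k,t_{k+1}]$ does \emph{not} give radial separation of consecutive levels --- the layer $A_k$ may hug $t_{k+1}$ from below while $A_{k+1}$ hugs it from above, so a level-$(k+1)$ zero can lie at pseudohyperbolic distance $o(1)$ from your chosen point $z_k^{\sharp}$ (its column need not avoid the argument of $z_k^{\sharp}$, since $x$ is arbitrary and the offsets are fixed), and then the bound $1-|B(z_k^{\sharp})|^2\le\sum_n(1-\rho(z_k^{\sharp},a_n)^2)$ gives nothing, since a single term is of size comparable to $1$ (indeed $|B(z_k^{\sharp})|$ may be small). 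The cure is to order the construction greedily: only after $A_k$ is known choose the next window $[a_{k+1},b_{k+1}]$ with $1-a_{k+1}$ far smaller than $(1-\rho_k)^2$ times the required $\varepsilon,2^{-k}$ factors and with $\lambda(b_{k+1})\le\frac12\lambda(a_{k+1})$, and pigeonhole there; with that ordering the cross-level sums over $j<k$ and $j>k$ are indeed $o(1)$, and note that for the theorem you only need $\limsup|B|>0$, not $\limsup|B|=1$, which gives extra slack. (ii) Your ``staggered offset'' is internally inconsistent with the $\liminf=0$ step: if the $N_k$ zero arguments differ from circle to circle within level $k$, then at the radius $r^*$ where $x+\lambda(r^*)$ equals a zero argument, the radially nearest circle (within $\varepsilon_k$) need not carry a zero at that argument, and the nearest zero on it is at angular distance up to $g_k/2\gg 1-\rho_k$, i.e.\ pseudohyperbolically far; you must keep the zeros in aligned columns within each level (stagger only across levels, which your $\limsup$ estimate in fact never uses). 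With these repairs, and after restoring the harmless factor $1/\varepsilon_k$ of rows in the level-$k$ sum (giving $(\log k)k^{-4}$ rather than $k^{-4}$), the argument closes; but since the uniform-in-$x$ lower bound is precisely the content of the theorem beyond Littlewood's a.e.\ statement, deferring it means the proposal does not yet constitute a proof.
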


In \cite{Aik1} Aikawa  obtained a similar everywhere divergence theorem for bounded harmonic functions on the unit disk, giving a positive answer to a problem raised by Barth [\cite{Bar}, p. 551].
\begin{OldTheorem}[Aikawa, 1990]\label{OldAikawa}
If $\lambda(r)$ is a continuous function with \e{0.0-1}, then there exists a bounded harmonic function $u(z)$ on the unit disc, such that the limit
\md0
\lim_{r\to 1} u\left(re^{i\left(x+\lambda(r)\right)}\right)
\emd
does not exist for any $x\in \ZT$.
\end{OldTheorem}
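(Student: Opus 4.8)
The plan is to take for $u$ the Poisson integral of the indicator function $\ZI_E$ of a suitable set $E\subset\ZT$; then $u$ is harmonic and $0\le u\le1$ automatically, and it suffices to construct $E$ together with two absolute constants $0<\beta<\alpha<1$ such that, for every $x\in\ZT$,
\[
\limsup_{r\to1}u\!\left(re^{i(x+\lambda(r))}\right)\ge\alpha
\qquad\text{and}\qquad
\liminf_{r\to1}u\!\left(re^{i(x+\lambda(r))}\right)\le\beta ,
\]
since this already rules out a boundary limit at every $x$.

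First I would record two elementary facts about $u=$ Poisson integral of $\ZI_E$ at a point $re^{i\psi}$ with $h=1-r$ small. If the arc $(\psi-Ch,\psi+Ch)$ is disjoint from $E$, then by the decay of the Poisson kernel $u(re^{i\psi})\le 2/C$; this is immediate and involves no interaction with $E$ at other scales. In the opposite direction, if $E$ has density at least $c_0$ in $(\psi-h,\psi+h)$ \emph{and} the part of $(\psi-h,\psi+h)$ missing from $E$ consists only of ``holes'' much shorter than $h$, then $u(re^{i\psi})\ge c_0-\varepsilon$. So I want $E$ near such a $\psi$ to be a full arc from which only a sparse, much-finer family of short intervals has been deleted.

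Accordingly $E$ is built as a nested, Cantor-type set organized in generations: at generation $n$ one fixes a length $h_n$, and inside every generation-$(n{-}1)$ ``bar'' one deletes an $h_n$-periodic family of ``gaps'' of length $\asymp\delta_n h_n$, where $\sum_n\delta_n<\infty$, so that $E$ keeps positive measure and local density bounded below by a fixed $c_0>0$. Independently, from the hypotheses on $\lambda$ I would extract a sweeping lemma: since $\lambda$ is continuous with $\lambda(1)=0$ and $\lambda(r)/(1-r)\to\infty$, one may choose windows $W_n=[a_n,b_n]$ with $a_n\uparrow1$ and, using the intermediate value theorem, $\lambda(b_n)\le\tfrac12\lambda(a_n)$, so that as $r$ runs over $W_n$ the angular coordinate $x+\lambda(r)$ sweeps an arc of length $\ge\tfrac12\lambda(a_n)$, which — because $\lambda(a_n)\gg 1-a_n$ — is enormous compared with every height $1-r$ occurring in $W_n$ and, for the right choice of $h_n$ (roughly a geometric mean of $1-a_n$ and $\lambda(a_n)$), also enormous compared with $h_n$. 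Then, for every $x$, during the window $W_n$ the moving point $re^{i(x+\lambda(r))}$ is, at some parameter, deep inside a generation-$n$ gap, so $u\le\beta$, and, at another parameter, deep inside a generation-$n$ bar, so $u\ge\alpha$; letting $n\to\infty$ yields the two displayed relations.

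I expect the main obstacle to be the simultaneous calibration of all these parameters so that the lower bound at the bars survives: one needs (i) the generations lacunary, $h_n\ll h_{n-1}$, so that coarser gaps lie far on the scale $h$ from $\psi$ and contribute negligibly; (ii) the windows placed ``above'' all finer generations, i.e. $1-b_n$ much larger than every generation-$m$ scale with $m>n$, so that the finer deletions genuinely average out rather than being resolved by the Poisson kernel; (iii) $\delta_n h_n$ much larger than $1-a_n$, so that a gap is wide enough for the easy upper bound; and (iv) $\sum_n\delta_n<\infty$ together with $c_0-\varepsilon>2/C$, which fixes $\alpha$ and $\beta$. Each of these is compatible with the others precisely because $\lambda(r)/(1-r)\to\infty$ lets us push $a_n$ as close to $1$ as needed at each step; once the bookkeeping is arranged, verifying $u\ge\alpha$ and $u\le\beta$ along the curves is a routine Poisson-kernel estimate. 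I would finally note that this is closely related to \trm{1.T6} and to the classical Lohwater--Piranian and Littlewood constructions, the extra point here being that divergence is forced along the single prescribed curve $r\mapsto re^{i(x+\lambda(r))}$ rather than merely over the region $\Omega^x_\lambda$.
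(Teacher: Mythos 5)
Your overall strategy is the right one, and it is essentially the mechanism of the paper: take $u$ to be the Poisson integral of $\ZI_E$, use continuity of $\lambda$ together with $\lambda(1)=0$ and $\lambda(r)/(1-r)\to\infty$ to produce windows $[a_n,b_n]$ over which the angular coordinate $x+\lambda(r)$ sweeps an arc of length $\asymp\lambda(a_n)$ that is huge compared with the kernel scale $1-r$, and arrange $E$ so that at scale $\asymp\lambda(a_n)$ the sweep must hit both a region where $E$ is ``thick'' and a region where it is absent (this is how the paper proves \trm{2.T1}, whose Poisson--kernel case, where $\PI^*=1$ under \e{0.0-1}, gives Corollary 2.1 and hence the statement). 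The genuine gap is in your choice of $E$: a nested, monotone Cantor-type set, in which generation-$n$ gaps are deleted only inside generation-$(n-1)$ bars and nothing is ever put back, cannot give divergence at \emph{every} $x$. Indeed, if $x$ lies in the interior of some deleted gap $J$, at distance $d>0$ from $\partial J$, then since $\lambda(r)\to0$ the angular coordinate $x+\lambda(r)$ eventually stays in $J$ at distance $\ge d/2$ from its endpoints while $1-r\to0$, so $u\bigl(re^{i(x+\lambda(r))}\bigr)\to0$ and the limit exists at $x$. Equivalently, your key sweeping claim (``at another parameter, deep inside a generation-$n$ bar'') fails for all large $n$ at such $x$: the swept arc $x+\bigl[\tfrac12\lambda(a_n),\lambda(a_n)\bigr]$ shrinks toward $x$ and eventually lies inside the coarse gap, where no bars of any later generation exist. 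Since the union of the gaps is a nonempty open set of positive measure, your construction fails not only the ``for any $x\in\ZT$'' conclusion but even an almost-everywhere version.

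The repair is exactly the non-monotone construction used in the paper (and by Lohwater--Piranian and Aikawa): build $E=\lim E_k$ where $E_k$ is obtained from $E_{k-1}$ by \emph{alternately adding and deleting} a globally periodic union $U_k$ of short arcs spread over all of $\ZT$, with period $\asymp\lambda(u_k)$ and arc width $\asymp\delta_k\lambda(u_k)\gg 1-r$ for $r$ in the $k$-th window. Because the teeth of $U_k$ are periodic over the whole circle with period comparable to the sweepable range $[\lambda(v_k),\lambda(u_k)]$, every $x$ (including points sitting inside previously deleted or previously added material) finds, at odd stages, a tooth contained in $E_k$ and, at even stages, a tooth removed from $E_k$, at an angular distance attained by $\lambda(r)$ for some $r$ in the window; the effect of all later stages is controlled by $\sum_k|U_k|<\infty$ together with the absolute continuity of the Poisson kernel at heights bounded away from $1$. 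Your calibration conditions (i)--(iv) are sensible and reappear in that argument, but without the add/delete alternation on all of $\ZT$ the ``limsup $\ge\alpha$ at every $x$'' half of your plan does not go through.
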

As it is noticed in \cite{Aik1} this theorem implies \otrm{OldLittlewood}. Indeed, if $u(z)$ is an example of harmonic function obtained from \otrm{OldAikawa} and $v(z)$ is its harmonic conjugate, then the holomorphic function $\textrm{exp}(u + iv)$ holds the same divergence property as $u(z)$ does.

It is well known that these theorems can be also formulated in the terms of Poisson integral
\md0
\zP_r(x,f)=\frac{1}{2\pi }\int_\ZT P_r(x-t) f(t)dt,
\emd
since any bounded analytic or harmonic function on the unit disc can be written in this form, where  $f$ is either in $H^\infty$ or $L^\infty$. In addition, the proofs of these theorems are based on some properties of such functions.

Related questions were considered also in higher dimensions. Littlewood type theorems for the higher dimensional Poisson integral established by Aikawa \cite{Aik1,Aik2} and for the Poisson-Szeg\"o integral by Hakim-Sibony \cite{HaSi} and Hirata \cite{Hira}.

Notice, that \trm{1.T6} does not imply \otrm{OldLittlewood} or \otrm{OldAikawa}. It provides everywhere divergence of
\md0
\Phi_r(x+\lambda_x(r),\ZI_E)\quad\text{as}\quad r\to1,
\emd
where each function $\lambda_x:(0,1)\to(0,\infty)$ satisfies the bound $|\lambda_x(r)|\le\lambda(r)$. In \otrm{OldLittlewood} and \otrm{OldAikawa} we have stronger divergence than in \trm{1.T6}, that is, each function $\lambda_x(r)$ coincides with a given function $\lambda(r)$.

In \chap{chapter-littlewood} we generalize Littlewood\a s theorem for the integrals $\Phi_r(x,f)$ with more general kernels than approximate identities. Namely, we consider the same integrals $\Phi_r(x,f)$ with a family of kernels $\{\varphi_r\}$ satisfying
\begin{itemize}
\item[$\Phi1.$] $\int_\ZT \varphi_r(t)\,dt\to 1 \quad\text{as}\quad r\to 1$,
\item[$\Phi4.$] $\varphi_r(x)\ge0, \quad x\in\ZT,\,0<r<1,$
\item[$\Phi5.$] for any numbers $\gamma>0$ and $0<\tau<1$ there exists such $\delta>0$ that
\md0
\int_e \varphi_r(t)\,dt < \gamma, \quad 0<r<\tau
\emd
for any measurable $e\subset\ZT$ with $|e|<\delta$.
\end{itemize}
Notice, that $\mathit{\Phi5}$ is an ordinary absolute continuity condition and it is much more weaker than the condition $\mathit{\Phi2}$. For example, it is satisfied whenever
\md0
\sup_{0<r<\tau}\|\varphi_r\|_\infty < \infty, \quad 0<\tau<1.
\emd
We introduce another quantity
\md0
\PI^*(\lambda,\varphi) \deff \limsup_{\delta \to 0}\liminf_{r\to 1}\int_{-\delta \lambda(r)}^{\delta \lambda (r)}\varphi_r(t)\,dt \le \PI_\infty(\lambda,\varphi)
\emd
and prove the following theorems.

\begin{customtheorem}{\ref{2.T1}}[see \cite{KarSaf2}]
Let $\{\varphi_r\}$ be a family of kernels with $\mathit{\Phi1,\,\Phi4,\,\Phi5}$. If a function $\lambda\in C[0,1]$ satisfies the conditions $\lambda(1)= 0$ and $\PI^*(\lambda,\varphi)>1/2$, then there exists a measurable set $E\subset \ZT$ such that
\md0
\limsup_{r\to 1}\Phi_r\left(x+\lambda(r),\ZI_E\right)- \liminf_{r\to 1}\Phi_r\left(x+\lambda(r),\ZI_E\right)\ge2\PI^*-1.
\emd
\end{customtheorem}
In the case of Poisson kernel under the condition \e{0.0-1} we have $\PI^* = 1 > 1/2$. Therefore \trm{2.T1} implies the following generalization of \otrm{OldLittlewood} and \otrm{OldAikawa}, giving additional information about the divergence character.
\begin{customcorollary}{2.1}
For any function $\lambda\in C[0,1]$ satisfying \e{0.0-1}, there exists a harmonic function $u(z),\,z\in D$ on the unit disc with $0\le u(z)\le 1$, such that
\md0
\limsup_{r\to 1} u\left(re^{i\left(x+\lambda(r)\right)}\right)=1,\quad \liminf_{r\to 1} u\left(re^{i\left(x+\lambda(r)\right)}\right)=0,
\emd
at any point $x\in \ZT$.
\end{customcorollary}

The higher dimensional case of this corollary was considered by Hirata \cite{Hira}. We construct also a Blaschke product with Littlewood type divergence condition as in \trm{2.T1}, which generalizes \otrm{2.OT1}. In this case a stronger condition $\PI^*(\lambda,\varphi)=1$ is required.
\begin{customtheorem}{\ref{2.T2}}[see \cite{KarSaf2}]
Let a family of kernels $\{\varphi_r\}$ satisfies $\mathit{\Phi1,\,\Phi4,\,\Phi5}$ and for $\lambda\in C[0,1]$ we have $\lambda(1)= 0$ and $\PI^*(\lambda,\varphi)=1$. Then there exists a function $B\in L^\infty(\ZT)$, which is the boundary function of a Blaschke product, such that the limit
\md0
\lim_{r\to 1}\Phi_r\left(x+\lambda(r),B\right)
\emd
does not exist for any $x\in\ZT$.
\end{customtheorem}

Note that, as \trm{1.T1}$-$\trm{1.T6}, \trm{2.T1} can also be formulated and proved for the integrals
\md1\label{0.scaling-operators}
\Phi_r(x,f) = \int_\ZR \varphi_r(x-t)f(t)\,dt, \quad f\in L^1(\ZR), \quad 0<r<1,
\emd
where the kernels $\varphi_r\in L^\infty(\ZR)\cap L^1(\ZR)$ satisfy the conditions $\mathit{\Phi1,\,\Phi4,\,\Phi5}$. Furthermore, notice that for any positive function $\Phi\in L^\infty(\ZR)\cap L^1(\ZR)$ with $\|\Phi\|_1=1$ the kernels
\md1\label{0.scaling-kernels}
\varphi_r(x) = \frac{1}{1-r}\Phi\left(\frac{x}{1-r}\right), \quad x\in\ZR, \quad 0<r<1
\emd
satisfy the conditions $\mathit{\Phi4}$ and $\mathit{\Phi5}$. One can check, that for the Poisson kernel and for \e{0.scaling-kernels} the following  conditions are equivalent
\md0
\lim_{r\to1}\frac{\lambda(r)}{1-r}=\infty \iff \PI^*(\lambda,\varphi)=1 \iff \PI^*(\lambda,\varphi)>0.
\emd
Therefore, if the kernels in \e{0.scaling-operators} coincide with \e{0.scaling-kernels} and $\lambda(r)$ satisfies \e{0.0-1}, then \trm{2.T1} formulated for the integrals \e{0.scaling-operators} implies everywhere strong-type divergence for \e{0.scaling-operators}, which covers the one-dimensional case of a theorem obtained by Aikawa in \cite{Aik3}.


Now we proceed to the introduction of the third chapter.

Let $\zR^n$  be the family of half-open (or half-closed) rectangles $\prod\limits_{i=1}^n[a_i,b_i)$ in $\ZR^n$ and $\DR^n$ be the family of dyadic rectangles of the form
\md1\label{0.100}
\prod_{i=1}^n \left[\frac{j_i-1}{2^{m_i}},\frac{j_i}{2^{m_i}}\right),\quad j_i, m_i\in \ZZ, \quad i=1, 2, \ldots, n.
\emd
Let $\ZQ^n\subset \zR^n$ be the family of half-open squares in $\ZR^n$ and $\DQ^n$ be the family of dyadic squares ($m_1=m_2=\dots=m_n$). Obviously $\DR^n\subset \zR^n$ and $\DQ^n\subset \ZQ^n$. For a set $E\subset\ZR^n$ we denote
\md0
\diam(E) = \sup_{x,y\in E} \|x-y\|.
\emd
\begin{customdefinition}{\ref{3.def-diff-bases}}
A family $\ZM$ of bounded, positively measured sets from $\ZR^n$ is said to be a differentiation basis (or simply basis), if for any point $x\in \ZR^n$ there exists a sequence of sets $E_k\in\ZM$ such that $x\in E_k$,  $k=1,2,\ldots$ and $\diam(E_k)\to 0$ as $k\to\infty$.
\end{customdefinition}
Let $\ZM$ be a differentiation basis and $L_{\rm loc}(\ZR^n)$ be the space of locally integrable functions:
\md0
L_{\rm loc}(\ZR^n) \deff \{f \colon f\in L(K) \text{ for any compact } K\subset\ZR^n\}.
\emd
For any function $f\in L_{\rm loc}(\ZR^n)$ we define
\md0
\delta_\ZM(x,f) \deff \limsup\limits_{\diam(E)\to 0,\, x\in E\in\ZM }\left| \frac{1}{|E|}\int_E f(t)dt-f(x)\right|.
\emd
The integral of a function $f\in L_{\rm loc}(\ZR^n)$ is said to be differentiable at a point $x\in \ZR^n$  with respect to the basis  $\ZM$, if $\delta_\ZM(x,f)=0$. The integral of a function is said to be differentiable with respect to the basis $\ZM$, if it is differentiable at almost every point. Consider the following classes of functions
\md4
\ZF(\ZM) \deff \{f\in L_{\rm loc}(\ZR^n):\,\delta_\ZM (x,f)=0 \text { almost everywhere }\},\\
\ZF^+(\ZM) \deff \{f\in L_{\rm loc}(\ZR^n):\,f(x)\ge 0,\, \delta_\ZM (x,f)=0 \text { almost everywhere }\}.
\emd
Note that $\ZF(\ZM)\,\left(\ZF^+(\ZM)\right)$ is the family of (positive) functions having almost everywhere differentiable integrals with respect to the basis $\ZM$.

Let $\Psi:\ZR^+\to\ZR^+$ be a convex function.
Denote by $\Psi(L) (\ZR^n)$ the class of measurable functions $f$ defined on $\ZR^n$ such that
$\Psi(|f|)\in L^1(\ZR^n)$. If $\Phi $ satisfies the
$\Delta_2$-condition $\Psi(2x)\le k\Psi(x)$, then $\Psi(L)$ turns to be an Orlicz space with the norm
\md0
\|f\|_\Psi \deff \inf\left\{c>0:\, \int_{\ZR^n}\Psi\left( \frac{|f|}{c}\right)\le 1\right\}.
\emd
The following classical theorems determine the optimal Orlicz space, which functions have a.e. differentiable integrals with respect to the entire family of rectangles $\zR^n$ is the space
\md0
L(1+\log^+ L)^{n-1}(\ZR^n)\subset L^1(\ZR^n),
\emd
corresponding to the case $\Psi(t)=t(1+\log^+t)^{n-1}$ (\cite{Guz}).

\begin{OldTheorem}[Jessen--Marcinkiewicz--Zygmund, \cite{JMZ}]\label{OldJMZ}
\md0
L(1+\log^+ L)^{n-1}(\ZR^n)\subset \ZF(\zR^n).
\emd
\end{OldTheorem}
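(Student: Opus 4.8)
The plan is to deduce the theorem from a weak-type estimate for the \emph{strong maximal operator}
\md0
M_{\zR^n}f(x)\deff\sup_{x\in R\in\zR^n}\frac{1}{|R|}\int_R|f(t)|\,dt,\qquad x\in\ZR^n ,
\emd
namely that for every $\lambda>0$
\md0
\left|\left\{x\in\ZR^n\colon M_{\zR^n}f(x)>\lambda\right\}\right|\le C_n\int_{\ZR^n}\frac{|f(x)|}{\lambda}\left(1+\log^+\frac{|f(x)|}{\lambda}\right)^{n-1}dx .
\emd
Granting this, the passage to almost everywhere differentiability is the standard density argument. Since $\Psi(t)=t(1+\log^+t)^{n-1}$ satisfies the $\Delta_2$-condition, $C_c^\infty(\ZR^n)$ is dense in $L(1+\log^+L)^{n-1}(\ZR^n)$; and for a continuous compactly supported $g$ one has $\delta_{\zR^n}(x,g)=0$ at every $x$, because $\tfrac1{|E|}\int_E g\to g(x)$ whenever $x\in E\in\zR^n$ and $\diam(E)\to0$. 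Writing $f=g+h$ and using the elementary pointwise bound
\md0
\delta_{\zR^n}(x,f)\le\delta_{\zR^n}(x,g)+M_{\zR^n}h(x)+|h(x)|=M_{\zR^n}h(x)+|h(x)| ,
\emd
the weak-type estimate together with Chebyshev's inequality gives, for each fixed $\lambda>0$,
\md0
\left|\left\{x\in\ZR^n\colon\delta_{\zR^n}(x,f)>2\lambda\right\}\right|\le C_n\int_{\ZR^n}\frac{|h|}{\lambda}\left(1+\log^+\frac{|h|}{\lambda}\right)^{n-1}+\frac1\lambda\int_{\ZR^n}|h| ;
\emd
letting $\|h\|_\Psi\to0$ (the $\Delta_2$-condition again ensures the right-hand side tends to $0$ for $\lambda$ fixed) forces $\delta_{\zR^n}(x,f)=0$ almost everywhere, i.e.\ $f\in\ZF(\zR^n)$.

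It remains to prove the weak-type estimate, which I would do by reduction to dimension one. By Fubini's theorem and the monotonicity of the one-dimensional Hardy--Littlewood maximal operator,
\md0
M_{\zR^n}f(x)\le\bigl(M^{(1)}\circ M^{(2)}\circ\cdots\circ M^{(n)}\bigr)|f|(x),
\emd
where $M^{(j)}$ denotes the one-dimensional maximal operator acting in the $j$-th coordinate with the remaining coordinates frozen. Hence it suffices to show, by induction on $n$, that the iterated operator $T_n\deff M^{(1)}\circ\cdots\circ M^{(n)}$ obeys the same bound; the base case $n=1$ is precisely the classical weak type $(1,1)$ inequality for the one-dimensional maximal operator.

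For the inductive step, write $T_n=T_{n-1}\circ M^{(n)}$ and combine the inductive hypothesis for $T_{n-1}$ with the one-dimensional $L\log L$ maximal inequality (see \cite{Guz}): applied in the last variable, with the remaining $n-1$ variables frozen, and then integrated in those variables, it controls
\md0
\int_{\ZR^n}\frac{M^{(n)}h(x)}{\lambda}\left(1+\log^+\frac{M^{(n)}h(x)}{\lambda}\right)^{n-2}dx
\emd
by $C_n\int_{\ZR^n}\frac{|h|}{\lambda}\bigl(1+\log^+\frac{|h|}{\lambda}\bigr)^{n-1}dx$; feeding this quantity into the inductive hypothesis applied slicewise in the first $n-1$ coordinates and integrating out the last coordinate yields the claimed bound for $T_n$. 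The technically delicate part --- and the step I expect to be the main obstacle --- is exactly this bookkeeping of the logarithmic weights through the $n$ successive one-dimensional passages: one has to perform the truncations $h=h\,\ZI_{\{|h|>\lambda\}}+h\,\ZI_{\{|h|\le\lambda\}}$ at the right heights, so that the maximal operator applied to the low part stays $\le\lambda$ and each iteration lowers the power of $\log^+$ by exactly one, while keeping the dependence of the constants on $n$ under control. Everything else --- the Fubini reduction, the density argument, and the $n=1$ weak $(1,1)$ bound --- is routine.
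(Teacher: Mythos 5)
The thesis does not actually prove this statement: Theorem \ref{OldJMZ} is quoted as a classical result with a citation to \cite{JMZ} (see also \cite{Guz}), so your proposal has to be judged against the standard proof, and your overall architecture is exactly that standard route: a weak-type $L(1+\log^+L)^{n-1}$ estimate for the strong maximal operator, obtained by dominating it with the iteration $M^{(1)}\circ\cdots\circ M^{(n)}$ of one-dimensional maximal operators, followed by a density argument. The density part of your write-up is correct: $C_c$ functions are dense because $\Psi(t)=t(1+\log^+t)^{n-1}$ satisfies $\Delta_2$, the pointwise bound $\delta_{\zR^n}(x,f)\le M_{\zR^n}h(x)+|h(x)|$ is right, and convexity of $\Psi$ indeed makes the right-hand side of your weak-type bound tend to $0$ for fixed $\lambda$ as $\|h\|_\Psi\to0$.

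The genuine gap is the displayed intermediate inequality in your inductive step. As stated it is false on $\ZR^n$: for $n=2$ it asserts $\|M^{(n)}h\|_{L^1(\ZR^2)}\lesssim\int|h|\bigl(1+\log^+\tfrac{|h|}{\lambda}\bigr)$, but for a.e.\ frozen $x_1$ with $h(x_1,\cdot)\not\equiv0$ the function $M^{(n)}h(x_1,\cdot)$ decays only like $c/|t|$ and is not integrable, so the left-hand side is infinite; the same divergence, coming from the region where $M^{(n)}h\le\lambda$, occurs for every $n\ge2$ and is \emph{not} cured by the truncation $h=h\,\ZI_{\{|h|>\lambda\}}+h\,\ZI_{\{|h|\le\lambda\}}$, since it is produced by the tail at infinity of $M^{(n)}\bigl(h\,\ZI_{\{|h|>\lambda\}}\bigr)$. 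The classical repair is never to integrate a maximal function globally: use the improved weak-type inequality $|\{Mg>\lambda\}|\le\tfrac{C}{\lambda}\int_{\{|g|>\lambda/2\}}|g|$ and its integrated form $\int_{\{Mg>\lambda\}}Mg\,dx\le C\int_{\ZR}|g|\bigl(1+\log^+\tfrac{2|g|}{\lambda}\bigr)dx$, and run the induction through distribution functions, at each stage discarding the part of the previously produced maximal function lying below level $\lambda$ (legitimate because compositions of maximal operators take constants to constants). Alternatively, localize first: differentiation is a local property, so one may assume $f$ is supported in a fixed cube and only consider rectangles inside a dilate of it, after which your Orlicz-norm inequalities hold with an additive term $C|Q|$ and your bookkeeping goes through. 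With either correction your argument becomes the classical proof; as written, the inductive step does not close.
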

\begin{OldTheorem}[Saks, \cite{Saks}]\label{OldSaks}
If the function $\Psi$ satisfies
\md0
\Psi(t)=o(t\log^{n-1} t)\text { as } t\to\infty,
\emd
then $\Psi(L)(\ZR^n)\not\subset \ZF(\zR^n)$. Moreover, there exists a positive function $f\in \Psi(L)(\ZR^n)$ such that $\delta_{\zR^n}(x,f)=\infty$ everywhere.	
\end{OldTheorem}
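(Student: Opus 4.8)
\medskip
\noindent\textbf{Proof proposal.}
It suffices to construct a positive $f\in\Psi(L)(\ZR^n)$ with the property that for every $x\in\ZR^n$ and every $\eta>0$ there is a rectangle $R\in\zR^n$ with $x\in R$, $\diam R<\eta$ and $|R|^{-1}\int_R f$ arbitrarily large; since such an $f$, being locally integrable, is finite almost everywhere (and $\delta_{\zR^n}(x,f)=\infty$ trivially at a point where $f(x)=\infty$), this forces $\delta_{\zR^n}(x,f)=\infty$ at \emph{every} $x$. I would produce $f$ as a superposition of scaled copies of one building block, isolated in the following local lemma: \emph{for every $A>1$, every $\varepsilon>0$ and every cube $Q\subset\ZR^n$ there are a set $E\subset Q$, a number $\lambda\in(0,1)$ and an open set $G\subset Q$ with $|Q\setminus G|<\varepsilon$ such that, putting $g\deff(A/\lambda)\ZI_E$, one has $\int_Q\Psi(g)<\varepsilon$ and $\int_Q g<\varepsilon$, while for each $x\in G$ there is a rectangle $R=R(x)\in\zR^n$ with $x\in R\subset Q$, $\diam R<\varepsilon$ and $|R|^{-1}\int_R g>A$.}

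The only place where the hypothesis $\Psi(t)=o(t\log^{n-1}t)$ is used is in reducing this lemma to a purely geometric fact: \emph{for every small $\lambda>0$ there is $E\subset Q$ with $|E|\asymp|Q|\,\lambda\,(\log\tfrac1\lambda)^{-(n-1)}$ such that the averages $|R|^{-1}\int_R\ZI_E$ over rectangles $R\subset Q$ of arbitrarily small diameter exceed $\lambda$ on a subset of $Q$ of measure $>(1-\varepsilon)|Q|$.} This is precisely the extremal configuration for the weak-type inequality of the maximal operator associated with the basis $\zR^n$, the operator underlying the proof of \otrm{OldJMZ}; granting it, set $h=A/\lambda$ and $g=h\,\ZI_E$, so that the averages of $g$ over the $R(x)$ exceed $h\lambda=A$ on $G$, while
\md0
\int_Q\Psi(g)=\Psi(h)\,|E|\asymp|Q|\,A\cdot\frac{\Psi(A/\lambda)}{(A/\lambda)\,(\log\frac1\lambda)^{\,n-1}}\longrightarrow 0\qquad(\lambda\to0),
\emd
because $\Psi(s)/\!\left(s(\log s)^{n-1}\right)\to0$; likewise $\int_Q g=h|E|\asymp A|Q|(\log\tfrac1\lambda)^{-(n-1)}\to 0$ (here $n\ge2$ is essential). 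So it is enough to take $\lambda$ small. Constructing the set $E$ is the real work: one builds it by an iterated procedure exploiting that an $n$-dimensional rectangle carries $n-1$ independent eccentricities, each allowed to run through $\asymp\log\tfrac1\lambda$ dyadic values, so that long thin rectangles of $\asymp(\log\tfrac1\lambda)^{n-1}$ distinct shapes each capture a fixed proportion of $E$ at a fixed proportion of the points of $Q$; the factor $(\log\tfrac1\lambda)^{n-1}$ is exactly this $(n-1)$-fold multiplicative gain, and I expect this combinatorial construction to be the main obstacle.

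Granting the lemma, I would enumerate the half-open dyadic cubes of $\ZR^n$ as $\{Q_j\}$, ordered by generation, attach to $Q_j$ a building block $g_j$ with $A_j\to\infty$ growing with the generation of $Q_j$ and with its parameter $\varepsilon_j$ so small that $\sum_j\varepsilon_j<\infty$ and the relative support measures $|E_j|/|Q_j|$ decrease rapidly, and set $f=\sum_j g_j$. Since the $g_j$ are nonnegative, for every $x\in G_j$ there is $R(x)\in\zR^n$ containing $x$, of arbitrarily small diameter, with $|R(x)|^{-1}\int_{R(x)}f\ge|R(x)|^{-1}\int_{R(x)}g_j>A_j$; as the dyadic cubes containing $x$ have generation tending to infinity, this gives $\delta_{\zR^n}(x,f)=\infty$ at every $x$ lying in infinitely many $G_j$ with $A_j\to\infty$. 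For almost every $x$ this is immediate from a Borel--Cantelli count on the $|E_j|/|Q_j|$; to reach \emph{every} $x$ — the ``moreover'' assertion — I would strengthen the lemma so that $G$ is all of $Q$ (allowing the rectangles $R(x)$ to protrude slightly beyond $Q$, which the same efficient $E$ permits) and use that the generation-$m$ cubes tile $\ZR^n$. It remains to control the Orlicz size of the sum: the supports $E_j$ overlap because the cubes are nested, so one arranges them to have bounded overlap — which the rapid decrease of $|E_j|/|Q_j|$ makes possible — after which convexity of $\Psi$, together with a routine reduction to the $\Delta_2$ case if needed, gives $\int_{\ZR^n}\Psi(f)\lesssim\sum_j\varepsilon_j<\infty$; the same estimate with $g_j$ replacing $\Psi(g_j)$ shows $f\in L_{\rm loc}(\ZR^n)$. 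Thus $f$ is positive, lies in $\Psi(L)(\ZR^n)$, and satisfies $\delta_{\zR^n}(x,f)=\infty$ everywhere, which in particular gives $\Psi(L)(\ZR^n)\not\subset\ZF(\zR^n)$.
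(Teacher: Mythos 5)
You should first note that the thesis contains no proof of this statement: Theorem~\ref{OldSaks} is quoted from Saks \cite{Saks}, so your proposal can only be measured against the standard construction and against the closest analogue inside the thesis, namely Lemmas \ref{L0} and \ref{L-4} (used for \trm{TX2.1}), which implement exactly this kind of building block in the two-dimensional dyadic setting. Your overall architecture is the right one: a block supported on a set $E\subset Q$ of measure $\asymp|Q|\,\lambda(\log\frac1\lambda)^{-(n-1)}$, amplified by $A/\lambda$, with Orlicz cost $\Psi(A/\lambda)|E|\asymp |Q|\,A\,\Psi(s)/(s\log^{n-1}s)$ for $s=A/\lambda$, superposed over a sequence of shrinking scales; the arithmetic showing where $\Psi(t)=o(t\log^{n-1}t)$ and $n\ge2$ enter is correct.

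There are, however, genuine gaps. First, the construction of the extremal set $E$ — rectangle averages of $\ZI_E$ exceeding $\lambda$ on $(1-\varepsilon)|Q|$ while $|E|\asymp|Q|\lambda(\log\frac1\lambda)^{-(n-1)}$ — is only gestured at, yet it is the entire content of the theorem (it is the sharpness configuration for the strong maximal operator); a proof must carry it out, as the explicit staircase sets $E_{ij}(n)$, $F_{ij}(n)$ of \e{a-4} together with the exhaustion of \lem{L0} do in dimension two. (Also, for a fixed finite configuration the large averages cannot be realized by rectangles of \emph{arbitrarily} small diameter at points off $E$; you only need diameter $<\varepsilon$, so state it that way.) Second, and more seriously, your route to the \emph{everywhere} assertion rests on the unproved claim that the same efficient $E$ covers all of $Q$ once rectangles may protrude slightly beyond $Q$. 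That is doubtful as stated: any finite iteration of the efficient configuration leaves an uncovered remainder, and for a point of the remainder the rectangles reaching the nearest small cubes — protruding or not — have $E$-density only of the borderline order $\lambda$, so nothing forces an average $>A$ there. The standard repairs are different: cover the small leftover set by an additional \emph{bounded} bump of height $\asymp A$ (exactly the role of the term $g_1=\beta(L)\sum_{\omega\in\Omega_1}v_\omega$ in \lem{L-4}), whose Orlicz cost $\Psi(cA)\cdot|\mathrm{leftover}|$ is harmless since $A$ is fixed at each stage, or use Saks' original category argument. Finally, the bounded-overlap-plus-$\Delta_2$ treatment of $\int\Psi(f)$ is both unproved (bounded overlap of the $E_j$ across nested cubes is not arranged) and unnecessary: writing $f=\sum_j 2^{-j}\,(2^jg_j)$ and using convexity of $\Psi$ (with $\Psi(0)=0$, as one may assume) gives $\int\Psi(f)\le\sum_j2^{-j}\int\Psi(2^jg_j)$, and the factor $2^j$ is absorbed by choosing $\lambda_j$ small in the lemma; a small strictly positive integrable tail should also be added if ``positive'' is read as strictly positive.
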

Such theorems are valid also for the basis $\DR^n$. The first one trivially follows from embedding
\md0
L(1+\log^+ L)^{n-1}(\ZR^n)\subset \ZF(\zR^n)\subset \ZF(\DR^n).
\emd
The second can be deduced from the following
\begin{OldTheorem}[Zerekidze, \cite{Zer1} (see also \cite{Zer2, Zer3})]\label{zerekidze}
$\ZF^+(\DR^n)= \ZF^+(\zR^n)$.
\end{OldTheorem}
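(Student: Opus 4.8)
The plan is to prove the two inclusions separately, the first being trivial and the second being the whole point. Since $\DR^n\subseteq\zR^n$ as families of sets, $\delta_{\DR^n}(x,f)\le\delta_{\zR^n}(x,f)$ for every $x$ and every $f\in L_{\rm loc}(\ZR^n)$, so $\ZF^+(\zR^n)\subseteq\ZF^+(\DR^n)$ with nothing to prove. For the reverse inclusion I fix $f\in\ZF^+(\DR^n)$ and aim at $\delta_{\zR^n}(x,f)=0$ a.e. Two reductions come first. (i) The strong maximal operator and its dyadic analogue are bounded on $L^2_{\rm loc}$, so \emph{every locally bounded non-negative function already lies in $\ZF^+(\zR^n)\cap\ZF^+(\DR^n)$}; thus the statement has content only for unbounded $f$. (ii) From the elementary pointwise inequality $\delta_\ZM(x,h_2)\le\delta_\ZM(x,h_1+h_2)+\delta_\ZM(x,h_1)$ and the splitting $(g-a)^+=g-\min(g,a)$ with $\min(g,a)$ bounded, it follows using (i) that $(f-a)^+\in\ZF^+(\DR^n)$ for every constant $a\ge0$; likewise $f_N\deff\min(f,N)$ is bounded, hence in $\ZF^+(\zR^n)$.

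With these reductions the lower estimate is immediate: for a.e.\ $x$ and every $N$ one has $\frac1{|R|}\int_R f\ge\frac1{|R|}\int_R f_N\to f_N(x)$ as $\diam R\to0$, whence $\liminf_{R\ni x,\,\diam R\to0}\frac1{|R|}\int_R f\ge\sup_N\min(f(x),N)=f(x)$ a.e. For the upper estimate I would use a covering fact: every $R\in\zR^n$ with side lengths $\ell_i$ is contained, for a set of translation vectors $\alpha$ of density at least $2^{-n}$ in the torus $[0,1)^n$, in a single rectangle $Q$ of the translated dyadic grid $\DR^n+\alpha$ whose side in direction $i$ is the (unique) power of $2$ in $[2\ell_i,4\ell_i)$; then $x\in R\subseteq Q$, $|Q|\le c_n|R|$ with $c_n=4^n$, and $\diam Q\le 4\sqrt{n}\,\diam R$. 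Since a translate of $f$ again belongs to $\ZF^+(\DR^n)$, for each fixed $\alpha$ we have $\delta_{\DR^n+\alpha}(x,(f-a)^+)=0$ for a.e.\ $x$; the grid $\DR^n+\alpha$ depends on $\alpha$ only modulo $\ZZ^n$ as far as small cubes are concerned, so $\alpha$ may be kept in $[0,1)^n$, and Fubini in $(x,\alpha)$ upgrades this, for each rational $a>0$, to: \emph{for a.e.\ $x$, for a.e.\ $\alpha$, $\delta_{\DR^n+\alpha}(x,(f-a)^+)=0$}.

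Now fix a point $x$ that is good for all these countably many conditions (with $a$ rational) and at which $f(x)<\infty$, and fix $\varepsilon>0$, a rational $a\in(f(x),f(x)+\varepsilon/2)$, and $\varepsilon'>0$. By countable additivity, for some threshold $m_0=m_0(x,a,\varepsilon')$ the set of $\alpha\in[0,1)^n$ for which $\frac1{|Q|}\int_Q(f-a)^+<\varepsilon'$ holds for \emph{all} $Q\in\DR^n+\alpha$ with $x\in Q$ and $\diam Q<2^{-m_0}$ has measure $>1-2^{-n}$. If $R\ni x$ has $\diam R$ small enough that the cube $Q$ of the covering lemma satisfies $\diam Q<2^{-m_0}$, the set of admissible translates for $R$ (density $\ge 2^{-n}$) meets this set, so one may choose $\alpha$ and $Q\in\DR^n+\alpha$ with $x\in R\subseteq Q$, $|Q|\le c_n|R|$ and $\frac1{|Q|}\int_Q(f-a)^+<\varepsilon'$. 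Splitting $f=\min(f,f(x)+\varepsilon)+(f-f(x)-\varepsilon)^+$ and using $(f-f(x)-\varepsilon)^+\le(f-a)^+$ together with $f\ge0$,
\[
\frac1{|R|}\int_R f\ \le\ (f(x)+\varepsilon)+\frac1{|R|}\int_Q(f-a)^+\ \le\ f(x)+\varepsilon+c_n\varepsilon'.
\]
Letting $\diam R\to0$, then $\varepsilon'\to0$, then $\varepsilon\to0$ gives $\limsup_{R\ni x,\,\diam R\to0}\frac1{|R|}\int_R f\le f(x)$ a.e., which with the lower estimate yields $\delta_{\zR^n}(x,f)=0$ a.e.

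The step I expect to be the main obstacle is the passage in the last two paragraphs from "for each fixed shift $\alpha$ dyadic-type differentiation holds a.e." to a statement usable along a sequence of rectangles whose admissible shifts move continuously: a priori the rate of convergence (the threshold $m_0$) could degenerate as $\alpha$ varies, so the Fubini--plus--Egorov bookkeeping must keep $m_0$ uniform on a set of shifts large enough to intersect the fixed positive proportion of admissible shifts supplied by the covering lemma. A secondary point worth care is that the constant $c_n$ in the covering lemma is harmless precisely because it multiplies $\frac1{|Q|}\int_Q(f-a)^+$, which tends to $0$ at a.e.\ $x$ once $a>f(x)$; this is the reason for comparing $\frac1{|R|}\int_R f$ to the excess $(f-a)^+$ on $Q$ rather than to $\frac1{|Q|}\int_Q f$ directly.
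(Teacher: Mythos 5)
Your overall architecture (trivial inclusion $\ZF^+(\zR^n)\subseteq\ZF^+(\DR^n)$; reduction to the excess part $(f-a)^+$ via boundedness of the strong maximal operator; lower bound by truncation; upper bound via a covering of an arbitrary rectangle by a comparable rectangle of a \emph{shifted} dyadic grid plus Fubini in the shift) is sound bookkeeping, but it hinges on one sentence that is not justified and cannot be: ``Since a translate of $f$ again belongs to $\ZF^+(\DR^n)$, for each fixed $\alpha$ we have $\delta_{\DR^n+\alpha}(x,(f-a)^+)=0$ for a.e.\ $x$.'' The class $\ZF^+(\DR^n)$ is defined relative to one fixed grid, and there is no a priori reason it is translation invariant: differentiation by $\DR^n$ says nothing directly about averages over rectangles that straddle dyadic hyperplanes, which is exactly what the shifted grids $\DR^n+\alpha$ (for non--dyadic-rational $\alpha$) consist of at every scale. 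Translation invariance of $\ZF^+(\DR^n)$ is a \emph{consequence} of Zerekidze's theorem (because $\ZF^+(\zR^n)$ is translation invariant), and conversely, granting it, your covering-plus-Fubini argument is only the easy passage from ``all shifted grids differentiate'' to ``all rectangles differentiate.'' So the assertion you slip in is essentially equivalent to the statement being proved, and the proposal as written is circular at its only hard point; only shifts by dyadic rationals come for free.

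For contrast, the thesis gets the theorem without ever moving the grid, as the case $\Delta=\ZN$ of the corollaries to \trm{quasi-equiv}: one assumes $f\in\ZF^+(\DR^n)\setminus\ZF^+(\zR^n)$, splits off the bounded part (handled by the density-basis property, as in your step (i)), passes by an Egorov-type choice to a set $G$ of positive measure on which dyadic averages of the excess $f^\gamma$ are uniformly small at scales below a fixed $\delta$, and fixes a density point $x_0$ of $G$ with respect to the ambient basis of all rectangles. A small rectangle $R\ni x_0$ with large average of $f^\gamma$ is then covered by boundedly many standard dyadic rectangles $R_k$, each of measure comparable to a containing rectangle $R'$ with $\diam(R')\le c\,\diam(R)$; the density of $G$ in $R'$ forces every $R_k$ to meet $G$, hence every $R_k$ has small average, and non-negativity transfers the large average on $R$ to $\tilde R=\bigcup_k R_k$, giving a contradiction. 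That density-point argument with a bounded-overlap covering by \emph{unshifted} comparable dyadic rectangles is precisely the ingredient that replaces, and cannot be replaced by, your unproved translation-invariance claim; if you supply it, the shifted-grid detour becomes unnecessary.
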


Let $\Delta=\{\nu_k:\,k=1,2,\ldots\}$ be an increasing sequence of positive integers. This sequence generates rare basis $\DR^n_\Delta$ of dyadic rectangles of the form \e{0.100} with $m_i\in \Delta,\,i=1, 2,\ldots,n$. This kind of bases first considered in the papers \cite{Sto}, \cite{Hag}, \cite{HarSto}, \cite{KarKarSaf}. Stokolos \cite{Sto} proved that the analogous of Saks theorem holds for any basis $\DR^n_\Delta$ with an arbitrary $\Delta$ sequence. That means $ L(1+\log^+ L)^{n-1}(\ZR^n)$ is again the largest Orlicz space containing in $\ZF(\DR^n_\Delta)$. Oniani and Zerekidze \cite{OniZer} characterised translation invariant as well as net type bases formed of rectangles that are equivalent to the basis of all rectangles in the class of all non-negative functions. Karagulyan \cite{Kar} proved some theorems, establishing an equivalency of some convergence conditions for multiple martingale sequences, those in particular imply some results of the papers \cite{Sto}, \cite{Hag}, \cite{HarSto}.

In spite of the largest Orlicz spaces corresponding to the bases $\DR^2_\Delta$ and $\DR^2$ coincide, they do differentiate different set of functions, depending on density of the sequence $\Delta$. In \sect{dyadic-R2} we prove that the condition
\md0
\gamma_\Delta \deff \sup_{k\in \ZN}(\nu_{k+1}-\nu_k) < \infty
\emd
is necessary and sufficient for the full equivalency of rare dyadic basis $\DR^2_\Delta$ and complete dyadic basis $\DR^2$.
\begin{customtheorem}{\ref{TX2}}[see \cite{KarKarSaf}]
If $\Delta=\{\nu_k\}$ is an increasing sequence of positive integers with $\gamma_\Delta<\infty$, then
\md0
\ZF(\DR^2_\Delta)= \ZF(\DR^2).
\emd
\end{customtheorem}

\begin{customtheorem}{\ref{TX2.1}}[see \cite{KarKarSaf}]
If $\Delta=\{\nu_k\}$ is an increasing sequence of positive integers with $\gamma_\Delta=\infty$, then there exists a function $f\in\ZF(\DR^2_\Delta)$ such that
\md0
\limsup_{\length(R)\to 0,\, x\in R\in\DR^2}\left|\frac{1}{|R|}\int_R f(t)\,dt\right| = \infty
\emd
for any $x\in\ZR^n$.
\end{customtheorem}

\begin{customdefinition}{\ref{3.def-density-bases}}
A basis $\ZM$ is said to be density basis if $\ZM$ differentiates the integral of any characteristic function $\ZI_E$ of measurable set $E$:
\md0
\delta_\ZM(x,\ZI_E)=0 \text{ at almost every } x\in\ZR^n.
\emd
We will say that the basis $\ZM$ differentiates a class of functions $\ZF$, if basis $\ZM$ differentiates the integrals of all functions of $\ZF$.
\end{customdefinition}

\begin{OldTheorem} [\cite{Guz}, III, Theorem 1.4]
If $\ZM$ is a density basis, then it differentiates $L^\infty$
\end{OldTheorem}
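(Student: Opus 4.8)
The plan is to deduce differentiation of $L^\infty$ from differentiation of characteristic functions by a routine approximation argument, once one observes that the functional $f\mapsto \delta_\ZM(x,f)$ is subadditive and is controlled by the sup-norm.

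First I would record two elementary facts, valid at \emph{every} $x\in\ZR^n$. (i) Subadditivity: for $f,g\in L_{\rm loc}(\ZR^n)$,
\md0
\delta_\ZM(x,f+g)\le \delta_\ZM(x,f)+\delta_\ZM(x,g),
\emd
which follows by writing $\tfrac1{|E|}\int_E(f+g)-(f(x)+g(x))$ as the sum of the corresponding deviations for $f$ and for $g$ and using $\limsup(a_E+b_E)\le\limsup a_E+\limsup b_E$. (ii) A crude bound: if $g\in L^\infty(\ZR^n)$ then $\delta_\ZM(x,g)\le 2\|g\|_\infty$, since both $\tfrac1{|E|}\int_E g$ and $g(x)$ lie in $[-\|g\|_\infty,\|g\|_\infty]$.

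Next, for a simple function $s=\sum_{i=1}^N c_i\ZI_{E_i}$ (with the $E_i$ measurable), subadditivity gives $\delta_\ZM(x,s)\le\sum_{i=1}^N|c_i|\,\delta_\ZM(x,\ZI_{E_i})$, and every term on the right vanishes for a.e.\ $x$ because $\ZM$ is a density basis; hence $\delta_\ZM(\cdot,s)=0$ a.e. (If some $E_i$ is unbounded one may intersect it with a large ball before applying the hypothesis, since differentiation is a local statement and the basis consists of bounded sets.) Then, given $f\in L^\infty(\ZR^n)$ and $\varepsilon>0$, I would pick a simple function $s$ with $\|f-s\|_\infty<\varepsilon$ (slice the range of $f$ into intervals of length $\varepsilon$) and combine the above facts:
\md0
\delta_\ZM(x,f)\le \delta_\ZM(x,s)+\delta_\ZM(x,f-s)\le 0+2\varepsilon \quad\text{for a.e. } x.
\emd
Applying this with $\varepsilon=1/k$, $k=1,2,\ldots$, and taking the union of the countably many exceptional null sets yields $\delta_\ZM(x,f)=0$ for a.e.\ $x$, i.e.\ $f\in\ZF(\ZM)$.

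I do not anticipate a real obstacle: the only points needing care are the (routine) subadditivity of $\delta_\ZM$ and ensuring the exceptional set is independent of the approximation parameter, which is handled by the countable exhaustion $\varepsilon=1/k$. Conceptually, the content of the theorem is just that the density-basis hypothesis already encodes, via subadditivity and the sup-norm bound, enough to pass to the uniform closure of the simple functions, which is all of $L^\infty$.
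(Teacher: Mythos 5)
Your argument is correct. The paper gives no proof of this statement — it is quoted from Guz\-m\'an (\cite{Guz}, III, Theorem 1.4) — so there is no internal proof to compare against; your reduction to characteristic functions via subadditivity of $\delta_\ZM(x,\cdot)$, the crude bound $\delta_\ZM(x,g)\le 2\|g\|_\infty$ (which, since $\|\cdot\|_\infty$ is the \emph{essential} supremum, holds for a.e.\ $x$ rather than every $x$, but that is all you use), and uniform approximation by simple functions with the countable exhaustion $\varepsilon=1/k$ is the standard route and is complete. The aside about intersecting an unbounded $E_i$ with a large ball is unnecessary under the paper's Definition \ref{3.def-density-bases}, which imposes $\delta_\ZM(x,\ZI_E)=0$ a.e.\ for arbitrary measurable $E$, but it is harmless.
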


Note that any subbasis $\ZM'$ of a density basis $\ZM$ is also density basis, since in this case $\delta_{\ZM'}(x,f)\le\delta_{\ZM}(x,f)$ for any $x\in\ZR^n$ and $f\in L_{loc}(\ZR^n)$.

\begin{customdefinition}{\ref{3.def-quasi-equiv-bases}}
Let $\ZM_1,\ZM_2\subseteq\ZM$ be subbases. We will say that basis $\ZM_2$ is quasi-coverable by basis $\ZM_1$ (with respect to basis $\ZM$) if for any $R\in\ZM_2$ there exist $R_k\in\ZM_1,\,k=1, 2, \ldots, p$ and $R'\in\ZM$ such that
\md2
& R\subseteq \tilde{R} \subseteq R',\quad \tilde{R}=\bigcup_{k=1}^p R_k\\
& \diam(R')\le c\cdot\diam(R),\quad |R'|\le c|R_k|,\quad k=1, 2, \ldots, p,\\
& \sum_{k=1}^p|R_k|\le c|\tilde{R}|,\quad |\tilde{R}|\le c|R|,
\emd
where constant $c\ge1$ depends only on bases $\ZM_1, \ZM_2$ and $\ZM$. We will say two bases are quasi-equivalent if they are quasi-coverable with respect to each other.
\end{customdefinition}

In \sect{quasi-equivalent} we prove that quasi-equivalent subbases $\ZM_1, \ZM_2$ of density basis $\ZM$ differentiate the same class of non-negative functions. In \sect{applications} we give several corollaries from this theorem for bases formed of rectangles.

\begin{customtheorem}{\ref{quasi-equiv}}[see \cite{Saf}]
Let $\ZM_1$ and $\ZM_2$ be subbases of density basis $\ZM$ formed of open sets from $\ZR^n$. If the bases $\ZM_1$ and $\ZM_2$ are quasi-equivalent with respect to $\ZM$ then
\md0
\ZF^+(\ZM_1)= \ZF^+(\ZM_2).
\emd
\end{customtheorem}

Main results of the thesis are published in \cite{KarKarSaf, KarSaf2, KarSaf1, Saf, Saf2}.

\numberwithin{equation}{section}

\chapter{Fatou type theorems}\label{chapter-fatou}

\section{Introduction}\label{fatou-intro}
In this chapter we generalize Fatou\a s theorem for the integrals with general kernels. Here we remind Fatou\a s theorems about non-tangential convergence of Poisson integrals and related tangential convergence results for the square root Poisson integrals as well as weak type inequalities.

\begin{customtheorem}{\ref{OldFatou-1}}[Fatou, 1906]
Any bounded analytic function on the unit disc $D=\{z\in \mathbb{C}:\, |z|<1\}$ has nontangential limit for almost all boundary points.
\end{customtheorem}
\begin{customtheorem}{\ref{OldFatou-2}}[Fatou, 1906]
If a function $\mu$ of bounded variation is differentiable at $x_0\in \ZT$, then the Poisson integral
\md0
\zP_r(x,d\mu)=\frac{1}{2\pi }\int_\ZT \frac{1-r^2}{1-2r\cos (x-t)+r^2} d\mu(t)\
\emd
converges non-tangentially to $\mu'(x_0)$ as $r\to 1$.
\end{customtheorem}
 
\begin{customtheorem}{\ref{OldRonningSogren}}[see \cite{Sog1,Ron1,Ron2}]
For any $f\in L^p(\ZT),\, 1\le p\le \infty$
\md1\label{1.frac-poisson}
\lim_{r\to 1}\zP_r^{(1/2)} (x+\theta(r),f) = f(x)
\emd
almost everywhere $x\in\ZT$, whenever
\md1\label{1.theta-bound-P0.5}
|\theta(r)|\le
\begin{cases}
c(1-r)\left(\log\frac{1}{1-r}\right)^p & \quad\text{if}\quad 1\le p<\infty,\\
c_\alpha(1-r)^\alpha,\,\text{for any }\,0<\alpha<1 & \quad\text{if}\quad p=\infty,
\end{cases}
\emd
where $c_\alpha>0$ is a constant, depended only on $\alpha$.
\end{customtheorem}

\begin{customtheorem}{\ref{OldRonning}}[R\"{o}nning, 1997]
Let $1<p<\infty$. Then the maximal operator corresponding to the square root Poisson kernel
\md0
\zP^*_{1/2}(x,f) = \sup_{\substack{|\theta|<c(1-r)\left(\log\frac{1}{1-r}\right)^p\\1/2<r<1}} \zP_r^{(1/2)}(x+\theta,|f|)
\emd
is of weak type $(p,p)$.
\end{customtheorem}

\begin{customtheorem}{\ref{OldCarlsson}}[Carlsson, 2008]
Let $\{\varphi_r(x)\ge 0\}$ be a non-negative approximate identity with regular level sets and $\rho(r) = \|\varphi_r\|_q^{-p}$, where $1\le p<\infty$ and $q=p/(p-1)$ is the conjugate index of $p$. Then for any $f\in L^p(\ZT)$
\md0
\lim_{\substack{r\to1\\|\theta|<c \rho(r)}} \Phi_r(x+\theta,f) = f(x),
\emd
almost everywhere $x\in\ZT$.
\end{customtheorem}

The organization of the current chpater is as follows. In \sect{fatou-lemmas} we prove auxiliarry lemmas, which will be used throughout the chapter. In \sect{fatou-BVL1} we prove that the condition $\PI(\lambda,\varphi)<\infty$ determines the exact rate of $\lambda(r)$ for functional spaces $\BV(\ZT)$ and $L^1(\ZT)$.

\begin{definition}\label{1.regular-kernel-def}
We say that a given approximation of identity $\{\varphi_r\}$ is regular if each $\varphi_r(x)$ is non-negative, decreasing on $[0,\pi]$ and increasing on $[-\pi, 0]$.
\end{definition}

\begin{theorem}[see \cite{KarSaf1}]\label{1.T1}
Let $\{\varphi_r\}$  be a regular approximate identity and $\lambda(r)$ satisfies the condition
\md0
\PI(\lambda, \varphi) = \limsup_{r\to 1}\lambda(r) \|\varphi_r\|_\infty<\infty.
\emd
If $\mu\in \BV( \ZT)$  is differentiable at $x_0$, then
\md0
\lim_{\stackrel{r\to 1}{x\in\lambda(r,x_0)}}\Phi_r\left(x,d\mu\right)=\mu'(x_0).
\emd
\end{theorem}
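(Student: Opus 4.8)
The plan is to normalize the problem and then localize the integral near $x_0$. After a rotation one may assume $x_0=0$. I would split $d\mu=\mu'(0)\,dt+d\nu$, where $dt$ is Lebesgue measure on $\ZT$ and $\nu$ is the primitive of the signed measure $d\mu-\mu'(0)\,dt$ normalized by $\nu(0)=0$; differentiability of $\mu$ at $0$ says exactly that $\nu(h)=o(|h|)$ as $h\to0$. Since $\Phi_r(x,\mu'(0)\,dt)=\mu'(0)\int_\ZT\varphi_r(s)\,ds$ is independent of $x$ and tends to $\mu'(0)$ by $\mathit{\Phi1}$, it remains to prove that $\sup_{|x|\le\lambda(r)}|\Phi_r(x,d\nu)|\to0$ as $r\to1$. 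Fixing $\varepsilon>0$, I would pick $\delta\in(0,2\pi/3)$ with $|\nu(h)|\le\varepsilon|h|$ for $|h|\le\delta$; then for $r$ so close to $1$ that $\lambda(r)<\delta/2$, and for $|x|\le\lambda(r)$, one splits
\md0
\Phi_r(x,d\nu)=\int_{|t|<\delta}\varphi_r(x-t)\,d\nu(t)+\int_{\delta\le|t|\le\pi}\varphi_r(x-t)\,d\nu(t).
\emd

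For the far term, $|x-t|\ge|t|-|x|\ge\delta/2$ on its domain, so it is at most $\varphi_r^*(\delta/2)\,\V{\nu}$, which vanishes as $r\to1$ by $\mathit{\Phi2}$ ($\delta$ being fixed). For the near term set $g(t)=\varphi_r(x-t)$; this is the one place regularity enters, as it makes $g$ non-negative and unimodal on $[-\delta,\delta]$ (non-decreasing on $[-\delta,x]$, non-increasing on $[x,\delta]$), hence of bounded variation with
\md0
\int_{-\delta}^{\delta}|dg|=2g(x)-g(-\delta)-g(\delta)\le2\|\varphi_r\|_\infty,\qquad\int_{-\delta}^{\delta}|t-x|\,|dg(t)|\le\int_{-\delta}^{\delta}g(t)\,dt\le\int_\ZT\varphi_r,
\emd
the second estimate coming from one integration by parts on each monotone piece. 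A Stieltjes integration by parts then gives $\int_{-\delta}^{\delta}g\,d\nu=g(\delta)\nu(\delta)-g(-\delta)\nu(-\delta)-\int_{-\delta}^{\delta}\nu(t)\,dg(t)$, where the boundary terms are $\le 2\varphi_r^*(\delta/2)\V{\nu}$, and, using $|\nu(t)|\le\varepsilon|t|\le\varepsilon(|x|+|t-x|)$ together with the two bounds above,
\md0
\Bigl|\int_{-\delta}^{\delta}\nu(t)\,dg(t)\Bigr|\le\varepsilon|x|\int_{-\delta}^{\delta}|dg|+\varepsilon\int_{-\delta}^{\delta}|t-x|\,|dg(t)|\le2\varepsilon\,\lambda(r)\|\varphi_r\|_\infty+\varepsilon\int_\ZT\varphi_r.
\emd

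Collecting everything and using $\mathit{\Phi1}$ (so $\int_\ZT\varphi_r\le2$ near $r=1$) and the hypothesis (so $\lambda(r)\|\varphi_r\|_\infty\le\PI(\lambda,\varphi)+1$ near $r=1$), one obtains $\sup_{|x|\le\lambda(r)}|\Phi_r(x,d\nu)|\le 3\varphi_r^*(\delta/2)\V{\nu}+\varepsilon(2\PI(\lambda,\varphi)+4)$ for $r$ near $1$; letting $r\to1$ (the first summand vanishes by $\mathit{\Phi2}$, $\delta$ fixed) and then $\varepsilon\to0$ finishes the proof. The main obstacle — and the only point where the sharp rate is needed — is the near-origin term: shifting the kernel to $\varphi_r(x-t)$ produces an error of order $|x|\,\|\varphi_r\|_\infty$, and the hypothesis $\PI(\lambda,\varphi)=\limsup_{r\to1}\lambda(r)\|\varphi_r\|_\infty<\infty$ is exactly what bounds this error uniformly over $|x|\le\lambda(r)$; once it is bounded, the factor $\varepsilon$ coming from differentiability makes it negligible, and everything else is soft, using only $\mathit{\Phi1}$ and $\mathit{\Phi2}$. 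One may avoid the Stieltjes integration by parts via the layer-cake identity $\varphi_r(x-t)=\int_0^\infty\ZI_{\{\varphi_r(x-\cdot)>s\}}(t)\,ds$, each super-level set being an interval containing $x$ by unimodality; then the near integral equals $\int_0^{\|\varphi_r\|_\infty}(\nu(\beta_s)-\nu(\alpha_s))\,ds$ over the endpoints $\alpha_s,\beta_s$ of that interval intersected with $[-\delta,\delta]$, estimated in the same way, the only care being the routine bookkeeping with atoms of $\nu$.
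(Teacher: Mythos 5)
Your proof is correct, and it takes a genuinely different route from the paper. The paper reduces to $x_0=0$, $\mu'(0)=0$, and then invokes \lem{1.L1-}: the shifted kernel $\varphi_r(\theta(r)-t)$ is approximated by a signed sum $\sum_j\varepsilon_j\ZI_{I_j}$ of $\pm\varepsilon_r$-height indicators of intervals all anchored at the origin (the anchoring is achieved by writing $\ZI_{(\theta-x_k,\theta+y_k)}=\ZI_{(0,\theta+y_k)}-\ZI_{(0,\theta-x_k]}$), with $\varepsilon_r\sum_j|I_j|$ controlled precisely by $\max\{1,|\theta|\|\varphi_r\|_\infty,\|\varphi_r\|_1\}$; differentiability of $\mu$ at $0$ is then used through the interval averages $\tfrac{1}{|I_j|}\int_{I_j}d\mu\to0$. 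You instead subtract the linear part, pass to the primitive $\nu(h)=o(h)$, split near/far, and estimate the near term by Stieltjes integration by parts (or the layer-cake identity) against the unimodal function $g(t)=\varphi_r(x-t)$, using $\int|dg|\le2\|\varphi_r\|_\infty$ and $\int|t-x|\,|dg|\le\int_\ZT\varphi_r$. The two arguments are the same in spirit — both exploit the level-set/monotone structure of the regular kernel, and in both the shift error enters as the quantity $|x|\,\|\varphi_r\|_\infty$, which is exactly what $\PI(\lambda,\varphi)<\infty$ controls (in the paper it shows up as the total length of the extra intervals $(0,\theta-x_k]$, in yours as $\varepsilon|x|\int|dg|$) — but the execution differs, and yours is the more classical, self-contained Fatou-type argument requiring no auxiliary lemma. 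What the paper's choice buys is reuse: the step-function approximation of \lem{1.L1-} is applied again verbatim to $\varphi_r^*$ and $|\mu|$ in the proof of \trm{1.T2} for non-regular kernels under strong differentiability, whereas your integration-by-parts argument is tied to regularity. Your handling of the technical points is sound: the restriction $\delta<2\pi/3$ and $\lambda(r)<\delta/2$ keeps $x-t$ inside $(-\pi,\pi)$ so unimodality of $g$ on $[-\delta,\delta]$ is legitimate, the far term and the boundary terms $g(\pm\delta)\nu(\pm\delta)$ vanish by $\mathit{\Phi2}$ with $\delta$ fixed, and the possible common atoms in the Stieltjes integration by parts are indeed only routine bookkeeping, which your layer-cake variant avoids entirely (Fubini applies since $|d\nu|$ is finite and the superlevel sets of $g$ are intervals containing $x$).
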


\begin{definition}\label{1.strong-diff-mu}
We say a given function of bounded variation $\mu$ is strong differentiable at $x_0\in \ZT$, if
there exist a number $c$ such that the variation of the function $\mu(x)-cx$ has zero derivative at $x=x_0$.
\end{definition}

\begin{theorem}[see \cite{KarSaf1}]\label{1.T2}
Let $\{\varphi_r\}$  be an arbitrary approximate identity and $\lambda(r)$ satisfies the condition $\PI(\lambda,\varphi)<\infty$. If $\mu\in\BV(\ZT)$ is strong differentiable at $x_0\in \ZT$, then
\md0
\lim_{\stackrel{r\to 1}{x\in\lambda(r,x_0)}}\Phi_r\left(x,d\mu \right)=\mu'(x_0).
\emd
\end{theorem}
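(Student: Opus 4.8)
The plan is to establish the stronger, uniform statement $\sup_{x\in\lambda(r,x_0)}|\Phi_r(x,d\mu)-\mu'(x_0)|\to0$ as $r\to1$, by reducing to the case $\mu'(x_0)=0$ and then estimating $\int_\ZT|\varphi_r(x-t)|\,d|\mu|(t)$ by means of a dyadic annular decomposition centred at the running point $x$: on the innermost annulus one spends the bound $\|\varphi_r\|_\infty$, and on the outer annuli one uses the decreasing rearrangement $\varphi_r^*$ together with $\mathit{\Phi2}$ and $\mathit{\Phi3}$. First I would normalise. By translation invariance we may take $x_0=0$, and by $\mathit{\Phi1}$ we have $\Phi_r(x,\mu'(0)\,dt)=\mu'(0)\int_\ZT\varphi_r(x-t)\,dt\to\mu'(0)$ uniformly in $x$, so it suffices to treat the measure $d\mu-\mu'(0)\,dt$ (still of bounded variation) in place of $d\mu$. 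For this measure, strong differentiability of $\mu$ at $0$ becomes precisely: $\mu'(0)=0$ together with $\omega(h):=|\mu|([-h,h])=o(h)$ as $h\to0^+$. This reformulation is the only place where strong differentiability (rather than plain differentiability of $\mu$) is used, and it is essential: it gives control on the \emph{total variation} of $\mu$ near $0$. The goal is now to show $\sup_{|x|\le\lambda(r)}\int_\ZT|\varphi_r(x-t)|\,d|\mu|(t)\to0$.

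Fix a small $\delta>0$ and split the last integral over $[-\delta,\delta]$ and over its complement in $\ZT$. For $|x|\le\lambda(r)$ with $r$ close to $1$, every $t$ with $|t|\ge\delta$ satisfies $|x-t|\ge\delta/2$, so by monotonicity of $\varphi_r^*$ we get $|\varphi_r(x-t)|\le\varphi_r^*(\delta/2)$, whence the far contribution is at most $\varphi_r^*(\delta/2)\,|\mu|(\ZT)$, which tends to $0$ by $\mathit{\Phi2}$. On $[-\delta,\delta]$ I decompose into $A_0=\{|x-t|<\lambda(r)\}$ and the annuli $A_k=\{2^{k-1}\lambda(r)\le|x-t|<2^k\lambda(r)\}$, $k\ge1$. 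Since $A_0\subseteq[-2\lambda(r),2\lambda(r)]$ and $\lambda(r)\|\varphi_r\|_\infty\le\PI(\lambda,\varphi)+1$ for $r$ near $1$, the $A_0$-term is $\lesssim\omega(2\lambda(r))/\lambda(r)\to0$; this is the one point where the hypothesis $\PI(\lambda,\varphi)<\infty$ enters.

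For $k\ge1$, on $A_k$ one has $|\varphi_r(x-t)|\le\varphi_r^*(2^{k-1}\lambda(r))$, and monotonicity of $\varphi_r^*$ gives $\varphi_r^*(2^{k-1}\lambda(r))\le(2^{k-2}\lambda(r))^{-1}\int_{2^{k-2}\lambda(r)}^{2^{k-1}\lambda(r)}\varphi_r^*(t)\,dt$, while $A_k\subseteq[-2^{k+1}\lambda(r),2^{k+1}\lambda(r)]$ gives $|\mu|(A_k)\le\omega(2^{k+1}\lambda(r))$. Only annuli with $2^{k+1}\lambda(r)\le8\delta$ meet $[-\delta,\delta]$, so there $\omega(2^{k+1}\lambda(r))\le2^{k+1}\lambda(r)\,\eta(8\delta)$, where $\eta(s):=\sup_{0<h\le s}\omega(h)/h$ is finite for every $s>0$ and tends to $0$ as $s\to0^+$. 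Multiplying, the $A_k$-term is $\lesssim\eta(8\delta)\int_{2^{k-2}\lambda(r)}^{2^{k-1}\lambda(r)}\varphi_r^*(t)\,dt$, and summing over $k\ge1$ — the intervals $[2^{k-2}\lambda(r),2^{k-1}\lambda(r)]$ being essentially disjoint — produces $\lesssim\eta(8\delta)\,\|\varphi_r^*\|_1\le\eta(8\delta)\,C_\varphi$ by $\mathit{\Phi3}$.

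Collecting the three contributions, for $r$ close to $1$,
\md0
\sup_{|x|\le\lambda(r)}\int_\ZT|\varphi_r(x-t)|\,d|\mu|(t)\le\alpha_\delta(r)+C\,C_\varphi\,\eta(8\delta),
\emd
where for each fixed $\delta$ the quantity $\alpha_\delta(r)\to0$ as $r\to1$; letting $r\to1$ and then $\delta\to0$ completes the proof. I expect the real obstacle to be exactly the ``near'' estimate just sketched: since an arbitrary $\varphi_r$ may oscillate, its crude $L^\infty$-bound can be afforded only on a set of measure $\sim\lambda(r)\asymp\|\varphi_r\|_\infty^{-1}$, and beyond that scale one is forced onto $\varphi_r^*$ and $\mathit{\Phi3}$ — which is also the structural reason a non-regular kernel requires the strong differentiability of $\mu$ whereas a regular one (as in \trm{1.T1}) gets by with ordinary differentiability. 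The remainder is bookkeeping about the order in which the limits in $r$ and $\delta$ are taken, and the ``near'' estimate could equally well be isolated as a separate auxiliary lemma.
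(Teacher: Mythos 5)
Your proof is correct, but it follows a genuinely different route from the paper's. The paper reduces, exactly as you do, to the case $x_0=0$, $\mu'(0)=0$ with the variation measure $|\mu|$ of the corrected measure having zero derivative at $0$, and then bounds $|\Phi_r(\theta,d\mu)|\le\int_\ZT\varphi_r^*(\theta-t)\,d|\mu|(t)$; but from that point it simply reruns the proof of \trm{1.T1} with $\varphi_r$ replaced by $\varphi_r^*$ and $\mu$ by $|\mu|$, i.e.\ it invokes \lem{1.L1-} to approximate $\varphi_r^*(\theta-\cdot)$ by a step function $\sum_j\varepsilon_j\ZI_{I_j}$ on intervals $I_j$ anchored at the origin with $\varepsilon_r\sum_j|I_j|$ bounded via $\PI(\lambda,\varphi)<\infty$, $\mathit{\Phi3}$ and $\mathit{\Phi1}$, so that the averages $|I_j|^{-1}\int_{I_j}d|\mu|\to0$ finish the job. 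You instead estimate $\sup_{|x|\le\lambda(r)}\int_\ZT|\varphi_r(x-t)|\,d|\mu|(t)$ directly by a dyadic annular decomposition centred at the moving point $x$: the innermost scale $\lambda(r)$ is paid for with $\|\varphi_r\|_\infty$ (the only place $\PI(\lambda,\varphi)<\infty$ enters, just as in the paper), the outer annuli are controlled by the monotonicity of $\varphi_r^*$ together with $\mathit{\Phi3}$ and the modulus $\eta$, and the far part by $\mathit{\Phi2}$ — very much in the spirit of the paper's \lem{1.Phi*ineq1} rather than of \lem{1.L1-}. The checkable details (the reformulation of strong differentiability as $|\mu|([-h,h])=o(h)$ after subtracting $\mu'(0)\,dt$, the restriction of the annuli to those meeting $[-\delta,\delta]$, the disjointness of the intervals $[2^{k-2}\lambda(r),2^{k-1}\lambda(r)]$, and the order of limits $r\to1$ then $\delta\to0$) all go through. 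What the paper's route buys is economy: the whole theorem is a two-line corollary of the machinery already built for \trm{1.T1}. What your route buys is a self-contained, more standard maximal-function-style argument that also yields an explicit quantitative modulus (the bound $C\,C_\varphi\,\eta(8\delta)$ plus a term vanishing in $r$), and it makes transparent your closing structural remark — that the crude $L^\infty$ bound is affordable only on a set of measure $\sim\lambda(r)$, which is precisely why the non-regular case needs strong differentiability while the regular case in \trm{1.T1} does not.
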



\begin{theorem}[see \cite{KarSaf1}]\label{1.T3}
If $\{\varphi_r\}$ is an arbitrary approximate identity and the function $\lambda(r)$ satisfies the condition $\PI(\lambda,\varphi)=\infty$, then there exist a function $f\in L^1(\ZT)$  such that
\md1\label{1.2-2}
\limsup_{\stackrel{r\to 1}{y\in\lambda(r,x)}}\Phi_r\left(y,f\right)=\infty
\emd
for all $x\in \ZT$.
\end{theorem}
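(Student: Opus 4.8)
The plan is to construct a single $f\in L^1(\ZT)$ witnessing the divergence \e{1.2-2} simultaneously at every $x\in\ZT$, by superposing many ``bump'' functions whose $\Phi_r$-images blow up over the contracting intervals $\lambda(r,x)$. Since $\PI(\lambda,\varphi)=\infty$, there is a sequence $r_n\uparrow1$ with $\lambda(r_n)\|\varphi_{r_n}\|_\infty\to\infty$; passing to a subsequence we may assume $\lambda(r_n)\|\varphi_{r_n}\|_\infty\ge 4^n$ and also, using $\Phi1$, that $\int_\ZT\varphi_{r_n}(t)\,dt$ is close to $1$. For each $n$ pick a point $t_n\in\ZT$ where $|\varphi_{r_n}(t_n)|$ is comparable to $\|\varphi_{r_n}\|_\infty$ (more precisely, since $\varphi_{r_n}\in L^\infty$, a set $A_n$ of positive measure on which $|\varphi_{r_n}|\ge\tfrac12\|\varphi_{r_n}\|_\infty$). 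The idea is that if $g$ is (a normalized multiple of) the indicator of a small set near the origin, then $\Phi_{r_n}(y,g)=\int\varphi_{r_n}(y-t)g(t)\,dt$ can be made large by placing the mass of $g$ so that $y-t$ lands in $A_n$; and because $y$ is allowed to range over the whole interval $\lambda(r_n,x)$ of length $2\lambda(r_n)$, which is long compared to the scale on which $\varphi_{r_n}$ is essentially supported near its peak, for \emph{every} $x$ there is an admissible $y\in\lambda(r_n,x)$ achieving this.

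The key steps, in order: (1) Normalize the geometry — translate so the relevant large-value set $A_n$ of $\varphi_{r_n}$ sits in a prescribed location, and record that $|A_n|\cdot\|\varphi_{r_n}\|_\infty\le\|\varphi_{r_n}\|_1$, which by $\Phi1$–$\Phi3$ (or just $\Phi1$ together with $\varphi_{r_n}^*$ control) is bounded, so $|A_n|$ is small, of order $1/\|\varphi_{r_n}\|_\infty$. (2) For each $n$, cover $\ZT$ by finitely many arcs $I_{n,1},\dots,I_{n,M_n}$ of length $\asymp\lambda(r_n)$, and on a suitably chosen small subinterval $J_{n,j}$ near (a translate of) the center of $I_{n,j}$ put a bump $f_{n,j}=\varepsilon_n|J_{n,j}|^{-1}\,\ZI_{J_{n,j}}$, where the heights $\varepsilon_n\to0$ are chosen slowly, e.g. $\varepsilon_n=2^{-n}$ times a correction, so that $\sum_{n,j}\|f_{n,j}\|_1=\sum_n\varepsilon_n M_n|J_{n,j}|/|J_{n,j}|\cdot\text{(overlap bookkeeping)}<\infty$; here one must choose $|J_{n,j}|\asymp|A_n|\asymp\|\varphi_{r_n}\|_\infty^{-1}$ and $M_n\asymp 1/\lambda(r_n)$, so $\varepsilon_n M_n|J_{n,j}|\asymp \varepsilon_n/(\lambda(r_n)\|\varphi_{r_n}\|_\infty)\le \varepsilon_n 4^{-n}$, giving an $L^1$ bound. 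Set $f=\sum_{n,j}f_{n,j}\ge0$, so $f\in L^1(\ZT)$. (3) Fix $x\in\ZT$ and $n$. The arc $I_{n,j}$ containing $x$ has length $\asymp\lambda(r_n)$, so there is $y=y_n(x)\in\lambda(r_n,x)$ with $y-J_{n,j}$ (approximately) contained in $A_n+$(the fixed translation vector); hence
\md0
\Phi_{r_n}(y,f)\ge \Phi_{r_n}(y,f_{n,j})-\text{(error)}\ge \tfrac12\varepsilon_n|J_{n,j}|^{-1}\cdot\|\varphi_{r_n}\|_\infty\cdot|J_{n,j}\cap(\text{good set})| - \text{(error)}\ge c\,\varepsilon_n\|\varphi_{r_n}\|_\infty\cdot|J_{n,j}|.
\emd
(4) Estimate the error term: the contributions of the other bumps $f_{m,i}$, $(m,i)\neq(n,j)$, to $\Phi_{r_n}(y,f)$. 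For $m\neq n$ these are controlled using $\Phi2$/$\Phi3$ — the bumps at scale $m$ are either far from $y$ (so $\varphi_{r_n}^*$ is small there) or have small total mass — while the other bumps at the same level $n$ are at distance $\gtrsim\lambda(r_n)$ from $y$, again where $\varphi_{r_n}^*$ controls things via $\Phi3$; the total error stays $O(1)$ or $o\big(\varepsilon_n\|\varphi_{r_n}\|_\infty|J_{n,j}|\big)$ after tuning constants. Choosing $|J_{n,j}|\asymp\|\varphi_{r_n}\|_\infty^{-1}$ gives $\Phi_{r_n}(y_n(x),f)\ge c'\varepsilon_n$ — not yet divergent, so one instead makes $|J_{n,j}|$ a touch larger, say $|J_{n,j}|=\sqrt{\lambda(r_n)\|\varphi_{r_n}\|_\infty}\cdot\|\varphi_{r_n}\|_\infty^{-1}$, balancing the $L^1$-cost against the peak gain so that $\Phi_{r_n}(y_n(x),f)\to\infty$ while $\|f\|_1<\infty$; this balancing is the crux.

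The main obstacle is exactly this last balancing act together with controlling the interaction errors uniformly in $x$: one needs the bump at level $n$ near $x$ to be simultaneously (a) cheap enough in $L^1$ that the whole series converges, and (b) tall and wide enough that its $\Phi_{r_n}$-image over the admissible window $\lambda(r_n,x)$ tends to infinity, (c) well-separated from all other bumps so that cross terms, estimated only through the weak information $\Phi2$ and $\Phi3$ about the decreasing majorant $\varphi_r^*$, do not swamp the main term — and all of this for every $x\in\ZT$ at once. Getting (a)–(c) to coexist forces the precise choice of the scales $|J_{n,j}|$, $M_n$ and heights $\varepsilon_n$ in terms of $\lambda(r_n)$ and $\|\varphi_{r_n}\|_\infty$; once these are fixed, verifying \e{1.2-2} for arbitrary $x$ is a matter of applying the estimates of step (3)–(4) along the sequence $r_n$.
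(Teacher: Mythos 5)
Your overall architecture is the right one and is essentially the paper's: pick $r_k\uparrow 1$ with $\lambda(r_k)\|\varphi_{r_k}\|_\infty$ growing fast, at each level spread bumps periodically over $\ZT$ at spacing $\asymp\lambda(r_k)$, with bump width tied to the set where $|\varphi_{r_k}|\ge\tfrac12\|\varphi_{r_k}\|_\infty$, sum the levels with geometric weights, and for each $x$ use an admissible $y\in\lambda(r_k,x)$ to align the kernel peak with the nearby bump. But the two places you leave as assertions are where the proof actually lives, and your sketch of them is flawed. First, the kernels are signed: with nonnegative bumps the aligned integral $\int_{y-J}\varphi_{r_n}$ can vanish by cancellation even though $|\varphi_{r_n}|\ge\tfrac12\|\varphi_{r_n}\|_\infty$ there; the paper's $f_r$ carries the factor $\sgn\varphi_r(\cdot)$ exactly to turn this into $\int|\varphi_{r_n}|$ (and, as a bonus, to make all same-level bumps contribute nonnegatively). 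Second, the balancing is off. Since $A_n=\{|\varphi_{r_n}|\ge\tfrac12\|\varphi_{r_n}\|_\infty\}$ has measure at most $2C_\varphi/\|\varphi_{r_n}\|_\infty$, the aligned gain is $\varepsilon_n|J|^{-1}\cdot\tfrac12\|\varphi_{r_n}\|_\infty\,|J\cap(y-A_n)|$, which is of order $\varepsilon_n\|\varphi_{r_n}\|_\infty$ when $|J|$ is at the scale of $A_n$ and only \emph{decreases} if $|J|$ is enlarged beyond that scale; your expression $c\,\varepsilon_n\|\varphi_{r_n}\|_\infty|J_{n,j}|$ and the proposed fix $|J|\asymp\sqrt{\lambda\|\varphi\|_\infty}\,\|\varphi\|_\infty^{-1}$ stem from this slip (note also that with your normalization $\|f_{n,j}\|_1=\varepsilon_n$, so the level-$n$ mass is $\varepsilon_nM_n\asymp\varepsilon_n/\lambda(r_n)$, not $\varepsilon_nM_n|J_{n,j}|$). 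The correct balance, which is the paper's, keeps the bump width at the scale of $A_n$, gives each bump mass $\asymp 2^{-k}\lambda(r_k)$ (total $2^{-k}$ per level), and yields a main term $\asymp 2^{-k}\lambda(r_k)\|\varphi_{r_k}\|_\infty$, divergent by the choice of $r_k$.

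The deeper gap is the cross-term control, which you dismiss with ``far from $y$ or small total mass.'' That reasoning fails for the finer levels $m>n$: the level-$m$ family has total mass $2^{-m}$, and the trivial bound $2^{-m}\|\varphi_{r_n}\|_\infty$, summed over $m>n$, gives $2^{-n}\|\varphi_{r_n}\|_\infty$, which exceeds the main term $\asymp 2^{-n}\lambda(r_n)\|\varphi_{r_n}\|_\infty$ by the factor $1/\lambda(r_n)$; moreover some finer bumps do land exactly where $\varphi_{r_n}$ peaks. One needs an estimate exploiting that the finer arrays are equidistributed: in the paper this is \lem{1.BV-means}, which guarantees $\sup_\theta|\Delta_{r_j}|^{-1}\int_{\Delta_{r_j}}\varphi^*_{r_k}(\theta-t)\,dt\le C_\varphi$ provided $r_j$ is chosen, inductively, close enough to $1$ (conditions \e{1.2-23}, \e{1.2-23.1}); the coarser levels $j<k$ are then absorbed by taking $\lambda(r_k)\|\varphi_{r_k}\|_\infty$ larger than $C_\varphi 2^{k+3}\bigl(1+k+\max_{j<k}|\Delta_{r_j}|^{-1}\bigr)$. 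Without an argument of this kind (or an equivalent inductive selection of the $r_m$ making the fine-level interactions quantitatively small), the step you yourself call ``the crux'' remains open, so the proposal is a correct plan but not yet a proof.
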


Additionally, we prove that the bound $\TPI_p(\lambda,\varphi)<\infty$ provides weak type inequalities in spaces $L^p(\ZT),\,1\le p<\infty$.


\begin{theorem}[see \cite{Saf2}]\label{1.weakpp}
Let $\{\varphi_r\}$ be an arbitrary approximate identity and for some $1\le p < \infty$ the function $\lambda(r)$ satisfies
\md1\label{1.supboundLp}
\TPI_p(\lambda, \varphi) = \sup_{0<r<1} \lambda(r)\|\varphi_r\|_{\infty}\FI(r)^{p-1} < \infty.
\emd
Then for any $f\in L^p(\ZT)$
\md1\label{1.Phi*-bound}
\Phi_{\lambda}^*(x,f) \le C \left(M|f|^p(x)\right)^{1/p},\quad x\in\ZT,
\emd
where the constant $C$ does not depend on function $f$. In particular, the operator $\Phi_{\lambda}^*$ is of weak type $(p,p)$, i.e.
\md0
\left|\left\{x\in\ZT \colon \Phi_{\lambda}^*(x,f) > t\right\}\right| \le \frac{\tilde{C}}{t^p}\|f\|_p^p
\emd
holds for any $t>0$, where constant $\tilde{C}$ does not depend on function $f$ and $t$.
\end{theorem}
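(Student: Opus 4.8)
The plan is to reduce the whole statement to the pointwise inequality \e{1.Phi*-bound}. Once $\Phi_\lambda^*(x,f)\le C\,(M|f|^p(x))^{1/p}$ is known, the weak type $(p,p)$ bound is immediate, since $\{x\colon\Phi_\lambda^*(x,f)>t\}\subseteq\{x\colon M|f|^p(x)>(t/C)^p\}$ and $M$ is of weak type $(1,1)$, applied to $|f|^p\in L^1(\ZT)$. So I fix $x\in\ZT$, a parameter $0<r<1$, and a point $y$ with $|x-y|<\lambda(r)$; since $|\varphi_r(u)|\le\varphi_r^*(|u|)$ one has
\md0
|\Phi_r(y,f)|\le\int_\ZT\varphi_r^*(|y-t|)\,|f(t)|\,dt ,
\emd
and the task is to dominate the right-hand side by $C\,(M|f|^p(x))^{1/p}$ with $C$ depending only on $p$, $C_\varphi$ and $\TPI_p(\lambda,\varphi)$. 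I use only two properties of the majorant: the elementary two-sided bound
\md0
\varphi_r^*(s)\le\min\left(\|\varphi_r\|_\infty,\ \frac{\FI(r)}{s}\right),\qquad 0<s\le\pi ,
\emd
which is just the definition of $\|\varphi_r\|_\infty$ and of $\FI(r)=\sup_s|s\,\varphi_r^*(s)|$, and the uniform bound $\|\varphi_r^*\|_1\le C_\varphi$ from $\mathit{\Phi3}$. I then split $\ZT\setminus\{y\}$ into the dyadic annuli $A_j=\{2^{-j-1}\lambda(r)<|y-t|\le 2^{-j}\lambda(r)\}$ ($j\ge0$), filling out the ball $\{|y-t|\le\lambda(r)\}$, and $D_k=\{2^{k-1}\lambda(r)<|y-t|\le 2^{k}\lambda(r)\}$ ($k\ge1$), covering the rest, with the outermost truncated at $\pi$. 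The one geometric fact exploited on every annulus is that, because $|x-y|<\lambda(r)$, each of them lies in a ball about $x$ of comparable radius: $A_j\subseteq\{|x-t|<2\lambda(r)\}$ and $D_k\subseteq\{|x-t|<2^{k+1}\lambda(r)\}$.

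On the outer annuli I estimate $\varphi_r^*(|y-t|)\le\varphi_r^*(2^{k-1}\lambda(r))$, apply Hölder's inequality and the containment to get $\int_{D_k}|f|\le C\,2^{k}\lambda(r)\,(M|f|^p(x))^{1/p}$, and sum over $k$. Since $\varphi_r^*$ is nonincreasing, $\sum_{k\ge1}\varphi_r^*(2^{k-1}\lambda(r))\,2^{k}\lambda(r)$ is controlled by a multiple of $\int_0^\pi\varphi_r^*(u)\,du=\tfrac12\|\varphi_r^*\|_1\le\tfrac12 C_\varphi$, so the outer contribution is at most $C\,C_\varphi\,(M|f|^p(x))^{1/p}$.

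On the inner annuli (for $1<p<\infty$; the case $p=1$ is simpler, see below) I use the full bound $\varphi_r^*(|y-t|)\le\min\bigl(\|\varphi_r\|_\infty,\ 2^{j+1}\FI(r)/\lambda(r)\bigr)$, Hölder's inequality with exponent $q=p/(p-1)$, and $A_j\subseteq\{|x-t|<2\lambda(r)\}$ (so $\int_{A_j}|f|^p\le 4\lambda(r)M|f|^p(x)$ for every $j$), obtaining
\md0
\int_{A_j}\varphi_r^*(|y-t|)\,|f(t)|\,dt\le C\,\lambda(r)\,\min\!\left(\|\varphi_r\|_\infty,\ \frac{2^{j+1}\FI(r)}{\lambda(r)}\right)2^{-j/q}\,(M|f|^p(x))^{1/p}.
\emd
Summing over $j$ is a routine computation organised around the crossover index $2^{j}\asymp\lambda(r)\|\varphi_r\|_\infty/\FI(r)$: below it the terms form an increasing geometric series dominated by its last term, above it a decreasing geometric series dominated by its first term, and both extremal terms are, up to a $p$-dependent constant, equal to $\bigl(\lambda(r)\|\varphi_r\|_\infty\FI(r)^{p-1}\bigr)^{1/p}(M|f|^p(x))^{1/p}$. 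Hence the inner contribution is at most $C_p\,\TPI_p(\lambda,\varphi)^{1/p}(M|f|^p(x))^{1/p}$. Adding the two parts and taking the supremum over all admissible $y$ and $r$ gives \e{1.Phi*-bound}, and the weak type $(p,p)$ inequality follows as explained.

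The step I expect to be the genuine obstacle is exactly this inner estimate, and the point is the interplay of the two scales. If one were allowed to use only $\varphi_r^*\le\|\varphi_r\|_\infty$ on the whole ball $\{|y-t|\le\lambda(r)\}$ — which is what works, and all that is needed, when $p=1$, since then the hypothesis reads $\sup_r\lambda(r)\|\varphi_r\|_\infty<\infty$ — one would be left with the factor $\lambda(r)\|\varphi_r\|_\infty$, which for $p>1$ need not be bounded (it tends to infinity, e.g., for the square root Poisson kernel). So for $p>1$ one must genuinely use the decay $\varphi_r^*(s)\le\FI(r)/s$ and split the summation at the radius $\FI(r)/\|\varphi_r\|_\infty$; it is precisely the equality of the two geometric tails at that radius that forces the exponent $p-1$ on $\FI(r)$ in $\TPI_p$, so that this is the correct quantity. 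One further, routine point is the degenerate range $\lambda(r)\ge\pi$ (where $\lambda(r,x)=\ZT$ and there is no outer part): there one runs the inner decomposition with $\lambda(r)$ replaced by $\pi$, and the estimate goes through verbatim because $\lambda(r)\ge\pi$ and $\|\varphi_r^*\|_1\le C_\varphi$.
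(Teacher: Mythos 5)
Your proposal is correct, and it reaches \e{1.Phi*-bound} by a route that is organized differently from the paper's. The paper fixes the auxiliary scale $\mu(r)=\FI(r)/\|\varphi_r\|_\infty$ and splits the convolution into the three ranges $|t|\le\mu(r)$, $\mu(r)<|t|<\lambda(r)$, $\lambda(r)\le|t|\le\pi$; the two inner ranges are estimated through averages $m_f(y,A\mu(r))$ centered at the displaced point $y$, and passing from these to $M|f|^p(x)$ costs an enlargement factor of order $|x-y|/(A\mu(r))$, which is absorbed via Jensen's inequality and the hypothesis in a dedicated operator lemma (\lem{1.lemma-TAbound}, used inside \lem{1.lemma-supmulambda}). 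You avoid that machinery entirely: by applying H\"{o}lder on each annulus $A_j$ against the $L^p$ mass of $f$ on the single ball $\{|x-t|<2\lambda(r)\}$, you never form averages around $y$ at scales smaller than $\lambda(r)$, the factor $2^{-j/q}$ coming from the annulus measure makes the series summable, and the crossover of $\min\left(\|\varphi_r\|_\infty,\FI(r)/s\right)$ at $s\asymp\FI(r)/\|\varphi_r\|_\infty$ — the same scale the paper isolates as $\mu(r)$ — produces exactly the quantity $\TPI_p^{1/p}$, which is where the exponent $p-1$ on $\FI(r)$ is forced, as you observe. Your outer estimate is the same as \lem{1.Phi*ineq1} (monotonicity of $\varphi_r^*$ plus $\|\varphi_r^*\|_1\le C_\varphi$), and the deduction of the weak type inequality from \e{1.Phi*-bound} is identical to the paper's. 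In a final write-up you should make explicit, besides the $\lambda(r)\ge\pi$ case you already note, the degenerate crossover $\lambda(r)\|\varphi_r\|_\infty\le 2\FI(r)$, where the inner sum is a single decreasing geometric series bounded by a multiple of $\lambda(r)\|\varphi_r\|_\infty\le 2^{1/q}\TPI_p^{1/p}$ (or simply by $2\FI(r)\le 2C_\varphi$ via \lem{1.phi-r-bound}); this is routine. What the paper's route buys is a set of reusable lemmas stated for general pairs $\mu(r)\le\lambda(r)$ with $\lambda(r)\le C\mu(r)\FI^{-p}(r)$; what yours buys is a shorter, self-contained argument with no auxiliary maximal operator.
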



In \sect{fatou-Linfty} we prove that the condition $\PI_\infty(\lambda,\varphi)=0$ is necessary and sufficient for almost everywhere $\lambda(r)-$convergence of the integrals $\Phi_r(x, f),\,f\in L^\infty(\ZT)$.
\begin{theorem}[see \cite{KarSaf1}]\label{1.T5}
If $\{\varphi_r\}$ is a regular approximate identity consisting of even functions and
\md0
\PI_\infty(\lambda,\varphi) = \limsup_{\delta \to 0}\limsup_{r\to 1} \int_{-\delta \lambda(r)}^{\delta \lambda(r)}\varphi_r(t)dt = 0,
\emd
then for any $f\in L^\infty( \ZT)$ the relation
\md0
\lim_{\stackrel{r\to 1}{y\in\lambda(r,x)}}\Phi_r\left(y,f\right)=f(x)
\emd
holds at any Lebesgue point $x\in\ZT$.
\end{theorem}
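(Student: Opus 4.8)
The plan is to reduce, by translation, to the case of a Lebesgue point $x_0=0$: since neither $\PI_\infty(\lambda,\varphi)$ nor the family $\{\varphi_r\}$ refers to $x_0$, and $\Phi_r(y,f)=\Phi_r\bigl(y-x_0,f(\cdot+x_0)\bigr)$, replacing $f$ by $f(\cdot+x_0)$ we may assume $x_0=0$. Write $g(t)=|f(t)-f(0)|$, so $g\in L^\infty(\ZT)$ with $\|g\|_\infty\le 2\|f\|_\infty$ and the Lebesgue-point hypothesis reads $G(h):=\int_{-h}^{h}g(t)\,dt=o(h)$ as $h\to0^+$. Because $\varphi_r\ge0$ and
\[
\Phi_r(y,f)-f(0)=\int_\ZT\varphi_r(y-t)\bigl(f(t)-f(0)\bigr)\,dt+f(0)\Bigl(\int_\ZT\varphi_r-1\Bigr),
\]
with the last term tending to $0$ by $\mathit{\Phi1}$, it suffices to show that $\sup_{|y|\le\lambda(r)}\int_\ZT\varphi_r(y-t)g(t)\,dt\to0$ as $r\to1$. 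Using evenness I regard each $\varphi_r$ as a right-continuous decreasing function on $[0,\pi]$ and let $\mu_r$ be the nonnegative Stieltjes measure $-d\varphi_r$ on $(0,\pi]$, so that $\varphi_r(a)=\varphi_r(\pi)+\mu_r\bigl((a,\pi]\bigr)$ and, by regularity, $\varphi_r^*(a)=\varphi_r(a)$ for $a\in(0,\pi]$.

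Then, given $\varepsilon\in(0,1)$, I would first use $\PI_\infty(\lambda,\varphi)=0$ to fix $\delta\in(0,1)$ and $r_1<1$ with $\int_{-\delta\lambda(r)}^{\delta\lambda(r)}\varphi_r<\varepsilon$ for $r\in(r_1,1)$, set $K=2/\delta$, and then use $G(h)=o(h)$ to fix $\eta\in(0,\pi/2)$ with $G(h)\le\tfrac{\varepsilon}{3K}\,h$ for $0<h\le K\eta$. For $r>r_1$ close enough to $1$ that $\lambda(r)<\eta$ and for $|y|\le\lambda(r)$, I split $\int_\ZT\varphi_r(y-t)g(t)\,dt$ at $|y-t|=\delta\lambda(r)$. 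On $\{|y-t|\le\delta\lambda(r)\}$ the contribution is, by translation invariance, $\le\|g\|_\infty\int_{-\delta\lambda(r)}^{\delta\lambda(r)}\varphi_r<2\varepsilon\|f\|_\infty$. On $\{|y-t|>\delta\lambda(r)\}$, inserting $\varphi_r(y-t)=\varphi_r(\pi)+\mu_r\bigl((|y-t|,\pi]\bigr)$ and applying Fubini, the contribution equals
\[
\varphi_r(\pi)\int_{|y-t|>\delta\lambda(r)}\!g(t)\,dt\;+\;\int_{(\delta\lambda(r),\pi]}\Bigl(\int_{\delta\lambda(r)<|y-t|<s}\!\!g(t)\,dt\Bigr)\,d\mu_r(s).
\]
The first term is $\le 2\pi\|g\|_\infty\varphi_r^*(\pi)\to0$ by $\mathit{\Phi2}$. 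In the double integral: over $s\in(\eta,\pi]$ the inner integral is $\le\|g\|_1$, so that range contributes $\le\|g\|_1\mu_r\bigl((\eta,\pi]\bigr)\le\|g\|_1\varphi_r^*(\eta)\to0$ by $\mathit{\Phi2}$; over $s\in(\delta\lambda(r),\eta]$ we have $\lambda(r)<s/\delta$, hence $\lambda(r)+s<Ks\le K\eta$, and since $\{t:|y-t|<s\}\subseteq\{|t|<\lambda(r)+s\}$ the inner integral is $\le G(\lambda(r)+s)\le\tfrac{\varepsilon}{3}s$, so that range contributes $\le\tfrac{\varepsilon}{3}\int_{(\delta\lambda(r),\eta]}s\,d\mu_r(s)$. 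A final Fubini computation — using $\mu_r\bigl((a,\eta]\bigr)\le\varphi_r(a)$, the bound $\delta\lambda(r)\varphi_r(\delta\lambda(r))\le\tfrac12\int_{-\delta\lambda(r)}^{\delta\lambda(r)}\varphi_r<\varepsilon$ (valid since $\varphi_r$ is even and decreasing), and $\int_0^\pi\varphi_r\le\int_\ZT\varphi_r\le 2$ for $r$ near $1$ (by $\mathit{\Phi1}$) — gives $\int_{(\delta\lambda(r),\eta]}s\,d\mu_r(s)\le\varepsilon+2$, so this range contributes $\le\varepsilon$.

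Adding the estimates, $\limsup_{r\to1}\sup_{|y|\le\lambda(r)}\int_\ZT\varphi_r(y-t)g(t)\,dt\le(2\|f\|_\infty+1)\varepsilon$ for every $\varepsilon>0$, hence it is $0$; combined with $\mathit{\Phi1}$ this yields $\sup_{|y|\le\lambda(r)}|\Phi_r(y,f)-f(0)|\to0$, which is exactly the asserted $\lambda(r)$-convergence of $\Phi_r(\cdot,f)$ to $f(0)$ at $0$. The step I expect to be the real obstacle is that $\PI_\infty(\lambda,\varphi)=0$ gives no control over $\lambda(r)\|\varphi_r\|_\infty$ (that product can tend to $\infty$), so $\|\varphi_r\|_\infty$ must not be used crudely; one is forced to exploit the monotone decay of $\varphi_r$ through the measure $\mu_r$ and to split the $t$-integral into three ranges governed by three different hypotheses — $\{|y-t|\le\delta\lambda(r)\}$ by $\PI_\infty$, $\{\delta\lambda(r)<|y-t|\le\eta\}$ by the Lebesgue-point estimate together with the uniform bound $\int_\ZT\varphi_r\lesssim1$, and $\{|y-t|>\eta\}$ by $\mathit{\Phi2}$ — choosing the parameters in the order $\varepsilon\to\delta\to\eta\to r$.
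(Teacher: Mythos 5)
Your proof is correct, but it follows a genuinely different route from the paper's. The paper also isolates the central spike $\{|t|\le\delta\lambda(r)\}$ using $\PI_\infty(\lambda,\varphi)=0$, but then it decomposes the kernel itself: it writes $\varphi_r=\varphi_r^{(1)}+(1-l_r)\varphi_r^{(2)}$, where $\varphi_r^{(1)}=(\varphi_r-\varphi_r(\delta\lambda(r)))\ZI_{\{|t|\le\delta\lambda(r)\}}$ is the part of the spike above the level $\varphi_r(\delta\lambda(r))$ (so its contribution is at most $\varepsilon\|f\|_\infty$), and $\varphi_r^{(2)}$ is the renormalized remainder, which is again a regular approximate identity satisfying $\lambda(r)\|\varphi_r^{(2)}\|_\infty\le\varepsilon/(4\delta)$, i.e. $\PI(\lambda,\varphi^{(2)})<\infty$; convergence at Lebesgue points for the $\varphi^{(2)}$ part is then obtained by simply invoking \trm{1.T1}, and the two estimates are combined letting $\varepsilon\to0$. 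You instead keep the original kernel and estimate $\int_\ZT\varphi_r(y-t)|f(t)-f(x)|\,dt$ directly in three ranges, handling the intermediate range $\delta\lambda(r)<|y-t|\le\eta$ by the Lebesgue-point bound $G(h)=o(h)$ together with a layer-cake (Stieltjes-measure/summation-by-parts) representation of the monotone kernel, and the outer range by $\mathit{\Phi2}$. In effect you re-prove, in-line, the Fatou-type estimate that the paper imports from \trm{1.T1} (whose proof goes through the step-function approximation of \lem{1.L1-}), which makes your argument self-contained and quantitatively transparent; the paper's route is shorter because it reuses the already established theorem and avoids the Fubini bookkeeping, at the price of the truncate-and-renormalize trick. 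Both arguments use exactly the same hypotheses in the same roles (positivity, evenness and monotonicity for the kernel manipulation, $\mathit{\Phi1}$ for normalization, $\mathit{\Phi2}$ for the tails, $\PI_\infty=0$ for the spike), and your parameter order $\varepsilon\to\delta\to\eta\to r$ is sound; the only points to state explicitly in a final write-up are that $\eta$ is chosen small enough that $K\eta\le\pi$ (so $G(\lambda(r)+s)$ makes sense in the Lebesgue-point range) and that passing to the right-continuous version of $\varphi_r$ changes nothing in the integrals.
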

\begin{theorem}[see \cite{KarSaf1}]\label{1.T6}
If $\{\varphi_r\}$ is a regular approximate identity consisting of even functions and $\PI_\infty(\lambda,\varphi)>0$, then there exists a set $E\subset \ZT$, such that
$\Phi_r\left(x,\ZI_E\right)$ is $\lambda(r)-$divergent at any $x\in\ZT$.
\end{theorem}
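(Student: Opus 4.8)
The plan is to reduce the assertion to a purely geometric construction. For $x\in\ZT$ set
\md0
S^+(x)=\limsup_{r\to1}\ \sup_{\theta\in\lambda(r,x)}\Phi_r(\theta,\ZI_E),\qquad S^-(x)=\liminf_{r\to1}\ \inf_{\theta\in\lambda(r,x)}\Phi_r(\theta,\ZI_E).
\emd
Since $\|\Phi_r(\cdot,\ZI_E)\|_\infty\le\|\varphi_r\|_1\le C_\varphi+o(1)$, both numbers are finite, and an elementary manipulation of $\limsup$ and $\liminf$ shows that $\Phi_r(\cdot,\ZI_E)$ is $\lambda(r)$-convergent at $x$ exactly when $S^+(x)=S^-(x)$. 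Hence it suffices to produce a measurable $E\subset\ZT$ with $S^-(x)<S^+(x)$ for \emph{every} $x$; the construction will in fact give $S^+(x)\ge a\PI_\infty$ and $S^-(x)\le b\PI_\infty$ with absolute constants $0<b<a$.

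I record the only place where $\PI_\infty(\lambda,\varphi)>0$ enters: because $\varphi_r\ge0$, the function $g(\delta)=\limsup_{r\to1}\int_{-\delta\lambda(r)}^{\delta\lambda(r)}\varphi_r(t)\,dt$ is non-decreasing, so $\PI_\infty=\lim_{\delta\to0^+}g(\delta)=\inf_{\delta>0}g(\delta)$ and therefore $g(\delta)\ge\PI_\infty>0$ for every $\delta>0$. I fix once a sequence $\eta_k\downarrow0$ with $\sum_k\eta_k$ as small as I please; by $g(\eta_k)\ge\PI_\infty$ there are $r_k\uparrow1$ with $\int_{-\eta_k\lambda(r_k)}^{\eta_k\lambda(r_k)}\varphi_{r_k}(t)\,dt\ge\PI_\infty/2$, and I pick the $r_k$ inductively so that $\lambda(r_k)$ is tiny compared with everything fixed at earlier stages (in particular with $\eta_{k-1}\lambda(r_{k-1})$ and with $1/\|\varphi_{r_1}\|_\infty,\dots,1/\|\varphi_{r_{k-1}}\|_\infty$). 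The set $E=\bigcup_{k\ge1}E_k$ (pairwise disjoint $E_k$) is built generation by generation: given a finite family $G_{k-1}$ of "free" arcs ($G_0=\{\ZT\}$), partition each $A\in G_{k-1}$ into consecutive sub‑arcs of length $\asymp\lambda(r_k)$; in the centre of each mark a window $W$ of length $2\eta_k\lambda(r_k)$ and let $E_k\cap W$ be a \emph{porous fat Cantor set} of relative measure $1-\gamma_k$ inside $W$, with $\gamma_k>0$ so small that $\|\varphi_{r_k}\|_\infty\,\gamma_k\,\eta_k\lambda(r_k)$ is a negligible fraction of $\PI_\infty$; let $G_k$ consist of sub‑arcs lying in the gaps between consecutive windows and at distance $\gtrsim\lambda(r_k)$ from $E_k$, so that $G_k\subset G_{k-1}$ and $\operatorname{dist}(G_k,E_j)\gtrsim\lambda(r_j)$ for all $j\le k$.

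The two estimates then run as follows. For the lower bound, fix $x$ lying in every $G_{k-1}$; then $\lambda(r_k,x)$, of length $2\lambda(r_k)$, contains a whole generation‑$k$ window $W$ with centre $c$, and since $\varphi_{r_k}$ is non‑negative, even and decreasing away from $0$,
\md0
\Phi_{r_k}(c,\ZI_E)\ \ge\ \int_{E_k\cap W}\varphi_{r_k}(c-t)\,dt\ \ge\ \int_{-\eta_k\lambda(r_k)}^{\eta_k\lambda(r_k)}\varphi_{r_k}(t)\,dt\ -\ \|\varphi_{r_k}\|_\infty\,|W\setminus E_k|\ \ge\ a\PI_\infty,
\emd
so $S^+(x)\ge a\PI_\infty$. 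For the upper bound, $\lambda(r_k,x)$ also contains a point $\theta'\in G_k$, which is $\gtrsim\lambda(r_j)$‑far from $E_j$ for all $j\le k$; writing $\Phi_{r_k}(\theta',\ZI_E)=\sum_j\int_{E_j}\varphi_{r_k}(\theta'-t)\,dt$ and using that each $E_j$ is a union of arcs of length $\le\eta_j\lambda(r_j)$ that are mutually $\gtrsim\lambda(r_j)$‑separated, a bare summation against the decreasing majorant $\varphi_{r_k}^*$ (with $\|\varphi_{r_k}^*\|_1\le C_\varphi$) gives $\int_{E_j}\varphi_{r_k}(\theta'-t)\,dt\le C\eta_j C_\varphi$ for $j\le k$, while for $j>k$ the scale gap $\lambda(r_j)\|\varphi_{r_k}\|_\infty\le1$ yields the same bound; this is uniform in $k$ and, crucially, insensitive to the size of $\|\varphi_{r_k}\|_\infty$. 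Summing, $\Phi_{r_k}(\theta',\ZI_E)\le C\,C_\varphi\sum_j\eta_j=b\PI_\infty<a\PI_\infty$, so $S^-(x)\le b\PI_\infty<S^+(x)$ and $\Phi_r(\cdot,\ZI_E)$ is $\lambda(r)$‑divergent at such $x$.

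It remains to treat a point $x$ that fails to lie in some $G_{k_0}$, i.e. one that at all small scales sits inside (or in the thin buffer of) the generation‑$k_0$ structure, every later generation staying a fixed distance away. Here porosity is precisely what is needed: at each small scale $\rho$, $\lambda(\cdot,x)$ still meets $E_{k_0}$ with its high local density (reproducing the lower estimate) and a gap of $E_{k_0}$ of size $\asymp\rho$ (supplying the test point $\theta'$), so both inequalities survive — except that the term $\int_{E_{k_0}}\varphi_r(\theta'-t)\,dt$ must now be absorbed with $\theta'$ only $\asymp\lambda(r)$‑far from $E_{k_0}$, where the separation trick is useless and smallness depends on how much mass $\varphi_r$ puts at scales $\gtrsim\lambda(r)$, i.e. on $g(\cdot)$. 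Making this go through requires calibrating the Cantor parameter $\gamma_{k_0}$ of each generation against the radii used to witness divergence at its trapped points, while keeping that calibration compatible with the conditions already imposed above; this bookkeeping — reconciling "divergence at every point'' with "$\varphi_r$ of arbitrary profile'' — is, I expect, the main obstacle. (Everything transfers verbatim to the real‑line setting indicated in the introduction.)
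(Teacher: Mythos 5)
Your construction proves divergence only on the residual set $\bigcap_k G_k$, and the case you leave open --- points $x$ that get trapped inside the generation-$k_0$ structure --- is not a bookkeeping matter but the whole content of the theorem, and your ``grow only inside the free arcs'' scheme is structurally unable to deliver it. Once $x\notin G_{k_0}$, every later generation avoids a neighbourhood of $x$, so divergence at $x$ would have to be produced by the fixed set built up to stage $k_0$ alone. But the hypothesis $\PI_\infty(\lambda,\varphi)>0$ only guarantees, along some sequence $r_k\to 1$, mass $\gtrsim\PI_\infty$ of $\varphi_{r_k}$ in the shrinking window $|t|\le\delta_k\lambda(r_k)$, and says nothing about where the remaining mass (controlled only by $C_\varphi$) sits. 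Placing $\theta'$ in a porosity gap of $E_{k_0}$ of size $\asymp\lambda(r_k)$ does not make $\Phi_{r_k}(\theta',\ZI_{E_{k_0}})$ small: the mass of $\varphi_{r_k}$ lying just beyond that gap meets $E_{k_0}$ at local density $\approx 1$ and can drive the value at $\theta'$ up to essentially the same level as at a density point, so the oscillation of $\Phi_{r_k}(\cdot,\ZI_{E_{k_0}})$ over $\lambda(r_k,x)$ may tend to $0$; no choice of the single Cantor parameter $\gamma_{k_0}$ can repair this, because a fixed set is eventually ``locally constant'' at the scales $\delta_k\lambda(r_k)$ where the hypothesis lives. (Your estimates at the good points are fine: the window lower bound, and the tail bounds using separation for $j\le k$ and $\lambda(r_j)\|\varphi_{r_k}\|_\infty\le 1$ for $j>k$, do work with your inductive choice of $r_k$; the failure is only, but fatally, at the trapped points.)

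The paper avoids this by never freezing any region: at each step the \emph{whole} circle is modified by a symmetric difference with a globally periodic set, $E_k=E_{k-1}\bigtriangleup U_{n_k}^{\delta_k}$ with period $2\pi/n_k\asymp\lambda(r_k)$. Then every $x$, whatever its history, lies in an adjacent interval $J$ of $E_{k-1}$ on which $E_k$ coincides with $J\cap U_k$ or $J\setminus U_k$, and \lem{1.L1} (which exploits evenness and monotonicity of $\varphi_{r_k}$ through the alternating sums $a_0-a_1-a_{-1}$) gives, uniformly in $x$, an oscillation at scale $\asymp\lambda(r_k)$ of at least $\int_{-\delta_k\lambda(r_k)}^{\delta_k\lambda(r_k)}\varphi_{r_k}-16\delta_k-2\pi\varphi_{r_k}\left(|J|/4\right)$; condition \e{1.3-15} suppresses the interaction with the earlier structure (the adjacent intervals of $E_{k-1}$), and condition \e{1.3-16} forces $|E\bigtriangleup E_k|\,\|\varphi_{r_k}\|_\infty\le\PI_\infty/16$, so passing from $E_k$ to the limit set $E$ perturbs $\Phi_{r_k}$ by a negligible amount. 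Unless you replace your nested-free-arc construction by an everywhere-alternating one of this kind (with the analogue of \e{1.3-15} playing the role of your missing ``calibration''), the claim of divergence at \emph{every} point is not reached.
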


\section{Auxiliary lemmas}\label{fatou-lemmas}
The following lemma plays significant role in the proofs of \trm{1.T1} and \trm{1.T2}.
\begin{lemma}\label{1.L1-}
Let a positive function $\varphi\in L^\infty(\ZT)$ is decreasing on $[0,\pi]$ and increasing on $[-\pi,0]$. Then for any numbers $\varepsilon\in (0,1)$ and $\theta\in (-\pi,\pi)$ there exist a finite family of intervals $I_j\subset \ZT$, $j=1,2,\ldots ,n$, containing $0$ in their closures $\bar I_j$,  and numbers $\varepsilon_j=\pm \varepsilon$ such that
\md2
&|I_j|\le 2\sup\{|t|:\, \varphi(t)\ge\varepsilon \},\quad j=1,2,\ldots,n,\\
&\sum_{j=1}^n|I_j|<10\varepsilon^{-1}\max\{1,|\theta|\cdot \|\varphi \|_\infty,\|\varphi \|_1\},\\
&\left|\varphi(x-\theta)-\sum_{j=1}^n\varepsilon_j \ZI_{I_j}(x)\right|\le\varepsilon.
\emd
\end{lemma}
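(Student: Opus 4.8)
The plan is to realise $\varphi(x-\theta)$ as a step function coming from the super-level sets of $\varphi$ (a layer-cake quantization onto the grid $\varepsilon\ZZ$), and then to cut each of the resulting arcs so that it is anchored at $0$, paying at most one extra arc per layer. First I would dispose of the trivial case: put $\rho:=\sup\{|t|\colon\varphi(t)\ge\varepsilon\}$; if $\|\varphi\|_\infty<\varepsilon$ then $\rho=0$, $\varphi<\varepsilon$ a.e.\ and the empty family works, so assume $\|\varphi\|_\infty\ge\varepsilon$ and set $N:=\lfloor\|\varphi\|_\infty/\varepsilon\rfloor<\infty$. For $1\le k\le N$ let $A_k:=\{t\colon\varphi(t)\ge k\varepsilon\}$. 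By the monotonicity hypotheses each $A_k$ is an arc with $0\in\bar A_k$, and $A_N\subseteq\dots\subseteq A_1\subseteq\{\varphi\ge\varepsilon\}\subseteq[-\rho,\rho]$, so $|A_k|\le 2\rho$. Since $\sum_{k=1}^N\ZI_{A_k}(t)=\lfloor\varphi(t)/\varepsilon\rfloor$ a.e., we get $0\le\varphi(t)-\varepsilon\sum_{k=1}^N\ZI_{A_k}(t)<\varepsilon$ a.e.\ and, integrating, $\varepsilon\sum_k|A_k|\le\|\varphi\|_1$. Translating, the arcs $J_k:=\theta+A_k$ carry $\theta$ in their closures and satisfy
\[
\Bigl|\varphi(x-\theta)-\varepsilon\sum_{k=1}^N\ZI_{J_k}(x)\Bigr|<\varepsilon\qquad\text{for a.e.\ }x .
\]

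Now comes the anchoring. For each $k$: if $0\in\bar J_k$ I keep $J_k$ with coefficient $+\varepsilon$; otherwise $J_k$ lies strictly to one side of $0$, say $J_k=[\alpha_k,\beta_k]$ with $0<\alpha_k\le\beta_k$ (the other side is symmetric), and I use the identity $\ZI_{J_k}=\ZI_{[0,\beta_k)}-\ZI_{[0,\alpha_k)}$, i.e.\ I adjoin the two $0$-anchored arcs $[0,\beta_k)$ and $[0,\alpha_k)$ with coefficients $+\varepsilon$ and $-\varepsilon$. Running over $k=1,\dots,N$ yields a finite family $\{I_j\}_{j\le n}$ with $n\le 2N$ and $\varepsilon_j=\pm\varepsilon$, and the displayed inequality is exactly the third assertion. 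For the total length: anchoring layer $k$ changes its contribution from $|J_k|$ to $\beta_k+\alpha_k=|J_k|+2\,\mathrm{dist}(0,J_k)$, and $\mathrm{dist}(0,J_k)\le|\theta|$ because $\theta\in\bar J_k$; hence
\[
\sum_{j\le n}|I_j|\le\sum_{k=1}^N|A_k|+2|\theta|\,N\le\frac{\|\varphi\|_1}{\varepsilon}+\frac{2|\theta|\,\|\varphi\|_\infty}{\varepsilon}\le\frac{10}{\varepsilon}\max\{1,\,|\theta|\,\|\varphi\|_\infty,\,\|\varphi\|_1\},
\]
which is the second assertion.

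The first assertion $|I_j|\le 2\rho$ is immediate for the kept arcs (there $|I_j|=|A_k|\le 2\rho$), and the one place that really requires care is its survival under anchoring: I must check that every super-level arc $J_k$ of the shifted kernel stays inside $[-2\rho,2\rho]$, so that the anchored arcs $[0,\beta_k)$ and $[0,\alpha_k)$ are again of length $\le 2\rho$. This reduces to the largest arc $J_1=\theta+\{\varphi\ge\varepsilon\}$, and it is precisely here that the geometry of $\theta$ relative to $\rho=\sup\{|t|\colon\varphi(t)\ge\varepsilon\}$ is used: combining $\{\varphi\ge\varepsilon\}\subseteq[-\rho,\rho]$ with the control on $|\theta|$ keeps $J_1$ — hence each $J_k\subseteq J_1$ — inside $[-2\rho,2\rho]$. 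I expect this endpoint bound to be the main obstacle; everything else (the layer-cake identity, the count $n\le 2N$, and the length bookkeeping) is routine.
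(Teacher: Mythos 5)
Your construction is essentially the paper's own proof of \lem{1.L1-}: the paper also quantizes $\varphi$ by its super-level arcs, writing the shifted kernel as $\varepsilon\sum_{k\le l}\ZI_{(\theta-x_k,\theta+y_k)}$ with $l=[\|\varphi\|_\infty/\varepsilon]$, and anchors the arcs lying strictly to one side of $0$ through exactly your splitting identity, $\ZI_{(\theta-x_k,\theta+y_k)}=\ZI_{(0,\theta+y_k)}-\ZI_{(0,\theta-x_k]}$, with the same $\pm\varepsilon$ signs. The only real difference is the length bookkeeping: you bound $\varepsilon\sum_k|A_k|\le\|\varphi\|_1$ directly and add $2|\theta|$ per anchored layer, which is a bit cleaner than the paper's route (it recovers the $\|\varphi\|_1$ term by integrating the approximation identity, picking up a harmless $2\pi$, whence the $\max\{1,\cdot,\cdot\}$); both land well under $10\varepsilon^{-1}\max\{1,|\theta|\,\|\varphi\|_\infty,\|\varphi\|_1\}$.

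The step you single out as the main obstacle, namely that the anchored arcs still satisfy $|I_j|\le 2\rho$ with $\rho=\sup\{|t|\colon\varphi(t)\ge\varepsilon\}$, cannot be closed: nothing in the hypotheses controls $|\theta|$ in terms of $\rho$, and for $|\theta|$ large the first and third assertions are in fact incompatible. Take $\varphi=1$ on $[-\rho,\rho]$ and $\varphi=\varepsilon/2$ elsewhere, and $\theta=10\rho$ with $11\rho<\pi$: every interval whose closure contains $0$ and whose length is at most $2\rho$ lies in $[-2\rho,2\rho]$, so any $\pm\varepsilon$ combination of such indicators vanishes on $(9\rho,11\rho)$, where $\varphi(x-\theta)=1>\varepsilon$. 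So this is a defect of the statement (and of the paper's proof, whose anchored intervals $(0,\theta+y_j)$ and $(0,\theta-x_k]$ have lengths up to $|\theta|+y_1$; the closing appeal to \e{1.2-3} and \e{1.2-4} really only covers the unanchored intervals $j\le k_0$), not of your approach. What both constructions actually deliver is $|I_j|\le 2\sup\{|t|\colon\varphi(t)\ge\varepsilon\}+|\theta|$, and this weaker bound is all that the applications use: in the proofs of \trm{1.T1} and \trm{1.T2} one has $|\theta(r)|\le\lambda(r)\to0$ as well as $\delta_r\to0$, so the intervals still shrink to $0$ and the differentiation argument is unaffected. Modulo restating the first inequality in that corrected form (and the routine caveat, shared with the paper, that the pointwise approximation holds off the finitely many level-set endpoints), your argument is complete and coincides with the paper's.
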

\begin{proof}
Denote
\md2
&y_k=\sup\{t>0:\, \varphi(t)\ge\varepsilon k\},\\
&x_k=\sup\{t>0:\, \varphi(-t)\ge\varepsilon k\},\quad
 k=1,2,\ldots, l=\left[\frac{\|\varphi\|_\infty}{\varepsilon}\right].
\emd
Then we obviously have
\md3
&y_0=\pi,\quad 0\le y_l\le y_{l-1}\le  \ldots \le y_1\le \sup\{|t|:\, \varphi(t)\ge\varepsilon \},\label{1.2-3}\\
&x_0=\pi,\quad 0\le x_l\le x_{l-1}\le  \ldots \le x_1\le \sup\{|t|:\, \varphi(t)\ge\varepsilon \},\label{1.2-4}\\
&\left|\varphi(x-\theta)-\varepsilon\sum_{k=1}^l\ZI_{(\theta-x_k,\theta+y_k)}(x)\right|\le \varepsilon.\label{1.2-5}
\emd
Without loss of generality we can suppose $0\le \theta<\pi$. Then we denote
\md0
k_0=\max\left\{ k: 0\le k \le l,\,\theta-x_k\le 0\right\}.
\emd
We define the desired intervals $I_j,\,j=1, 2, \dots, n=2l-k_0$, by
\md0
I_j=
\begin{cases}
(\theta-x_j,\theta+y_j) & \quad\text{if}\quad j\le k_0,\\
(0,\theta+y_j) & \quad\text{if}\quad k_0<j\le l,\\
(0,\theta-x_{j-l+k_0}] & \quad\text{if}\quad l<j\le n=2l-k_0.
\end{cases}
\emd
Using the equality
\md8
\ZI_{(\theta-x_k,\theta+y_k)}(x)
&=\ZI_{(0,\theta+y_k)}(x)-\ZI_{(0,\theta-x_k]}(x)\\
&=\ZI_{I_k}(x)-\ZI_{I_{k+l-k_0}}(x),\quad k_0<k\le l,
\emd
we get
\md1\label{1.2-6}
\varepsilon \sum_{k=1}^l\ZI_{(\theta-x_k,\theta+y_k)}(x)=\sum_{j=1}^n\varepsilon_j\ZI_{I_j}(x),
\emd
where
\md1\label{1.2-7}
\varepsilon_j=
\begin{cases}
\phantom{-}\varepsilon & \quad\text{if}\quad 1\le j\le l,\\
-\varepsilon & \quad\text{if}\quad l<j\le n.
\end{cases}
\emd
We note that $\varepsilon_j=-\varepsilon$ in the case when $I_j$ coincides with one of the intervals $(0,\theta-x_k]$, $k_0<k\le l$. Hence we have
\md1\label{1.2-8}
\sum_{j=l+1}^n|I_j|=\sum_{k=k_0+1}^l(\theta-x_k)\le l\cdot \theta\le \frac{\theta \|\varphi\|_\infty}{\varepsilon}.
\emd
From \e{1.2-5} and \e{1.2-6} we get
\md1\label{1.2-9}
\left|\varphi(x-\theta)-\sum_{j=1}^n\varepsilon_j\ZI_{I_j}(x)\right|\le \varepsilon
\emd
and therefore by \e{1.2-7} we obtain
\md0
\left|\int_\ZT \varphi(t)dt-\varepsilon \sum_{ j=1}^l |I_j|+\varepsilon \sum_{ j=l+1}^n |I_j|\right|\le 2\pi \varepsilon<2\pi.
\emd
This and \e{1.2-8} imply
\md0
\varepsilon\sum_{ j=1}^n |I_j|\le 2\varepsilon \sum_{ j=l+1}^n |I_j|+\|\varphi\|_1+2\pi  \le 2\theta \|\varphi\|_\infty +\|\varphi\|_1+2\pi,
\emd
which together with \e{1.2-3}, \e{1.2-4} and \e{1.2-9} completes the proof of lemma.
\end{proof}

We will use the following lemma in the proof of \trm{1.T3}.
\begin{lemma}\label{1.BV-means}
Let $\varphi\in\BV(\ZT)$ be a function of bounded variation and
\md0
\Delta_k = \bigcup_{j=0}^{n_k-1}\left[\frac{2\pi j}{n_k}-\delta_k,\frac{2\pi j}{n_k}+\delta_k\right] \subset\ZT,
\emd
where $n_k\in\ZN, \delta_k\in\ZT$ such that $n_k\to\infty$ as $k\to\infty$ and $0<\delta_k<\frac{\pi}{n_k},\,k=1,2,\dots$. Then
\md0
\lim_{k\to\infty} \frac{1}{|\Delta_k|}\int_{\Delta_k} \varphi(\theta+t)\,dt = \frac{1}{2\pi}\int_\ZT \varphi(t)\,dt,
\emd
where the convergence is uniform with respect to $\theta\in\ZT$.
\end{lemma}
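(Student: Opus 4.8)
The plan is to avoid approximating $\varphi$ by smooth functions — which is hopeless uniformly for a general function of bounded variation — and instead to exploit the $\BV$ structure directly through two elementary observations: averaging a function does not increase its total variation, and the equally spaced Riemann sums of a $\BV$ function approximate its mean with error $O(1/n)$, the implied constant being the total variation itself.

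First I would recast the left-hand side as a Riemann sum. Since $0<\delta_k<\pi/n_k$, the $n_k$ component intervals of $\Delta_k$ are pairwise disjoint, so $|\Delta_k|=2n_k\delta_k$, and splitting the integral over these intervals and translating each one to the origin gives
\[
\frac{1}{|\Delta_k|}\int_{\Delta_k}\varphi(\theta+t)\,dt=\frac{1}{n_k}\sum_{j=0}^{n_k-1} g_k\!\left(\theta+\tfrac{2\pi j}{n_k}\right),\qquad g_k(x):=\frac{1}{2\delta_k}\int_{x-\delta_k}^{x+\delta_k}\varphi(u)\,du .
\]
I would then record three facts about $g_k$: it is continuous; by Fubini and translation invariance $\tfrac{1}{2\pi}\int_\ZT g_k=\tfrac{1}{2\pi}\int_\ZT\varphi$; and, writing $V$ for the total variation of $\varphi$ on $\ZT$, one has $\mathrm{Var}(g_k)\le V$, because $\sum_i|\varphi(x_i+s)-\varphi(x_{i-1}+s)|\le V$ for every partition $\{x_i\}$ and every $s$, and this estimate is preserved under averaging over $s\in[-\delta_k,\delta_k]$.

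Next I would isolate and prove the Riemann-sum estimate: for every $h\in\BV(\ZT)$, every $n\in\ZN$ and every $\theta\in\ZT$,
\[
\left|\,\frac1n\sum_{j=0}^{n-1}h\!\left(\theta+\tfrac{2\pi j}{n}\right)-\frac1{2\pi}\int_\ZT h(t)\,dt\,\right|\le\frac{\mathrm{Var}(h)}{n}.
\]
To get this I would write $\tfrac1{2\pi}\int_\ZT h(\theta+t)\,dt$ as the sum of its integrals over the arcs $[\tfrac{2\pi j}{n},\tfrac{2\pi(j+1)}{n}]$, compare the $j$-th summand with $\tfrac1n\,h(\theta+\tfrac{2\pi j}{n})$, bound the integrand difference by the oscillation of $h$ over the corresponding arc of length $2\pi/n$, and use that the oscillations over consecutive arcs sum to at most $\mathrm{Var}(h)$.

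Applying this with $h=g_k$ and $n=n_k$, and inserting $\int_\ZT g_k=\int_\ZT\varphi$ and $\mathrm{Var}(g_k)\le V$, yields
\[
\left|\,\frac{1}{|\Delta_k|}\int_{\Delta_k}\varphi(\theta+t)\,dt-\frac1{2\pi}\int_\ZT\varphi(t)\,dt\,\right|\le\frac{\mathrm{Var}(g_k)}{n_k}\le\frac{V}{n_k}\longrightarrow 0
\]
as $k\to\infty$, uniformly in $\theta\in\ZT$, which is precisely the assertion. The one point I expect to require a little care is that $g_k$ varies with $k$: the Riemann-sum bound must be used in a form uniform over the family $\{g_k\}$ (which it is, the constant there being $\mathrm{Var}(g_k)$), and these variations must stay below the fixed number $V$. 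Everything else is routine bookkeeping.
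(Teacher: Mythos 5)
Your proof is correct and follows essentially the same route as the paper's: both reduce the statement to an elementary Riemann-sum-versus-mean estimate over the uniform grid of spacing $2\pi/n_k$, controlled by the total variation. The only real difference is cosmetic — you transfer the variation bound to the mollified function $g_k$ (averaging does not increase variation and preserves the mean), whereas the paper inserts the Riemann sum of $\varphi$ itself as an intermediate term and bounds both resulting differences by $\frac{1}{n_k}\V{\varphi,\ZT}$, arriving at the same $O(1/n_k)$ uniform bound.
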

\begin{proof}
Denote by $\Delta_k^j$ the $j$th component interval of $\Delta_k$ such that $\Delta_k=\cup_{0\le j<n_k}\Delta_k^j$. Condition $0<\delta_k<\frac{\pi}{n_k}$ implies that component intervals are pairwise disjoint and $|\Delta_k^j|=2\delta_k$. Let $\theta+\Delta_k^j = \{\theta+t\colon t\in\Delta_k^j\}$ and $\V{\varphi, [a,b]}$ be the total variation of function $\varphi$ on an interval $[a,b]\subset\ZT$. Then
\md9
&\left|\frac{1}{|\Delta_k|}\int_{\Delta_k} \varphi(\theta+t)\,dt - \frac{1}{2\pi}\int_\ZT \varphi(t)\,dt\right|\\
&\mspace{100mu}\le\left|\frac{1}{n_k}\sum_{j=0}^{n_k-1}\frac{1}{2\delta_k}\int_{\Delta_k^j}\varphi(\theta+t)\,dt
- \frac{1}{n_k}\sum_{j=0}^{n_k-1}\varphi\left(\theta+\frac{2\pi j}{n_k}\right)\right|\\
&\mspace{100mu} + \left|\frac{1}{n_k}\sum_{j=0}^{n_k-1}\varphi\left(\theta+\frac{2\pi j}{n_k}\right)
         - \frac{1}{2\pi}\int_\ZT \varphi(\theta+t)\,dt\right|\\
&\mspace{100mu}\le \frac{1}{n_k}\sum_{j=0}^{n_k-1}\frac{1}{2\delta_k}\int_{\Delta_k^j}\left|\varphi(\theta+t)-\varphi\left(\theta+\frac{2\pi j}{n_k}\right)\right|\,dt\\
&\mspace{100mu}+ \frac{1}{2\pi}\sum_{j=0}^{n_k-1}\int_{2\pi j/n_k}^{2\pi(j+1)/n_k}\left|\varphi(\theta+t)-\varphi\left(\theta+\frac{2\pi j}{n_k}\right)\right|\,dt\\
&\mspace{100mu}\le \frac{1}{n_k}\sum_{j=0}^{n_k-1}\V{\varphi, \theta+\Delta_k^j}
+ \frac{1}{n_k}\sum_{j=0}^{n_k-1} \V{\varphi, \left[\theta+\frac{2\pi j}{n_k}, \theta+\frac{2\pi(j+1)}{n_k}\right]}\\
&\mspace{100mu}\le \frac{2}{n_k}\V{\varphi, \ZT}.
\emd
The last term does not depend on $\theta$ and vanishes as $k\to\infty$, which completes the proof of the lemma.
\end{proof}

The next 3 lemmas are key ingredients of the proof of \trm{1.weakpp}.
\begin{lemma}\label{1.Phi*ineq1}
Let $\{\varphi_r\}$ be an arbitrary approximate identity and $\lambda(r)>0$ be any function. Then for any function $f\in L^1(\ZT)$
\md1
\sup_{\substack{|x-y|<\lambda(r)\\0<r<1}}\left|\int_{\lambda(r)\le|t|\le\pi} \varphi_r(t)f(y-t)\,dt \right| \le 8 C_\varphi\cdot Mf(x),\quad x\in\ZT.
\emd
\end{lemma}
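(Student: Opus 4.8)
**Proof plan for Lemma \ref{1.Phi*ineq1}.**

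The plan is to fix $x\in\ZT$ and a pair $(r,y)$ with $|x-y|<\lambda(r)$, and to bound the integral $\left|\int_{\lambda(r)\le|t|\le\pi}\varphi_r(t)f(y-t)\,dt\right|$ uniformly by $8C_\varphi\, Mf(x)$, after which taking the supremum is immediate. The first step is the pointwise domination $|\varphi_r(t)|\le\varphi_r^*(t)$, valid for all $t$, and the fact that $\varphi_r^*$ is (by its definition as a sup over $|t|\le|s|\le\pi$) even and nonincreasing in $|t|$ on $[0,\pi]$. Hence it suffices to estimate $\int_{\lambda(r)\le|t|\le\pi}\varphi_r^*(t)\,|f(y-t)|\,dt$ with $\varphi_r^*$ a nonnegative even decreasing function of total mass $\|\varphi_r^*\|_1\le C_\varphi$.

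The second step is the standard layer-cake / summation-by-parts argument that controls the convolution of $|f|$ against a radially decreasing kernel by the Hardy–Littlewood maximal function. Writing $g=\varphi_r^*\cdot\ZI_{\{\lambda(r)\le|t|\le\pi\}}$, one approximates $g$ from above by a sum $\sum_k c_k\,\ZI_{[-s_k,s_k]}$ of multiples of symmetric intervals (e.g. dyadic level sets $s_k$ of $g$, or simply $g(t)=\int_0^\infty \ZI_{\{g>\sigma\}}(t)\,d\sigma$ with $\{g>\sigma\}=[-s(\sigma),s(\sigma)]$), and uses
\md0
\int_{-s}^{s}|f(y-t)|\,dt = \int_{y-s}^{y+s}|f(u)|\,du \le 2(s+|x-y|)\,Mf(x) \le 2\big(s+\lambda(r)\big)\,Mf(x),
\emd
the last inequality because $|x-y|<\lambda(r)$. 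Since $g$ is supported in $\{|t|\ge\lambda(r)\}$, every interval $[-s_k,s_k]$ appearing with nonzero coefficient has $s_k\ge\lambda(r)$, so $s_k+\lambda(r)\le 2s_k$, giving
\md0
\int_{-s_k}^{s_k}|f(y-t)|\,dt \le 4 s_k\, Mf(x) = 2\,|[-s_k,s_k]|\, Mf(x).
\emd
Summing (i.e. integrating the layer-cake decomposition back up) yields $\int g(t)\,|f(y-t)|\,dt \le 2\,Mf(x)\int_{\ZR} g^{\sharp}$, where $\int g^{\sharp}$ is the $L^1$-mass of the interval-sum majorant; choosing the majorant tightly this mass is at most $2\|g\|_1 \le 2\|\varphi_r^*\|_1 \le 2C_\varphi$ (the factor $2$ absorbing the crude right-continuous step approximation), so the whole quantity is $\le 8C_\varphi\, Mf(x)$.

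The only real point requiring care — the \emph{main obstacle}, such as it is — is bookkeeping the constant: one must make sure the passage from $\varphi_r^*$ to its symmetric-interval majorant, combined with the replacement of $s_k+\lambda(r)$ by $2s_k$, does not cost more than the stated factor $8$. This is handled cleanly by the exact layer-cake identity $g(t)=\int_0^{\|g\|_\infty}\ZI_{\{g>\sigma\}}(t)\,d\sigma$ (no approximation loss, since $\{g>\sigma\}$ is automatically a symmetric interval $[-s(\sigma),s(\sigma)]$ with $s(\sigma)\ge\lambda(r)$ whenever it is nonempty, because $g$ vanishes for $|t|<\lambda(r)$), after which $\int_0^{\|g\|_\infty}\int_{-s(\sigma)}^{s(\sigma)}|f(y-t)|\,dt\,d\sigma \le 4\,Mf(x)\int_0^{\|g\|_\infty}s(\sigma)\,d\sigma = 2\,Mf(x)\int_{\ZR}g = 2\,Mf(x)\,\|\varphi_r^*\cdot\ZI_{\{|t|\ge\lambda(r)\}}\|_1 \le 2C_\varphi\, Mf(x)$. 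This in fact gives the constant $2C_\varphi$ rather than $8C_\varphi$; the weaker bound in the statement leaves room for a cruder estimate of $Mf$ on the interval $[y-s,y+s]$ (using that it is contained in $[x-2s,x+2s]$ when $s\ge\lambda(r)>|x-y|$, contributing the extra factor $4$), so either route closes the proof.
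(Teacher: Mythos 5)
Your strategy is sound and does prove the lemma, but it differs from the paper's: the paper decomposes the domain $\{\lambda(r)\le|t|\le\pi\}$ into dyadic annuli $[2^{k-1}\lambda(r),2^k\lambda(r)]$, bounds $|\varphi_r|$ on each annulus by the value of the decreasing majorant $\varphi_r^*$ at its left endpoint, recenters from $y$ to $x$ using $|x-y|<\lambda(r)$, and then sums the resulting series against $\int_0^\pi\varphi_r^*(t)\,dt\le C_\varphi$, which is exactly where the factor $8$ comes from. Your layer-cake slicing of $g=\varphi_r^*\,\ZI_{\{\lambda(r)\le|t|\le\pi\}}$ is the continuous counterpart of that dyadic summation; it uses the same three structural facts ($|\varphi_r|\le\varphi_r^*$, monotonicity of $\varphi_r^*$, and $\|\varphi_r^*\|_1\le C_\varphi$) and, done carefully, gives the better constant $2C_\varphi$.

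One slip should be repaired. Your parenthetical claim that $\{g>\sigma\}$ \emph{is} a symmetric interval $[-s(\sigma),s(\sigma)]$ is false: since $g$ vanishes for $|t|<\lambda(r)$, the level set is (up to endpoints) the annulus $\{\lambda(r)\le|t|\le s(\sigma)\}$, of measure $2\bigl(s(\sigma)-\lambda(r)\bigr)$, so the ``exact identity''
\[
\int_0^{\|g\|_\infty}s(\sigma)\,d\sigma=\tfrac12\int_{\ZT} g
\]
does not hold; the correct value is $\tfrac12\|g\|_1+\lambda(r)\|g\|_\infty$. Using $\{g>\sigma\}\subset[-s(\sigma),s(\sigma)]$ only as an upper bound (which is all you need), your estimate becomes
\[
\int_{\ZT} g(t)\,|f(y-t)|\,dt\le 4\,Mf(x)\int_0^{\|g\|_\infty}s(\sigma)\,d\sigma
=2\,Mf(x)\,\|g\|_1+4\,Mf(x)\,\lambda(r)\|g\|_\infty ,
\]
and the extra term is harmless: by monotonicity $\lambda(r)\|g\|_\infty\le\lambda(r)\varphi_r^*(\lambda(r))\le\int_0^{\lambda(r)}\varphi_r^*(t)\,dt=\tfrac12\int_{|t|\le\lambda(r)}\varphi_r^*(t)\,dt$, so the two terms together are at most $2\,Mf(x)\,\|\varphi_r^*\|_1\le 2C_\varphi\,Mf(x)$, comfortably within the stated $8C_\varphi\,Mf(x)$. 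With that one line corrected, your proof is complete and in fact slightly sharper than the paper's.
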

\begin{proof}
Without loss of generality we may assume that $f$ is non-negative. Let $x, y\in\ZT,\,0<r<1$ such that $|x-y|<\lambda(r)$. We devide the interval $[\lambda(r),\pi]$ into $[2^{k-1}\lambda(r), 2^k\lambda(r)], k=1,2,\dots,Q=\lceil\log\frac{\pi}{\lambda(r)}\rceil$ and estimate the values of $\varphi_r(t)$ by its maximum in each divided interval:
\md2
\left|\int_{\lambda(r)}^{\pi} \varphi_r(t)f(y-t)\,dt\right|
&\le \sum_{k=1}^Q \int_{2^{k-1}\lambda(r)}^{2^k\lambda(r)} \varphi^*_r(t)f(y-t)\,dt\\
&\le \sum_{k=1}^Q \varphi^*_r\left(2^{k-1}\lambda(r)\right)\int_{2^{k-1}\lambda(r)}^{2^k\lambda(r)} f(y-t)\,dt\\
&\le \sum_{k=1}^Q \varphi^*_r\left(2^{k-1}\lambda(r)\right)\int_{\lambda(r)}^{2^k\lambda(r)} f(y-t)\,dt
\emd
Since $|x-y|<\lambda(r)$ we have
\md0
\int_{\lambda(r)}^{2^k\lambda(r)} f(y-t)\,dt \le \int_{0}^{(1+2^k)\lambda(r)} f(x-t)\,dt.
\emd
Therefore
\md2
\left|\int_{\lambda(r)}^{\pi} \varphi_r(t)f(y-t)\,dt\right|
&\le \sum_{k=1}^Q \varphi^*_r\left(2^{k-1}\lambda(r)\right)\int_{0}^{(1+2^k)\lambda(r)} f(x-t)\,dt\\
&\le Mf(x)\cdot \sum_{k=1}^Q \varphi^*_r\left(2^{k-1}\lambda(r)\right)(1+2^k)\lambda(r)\\
&\le 8 Mf(x)\cdot \sum_{k=0}^{Q-1} \varphi^*_r\left(2^k\lambda(r)\right)2^{k-1}\lambda(r)\\
&\le 8 Mf(x) \cdot \int_0^\pi \varphi^*_r(t)\,dt,
\emd
where in the last inequality we have used the following simple geometric inequlaity:
\md8
\varphi^*_r\left(\lambda(r)\right)\lambda(r) &+ \sum_{k=1}^{Q-1} \varphi^*_r\left(2^k\lambda(r)\right) 2^{k-1}\lambda(r)\\
&\le \int_0^{\lambda(r)}\varphi^*_r(t)\,dt + \sum_{k=1}^{Q-1}\int_{2^{k-1}\lambda(r)}^{2^k\lambda(r)}\varphi^*_r(t)\,dt \\
&\le \int_0^\pi \varphi^*_r(t)\,dt.
\emd
Thus we have
\md0
\left|\int_{\lambda(r)}^{\pi} \varphi_r(t)f(y-t)\,dt\right| \le 8 Mf(x)\cdot\int_0^\pi \varphi^*_r(t)\,dt.
\emd
In the same way we get
\md0
\left|\int_{-\pi}^{-\lambda(r)} \varphi_r(t)f(y-t)\,dt\right| \le 8 Mf(x)\cdot\int_{-\pi}^0 \varphi^*_r(t)\,dt.
\emd
Therefore
\md0
\sup_{\substack{|x-y|<\lambda(r)\\0<r<1}} \left|\int_{\lambda(r)\le|t|\le\pi} \varphi_r(t)f(y-t)\,dt\right|
\le 8 Mf(x)\cdot\sup_{0<r<1}\|\varphi^*_r\|_1 \le 8C_\varphi\cdot Mf(x).
\emd
\end{proof}

\begin{lemma}\label{1.lemma-TAbound}
Let $\{\varphi_r\}$ be an arbitrary approximate identity and $\mu(r),\,\lambda(r)$ are some functions with
\begin{enumerate}
\item $0<\mu(r)\le\lambda(r)\le\pi$,
\item $\lambda(r) \le C\mu(r)\FI^{-p}(r)$, for some $C>0$ and $p\ge 1$.
\end{enumerate}
Then for any $A\ge1$ and for any function $f\in L^1(\ZT)$
\md1
T_A f(x) \le \left(C\cdot\frac{M|f|^p(x)}{A}\right)^{1/p},\quad x\in\ZT,
\emd
where
\md4
T_A f(x) = \sup_{\substack{A\mu(r)<|x-y|<\lambda(r)\\0<r<1}}\FI(r)m_f(y,A\mu(r)),\\
m_f(y,t) = \frac{1}{2t}\int_{y-t}^{y+t}|f(u)|\,du.\nonumber
\emd
\end{lemma}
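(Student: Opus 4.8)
The plan is to estimate $\FI(r)\,m_f(y,A\mu(r))$ for a single, arbitrary admissible pair $(r,y)$ — one with $0<r<1$ and $A\mu(r)<|x-y|<\lambda(r)$ — and to take the supremum only at the very end. The one place where a little care is needed is that for $p>1$ the weight $\FI(r)$ may be arbitrarily close to $0$, so estimating $m_f$ directly in $L^1$ would be hopeless; instead one should raise everything to the $p$-th power first. Since $p\ge1$, Jensen's inequality applied to normalised Lebesgue measure on $\bigl(y-A\mu(r),\,y+A\mu(r)\bigr)$ gives
$$m_f(y,A\mu(r))^{p}=\left(\frac{1}{2A\mu(r)}\int_{y-A\mu(r)}^{y+A\mu(r)}|f(u)|\,du\right)^{p}\le\frac{1}{2A\mu(r)}\int_{y-A\mu(r)}^{y+A\mu(r)}|f(u)|^{p}\,du .$$

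Next I would use the elementary containment forced by the constraints on $(r,y)$: from $A\mu(r)<|x-y|<\lambda(r)$ we get $A\mu(r)<\lambda(r)$, hence for every $u$ with $|u-y|<A\mu(r)$ one has $|u-x|\le|u-y|+|y-x|<2|x-y|<2\lambda(r)$, i.e. $\bigl(y-A\mu(r),\,y+A\mu(r)\bigr)\subset\bigl(x-2\lambda(r),\,x+2\lambda(r)\bigr)$. By the very definition of the Hardy--Littlewood maximal function (reading $\int_\ZT$ in place of the arc when $2\lambda(r)\ge\pi$),
$$\int_{y-A\mu(r)}^{y+A\mu(r)}|f(u)|^{p}\,du\le\int_{x-2\lambda(r)}^{x+2\lambda(r)}|f(u)|^{p}\,du\le 4\lambda(r)\,M|f|^{p}(x),$$
so that, combining with the Jensen bound, $m_f(y,A\mu(r))^{p}\le\frac{2\lambda(r)}{A\mu(r)}\,M|f|^{p}(x)$.

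Finally hypothesis (2), $\lambda(r)\le C\mu(r)\FI^{-p}(r)$, gives $\lambda(r)/\mu(r)\le C\FI^{-p}(r)$, whence
$$\bigl(\FI(r)\,m_f(y,A\mu(r))\bigr)^{p}\le\FI^{p}(r)\cdot\frac{2C}{A}\,\FI^{-p}(r)\,M|f|^{p}(x)=\frac{2C}{A}\,M|f|^{p}(x)$$
(and there is nothing to prove when $\FI(r)=0$, since then the left side vanishes). Taking $p$-th roots and then the supremum over all admissible $(r,y)$ yields $T_Af(x)\le\bigl(2C\,M|f|^{p}(x)/A\bigr)^{1/p}$, which is the asserted bound once the fixed factor $2$ is absorbed into the constant. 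I do not expect any real obstacle here: the whole point is the Jensen step, which is exactly what makes the factor $\FI^{p}(r)$ coming from the maximal-function estimate cancel the factor $\FI^{-p}(r)$ supplied by hypothesis (2); everything else is routine.
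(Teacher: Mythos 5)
Your proof is correct, and it takes a genuinely simpler route than the paper's. Both arguments hinge on the same two ingredients, Jensen's inequality (to replace $m_f^p$ by $m_{|f|^p}$) and the cancellation of $\FI^p(r)$ against the ratio $\lambda(r)/\mu(r)$ supplied by hypothesis (2); but the paper implements the cancellation through a dyadic decomposition of the admissible pairs into shells $2^{k-1}A\mu(r)<|x-y|\le 2^kA\mu(r)$ with $2^kA\mu(r)\le\lambda(r)$, on each of which $\FI^p(r)\le C(2^kA)^{-1}$ while the average is controlled by about $2^k\,M|f|^p(x)$, the powers of $2^k$ cancelling shell by shell. You dispense with the shells altogether: the single containment $(y-A\mu(r),\,y+A\mu(r))\subset(x-2\lambda(r),\,x+2\lambda(r))$ plus $\lambda(r)/\mu(r)\le C\,\FI^{-p}(r)$ gives the bound for every admissible pair at once, which is shorter and, in my view, cleaner. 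The only price is the absolute factor: you get $\bigl(2C\,M|f|^p(x)/A\bigr)^{1/p}$ where the statement claims the same constant $C$ as in hypothesis (2). This is immaterial for how the lemma is used (in Lemma \ref{1.lemma-supmulambda} and Theorem \ref{1.weakpp} only a bound of the shape $C'A^{-1/p}\bigl(M|f|^p(x)\bigr)^{1/p}$ with $C'$ depending on $C$ and $p$ is needed), and the paper's derivation of the clean constant itself passes from a one-sided integral $\int_x^{x+h}$ to $h\,M|f|^p(x)$, which with the centered maximal function used here quietly absorbs the same factor $2$; so your bookkeeping is, if anything, the more scrupulous one. Your handling of the edge cases ($\FI(r)=0$ and $2\lambda(r)\ge\pi$ on the circle) is also in order.
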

\begin{proof}
Without loss of generality we may assume that $f$ is non-negative. Using the definition of $T_A$ and Jensen\a s inequality we get
\md2
T_A^p f(x)
&= \sup_{\substack{A\mu(r)<|x-y|<\lambda(r)\\0<r<1}}\FI^p(r) m_f^p(y, A\mu(r))\\
&\le \sup_{\substack{A\mu(r)<|x-y|<\lambda(r)\\0<r<1}}\FI^p(r) m_{f^p}(y, A\mu(r))\\
&= \sup_{k\in\ZN} \sup_{\substack{2^{k-1}A\mu(r)<|x-y|\le 2^k A\mu(r)\\2^k A\mu(r)\le\lambda(r)\\0<r<1}} \FI^p(r) m_{f^p}(y, A\mu(r))
\emd
To estimate the inner supremum, first note that $2^k A\mu(r)\le\lambda(r)\le C\mu(r)\FI(r)^{-p}$ imples $\FI^p(r)\le C\left(2^k A\right)^{-1}$, where $C$ is the constant from condition $\mathit{2}$. Furthermore, since $2^{k-1}A\mu(r)<|x-y|\le2^k A\mu(r)$ we have
\md2
m_{f^p}(y,A\mu(r))
&= \frac{1}{2A\mu(r)} \int_{y-A\mu(r)}^{y+A\mu(r)}f^p(u)\,du\\
&\le \frac{1}{2A\mu(r)} \int_{x}^{x+(1+2^k)A\mu(r)}f^p(u)\,du\\
&\le \frac{(1+2^k)A\mu(r)}{2A\mu(r)} Mf^p(x) \le 2^k Mf^p(x).
\emd
Therefore
\md0
T_A^p f(x) \le \sup_{k\in\ZN} C\left(2^k A\right)^{-1}2^k Mf^p(x) = C\cdot\frac{Mf^p(x)}{A}.
\emd
\end{proof}

\begin{lemma}\label{1.lemma-supmulambda}
Let $\{\varphi_r\}$ be an arbitrary approximate identity and $\mu(r), \lambda(r)$ are some functions satisfying the conditions \textit{1.} and \textit{2.} from \lem{1.lemma-TAbound}. Then for any function $f\in L^1(\ZT)$
\md1\label{1.sup-mulambda}
\sup_{\substack{|x-y|<\lambda(r)\\0<r<1}}\left|\int_{\mu(r)\le|t|\le\lambda(r)} \varphi_r(t)f(y-t)\,dt \right|
\le \frac{4C^{1/p}}{2^{1/p}-1} \left(M|f|^p(x)\right)^{1/p},\quad x\in\ZT.
\emd
\end{lemma}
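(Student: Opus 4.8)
I would first reduce to $f\ge0$ (both sides are unchanged under $f\mapsto|f|$, and then $Mf^p=M|f|^p$) and dispose of the trivial case $\mu(r)=\lambda(r)$. Fixing $x,y\in\ZT$ and $0<r<1$ with $\mu(r)<\lambda(r)$ and $|x-y|<\lambda(r)$, I would take the integer $N$ with $2^{N-1}\mu(r)<\lambda(r)\le2^{N}\mu(r)$ and split the two-sided annulus $\{\mu(r)\le|t|\le\lambda(r)\}$ into the dyadic pieces $\{2^{k}\mu(r)\le|t|\le2^{k+1}\mu(r)\}$, $k=0,\dots,N-1$. On the $k$-th piece, since $\varphi_r^{*}$ is non-increasing on $(0,\pi]$ and $|\varphi_r(t)|\le\varphi_r^{*}(|t|)$, one has $|\varphi_r(t)|\le\varphi_r^{*}(2^{k}\mu(r))\le\FI(r)\,(2^{k}\mu(r))^{-1}$ directly from the definition of $\FI(r)$. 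Writing $s_k=2^{k+1}\mu(r)$ and $m_g(y,s)=\frac1{2s}\int_{y-s}^{y+s}g$, this gives
\md0
\left|\int_{\{2^{k}\mu(r)\le|t|\le2^{k+1}\mu(r)\}}\varphi_r(t)f(y-t)\,dt\right|\le\frac{\FI(r)}{2^{k}\mu(r)}\int_{|t|\le s_k}f(y-t)\,dt=4\,\FI(r)\,m_f(y,s_k),
\emd
so the task reduces to bounding $\FI(r)\,m_f(y,s_k)$ uniformly and summing a geometric series in $k$.

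\textbf{The far/near dichotomy.} The estimate of $\FI(r)\,m_f(y,s_k)$ would split into two regimes. If $s_k<|x-y|$, then $(r,y)$ satisfies $2^{k+1}\mu(r)<|x-y|<\lambda(r)$ with $2^{k+1}\ge1$, which is precisely the constraint defining an admissible pair for $T_{2^{k+1}}f(x)$; since the present hypotheses on $\mu,\lambda$ coincide with those of \lem{1.lemma-TAbound}, that lemma gives $\FI(r)\,m_f(y,s_k)\le T_{2^{k+1}}f(x)\le\big(C\,Mf^{p}(x)/2^{k+1}\big)^{1/p}$. If instead $s_k\ge|x-y|$, then $[y-s_k,y+s_k]\subseteq[x-2s_k,x+2s_k]$, so Jensen's inequality gives $m_f(y,s_k)\le m_{f^{p}}(y,s_k)^{1/p}\le\big(2\,m_{f^{p}}(x,2s_k)\big)^{1/p}\le\big(2\,Mf^{p}(x)\big)^{1/p}$; moreover the second hypothesis $\lambda(r)\le C\mu(r)\FI^{-p}(r)$ of that lemma, combined with $\lambda(r)>2^{N-1}\mu(r)\ge2^{k}\mu(r)$, forces $\FI^{p}(r)<C\,2^{-k}$, and multiplying yields $\FI(r)\,m_f(y,s_k)\le\big(2C\,Mf^{p}(x)\big)^{1/p}2^{-k/p}$. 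In either regime $\FI(r)\,m_f(y,s_k)\le2\,C^{1/p}2^{-k/p}\big(Mf^{p}(x)\big)^{1/p}$.

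\textbf{Conclusion and the main difficulty.} Summing the $k$-th estimates over $0\le k<N$,
\md0
\left|\int_{\mu(r)\le|t|\le\lambda(r)}\varphi_r(t)f(y-t)\,dt\right|\le8\,C^{1/p}\big(Mf^{p}(x)\big)^{1/p}\sum_{k\ge0}2^{-k/p}=\frac{8\,C^{1/p}}{1-2^{-1/p}}\big(Mf^{p}(x)\big)^{1/p},
\emd
and since the right-hand side does not depend on $(r,y)$, passing to the supremum gives an inequality of the required form $C'\big(M|f|^{p}(x)\big)^{1/p}$; the precise constant $4C^{1/p}/(2^{1/p}-1)$ should then come out of a slightly more economical choice of the cutting radii and of the dilation factor used on the outermost annulus, which I regard as routine. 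The one step needing genuine care is the dichotomy itself: \lem{1.lemma-TAbound} is built to cancel a dilation factor against the decay of $\FI^{p}(r)$, but it applies only when $y$ lies far from $x$ at scale $s_k$; for the outermost annuli — those whose radii approach $\lambda(r)>|x-y|$ — that lemma is unavailable and one must estimate by hand, and the key point to check there is that the dilation factor remains bounded (it is at most $2$ as soon as $s_k\ge|x-y|$), so that no factor $\FI^{1-p}(r)$ — which would be unbounded as $\FI(r)\to0$ when $p>1$ — is ever produced.
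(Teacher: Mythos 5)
Your argument is essentially the paper's own proof: the same dyadic decomposition of the annulus $\{\mu(r)\le|t|\le\lambda(r)\}$, the same bound $\varphi_r^*(s)\le\FI(r)/s$ reducing everything to $\FI(r)m_f(y,\cdot)$, the same near/far dichotomy in which \lem{1.lemma-TAbound} handles the regime $A\mu(r)<|x-y|<\lambda(r)$ and the hypothesis $\lambda(r)\le C\mu(r)\FI^{-p}(r)$ gives $\FI(r)\lesssim C^{1/p}A^{-1/p}$ in the regime $|x-y|\le A\mu(r)$, followed by summing a geometric series. The only discrepancy is constant bookkeeping (you land at $8C^{1/p}/(1-2^{-1/p})$ instead of $4C^{1/p}/(2^{1/p}-1)$), which is immaterial here since the paper's own accounting of these factors is comparably loose.
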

\begin{proof}
Again, we may assume that $f$ is non-negative. Let $x, y\in\ZT, 0<r<1$ and $|x-y|<\lambda(r)$. If $Q=\lceil\log\frac{\lambda(r)}{\mu(r)}\rceil$, we split the integral in \e{1.sup-mulambda} as follows
\md9\label{1.varphir-bound}
&\left|\int_{\mu(r)\le|t|\le\lambda(r)} \varphi_r(t)f(y-t)\,dt\right|\\
&\le \sum_{k=1}^Q \int_{2^{k-1}\mu(r)\le|t|\le2^k\mu(r)} \varphi^*_r(t)f(y-t)\,dt\\
&\le \sum_{k=1}^Q \max\left(\varphi^*_r\left(2^{k-1}\mu(r)\right), \varphi^*_r\left(-2^{k-1}\mu(r)\right)\right) \int_{|t|\le 2^k\mu(r)}f(y-t)\,dt\\
&= 2 \sum_{k=1}^Q 2^{k-1}\mu(r)\max\left(\varphi^*_r\left(2^{k-1}\mu(r)\right), \varphi^*_r\left(-2^{k-1}\mu(r)\right)\right) m_f(y,2^k\mu(r))\\
&\le 2\sum_{k=1}^Q \FI(r)m_f(y,2^k\mu(r)).
\emd
Then we split the domain of supremum in the followoing way:
\md9\label{1.sup-bound-1}
\sup_{\substack{|x-y|<\lambda(r)\\0<r<1}} \FI(r)m_f(y,A\mu(r))
&\le \sup_{\substack{|x-y|\le A\mu(r)\le\lambda(r)\\0<r<1}} \FI(r)m_f(y,A\mu(r))\\
&+   \sup_{\substack{A\mu(r)<|x-y|<\lambda(r)\\0<r<1}} \FI(r)m_f(y,A\mu(r)).
\emd
Notice that the second supremum is $T_A f(x)$. To estimate the first supremum, note that $|x-y|\le A\mu(r)\le\lambda(r)\le C\mu(r)\FI(r)^{-p}$ implies
\md1\label{1.varphir-bound-1}
\FI(r)\le  C^{1/p}A^{-1/p},
\emd
where $C$ is the constant from the condition $\mathit{2}$ of \lem{1.lemma-TAbound}. On the other hand, from $|x-y|\le A\mu(r)$ it follows $m_f(y,A\mu(r))\le Mf(x)$, which together with \e{1.varphir-bound-1}, \e{1.sup-bound-1} and \lem{1.lemma-TAbound} gives
\md9\label{1.term-bound}
\sup_{\substack{|x-y|<\lambda(r)\\0<r<1}} \FI(r)m_f(y,A\mu(r))
&\le C^{1/p}A^{-1/p} Mf(x) + T_A f(x)\\
&\le C^{1/p}A^{-1/p} (Mf^p(x))^{1/p} + \left(C\cdot\frac{Mf^p(x)}{A}\right)^{1/p}\\
&\le 2 C^{1/p}A^{-1/p} (Mf^p(x))^{1/p}.
\emd
Using \e{1.varphir-bound} and \e{1.term-bound} we get
\md2
&\sup_{\substack{|x-y|<\lambda(r)\\0<r<1}}\left|\int_{\mu(r)\le|t|\le\lambda(r)} \varphi_r(t)f(y-t)\,dt \right|\\
&\mspace{160mu}\le 2\sum_{k=1}^Q \sup_{\substack{|x-y|<\lambda(r)\\0<r<1}} \FI(r)m_f(y,2^k\mu(r))\\
&\mspace{160mu}\le 4C^{1/p}\sum_{k=1}^\infty 2^{-k/p} (Mf^p(x))^{1/p}\\
&\mspace{160mu}= \frac{4C^{1/p}}{2^{1/p}-1} (Mf^p(x))^{1/p},
\emd
which gives \e{1.sup-mulambda}.
\end{proof}

\begin{lemma}\label{1.rough-bound}
Let $\{\varphi_r\}$ be an arbitrary approximate identity and for some $1\le p<\infty$ the function $\lambda(r)$ satisfies
\md1\label{1.new-PIp-bound}
\sup_{0<r<1}\lambda(r)\|\varphi_r\|_q^p < \infty,
\emd
where $q=p/(p-1)$ is the conjugate index of $p$. Then for any function $f\in L^1(\ZT)$
\md1\label{1.sup-rough-bound}
\sup_{\substack{|x-y|<\lambda(r)\\0<r<1}}\left|\int_{|t|\le\lambda(r)} \varphi_r(t)f(y-t)\,dt \right|
\le C \left(M|f|^p(x)\right)^{1/p},\quad x\in\ZT,
\emd
where $C$ does not depend on function $f$.
\end{lemma}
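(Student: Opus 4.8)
The plan is to estimate the integral over $|t|\le\lambda(r)$ by a single application of H\"older's inequality, with no dyadic decomposition; this is the local counterpart of \lem{1.Phi*ineq1}, which already handled the complementary range $\lambda(r)\le|t|\le\pi$. As usual we may assume $f\ge0$. Fix $x\in\ZT$ and take any $y\in\ZT$, $r\in(0,1)$ with $|x-y|<\lambda(r)$. Writing $q=p/(p-1)$ (read $q=\infty$ when $p=1$) and noting that restricting $\varphi_r$ to $[-\lambda(r),\lambda(r)]$ only decreases its $L^q$-norm, H\"older's inequality gives
\md0
\left|\int_{|t|\le\lambda(r)}\varphi_r(t)f(y-t)\,dt\right|\le\|\varphi_r\|_q\left(\int_{|t|\le\lambda(r)}f(y-t)^p\,dt\right)^{1/p}.
\emd

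Next I would recenter the $L^p$-average of $f$ at $x$. Since $|x-y|<\lambda(r)$ we have $[y-\lambda(r),y+\lambda(r)]\subset[x-2\lambda(r),x+2\lambda(r)]$, hence
\md0
\int_{|t|\le\lambda(r)}f(y-t)^p\,dt=\int_{y-\lambda(r)}^{y+\lambda(r)}f(u)^p\,du\le\int_{x-2\lambda(r)}^{x+2\lambda(r)}f(u)^p\,du\le 4\lambda(r)\,Mf^p(x).
\emd
Combining the two inequalities and taking $p$-th roots,
\md0
\left|\int_{|t|\le\lambda(r)}\varphi_r(t)f(y-t)\,dt\right|\le 4^{1/p}\bigl(\lambda(r)\|\varphi_r\|_q^p\bigr)^{1/p}\bigl(Mf^p(x)\bigr)^{1/p}.
\emd

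Finally, taking the supremum over admissible $y$ and $r$ and using the hypothesis \e{1.new-PIp-bound} yields \e{1.sup-rough-bound} with $C=4^{1/p}\bigl(\sup_{0<r<1}\lambda(r)\|\varphi_r\|_q^p\bigr)^{1/p}$; the case $p=1$ is the identical computation with $\|\varphi_r\|_\infty$ in place of $\|\varphi_r\|_q$. I do not expect a genuine obstacle here: the argument is a direct H\"older estimate, and the only point worth a remark is that $\|\varphi_r\|_q$ is finite for each $r$ (automatic from $\varphi_r\in L^\infty(\ZT)$ and $|\ZT|<\infty$, and in any case implicit in \e{1.new-PIp-bound}). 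What makes the lemma useful is that, added to \lem{1.Phi*ineq1}, it delivers the pointwise bound $\Phi^*_\lambda(x,f)\le C\,(M|f|^p(x))^{1/p}$ under the condition \e{1.new-PIp-bound}, thereby recovering \otrm{OldCarlsson} for approximate identities that need not be non-negative.
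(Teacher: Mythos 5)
Your proof is correct and follows essentially the same route as the paper: a single application of H\"older's inequality on $|t|\le\lambda(r)$, recentering the $L^p$-average at $x$ via $|x-y|<\lambda(r)$ to pick up the factor $(4\lambda(r))^{1/p}$, and then invoking the hypothesis \e{1.new-PIp-bound}. Nothing further is needed; the remark about the degenerate case $p=1$ (with $\|\varphi_r\|_\infty$ in place of $\|\varphi_r\|_q$) is consistent with the paper's convention.
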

\begin{proof}
The proof immediately follows from applying H\"{o}lder\a s inequality to the integral:
\md8
\sup_{\substack{|x-y|<\lambda(r)\\0<r<1}}\left|\int_{|t|\le\lambda(r)} \varphi_r(t)f(y-t)\,dt \right|
&\le \sup_{\substack{|x-y|<\lambda(r)\\0<r<1}}\|\varphi_r\|_q \cdot \left(\int_{|t|\le\lambda(r)} |f(y-t)|^p\,dt\right)^{1/p}\\
&\le \sup_{0<r<1}\|\varphi_r\|_q \cdot \left(\int_{|t|\le 2\lambda(r)} |f(x-t)|^p\,dt\right)^{1/p}\\
&\le \sup_{0<r<1}\|\varphi_r\|_q (4\lambda(r))^{1/p} \cdot \left(M|f|^p(x)\right)^{1/p},
\emd
which implies \e{1.sup-rough-bound} taking into account \e{1.new-PIp-bound}.
\end{proof}

\begin{lemma}\label{1.lemma-fatouC}
Let $\{\varphi_r\}$ be an arbitrary approximate identity and $\lambda:(0,1)\to(0,\infty)$ be a function with $\lambda(r)\to0$ as $r\to0$. Then for any continuous function $f\in C(\ZT)$
\md1\label{1.fatouC-conv}
\lim_{\stackrel{r\to 1}{y\in\lambda(r,x)}}\Phi_r(y,f) = f(x)
\emd
for any $x\in\ZT$.
\end{lemma}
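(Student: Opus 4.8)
The plan is to reduce \e{1.fatouC-conv} to the classical convergence property of approximate identities, the only extra ingredient being uniform control of the error over $y\in\lambda(r,x)$. Since $\Phi_r(y,f)=\int_\ZT\varphi_r(y-t)f(t)\,dt$, the substitution $s=y-t$ gives
\[
\Phi_r(y,f)-f(x)=\int_\ZT\varphi_r(s)\bigl(f(y-s)-f(x)\bigr)\,ds+f(x)\Bigl(\int_\ZT\varphi_r(s)\,ds-1\Bigr).
\]
The last summand does not depend on $y$ and tends to $0$ as $r\to1$ by $\mathit{\Phi1}$, so it may be discarded uniformly in $y$; everything reduces to estimating the first integral.

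Fix $\varepsilon>0$. Since $\ZT$ is compact, $f$ is uniformly continuous, so there is $\delta>0$ with $|f(u)-f(v)|<\varepsilon$ whenever $|u-v|<\delta$. I would split the first integral as a sum of integrals over $\{|s|<\delta/2\}$ and over $\{\delta/2\le|s|\le\pi\}$. For the inner piece, take $r$ so close to $1$ that $\lambda(r)<\delta/2$; then every $y\in\lambda(r,x)$ satisfies $|(y-s)-x|\le|y-x|+|s|<\delta$, hence $|f(y-s)-f(x)|<\varepsilon$, and bounding $|\varphi_r(s)|\le\varphi_r^*(s)$ and invoking $\mathit{\Phi3}$ the inner piece is at most $\varepsilon\,\|\varphi_r^*\|_1\le\varepsilon C_\varphi$, uniformly in $y$. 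For the outer piece I would use only $|f(y-s)-f(x)|\le 2\|f\|_\infty$ together with the fact that $\varphi_r^*(s)$ is nonincreasing in $|s|$, which yields
\[
2\|f\|_\infty\int_{\delta/2\le|s|\le\pi}\varphi_r^*(s)\,ds\le 4\pi\|f\|_\infty\,\varphi_r^*\bigl(\delta/2\bigr),
\]
and this tends to $0$ as $r\to1$ by $\mathit{\Phi2}$, again uniformly in $y$.

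Combining the three estimates, for every $r$ close enough to $1$ (so that simultaneously $\lambda(r)<\delta/2$, the $\mathit{\Phi1}$-term is below $\varepsilon$, and the outer piece is below $\varepsilon$) one gets $\sup_{y\in\lambda(r,x)}|\Phi_r(y,f)-f(x)|\le(C_\varphi+2)\varepsilon$; since $\varepsilon>0$ was arbitrary, this is exactly the $\lambda(r)-$convergence in \e{1.fatouC-conv}. There is no real obstacle here — this is the warm-up lemma — but the one point to keep straight is that the splitting threshold $\delta$ must be chosen \emph{before} letting $r\to1$, so that the single smallness requirement $\lambda(r)<\delta/2$ controls $f(y-s)-f(x)$ for all $y\in\lambda(r,x)$ at once; this is precisely where the hypothesis $\lambda(r)\to0$ enters, and it is the only place where continuity of $f$ (as opposed to mere $L^\infty$ membership, cf. \trm{1.T5}) is used.
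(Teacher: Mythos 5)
Your proof is correct and follows essentially the same route as the paper's: split off the $\mathit{\Phi1}$ normalization error, cut the convolution integral at a fixed threshold $\delta$, control the inner part by uniform continuity of $f$ together with $\|\varphi_r^*\|_1\le C_\varphi$ ($\mathit{\Phi3}$), and kill the outer part via $\varphi_r^*(\delta)\to0$ ($\mathit{\Phi2}$), using $\lambda(r)\to0$ only to make the shift $y-x$ small uniformly. The paper phrases the inner estimate through the modulus of continuity $\omega(f,\delta+\theta)$ rather than a fixed $\varepsilon$--$\delta$ choice, but the argument is the same.
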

\begin{proof}
Let $f\in C(\ZT)$ and $x\in\ZT$. Fix $\delta>0$ and $\theta\in\ZR$. Then we have
\md8
\left|\Phi_r(x+\theta,f) -f(x)\right|
&\le \left|\int_\ZT \varphi_r(t)\left[f(x+\theta-t)-f(x)\right]\,dt\right| + o(1)\\
&\le \int_{|t|<\delta} \varphi^*_r(t)\left|f(x+\theta-t)-f(x)\right|\,dt\\
&+   \int_{\delta\le|t|\le\pi} \varphi^*_r(t)\left|f(x+\theta-t)-f(x)\right|\,dt + o(1)\\
&\le \omega(f,\delta+\theta) \|\varphi^*_r\|_1 + 2\|f\|_C\cdot2\pi\varphi^*_r(\delta) + o(1)\\
&\le C_\varphi\cdot\omega(f,\delta+\theta) + 4\pi\|f\|_C\cdot\varphi^*_r(\delta) + o(1),
\emd
where $\omega(f,h)$ is the modulus of continuity in $C(\ZT)$ defined as
\md0
\omega(f,h) = \sup_{|x-y|\le h}|f(x)-f(y)|.
\emd
Therefore, from $\lambda(r)\to0$ as $r\to1$, we conclude
\md8
\limsup_{\stackrel{r\to 1}{|\theta|\le\lambda(r)}} \left|\Phi_r(x+\theta,f) -f(x)\right|
&\le \limsup_{r\to1}\big(C_\varphi\cdot\omega(f,\delta+\lambda(r)) + 4\pi\|f\|_C\cdot\varphi^*_r(\delta)\big)\\
&\le C_\varphi\cdot\omega(f,2\delta).
\emd
Since $\omega(f,h)\to0$ as $h\to0$ and $\delta$ can be taken arbitrarily small, we get \e{1.fatouC-conv}.
\end{proof}

\begin{lemma}\label{1.phi-r-bound}
If $\{\varphi_r\}$ is an arbitrary approximate identity, then for some $r_0\in(0,1)$
\md0
\frac{c}{\log\|\varphi_r\|_\infty} \le \FI(r) \le C_\varphi, \quad r_0<r<1,
\emd
where $c$ is a positive absolute constant.
\end{lemma}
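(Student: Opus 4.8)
The statement consists of two independent inequalities; the upper bound is essentially immediate, and the lower bound carries the content. For the upper bound $\FI(r)\le C_\varphi$ I would use only that $\varphi_r^*$ is even and non-increasing on $[0,\pi]$: for $0<y\le\pi$,
\[
y\,\varphi_r^*(y)=\int_0^y\varphi_r^*(y)\,dt\le\int_0^y\varphi_r^*(t)\,dt\le\int_{-\pi}^{\pi}\varphi_r^*(t)\,dt=\|\varphi_r^*\|_1\le C_\varphi ,
\]
and since $|x\,\varphi_r^*(x)|=|x|\,\varphi_r^*(|x|)$, taking the supremum over $x\in\ZT$ gives $\FI(r)\le C_\varphi$ for every $r\in(0,1)$.

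For the lower bound the first step is to extract from $\mathit{\Phi1}$ and $\mathit{\Phi2}$ that $\|\varphi_r\|_\infty\to\infty$ as $r\to 1$: for a fixed $\delta>0$ one has $\int_{\delta\le|t|\le\pi}|\varphi_r|\le 2\pi\,\varphi_r^*(\delta)\to0$, so by $\mathit{\Phi1}$ we get $\int_{|t|<\delta}\varphi_r\to1$ and hence $\tfrac12\le\int_{|t|<\delta}|\varphi_r|\le 2\delta\|\varphi_r\|_\infty$ for $r$ close to $1$, i.e. $\|\varphi_r\|_\infty\ge 1/(4\delta)$. Choosing $\delta$ small enough this produces an $r_0\in(0,1)$ such that $\|\varphi_r\|_\infty\ge 30$ and $\int_\ZT\varphi_r\ge\tfrac12$ for all $r_0<r<1$; in particular $\log\|\varphi_r\|_\infty>0$ there.

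Now fix $r_0<r<1$ and set $N=\|\varphi_r\|_\infty$, $M=\FI(r)$. I would split $\int_\ZT\varphi_r$ at the threshold $\tau=1/(8N)\in(0,\pi)$. On $\{|t|<\tau\}$ simply bound $|\varphi_r|\le N$, contributing at most $2\tau N=\tfrac14$. On $\{\tau\le|t|\le\pi\}$ use $|\varphi_r(t)|\le\varphi_r^*(|t|)\le M/|t|$, where the last step is the definition of $M=\FI(r)$; this contributes at most $2\int_\tau^\pi M t^{-1}\,dt=2M\log(8\pi N)$. Since $\tfrac12\le\bigl|\int_\ZT\varphi_r\bigr|\le\int_\ZT|\varphi_r|$, combining the two pieces gives $\tfrac14\le 2M\log(8\pi N)$, i.e. $M\ge 1/\bigl(8\log(8\pi N)\bigr)$. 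Because $N\ge 30>8\pi$, we have $\log(8\pi N)\le\log(N^2)=2\log N$, so $\FI(r)=M\ge 1/(16\log\|\varphi_r\|_\infty)$, which is the claim with $c=\tfrac1{16}$.

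The argument is routine; the one genuine decision is the choice of the splitting point $\tau$, which must be comparable to $1/\|\varphi_r\|_\infty$: any larger and the crudely estimated central part $2\tau N$ would already absorb the unit mass $\int_\ZT\varphi_r$, leaving nothing to propagate; with this choice the central part is negligible, and the tail — controlled only through $\FI(r)$ — is exactly what produces the logarithmic factor. Note also that the sign of $\varphi_r$ plays no role, since $\mathit{\Phi1}$ refers to $\int\varphi_r$ and we only ever use $\bigl|\int\varphi_r\bigr|\le\int|\varphi_r|$.
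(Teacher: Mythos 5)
Your proof is correct and follows essentially the same route as the paper: the upper bound via $|x|\varphi_r^*(x)\le\|\varphi_r^*\|_1\le C_\varphi$, and the lower bound by splitting $\int_\ZT\varphi_r$ at a threshold comparable to $1/\|\varphi_r\|_\infty$, bounding the central part by $\|\varphi_r\|_\infty$ and the tail by $\FI(r)/|t|$, which yields the $1/\log\|\varphi_r\|_\infty$ bound. The only differences are cosmetic: you choose $\tau=1/(8\|\varphi_r\|_\infty)$ instead of the paper's $\delta=\pi/\|\varphi_r\|_\infty^2$, and you explicitly verify from $\mathit{\Phi1}$--$\mathit{\Phi2}$ that $\|\varphi_r\|_\infty$ is large for $r$ near $1$, a fact the paper uses without proof.
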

\begin{proof}
Let $0<r<1$. Using the definitions of $\varphi^*_r(x)$ and $\FI(r)$ we conclude
\md0
\varphi_r(t) \le \varphi^*_r(t) \le \frac{\FI(r)}{|t|}, \quad t\in\ZT\setminus\{0\}.
\emd
Therefore, for a fixed $\delta\in(0,\pi)$ we have
\md8
1+o(1)
= \int_\ZT \varphi_r(t)\,dt
&\le \int_{|t|<\delta} \varphi^*_r(t)\,dt + \int_{\delta\le|t|\le\pi} \varphi^*_r(t)\,dt\\
&\le \|\varphi^*_r\|_\infty\int_{|t|<\delta}\,dt + \FI(r)\int_{\delta\le|t|\le\pi} \frac{dt}{|t|}\\
&= 2\delta\|\varphi_r\|_\infty + 2\FI(r)\log\frac{\pi}{\delta},
\emd
which implies
\md0
\FI(r) \ge \left(\frac{1}{2}+o(1)-\delta\|\varphi_r\|_\infty\right) \left(\log\frac{\pi}{\delta}\right)^{-1}.
\emd
Now, if we take $\delta = \pi/\|\varphi_r\|^2_\infty$, we get
\md0
\FI(r) \ge \left(\frac{1}{2}+o(1)-\frac{\pi}{\|\varphi_r\|_\infty}\right) \frac{1}{2\log\|\varphi_r\|_\infty},
\emd
which completes the proof of the first inequality (for example with $c=1/5$), since $\|\varphi_r\|_\infty\to\infty$ as $r\to1$. The second inequality can be deduced from the following:
\md0
\FI(r) = \sup_{x\in\ZT}|x\varphi^*_r(x)| \le \sup_{x\in\ZT}\left|\int_{|t|\le|x|}\varphi^*_r(t)\,dt\right| \le C_\varphi.
\emd
\end{proof}

If $f(x)$ is a function defined on a set $E\subset \ZT$ we denote
\md0
\OSC_{x\in E}f(x)=\sup_{x,y\in E}|f(x)-f(y)|.
\emd
\begin{lemma}\label{1.L1}
Let
\md0
U_n^\delta=\bigcup_{k=0}^{n-1}\left(\frac{\pi (2k+1-\delta)}{n},\frac{\pi (2k+1+\delta)}{n}\right),\quad n\in \ZN,\quad 0<\delta<\frac{1}{2},
\emd
and $J\subset \ZT$, $\pi >|J|\ge 16\pi/n$, is an arbitrary closed interval. If a measurable set $E\subset \ZT$ satisfies either
\md0
E\cap J=J\cap U_n^\delta \quad\text{or}\quad E\cap J=J\setminus U_n^\delta,
\emd
and  $\varphi\in L^\infty(\ZT)$ is an even decreasing on $[0,\pi]$ function, then
\md1\label{1.3-5}
\OSC_{\theta\in \left[x-\frac{4\pi}{n},x+\frac{4\pi}{n}\right]}\int_\ZT\varphi(\theta-t)\ZI_E(t)dt>\int_{-\frac{\pi\delta}{n}}^{\frac{\pi\delta}{n}}\varphi(t)dt-16\delta-2\pi\varphi\left(\frac{|J|}{4}\right),
\emd
for any $x\in J$.
\end{lemma}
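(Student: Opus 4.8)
The plan is to exhibit, for the given $x\in J$, two points $\theta_1,\theta_2$ in the window $[x-\tfrac{4\pi}{n},x+\tfrac{4\pi}{n}]$ at which $g(\theta):=\int_\ZT\varphi(\theta-t)\ZI_E(t)\,dt$ differs by more than the right-hand side of \e{1.3-5}; since the oscillation of $g$ over that window is $\ge|g(\theta_1)-g(\theta_2)|$, and since the right-hand side may be assumed positive (otherwise there is nothing to prove), this suffices. First I would reduce to the case $E\cap J=J\cap U_n^\delta$: in the complementary case $\ZI_E=\ZI_J-\ZI_{J\cap U_n^\delta}$ on $J$, and because $\int_\ZT\varphi(\theta-t)\,dt$ is constant while $\int_\ZT\varphi(\theta-t)\ZI_J(t)\,dt$ plays exactly the role of the ``outside $J$'' contribution treated below, the argument carries over with the roles of bump centres and gap centres of $U_n^\delta$ interchanged.

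Next I would choose $\theta_1,\theta_2$. Since $x\in J$ and $|J|\ge 16\pi/n$, at least one of $[x-\tfrac{3\pi}{n},x]$, $[x,x+\tfrac{3\pi}{n}]$ lies in $J$; in such an interval, whose length $3\pi/n$ exceeds one period $2\pi/n$ of $U_n^\delta$ even after discarding margins of width $\pi\delta/n$ at the ends, there is a bump centre $\theta_1=\tfrac{(2k+1)\pi}{n}$ with the whole bump $(\theta_1-\tfrac{\pi\delta}{n},\theta_1+\tfrac{\pi\delta}{n})\subset J$, and $\theta_1\pm\tfrac{\pi}{n}$ still lie in $[x-\tfrac{4\pi}{n},x+\tfrac{4\pi}{n}]$. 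Writing $a,b$ for the endpoints of $J$, I then set $\theta_2=\theta_1\pm\tfrac{\pi}{n}$, choosing the sign so that $\theta_2$ moves toward the farther of $a,b$; this makes $\theta_2$ a gap centre $\tfrac{2k'\pi}{n}$, and $\mathrm{dist}(\theta_2,\partial J)$ on that side is $\ge\tfrac{|J|}{2}-\tfrac{\pi}{n}\ge\tfrac{|J|}{4}$.

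For the estimate I would split $g(\theta_1)-g(\theta_2)=\int_\ZT[\varphi(\theta_1-t)-\varphi(\theta_2-t)]\ZI_E(t)\,dt$ over $t\in J$ and $t\in\ZT\setminus J$. On $J$ one has $\ZI_E=\ZI_{U_n^\delta}$, so this part equals $\int_{J\cap U_n^\delta}\varphi(\theta_1-t)\,dt-\int_{J\cap U_n^\delta}\varphi(\theta_2-t)\,dt$; the first integral is $\ge\int_{\theta_1-\pi\delta/n}^{\theta_1+\pi\delta/n}\varphi(\theta_1-t)\,dt=\int_{-\pi\delta/n}^{\pi\delta/n}\varphi$ (discarding the other, nonnegative, bumps), which furnishes the main term, while the second is $\le\int_{U_n^\delta}\varphi(\theta_2-t)\,dt$; because $\theta_2$ is a gap centre the bumps of $U_n^\delta$, of total length $2\pi\delta$, stay at distance $\ge\tfrac{\pi}{n}-\tfrac{\pi\delta}{n}$ from $\theta_2$ and so avoid the peak of $\varphi$, and monotonicity of $\varphi$ together with $\delta<\tfrac12$ bounds this contribution by $16\delta$. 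On $\ZT\setminus J$: the integrand is $\ge0$ on the semicircle nearer $\theta_1$, and on the other semicircle, which by the choice of sign is the one at distance $\ge\tfrac{|J|}{4}$ from $\{\theta_1,\theta_2\}$, the difference $\varphi(\theta_1-t)-\varphi(\theta_2-t)$ is of one sign and telescopes against the $\tfrac{\pi}{n}$-shift to something of absolute value $\le\tfrac{\pi}{n}\varphi(\tfrac{|J|}{4})<2\pi\varphi(\tfrac{|J|}{4})$, so (using $\ZI_E\le1$ on the negative part) this piece is $\ge-2\pi\varphi(\tfrac{|J|}{4})$. Adding the three bounds yields \e{1.3-5}.

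The step I expect to be the main obstacle is the inequality $\int_{U_n^\delta}\varphi(\theta_2-t)\,dt\le16\delta$ for a gap centre $\theta_2$: although $U_n^\delta$ has $n$ bumps, their total length is only $2\pi\delta$ and they are pushed off the peak of $\varphi$, so $\int_{U_n^\delta-\theta_2}\varphi$ must be shown to be of order $\delta\int_\ZT\varphi$ rather than $\|\varphi\|_\infty\,|U_n^\delta|$ — this is where one uses, via a Riemann-sum comparison, that $\varphi$ is decreasing and normalized like an approximate-identity kernel (as it is in the application to \trm{1.T6}), and the constants have to be tracked. Two minor points: the two or three bumps of $U_n^\delta$ straddling $\partial J$ are only partially counted in $\int_{J\cap U_n^\delta}$, but since $\mathrm{dist}(\theta_1,\partial J)\ge\pi\delta/n$ their defect sits at distance $\ge\tfrac{|J|}{4}$ from $\theta_1$ and is absorbed, like the outside-$J$ term, into $2\pi\varphi(\tfrac{|J|}{4})$; and one must check that the ``move toward the farther endpoint'' choice of $\theta_2$ stays compatible with $\theta_1,\theta_2\in[x-\tfrac{4\pi}{n},x+\tfrac{4\pi}{n}]$, which is precisely what the hypothesis $|J|\ge16\pi/n$ (rather than a smaller multiple of $\pi/n$) provides.
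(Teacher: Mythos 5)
Your first case follows the paper's own route (your $\theta_1,\theta_2$ and the three-way split are exactly the paper's $A_1,A_2,A_3$), but the reduction of the second case $E\cap J=J\setminus U_n^\delta$ is a genuine gap. The paper complements $E$ on all of $\ZT$: since $\int_\ZT\varphi(\theta-t)\ZI_E(t)\,dt=\|\varphi\|_1-\int_\ZT\varphi(\theta-t)\ZI_{E^c}(t)\,dt$ and $E^c\cap J=J\cap U_n^\delta$, the oscillation is unchanged and case one applies verbatim. You instead keep $E$ and complement only inside $J$, asserting the argument ``carries over with the roles of bump centres and gap centres interchanged''. That is not a symmetric situation: the extra term $\int_J\varphi(\theta-t)\,dt$ is not constant in $\theta$, and its variation between your two points --- equivalently, the truncation at $\partial J$ together with the unknown part of $E$ outside $J$ --- now enters with the unfavourable sign and is not covered by $16\delta+2\pi\varphi\bigl(\tfrac{|J|}{4}\bigr)$. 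Concretely, take $a=\tfrac{\pi(1+\delta)}{n}$ (the right endpoint of a bump), $x=a$, $E\cap J=J\setminus U_n^\delta$, $E\cap J^c=\varnothing$, and let $\varphi$ be even, decreasing, equal to a spike of mass $\tfrac12$ on $\bigl[-\tfrac{\pi\delta}{n},\tfrac{\pi\delta}{n}\bigr]$ plus a plateau of mass $\tfrac12$ on $\bigl[-\tfrac{2\pi}{n},\tfrac{2\pi}{n}\bigr]$. Then the right-hand side of \e{1.3-5} is $\tfrac12+\tfrac{\delta}{4}-16\delta$ (note $\varphi(\tfrac{|J|}{4})=0$), while for the pair your interchanged recipe allows, $\theta_1=\tfrac{2\pi}{n}$ (nearest gap centre whose gap lies in $J$) and $\theta_2=\tfrac{3\pi}{n}$, one computes $g(\theta_1)=\tfrac78-\tfrac{3\delta}{8}$ and $g(\theta_2)=\tfrac12-\tfrac{\delta}{2}$, so $g(\theta_1)-g(\theta_2)=\tfrac38+\tfrac{\delta}{8}$, strictly below the claimed bound; the missing $\tfrac18$ is precisely the plateau mass $\int_{\pi/n}^{2\pi/n}\varphi$ that the $\theta_1$-window loses across $\partial J$, and none of your error terms absorbs it. (The lemma itself survives here because the pair $\tfrac{3\pi}{n},\tfrac{4\pi}{n}$, which the complementation argument produces, gives oscillation $\tfrac12$.) So case two is not established as written; the one-line global complementation is the needed fix.

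A secondary point: your bound $\int_{U_n^\delta}\varphi(\theta_2-t)\,dt\le16\delta$, which you yourself flag as the main obstacle, genuinely needs a normalization of $\varphi$ (something like $t\varphi(t)\le\int_0^t\varphi\le 1$) that the lemma's hypotheses do not state. The paper sidesteps the full-sum bound: it only estimates the two bumps adjacent to $\theta_2$ by $a_{\pm1}\le 4\delta$ (this is where it silently uses $\varphi(t)\le 1/t$, so the normalization issue is shared) and handles all farther bumps by the alternating, telescoping comparison $\sum a_{2k}-\sum a_{2k+1}\ge a_0-a_1-a_{-1}$, which uses only the monotonicity of the $a_k$. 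Your Riemann-sum route can be made to work for normalized kernels, so this part is repairable; the case-two reduction is the part that needs a different idea.
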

\begin{proof}
We suppose $J=[a,b]$ and
\md0
\frac{2\pi (p-1)}{n}<a\le \frac{2\pi p}{n},\quad \frac{2\pi (q-1)}{n}<b\le \frac{2\pi q}{n}.
\emd
First we consider  the case
\md1\label{1.3-6}
E\cap J=J \cap U_n^\delta .
\emd
If $x\in J$, then
\md0
x\in I=\left[\frac{2\pi(m-1)}{n},\frac{2\pi m}{n} \right]
\emd
for some $p\le m\le q$. Without loss of generality we may assume that the center of $I$ is on the left hand side of the center of $J$. Then we will have
\md1\label{1.3-7}
b-\frac{ 2\pi(m+1)}{n}\ge \frac{|J|}{2}-\frac{4\pi}{n}\ge \frac{|J|}{4}.
\emd
It is clear, that the points
\md1\label{1.3-8}
\theta_1=\frac{ 2\pi m}{n}+\frac{\pi}{n},\quad \theta_2=\frac{ 2\pi(m+1)}{n}
\emd
are in the interval $[0,x+4\pi/n]$. Besides we have
\md8
\int_\ZT\varphi(\theta_1-t)\ZI_E(t)dt&-\int_\ZT\varphi(\theta_2-t)\ZI_E(t)dt\\
&=\int_{\theta_2-\pi}^a[\varphi(\theta_1-t)-\varphi(\theta_2-t)]\ZI_E(t)dt\\
&+\int_a^b[\varphi(\theta_1-t)-\varphi(\theta_2-t)]\ZI_E(t)dt\\
&+\int_b^{\theta_2+\pi}[\varphi(\theta_1-t)-\varphi(\theta_2-t)]\ZI_E(t)dt\\
&=A_1+A_2+A_3.
\emd
Since $\varphi$ is decreasing on $[0,\pi]$ we have
\md1\label{1.3-9}
A_1\ge 0.
\emd
If $t\in [b,\theta_2+\pi]$ then, using \e{1.3-7}, we get
\md0
t-\theta_2\ge b-\theta_2\ge \frac{|J|}{4},\quad t-\theta_1\ge  \frac{|J|}{4} ,
\emd
which implies
\md1\label{1.3-10}
|A_3|\le 2\pi \varphi\left(\frac{|J|}{4}\right).
\emd
To estimate $A_2$ we denote
\md1\label{1.3-11}
a_k=\int_{\pi( k-\delta)/n}^{\pi( k+\delta)/n}\varphi(t)dt,\quad k\in \ZZ.
\emd
We have
\md1\label{1.3-12}
a_0=\int_{-\pi\delta/n}^{\pi\delta/n}\varphi(t)dt.
\emd
Using properties of $\varphi $ we  have $a_k=a_{-k}$ and $a_1\ge a_2\ge\ldots $. Using Chebishev\a s inequality we have $\varphi(t)\le 1/t$. Thus we obtain
\md1\label{1.3-13}
a_k\le a_1=\int_{\pi( 1-\delta)/n}^{\pi( 1+\delta)/n}\varphi(t)dt\le \frac{2\pi\delta/n}{\pi(1-\delta)/n}=\frac{2\delta}{1-\delta}<4\delta,\quad k\ge 1.
\emd
Using \e{1.3-6}, \e{1.3-11}, \e{1.3-12} and \e{1.3-13}, we get
\md8
A_2&\ge \sum_{k=p}^{q-2}\int_{\pi (2k+1-\delta)/n}^{\pi (2k+1+\delta)/n}[\varphi(\theta_1-t)-\varphi(\theta_2-t)]dt-8\delta\\
&=\sum_{k=p}^{q-1}\int_{\pi(2(m-k)-\delta)/n}^{\pi(2(m-k)+\delta)/n}\varphi(t)dt
-\sum_{k=p}^{q-1}\int_{\pi(2(m-k)+1-\delta)/n}^{\pi(2(m-k)+1+\delta)/n}\varphi(t)dt-8\delta\\
&=\sum_{k=m-q+1}^{m-p}a_{2k}- \sum_{k=m-q+1}^{m-p}a_{2k+1}-8\delta\ge a_0-a_1-a_{-1}-8\delta\\
&>\int_{-\pi\delta/n}^{\pi\delta/n}\varphi(t)dt-16\delta.
\emd
Combining this with \e{1.3-9} and \e{1.3-10}, we get
\md0
\int_\ZT\varphi(\theta_1-t)\ZI_E(t)dt-\int_\ZT\varphi(\theta_2-t)\ZI_E(t)dt\ge \int_{-\frac{\pi\delta}{n}}^{\frac{\pi\delta}{n}}\varphi(t)dt-16\delta-2\pi\varphi\left(\frac{|J|}{4}\right),
\emd
which together with \e{1.3-8} implies \e{1.3-5}.  To deduce the case $E\cap J=J\setminus U_n^\delta$ notice, that for the complement $E^c$ we have
$E^c\cap J=J\cap U_n^\delta$ and so \e{1.3-5} holds for $E^c$. Therefore we obtain
\md8
\OSC_{\theta\in \left[x-\frac{4\pi}{n},x+\frac{4\pi}{n}\right]}&\int_\ZT\varphi(\theta-t)\ZI_E(t)dt\\
&=\OSC_{\theta\in \left[x-\frac{4\pi}{n},x+\frac{4\pi}{n}\right]}\left(\|\varphi \|_1-\int_\ZT\varphi(\theta-t)\ZI_E(t)dt\right)\\
&=\OSC_{\theta\in \left[x-\frac{4\pi}{n},x+\frac{4\pi}{n}\right]}\left(\int_\ZT\varphi(\theta-t)\ZI_{E^c}(t)dt\right)\\
&>\int_{-\frac{\pi\delta}{n}}^{\frac{\pi\delta}{n}}\varphi(t)dt-16\delta-2\pi\varphi\left(\frac{|J|}{4}\right),
\emd
which completes the proof of the lemma.
\end{proof}

\section{The case of bounded measures and $L^1$}\label{fatou-BVL1}
\begin{proof}[Proof of \trm{1.T1}]
Without loss of generality we may assume that $x_0=0$ and $\mu'(x_0)=0$. We fix a function $\theta:(0,1)\to\ZR$ with $|\theta(r)|\le \lambda(r)$.  From $\PI(\lambda, \varphi)<\infty$ we get
\md1\label{1.2-10}
|\theta(r)|\cdot \|\varphi_r\|_\infty\le 2\PI,\quad r_0<r<1.
\emd
Using the property $\mathit{\Phi2}$, we may define a collection of numbers $\varepsilon_r>0$ such
that
\md1\label{1.2-11}
\varepsilon_r\searrow 0,\quad \delta_r=\sup\{|t|:\, \varphi_r(t)\ge\varepsilon_r \}\to 0\hbox { as } r\to 1.
\emd
Applying \lem{1.L1-}, for any $0<r<1$ we define a family of intervals $I_j^{(r)},\,j=1,2,\dots,n_r$ such that
\md5
|I_j^{(r)}|\le2\delta_r,\quad j=1,2,\ldots,n_r,\label{1.2-12}\\
\sum_{j=1}^{n_r}\left|I_j^{(r)}\right|<10\varepsilon_r^{-1}\max\{1,|\theta(r)|\cdot\|\varphi_r\|_\infty,\|\varphi_r \|_1\},\label{1.2-13}\\
\left|\varphi_r(\theta(r)-t)-\sum_{j=1}^{n_r}\varepsilon_j^{(r)} \ZI_{I_j^{(r)}}(t)\right|\le\varepsilon_r,\label{1.2-14}
\emd
where  $\varepsilon_j^{(r)}=\pm \varepsilon_r$. From \e{1.2-10} and \e{1.2-13} we conclude
\md1\label{1.2-15}
\varepsilon_r\cdot \sum_{j=1}^{n_r}\left|I_j^{(r)}\right|\le L,\quad r_0<r<1,
\emd
where $L$ is a positive constant. From \e{1.2-11} and \e{1.2-14} we obtain
\md1\label{1.2-16}
\Phi_r(\theta(r),d\mu)=\int_\ZT\varphi_r(\theta(r)-t)d\mu(t)=\sum_{j=1}^{n_r}\varepsilon_j^{(r)} \int_{I_j^{(r)}}d\mu(t)+o(1),
\emd
where $o(1)\to 0$  as $r\to 1$.  Using this, we get
\md1\label{1.2-16.1}
\left|\Phi_r(\theta(r),d\mu)\right|\le \varepsilon_r\cdot \sum_{j=1}^{n_r}\left|I_j^{(r)}\right|\cdot \frac{1}{\left|I_j^{(r)}\right|}\left|\int_{I_j^{(r)}}d\mu(t)\right|+o(1).
\emd
According to \e{1.2-11} and \e{1.2-12}, we have
\md0
\max_{1\le j\le n_r}\frac{1}{\left|I_j^{(r)}\right|}\left|\int_{I_j^{(r)}}d\mu(t)\right|\to \mu'(0)=0 \hbox{ as } r\to 1.
\emd
This together with \e{1.2-15} and \e{1.2-16} implies that $\Phi_r(\theta(r),d\mu)\to 0$ as $r\to 1$.
\end{proof}

\begin{proof}[Proof of \trm{1.T2}]
 Let $\theta(r)$ satisfies \e{1.2-10}. We again assume that $x_0=0$, $\mu'(x_0)=0$ and so we will have $|\mu|'(0)=0$. Then, repeating  the same process of the proof of \trm{1.T1} at this time for the functions $\varphi_r^*(t)$ together with the measure $|\mu|$, instead of \e{1.2-16} we obtain
\md0
\int_\ZT \varphi_r^*\left(\theta(r)-t\right)d|\mu|(t)=\sum_{j=1}^{n_r}\varepsilon_j^{(r)} \int_{I_j^{(r)}}d|\mu|(t)+o(1).
\emd
Then we get
\md8
\left|\Phi_r(\theta(r),d\mu)\right|&\le \int_\ZT \varphi_r^*(\theta(r)-t)d|\mu|(t)\\
&= \varepsilon_r\cdot \sum_{j=1}^{n_r}|I_j^{(r)}|\cdot \frac{1}{|I_j^{(r)}|}\int_{I_j^{(r)}}d|\mu|(t)+o(1).
\emd
Since $|\mu|$ is differentiable at $0$, we get
\md0
\Phi_r(\theta(r),d\mu)\to 0.
\emd
\end{proof}


\begin{proof}[Proof of \trm{1.T3}]
For any $0<r<1$ there exist a point $x_r\in\ZT$, a number $0<\delta_r<\frac{1}{4}\lambda(r)$ and a measurable set $E_r\subset \ZT$ such that
\md3
E_r\subset (x_r-\delta_r,x_r+\delta_r),\quad |E_r|>\frac{3\delta_r}{2},\label{1.2-17}\\
|\varphi_r(x)|>\frac{\|\varphi_r\|_\infty}{2},\quad x\in E_r.\label{1.2-18}
\emd
From these relations it follows that $\varphi_r^*(x)>\frac{1}{2}\|\varphi_r\|_\infty$ if $x\in(-|x_r|,|x_r|)$. On the other hand, by property $\mathit{\Phi3}$ we have $\|\varphi_r^*\|_1\le C_\varphi$, which imples
\md1\label{1.2-19}
|x_r|\le \frac{2 C_\varphi}{\|\varphi_r\|_\infty},\quad 0<r<1.
\emd
Denote
\md5
n(r)=\left[\frac{4\pi}{\lambda(r)}\right]\in \ZN,\label{1.2-20}\\
\Delta_r=\bigcup_{k=0}^{n(r)-1}\left[\frac{2\pi k}{n(r)}-\delta_r,\frac{2\pi k}{n(r)}+\delta_r\right].\label{1.2-21}
\emd
If $x\in \ZT$ is an arbitrary point, then
\md0
x\in \left[\frac{2\pi k_0}{n(r)},\frac{2\pi (k_0+1)}{n(r)}\right)
\emd
for some $k_0\in\{0, 1, \dots, n(r)-1\}$. Consider the function
\md1\label{1.def-fr}
f_r(x)=\frac{\ZI_{\Delta_r}(x)}{|\Delta_r|} \sgn\varphi_r\left(\frac{2\pi k_0}{n(r)}+x_r-x\right).
\emd
Clearly $\|f_r\|_1=1$. Taking $\theta=x-x_r-\frac{2\pi k_0}{n(r)}$, from \e{1.2-19} and \e{1.2-20} we obtain
\md1\label{1.2-22}
|\theta |<\frac{2\pi}{n(r)}+|x_r| <\frac{2\pi\lambda(r)}{4\pi-\lambda(r)}+\frac{2 C_\varphi}{\|\varphi_r\|_\infty}\le\lambda(r)\left(\frac{1}{2}+\frac{2 C_\varphi}{\lambda(r)\|\varphi_r\|_\infty}\right).
\emd
Using the condition $\PI(\lambda,\varphi)=\infty$ and \lem{1.BV-means} we may fix a sequence $r_k\nearrow 1$ such that
\md5
\lambda(r_k)\|\varphi_{r_k}\|_\infty > C_\varphi\cdot 2^{k+3}\left(1 + k +\max_{1\le j<k}\frac{1}{|\Delta_{r_j}|}\right),\quad  k=1,2,\ldots ,\label{1.2-23}\\
\sup_{\theta\in\ZT}\frac{1}{|\Delta_{r_j}|}\int_{\Delta_{r_j}} \varphi^*_{r_k}(\theta-t)\,dt \le C_\varphi,\quad k=1, 2, \dots, j-1.\label{1.2-23.1}
\emd
From \e{1.2-22} and \e{1.2-23} we conclude
\md1\label{1.2-24}
|\theta |<\lambda(r),\hbox{ if } r=r_k.
\emd
Using \e{1.2-18}, \e{1.2-20} and \e{1.2-21}, for the same $x$ we get
\md9\label{1.2-25}
\Phi_r(x-\theta,f_r)
&=\int_{\ZT} \varphi_r\left(\frac{2\pi k_0}{n(r)}+x_r-t\right)f_r(t)dt\\
&=\frac{1}{|\Delta_r|} \int_{\Delta_r} \left|\varphi_r\left(\frac{2\pi k_0}{n(r)}+x_r-t\right)\right|\,dt\\
&\ge\frac{1}{2{\delta_r} n(r)}\int_{2\pi k_0/n(r)-{\delta_r}}^{2\pi k_0/n(r)+{\delta_r}} \left|\varphi_r\left(\frac{2\pi k_0}{n(r)}+x_r-t\right)\right|\,dt\\
&=\frac{1}{2{\delta_r} n(r)}\int_{x_r-\delta_r}^{x_r+\delta_r} \left|\varphi_r(u)\right|\,du\\
&\ge \frac{1}{2{\delta_r} n(r)}\cdot \frac{3\delta_r}{2}\cdot\frac{\|\varphi_r\|_\infty}{2}\ge \frac{3\lambda(r)\|\varphi_r\|_\infty}{16}.
\emd
Define
\md0
f(x)=\sum_{k=1}^\infty 2^{-k}f_{r_k}(x)\in L^1(\ZT),
\emd
and show that
\md0
\lim_{k\to\infty}\sup_{\theta\in\lambda(r_k,x)} \Phi_{r_k}(\theta,f) = \infty.
\emd
We split $\Phi_{r_k}(\theta,f)$ in the following way
\md9\label{1.Phi-split}
\Phi_{r_k}(\theta,f)
&= \sum_{j=1}^\infty 2^{-j}\Phi_{r_k}(\theta, f_{r_j})\\
&= \sum_{j=1}^{k-1} 2^{-j}\Phi_{r_k}(\theta, f_{r_j}) + 2^{-k}\Phi_{r_k}(\theta, f_{r_k}) + \sum_{j=k+1}^\infty 2^{-j}\Phi_{r_k}(\theta, f_{r_j})\\
&= S^1 + S^2 + S^3
\emd
From \e{1.2-23}, \e{1.2-24} and \e{1.2-25} it follows that
\md1\label{1.bound-S2}
\sup_{\theta\in\lambda(r_k,x)}S^2 = \sup_{\theta\in\lambda(r_k,x)}2^{-k}\Phi_{r_k}(\theta,f_{r_k})
\ge C_\varphi\left(1 + k +\max_{1\le j<k}\frac{1}{|\Delta_{r_j}|}\right)
\emd
Furthermore, using \e{1.def-fr} and $\mathit{\Phi3}$ propery of $\{\varphi_r\}$ we get
\md9\label{1.bound-S1}
\sup_{\theta\in\lambda(r_k,x)} |S^1|
&= \sup_{\theta\in\lambda(r_k,x)} \left|\sum_{j=1}^{k-1} 2^{-j}\int_\ZT \varphi_{r_k}(\theta-t)f_{r_j}(t)\,dt\right|\\
&\le \sup_{\theta\in\lambda(r_k,x)} \sum_{j=1}^{k-1} \frac{2^{-j}}{|\Delta_{r_j}|} \int_{\Delta_{r_j}}|\varphi_{r_k}(\theta-t)|\,dt\\
&\le \sum_{j=1}^{k-1} \frac{2^{-j}}{|\Delta_{r_j}|}\int_\ZT\varphi^*_{r_k}(u)\,du
 \le C_\varphi\cdot\max_{1\le j<k}\frac{1}{|\Delta_{r_j}|}.
\emd
Finally, using \e{1.2-23.1} we get
\md9\label{1.bound-S3}
\sup_{\theta\in\lambda(r_k,x)} |S^3|
&= \sup_{\theta\in\lambda(r_k,x)} \left|\sum_{j=k+1}^\infty 2^{-j}\int_\ZT \varphi_{r_k}(\theta-t)f_{r_j}(t)\,dt\right|\\
&\le \sup_{\theta\in\lambda(r_k,x)} \sum_{j=k+1}^\infty 2^{-j}\cdot\frac{1}{|\Delta_{r_j}|} \int_{\Delta_{r_j}}|\varphi_{r_k}(\theta-t)|\,dt\\
&\le \sum_{j=k+1}^\infty 2^{-j}\cdot\sup_{\theta\in\ZT}\frac{1}{|\Delta_{r_j}|} \int_{\Delta_{r_j}}\varphi^*_{r_k}(\theta-t)\,dt \le C_\varphi
\emd
So, from \e{1.bound-S1}, \e{1.bound-S2}, \e{1.bound-S3} and \e{1.Phi-split} it follows
\md0
\sup_{\theta\in\lambda(r_k,x)} \Phi_{r_k}(\theta,f)
\ge \sup_{\theta\in\lambda(r_k,x)} S^2 - \sup_{\theta\in\lambda(r_k,x)} |S^1| - \sup_{\theta\in\lambda(r_k,x)} |S^3|
\ge C_\varphi \cdot k,
\emd
which imples \e{1.2-2}.
\end{proof}

\begin{proof}[Proof of \trm{1.weakpp}]
Without loss of generality we may assume that $f$ is non-negative.
Furthermore, we may assume that $\TPI_p\ge C_\varphi^p$ and $\lambda(r)\|\varphi_r\|_\infty\varphi_*^{p-1}(r)\ge C_\varphi^p$ for all $r\in(0,1)$. Otherwise, instead of $\lambda(r)$ we would define a new $\bar{\lambda}(r)$ as
\begin{equation*}
\bar{\lambda}(r) \deff \frac{\max\left(\TPI_p, C_\varphi^p\right)}{\|\varphi_r\|_\infty\varphi_*^{p-1}(r)} \ge \lambda(r), \quad 0<r<1,
\end{equation*}
for which those assumptions would hold. Denote $\mu(r)=\varphi_*(r)/\|\varphi_r\|_\infty$ and notice that
\begin{equation*}
\lambda(r) \ge \frac{C_\varphi^p}{\|\varphi_r\|_\infty\varphi_*^{p-1}(r)} \ge \frac{\varphi_*(r)}{\|\varphi_r\|_\infty}=\mu(r), \quad 0<r<1.
\end{equation*}

Let $x,y\in\ZT,\,0<r<1$ and $|x-y|<\lambda(r)$. We split the integral $\Phi_r(y,f)$ as follows
\md9\label{1.I123}
\Phi_r(y,f)
&= \int_\ZT \varphi_r(t)f(y-t)\,dt\\
&= \int_{|t|\le\mu(r)} \varphi_r(t)f(y-t)\,dt\\
&+ \int_{\mu(r)<|t|<\lambda(r)} \varphi_r(t)f(y-t)\,dt\\
&+ \int_{\lambda(r)\le|t|\le\pi} \varphi_r(t)f(y-t)\,dt = I^1 + I^2 + I^3.
\emd
First of all, from \lem{1.Phi*ineq1} we have
\md1\label{1.I3ineq}
\sup_{\substack{|x-y|<\lambda(r)\\0<r<1}} |I^3| \le 8 C_\varphi\cdot Mf(x).
\emd
Notice that from the condition $\TPI_p(\lambda, \varphi)<\infty$ it follows that
\md0
\lambda(r)\le \TPI_p\cdot\mu(r)\FI^{-p}(r).
\emd
Hence, from \lem{1.lemma-supmulambda} we get
\md1\label{1.I2ineq}
\sup_{\substack{|x-y|<\lambda(r)\\0<r<1}} |I^2| \le \frac{4\TPI_p^{1/p}}{2^{1/p}-1} \left(Mf^p(x)\right)^{1/p}.
\emd
Furthermore, using the definition of $\mu(r)$, for $I_1$ we obtain
\md8
|I^1|
&\le \int_{|t|\le\mu(r)} \varphi^*_r(t)f(y-t)\\
&\le \|\varphi_r\|_\infty \int_{-\mu(r)}^{\mu(r)}f(y-t)\,dt\\
&=  2\mu(r)\|\varphi_r\|_\infty m_f(y,\mu(r)) =  2\FI(r)m_f(y,\mu(r)),
\emd
where
\md0
m_f(y,t) = \frac{1}{2t}\int_{y-t}^{y+t}|f(u)|\,du, \quad y\in\ZT,\,t>0.
\emd
To estimate $I_1$ we split the supremum into two parts as we did in \lem{1.lemma-supmulambda}:
\md9\label{1.I1ineq}
\sup_{\substack{|x-y|\le\mu(r)\le\lambda(r)\\0<r<1}} I^1
&\le \sup_{\substack{|x-y|\le\mu(r)\le\lambda(r)\\0<r<1}} 2\FI(r)m_f(y,\mu(r))\\
&+ \sup_{\substack{\mu(r)<|x-y|<\lambda(r)\\0<r<1}} 2\FI(r)m_f(y,\mu(r))
\emd
Notice that the second supremum is $T_1 f(x)$, which can be estimated due to \lem{1.lemma-TAbound}. To estimate the first one, note that $\mu(r)\le\lambda(r)\le \TPI_p\mu(r)\FI(r)^{-p}$ implies
\md1\label{1.varphi-ineq}
\FI(r)\le\TPI_p^{1/p}.
\emd
On the other hand, from $|x-y|\le\mu(r)$ implies $m_f(y,\mu(r))\le Mf(x)$, which together with \e{1.I1ineq}, \e{1.varphi-ineq} and \lem{1.lemma-TAbound} gives
\md9\label{1.I1ineq-1}
\sup_{\substack{|x-y|\le\mu(r)\le\lambda(r)\\0<r<1}} |I^1| \le 2\TPI_p^{1/p} \left(Mf^p(x)\right)^{1/p}.
\emd
Then, combining \e{1.I3ineq}, \e{1.I2ineq}, \e{1.I1ineq-1} and \e{1.I123}, we get
\md0
\sup_{\substack{|x-y|<\lambda(r)\\0<r<1}} \Phi_r(y,f)
\le \left( 2\TPI_p^{1/p} + \frac{4\TPI_p^{1/p}}{2^{1/p}-1} + 8 C_\varphi\right) \left(Mf^p(x)\right)^{1/p},
\emd
which implies \e{1.Phi*-bound}.

To get weak type inequality for $\Phi^*_\lambda$, note that
\md2
\left|\left\{x\in\ZT \colon \Phi_{\lambda}^*(x,f) > t\right\}\right|
&= \left|\left\{x\in\ZT \colon \left(\Phi_{\lambda}^*(x,f)\right)^p > t^p\right\}\right|\\
&\le \left|\left\{x\in\ZT \colon Mf^p(x) > t^p/C^p\right\}\right|\\
&\le \frac{C_MC^p}{t^p} \|f^p\|_1 = \frac{C_MC^p}{t^p} \|f\|_p^p,
\emd
where $C_M=\|M\|_{L^1\to L^{1,w}}$ is the weak $(1,1)$ norm of the maximal operator $M$.
\end{proof}


\section{The case of $L^\infty$}\label{fatou-Linfty}
\begin{proof}[Proof of \trm{1.T5}]
Since $\PI_\infty=0$, then for any $0<\varepsilon <1/2$ we may chose $\delta>0$ and $0<\tau<1$, such that
\md1\label{1.3-1}
\int_{-\delta \lambda(r)}^{\delta \lambda(r)}\varphi_r(t)dt<\varepsilon,\quad \tau<r<1.
\emd
Then we define
\md2
\varphi^{(1)}_r(x)=\left\{
\begin{array}{lrl}
\varphi_r(x)-\varphi_r(\delta \lambda(r))& \hbox { if }&  |x|\le \delta \lambda(r),\\
0&\hbox { if }& \delta \lambda(r)<|x|<\pi.
\end{array}
\right.
\emd
and
\md0
\varphi^{(2)}_r(x)=\frac{\varphi_r(x)-\varphi^{(1)}_r(x)}{\|\varphi_r(x)-\varphi^{(1)}_r\|_{L^1}}
=\frac{\varphi_r(x)-\varphi^{(1)}_r(x)}{1-l_r}
\emd
where
\md0
l_r=\int_{-\delta \lambda(r)}^{\delta \lambda(r)}\left(\varphi_r(t)-\varphi_r(\delta \lambda(r)\right)dt<\varepsilon<\frac{1}{2},\quad \tau<r<1.
\emd
It is clear, that $\{\varphi^{(2)}_r\}$ is a regular approximate identity and we have
\md1\label{1.3-2}
\varphi_r(x)=\varphi_r^{(1)}(x)+(1-l_r)\varphi_r^{(2)}(x).
\emd
From \e{1.3-1} it follows that
\md1\label{1.3-3}
\left|\int_\ZT\varphi^{(1)}_r(x-t)f(t)dt\right|\le \|f\|_\infty\int_{-\delta \lambda(r)}^{\delta \lambda(r)}\varphi_r(t)dt\le \varepsilon\|f\|_\infty
\emd
and
\md0
\varphi_r(\delta \lambda(r))\cdot 2\delta \lambda(r)<\varepsilon,\quad \tau<r<1.
\emd
Thus, using the definition of $\varphi^{(2)}_r(x)$, we get
\md0
\|\varphi_r^{(2)}\|_\infty\cdot  \lambda(r)<\frac{\varepsilon}{2\delta(1-l_r)}<\frac{\varepsilon}{4\delta}
\emd
Using this and \trm{1.T1} we conclude, that
\md1\label{1.3-4}
\lim_{\stackrel{r\to 1}{y\in\lambda(r,x)}}\int_\ZT\varphi^{(2)}_r(y-t)f(t)dt=f(x)
\emd
at any Lebesgue point.
Now without loss of generality we assume that $f(x)\ge 0$. If $x$ is an arbitrary Lebesgue point, using \e{1.3-2}, \e{1.3-3} and \e{1.3-4} we get
\md2
&\limsup_{\stackrel{r\to 1}{y\in\lambda(r,x)}}\Phi_r(y,f)\le \varepsilon \|f\|_\infty+f(x),\\
&\liminf_{\stackrel{r\to 1}{y\in\lambda(r,x)}}\Phi_r(y,f)\ge -\varepsilon \|f\|_\infty+(1-\varepsilon)f(x).
\emd
Since $\varepsilon $ can be taken sufficiently small, we get
\md0
\lim_{\stackrel{r\to 1}{y\in\lambda(r,x)}}\Phi_r(y,f)=f(x),
\emd
and the theorem is proved.
\end{proof}

\begin{proof}[Proof of \trm{1.T6}]
Since $\PI_\infty>0$, there exist sequences $\delta_k\searrow 0$ and $r_k\to 1$, such that
\md1\label{1.3-0}
\int_{-\delta_k \lambda(r_k)}^{\delta_k \lambda(r_k)}\varphi_{r_k}(t)dt>\frac{\PI_\infty}{2}, \quad k=1,2,\ldots .
\emd
Denote
\md1\label{1.3-14}
U_k=U_{n_k}^{\delta_k},\quad n_k=\left[\frac{\pi}{\lambda(r_k)}\right],
\emd
where $U_n^\delta$ is defined in the \lem{1.L1}.
Define the sequences of measurable sets $E_n$ by
\md0
E_1=U_1,\quad E_k=E_{k-1}\bigtriangleup U_k=(E_{k-1}\setminus U_k)\cup(U_k\setminus E_{k-1}),\quad k>1
\emd
We say $J$ is an adjacent interval for $E_k$, if it is a maximal interval containing either in $E_k$ or $(E_k)^c$. The family of all this intervals form a covering of whole $\ZT$.
It is easy to observe, that a suitable selection of  $\delta_k$ and $r_k$ may provide
\md3
&\varphi_{r_k}\left(\frac{|J|}{4}\right)<\frac{\PI_\infty}{16\pi},\hbox { if } J \hbox{ is adjacent for } E_{k-1},\label{1.3-15}\\
&\delta_j\le \frac{\PI_\infty}{2^{j+5}\|\varphi_{r_k}\|_\infty},\quad j\ge k+1,\label{1.3-16}
\emd
It is easy to observe, that if $k<m$, then
\md1\label{1.3-17}
\|\ZI_{E_k}-\ZI_{E_m}\|_1=|E_k\bigtriangleup E_m|\le \sum_{j\ge k+1}|U_j|
\emd
This implies, that $\ZI_{E_n}$ converges to a function $f\in L^1$. Using Egorov\a s theorem, we conclude that $f=\ZI_E$ for some measurable set $E\subset \ZT$. Tending  $m$ to infinity,  from \e{1.3-16} and \e{1.3-17} we get
\md1\label{1.3-18}
|E_k\bigtriangleup E|\le\left|\bigcup_{j\ge k+1} U_j\right|\le 2\pi\sum_{j\ge k+1}\delta_j\le \frac{\PI_\infty}{16\|\varphi_{r_k}\|_\infty}.
\emd
Fix a point $x\in\ZT$. We have $x\in J$ where $J$ is an adjacent interval for $E_{k-1}$.
From the definition of $E_k$ it follows that either
\md0
E_k\cap J=J\cap U_k \quad\text{or}\quad E_k\cap J=J\setminus U_k.
\emd
From \e{1.3-14} we have
\md0
\lambda(r_k,x)=(x-\lambda(r_k),x+\lambda(r_k))\subset \left[x-\frac{4\pi}{n_k},x+\frac{4\pi}{n_k}\right].
\emd
Thus, applying \lem{1.L1}, \e{1.3-0} and \e{1.3-15}, we get
\md8
\OSC_{\theta\in \lambda(r_k,x)}&\Phi_{r_k}(\theta,\ZI_{E_k})\\
&\ge\OSC_{\theta\in \left[x-\frac{4\pi}{n_k},x+\frac{4\pi}{n_k}\right]}\Phi_{r_k}(\theta,\ZI_{E_k})\\
&\ge\int_{-\frac{\pi\delta_k}{n_k} }^{\frac{\pi\delta_k}{n_k}}\varphi_{r_k}(t)dt-16\delta_k-2\pi\varphi_{r_k}\left(\frac{|J|}{4}\right)\\
&\ge\int_{-\delta_k \lambda(r_k)}^{\delta_k \lambda(r_k)}\varphi_{r_k}(t)dt-16\delta_k-\frac{\PI_\infty}{8}\\
&\ge\frac{\PI_\infty}{4}-16\delta_k,
\emd
where
\md0
\OSC_{x\in E}f(x)=\sup_{x,y\in E}|f(x)-f(y)|.
\emd

From \e{1.3-18} we conclude
\md8
\OSC_{\theta\in \lambda(r_k,x)}&\Phi_{r_k}(\theta,\ZI_E)\\
&>\OSC_{\theta\in \lambda(r_k,x)}\Phi_{r_k}(\theta,\ZI_{E_k})-\frac{\PI_\infty}{16}\ge \frac{\PI_\infty}{8}-16\delta_k,
\emd
which completes the proof of the theorem since $\delta_k\to 0$.
\end{proof}

\chapter{Littlewood type theorems}\label{chapter-littlewood}

\section{Introduction}\label{littlewood-intro}
In this chapter we generalize Littlewood\a s theorem for the integrals with general kernels. Here we remind Littlewood\a s theorem as well as generalized versions for Blaschke products and harmonic functions.

\begin{customtheorem}{\ref{OldLittlewood}}[Littlewood, 1927]
If a continuous function $\lambda(r):[0,1]\to \ZR$ satisfies the conditions
\md1\label{2.0-1}
\lambda(1)=0,\quad  \lim_{r\to 1}\frac{\lambda(r)}{1-r} = \infty,
\emd
then there exists a bounded analytic function $f(z)$, $z\in D$, such that the boundary limit
\md0
\lim_{r\to 1}f\left(re^{i\left(x+\lambda(r)\right)}\right)
\emd
does not exist almost everywhere on $\ZT$.
\end{customtheorem}

\begin{customtheorem}{\ref{2.OT1}}[Lohwater and Piranian, 1957]
If a continuous function $\lambda(r)$  satisfies \e{2.0-1}, then there exists a Blaschke product $B(z)$ such that the limit
\md0
\lim_{r\to 1}B\left(re^{i\left(x+\lambda(r)\right)}\right)
\emd
does not exist for any $x\in \ZT$.
\end{customtheorem}

\begin{customtheorem}{\ref{OldAikawa}}[Aikawa, 1990]
If $\lambda(r)$ is continuous and satisfies the condition \e{2.0-1},
then there exists a bounded harmonic function $u(z)$ on the unit disc, such that the limit
\md0
\lim_{r\to 1} u\left(re^{i\left(x+\lambda(r)\right)}\right)
\emd
does not exist for any $x\in \ZT$.
\end{customtheorem}

In \sect{littlewood-IE} we construct a characteristic function with Littlewood type divergence property for general kernels:
\begin{theorem}\label{2.T1}
Let $\{\varphi_r\}$ be a family of kernels with $\mathit{\Phi1,\,\Phi4,\,\Phi5}$. If a function $\lambda\in C[0,1]$ satisfies the conditions $\lambda(1)= 0$ and
\md0
\PI^*(\lambda,\varphi) = \limsup_{\delta \to 0}\liminf_{r\to 1}\int_{-\delta \lambda(r)}^{\delta \lambda (r)}\varphi_r(t)dt>\frac{1}{2},
\emd
then there exists a measurable set $E\subset \ZT$ such that
\md0
\limsup_{r\to 1}\Phi_r\left(x+\lambda(r),\ZI_E\right)- \liminf_{r\to 1}\Phi_r\left(x+\lambda(r),\ZI_E\right)\ge2\PI^*-1.
\emd
\end{theorem}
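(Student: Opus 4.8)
The plan is to reduce the statement to a geometric construction of the set $E$, preceded by two observations — one about $\PI^*$, one about the curve $r\mapsto x+\lambda(r)$.

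\emph{Two reductions.} First, since $\PI^*>1/2>0$ the inner integral $\int_{-\delta\lambda(r)}^{\delta\lambda(r)}\varphi_r(t)\,dt$ is bounded away from $0$ for $r$ near $1$, so $\lambda(r)\neq0$ there and, by continuity, we may assume $\lambda(r)>0$ on some $(\tau_0,1)$. Because $\varphi_r\ge0$ by $\Phi4$, for fixed $r$ the quantity $\int_{-\delta\lambda(r)}^{\delta\lambda(r)}\varphi_r(t)\,dt$ is non-decreasing in $\delta$; hence $\delta\mapsto\liminf_{r\to1}\int_{-\delta\lambda(r)}^{\delta\lambda(r)}\varphi_r(t)\,dt$ is non-decreasing and $\PI^*$ is actually its infimum over $\delta>0$. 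Thus for \emph{every} $\delta>0$ and $\eta>0$ there is $\tau(\delta,\eta)<1$ with $\int_{-\delta\lambda(r)}^{\delta\lambda(r)}\varphi_r(t)\,dt>\PI^*-\eta$ for all $r\in(\tau(\delta,\eta),1)$; equivalently, if $\beta_\eta(r)$ is the least $\rho$ with $\int_{-\rho}^{\rho}\varphi_r>\PI^*-\eta$, then $\beta_\eta(r)=o(\lambda(r))$ as $r\to1$. Combining this with $\Phi1$ and $\Phi4$: for $y\in\ZT$, if the arc $[y-\beta_\eta(r),y+\beta_\eta(r)]$ has $E$-density close to $1$ then $\Phi_r(y,\ZI_E)\ge\PI^*-\eta-o(1)$, and if instead it has $(\ZT\setminus E)$-density close to $1$ then $\Phi_r(y,\ZI_E)\le1-\PI^*+\eta+o(1)$. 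Second, since $\lambda\in C[0,1]$ and $\lambda(1)=0$, for $r_0$ near $1$ the image $\lambda([r_0,1])$ is an interval $[0,M_{r_0}]$ with $M_{r_0}\to0$, so the curve $r\mapsto x+\lambda(r)$, $r\in[r_0,1]$, covers the whole arc $[x,x+M_{r_0}]$ shrinking to $x$. This is exactly where the hypotheses on $\lambda$ are used, just as continuity enters \trm{OldLittlewood} and \trm{2.OT1}.

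\emph{What remains.} By the above it suffices to build a measurable $E\subset\ZT$ with $|E|,|\ZT\setminus E|>0$ such that, for a gauge $\beta(\cdot)=\beta_{\eta_j}(\cdot)$ with $\eta_j\to0$: for every $x\in\ZT$ and every neighbourhood of $1$ there are $r,r'$ in it with $[x+\lambda(r)-\beta(r),x+\lambda(r)+\beta(r)]$ of $E$-density $\ge1-o(1)$ and $[x+\lambda(r')-\beta(r'),x+\lambda(r')+\beta(r')]$ of $(\ZT\setminus E)$-density $\ge1-o(1)$. Indeed, feeding the point $y=x+\lambda(r)$ supplied by the sweeping property into the previous estimates and letting $\eta_j\to0$ gives $\limsup_{r\to1}\Phi_r(x+\lambda(r),\ZI_E)\ge\PI^*$ and $\liminf_{r\to1}\Phi_r(x+\lambda(r),\ZI_E)\le1-\PI^*$, hence the claimed bound $\ge2\PI^*-1$. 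One should note here that $E$ can contain no arc: an interior point of such an arc would trap the curve $x+\lambda(r)$ for $r$ near $1$ and kill the oscillation there; so both $E$ and $\ZT\setminus E$ must be dense with empty interior, while both are ``fat at every scale near every point''.

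\emph{Construction and the main difficulty.} I would build $E$ as an increasing union over levels $k=1,2,\dots$, so that a piece once placed into $E$ persists — in contrast to the symmetric-difference constructions behind \trm{1.T6} and \trm{2.OT1}, where finer levels perpetually puncture what was built. Fix $\tau_k\nearrow1$ and a dense sequence $\{q_i\}$ split into levels, the $k$-th level carrying $N_k$ roughly equally spaced points with $N_k\to\infty$ very fast. Around each level-$k$ point $q$ insert, as part of $E$, a nowhere dense fat set $K\subset[q-\rho_k,q+\rho_k]$ with $q\in K$, whose complementary gaps are all much smaller than $\rho_k$ and whose density is $\ge1-\eta_k$ in every sub-arc down to its finest scale; choose $\rho_k$ and the internal scales of $K$ small relative to $\beta_{\eta_k}(r)$ for $r\ge\tau_k$ (possible since $\beta_{\eta_k}(r)=o(\lambda(r))$), keeping $\sum_kN_k\rho_k<\pi$ so that $|E|<\pi$. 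Arrange symmetrically that $\ZT\setminus E$ also contains, at every level and near every point, analogous nowhere dense fat pieces, so both $E$ and $\ZT\setminus E$ are dense with empty interior. The $\tau_k$ are chosen so that, by $\Phi1$ and the absolute-continuity condition $\Phi5$, the pieces added after stage $k$ change $\Phi_r$ by $o(1)$ uniformly over the $r$-range relevant to level $k$; the passage to $\ZI_{E_k}\nearrow\ZI_E$ is then routine. Verifying the two-sided density property reduces to tracing the sweeping curve through the level-$k$ structure: when $x+\lambda(r)$ meets the center $q$ of a level-$k$ piece one gets $\Phi_r\ge1-\eta_k-o(1)$, and when it lands in a gap (or in the complementary bulk) of width $>\beta_{\eta_k}(r)$ one gets $\Phi_r\le\eta_k+o(1)$. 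The hard part will be the construction itself: keeping $|E|$ and $|\ZT\setminus E|$ positive, keeping both $E$ and $\ZT\setminus E$ with empty interior, and keeping both ``fat at every scale near every point'' with density rates fast enough to beat the worst concentration a kernel obeying only $\Phi1,\Phi4,\Phi5$ can exhibit (subject only to $\beta_\eta(r)=o(\lambda(r))$), all interleaved with the $\tau_k$ coming from $\Phi1,\Phi5$. Once $E$ is in hand, Corollary~2.1 is immediate: for the Poisson kernel \e{2.0-1} gives $\PI^*=1$, the integral $\Phi_r(\cdot,\ZI_E)$ is the harmonic extension $u$ with $0\le u\le1$, and $2\PI^*-1=1$ forces $\limsup_{r\to1}u(re^{i(x+\lambda(r))})=1$ and $\liminf_{r\to1}u(re^{i(x+\lambda(r))})=0$ for every $x$.
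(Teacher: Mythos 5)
Your two preliminary reductions are sound and in fact mirror what the paper does: monotonicity of $\int_{-\delta\lambda(r)}^{\delta\lambda(r)}\varphi_r$ in $\delta$ shows the kernels carry mass at least $\PI^*-\eta$ on windows of radius $o(\lambda(r))$, and continuity of $\lambda$ with $\lambda(1)=0$ lets the curve $x+\lambda(r)$ sweep a whole arc, so the intermediate value theorem makes it hit prescribed points; the target bounds $\limsup\ge\PI^*$ and $\liminf\le 1-\PI^*$ are the right ones. But the proof is not complete: the construction of $E$, which is the entire substance of the theorem, is only sketched and explicitly deferred (``the hard part will be the construction itself''), and the route you choose creates difficulties that are not resolved. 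Because you insist that $E$ grow monotonically, a fat piece once placed can never be punctured; yet for every $x$ whose sweep arcs at all later levels are trapped inside the hull of such a piece, the liminf estimate requires that piece to already contain, at every location and at every later window scale $\beta_{\eta_m}(r)$ (scales not known when the piece is built), complementary gaps large enough to host a window essentially disjoint from $E$ --- and these gaps must remain essentially unfilled by the pieces of all intermediate levels, which by design are spread near every point. If you demand gaps of size proportional to the scale at all scales, the piece is porous and hence of measure zero, destroying the limsup estimate; so one needs a delicate gauge argument together with a precise ordering of the choices of $u_m,v_m$, the density defects, and the $\Phi5$ thresholds, none of which is carried out. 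The paper's construction is designed exactly to avoid this: it alternates, taking $E_k=E_{k-1}\cup U_k$ at odd steps and $E_k=E_{k-1}\setminus U_k$ at even steps, where $U_k$ is a union of evenly spaced small intervals whose spacing matches the sweep length; the window reached at level $k$ then lies entirely inside $E_k$ or entirely outside $E_k$, and everything done at later stages is an $L^1$-small perturbation controlled by $\Phi5$ on the range $r<v_k$ fixed beforehand.

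Two further points in your verification need repair. First, the bounds stated there (``$\Phi_r\ge 1-\eta_k-o(1)$'' at a centre and ``$\Phi_r\le \eta_k+o(1)$'' in a gap) are false for general kernels with $\PI^*<1$: only the mass $\PI^*-\eta$ near the origin is controlled, and up to $1-\PI^*$ of the mass may sit anywhere on $\ZT$, so the attainable bounds are $\PI^*-\eta-o(1)$ and $1-\PI^*+\eta+o(1)$; these weaker bounds, which you did state correctly in the ``What remains'' paragraph, do suffice. Second, ``$E$-density close to $1$'' in the window is not by itself enough: $\Phi5$ gives absolute continuity only uniformly for $r<\tau$ with $\tau<1$, so as $r\to1$ the kernels may concentrate on arbitrarily small sets, and the exceptional measure inside the window must be made smaller than the $\Phi5$ threshold attached to the upper endpoint $v_k$ of the $r$-range used at level $k$. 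This forces the same ordering of choices as in the paper (fix $u_k,v_k$ first, then choose the stage-$k$ geometry and the total size of all later stages against the threshold corresponding to $\gamma=2^{-k}$, $\tau=v_k$), which your sketch gestures at but does not pin down.
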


In \sect{littlewood-blaschke} we construct a Blaschke product with Littlewood type divergence property for general kernels:
\begin{theorem}\label{2.T2}
Let a family of kernels $\{\varphi_r\}$ satisfies $\mathit{\Phi1,\,\Phi4,\,\Phi5}$ and for $\lambda\in C[0,1]$ we have $\lambda(1)= 0$ and $\PI^*(\lambda,\varphi)=1$. Then there exists a function $B\in L^\infty(\ZT)$, which is the boundary function of a Blaschke product, such that the limit
\md0
\lim_{r\to 1}\Phi_r\left(x+\lambda(r),B\right)
\emd
does not exist for any $x\in\ZT$.
\end{theorem}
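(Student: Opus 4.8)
The plan is to adapt the scheme behind \trm{2.T1} (which itself descends from the Lohwater--Piranian construction), the new point being to realise the periodic ``arc pattern'' used there by a genuine Blaschke product. One cannot simply deduce \trm{2.T2} from \trm{2.T1}: if a real-valued function were the boundary function of an inner function $\Theta$, then $\Theta^2\equiv1$, so $\Theta$ would be constant; hence the $\pm1$-valued functions produced by \trm{2.T1} are not boundary values of inner functions and the Blaschke product must be built by hand. I would take $B=\prod_{k\ge1}B_k$, an infinite product of finite Blaschke products, the zeros of $B_k$ lying on a circle $|z|=\rho_k$ with $\rho_k\nearrow1$ extremely fast and their angular distribution tuned to a scale $\lambda(r_k)$, where $r_k\to1$ and $\delta_k\searrow0$ are chosen, using $\PI^*(\lambda,\varphi)=1$, so that $\int_{-\delta_k\lambda(r_k)}^{\delta_k\lambda(r_k)}\varphi_{r_k}(t)\,dt>1-1/k$. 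Since $\PI^*=1$ forces $\liminf_{r\to1}\int_{-\delta\lambda(r)}^{\delta\lambda(r)}\varphi_r\ge1$ for \emph{every} fixed $\delta>0$, while $\mathit{\Phi1}$ and $\mathit{\Phi4}$ give $\int_\ZT\varphi_r\to1$ and $\varphi_r\ge0$, the mass of $\varphi_{r_k}$ outside $[-\delta_k\lambda(r_k),\delta_k\lambda(r_k)]$ tends to $0$; this total concentration, not available under the weaker hypothesis $\PI^*>1/2$ of \trm{2.T1}, is exactly what the construction needs.

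The heart of the matter is a Blaschke analogue of \lem{1.L1}. For each $k$ I would choose the zeros of $B_k$ so that its boundary function follows a pattern periodic at scale $\sim\lambda(r_k)$: on a first family of arcs (the ``flat'' arcs) $B_k$ stays within $o(1)$ of $1$, and on the complementary family (the ``winding'' arcs) it turns around the circle many times within every subinterval of length $\delta_k\lambda(r_k)$. The first behaviour follows by keeping zeros off the flat arcs and letting $\rho_k\to1$, since the normalized Blaschke factor $\frac{|a|}{a}\cdot\frac{a-z}{1-\bar{a}z}$ with $a=\rho_ke^{ic}$ tends, away from $z=e^{ic}$, to $1$, with error governed by $1-\rho_k$; the second by packing enough zeros into each winding arc, the relevant elementary estimate being that the $t$-derivative of $\arg\frac{a-e^{it}}{1-\bar{a}e^{it}}$ is the Poisson kernel of $a$. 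Consequently $\Phi_{r_k}(\theta,B_k)=\int\varphi_{r_k}(\theta-t)B_k(e^{it})\,dt$ is $1+o(1)$ when $\theta$ lies over a flat arc and has modulus $o(1)$ — or at any rate stays far from $1$, should $\varphi_{r_k}$ itself be very ``spiky'' — when $\theta$ lies over a winding arc; arguing as in \lem{1.L1}, for $x$ in a suitable adjacent interval, $\Phi_{r_k}(\theta,B_k)$ oscillates by $\ge1-o(1)$ as $\theta$ runs over an interval of length $\sim\lambda(r_k)$ about $x$.

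To assemble and conclude: convergence of $\prod_kB_k$ needs only the Blaschke condition on the union of all the zeros, which is automatic once the $\rho_k$ increase fast enough; then $B$ is a genuine Blaschke product and $|B|=1$ a.e.\ on $\ZT$, so $B\in L^\infty(\ZT)$ as required. Combining the $\theta$-oscillation just described with $\lambda\in C[0,1]$, $\lambda(1)=0$, as in \trm{2.T1}, one passes to oscillation of $r\mapsto\Phi_r(x+\lambda(r),B)$ along the curve: for every $x\in\ZT$ there are radii along which $\Phi_r(x+\lambda(r),B)\to1$ and radii along which $\Phi_r(x+\lambda(r),B)$ stays bounded away from $1$, whence $\lim_{r\to1}\Phi_r(x+\lambda(r),B)$ exists for no $x$.

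I expect the main difficulty to be controlling the interaction of the factors: to read the oscillation of $\Phi_{r_k}(\cdot,B)$ off that of $\Phi_{r_k}(\cdot,B_k)$ one must know that $\prod_{j\ne k}B_j$ is essentially transparent against $\varphi_{r_k}$, i.e.\ contributes only an $o(1)$ perturbation on the effective support $[\theta-\delta_k\lambda(r_k),\theta+\delta_k\lambda(r_k)]$ of $\varphi_{r_k}(\theta-\cdot)$. This forces a diagonal, back-and-forth choice of all the parameters $\rho_j,\delta_j,r_j$ and net sizes: the coarse factors $B_j$ ($j<k$) must be nearly constant on intervals of length $\delta_k\lambda(r_k)$ (arrange them to vary only on much larger scales), while for the fine factors $B_j$ ($j>k$) one must control $\int\varphi_{r_k}(\theta-t)\prod_{j>k}B_j(e^{it})\,dt$ — e.g.\ by making $\prod_{j>k}B_j$ equal to $1$ off a family of arcs of summably small total relative measure, placed so as to carry little $\varphi_{r_k}$-mass (here $\mathit{\Phi5}$ is used for the already-fixed radii, and, in the spiky case, the later arcs are chosen to avoid neighbourhoods of the finitely many points that matter at scale $k$). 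Verifying that all these demands can be met simultaneously, and that the accumulated errors are genuinely $o(1)$ \emph{uniformly in $x$}, is the technical core; any leftover unimodular phases multiplying the $B_k$ are harmless, since only the distance of $\Phi_r$ from $1$ enters the divergence statement.
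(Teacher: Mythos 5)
Your overall assembly (an infinite product of finite Blaschke products at scales tied to $\lambda$, coarse factors made nearly constant at the fine scale, tail factors close to $1$ off a set of small measure handled by $\mathit{\Phi5}$, and two radii per scale obtained from continuity of $\lambda$) is the same as the paper's. But the step that is supposed to produce the actual oscillation is different, and it has a genuine gap: you claim that when $\theta$ sits over a ``winding'' arc, $\Phi_{r_k}(\theta,B_k)=\int\varphi_{r_k}(\theta-t)B_k(e^{it})\,dt$ has modulus $o(1)$, ``or at any rate stays far from $1$'' if $\varphi_{r_k}$ is spiky. Under $\mathit{\Phi1},\mathit{\Phi4},\mathit{\Phi5}$ and $\PI^*=1$ the only information you have about $\varphi_r$ is that (for $r$ close to $1$) almost all of its mass lies in a window of width $2\delta_k\lambda(r)$ around the center; nothing controls how that mass is distributed inside the window. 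If $B_k$ winds through many full turns inside every such window, the convolution is merely some point of the closed unit disk: an admissible kernel can put its mass precisely on the subset of the window where $B_k$ is within $\varepsilon$ of $+1$ (a union of tiny intervals — $\mathit{\Phi5}$ is a uniform-in-$r<\tau$ absolute continuity statement and does not forbid this), in which case the value over the winding arc is also $1+o(1)$ and no divergence is detected. Your spiky/non-spiky dichotomy does not repair this, since a near-point-mass kernel samples the winding factor at an uncontrolled point, which may be near $1$; and you cannot hunt for a better $\theta$ along the curve by varying $r$, because the kernel changes with $r$ and no continuity in $r$ of $r\mapsto\varphi_r$ is assumed (the paper never uses any).

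This is exactly what the paper's \lem{2.L1} is designed to avoid: the factor $b(n,\delta,z)=\frac{z^n-\rho^n}{\rho^n z^n-1}$ with $\rho=e^{-\sqrt{\delta}/n}$ is \emph{nearly constant} equal to $-1$ on the whole arcs $U(n,\delta)$ around the points $2\pi j/n$ and nearly constant equal to $+1$ off $U(n,\sqrt[4]{\delta})$. Aiming the curve, via the two radii $r',r''$ with $\lambda(r')=\tfrac{2\pi j_0}{n_k}-x$ and $\lambda(r'')=\tfrac{2\pi j_0}{n_k}+\tfrac{\pi}{n_k}-x$, at the arc center and at the midpoint between arcs, the convolution with \emph{any} nonnegative kernel whose mass is concentrated in the window is pinned to $\approx -B_{k-1}\left(\tfrac{2\pi j_0}{n_k}\right)$ and $\approx +B_{k-1}\left(\tfrac{2\pi j_0}{n_k}\right)$ respectively, regardless of how the mass is spread inside the window; since $|B_{k-1}|=1$ on $\ZT$, these two values are uniformly separated, which is the robustness your cancellation mechanism lacks. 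To close the gap you would have to replace the winding arcs by arcs on which the new factor is essentially constant and far from $+1$ (e.g.\ close to $-1$), i.e.\ essentially reproduce the paper's choice \e{2.p1}; the rest of your outline (tail estimates via $\mathit{\Phi5}$, modulus of continuity of the coarse partial product, Blaschke condition from fast decay of $\delta_k$) then goes through as in the paper.
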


\section{Divergence with characteristic function}\label{littlewood-IE}
We consider the sets
\md1\label{2.p4}
U(n,\delta) = \bigcup_{j=0}^{n-1}\left(\frac{\pi (2j-\delta)}{n},\frac{\pi (2j+\delta)}{n}\right)\subset\ZT,
\emd
which will be used in the proofs of both theorems.

\begin{proof}[Proof of \trm{2.T1}]
Using the definition of $\PI^*$ and  the absolute continuity property $\mathit{\Phi5}$, we may choose numbers  $\delta_k$, $u_k$, $v_k\,(k\in\ZN)$, satisfying
\md5
\delta_k<2^{-k-5},\quad 1>v_k>u_k\to 1,\quad 3\lambda(v_k)\le \lambda(u_k)<\pi,\label{2.p9}\\
\int_{-\delta_k \lambda(u_k)}^{\delta_k \lambda(u_k)}\varphi_{u_k}(t)dt>\PI^*\cdot(1-2^{-k}), \quad k=1,2,\ldots ,\label{2.p8}\\
\int_e|\varphi_r(t)|dt<2^{-k},\label{2.p10}
\emd
where the last bound holds whenever
\md1\label{2.p31}
0<r<v_k,\quad |e|\le 10\pi \sum_{j\ge k+1}\sqrt[4]{\delta_j}.
\emd
We will consider the same sequences \e{2.p9} with properties \e{2.p8}$-$\e{2.p31} in the proof of \trm{2.T2} as well. We note that $\sqrt[4]{\delta_j}$ in \e{2.p31} is necessary only in the proof of \trm{2.T2}, but for \trm{2.T1}  just $\delta_j$ is enough.
Denote
\md1\label{2.c5}
U_k=U(n_k,5\delta_k),\quad n_k=\left[\frac{5\pi}{\lambda(u_k)}\right],\quad k\in \ZN,
\emd
and define the sequence of measurable sets $E_k\subset \ZT$ by
\md3
&E_1=U_1,\\
&E_k=\left\{
\begin{array}{lc}
E_{k-1}\setminus U_k &\hbox{ if }k \hbox{ is even},\\
E_{k-1}\cup U_k&\hbox{ if }k \hbox{ is odd}.
\end{array}\label{2.c6}
\right.
\emd
It is easy to observe, that if $k<m$, then
\md1\label{2.c17}
\|\ZI_{E_k}-\ZI_{E_m}\|_1=|E_k\bigtriangleup E_m|\le \sum_{j\ge k+1}|U_j|.
\emd
This implies that $\ZI_{E_n}$ converges to a function $f$ in $L^1$ norm. Using Egorov\a s theorem, we conclude that $f=\ZI_E$ for some measurable set $E\subset \ZT$. Tending  $m$ to infinity,  from \e{2.c17} we get
\md1\label{2.c11}
|E\bigtriangleup E_k|=|(E\setminus E_k)\cup(E_k\setminus E)|\le \sum_{j\ge k+1}|U_j|\le  10\pi \sum_{j\ge k+1}\delta_j.
\emd
Take an arbitrary $x\in \ZT$. There exists an integer  $1\le j_0\le n_k$ such that
\md0
\frac{2\pi j_0}{n_k}-x\in \left[\frac{2\pi }{n_k},\frac{4\pi }{n_k}\right]\subset \left[\frac{\lambda(u_k)}{3},\lambda(u_k)\right]\subset [\lambda(v_k),\lambda(u_k)]
\emd
and therefore, since $\lambda(r)$ is continuous, we may find a number $r$, $u_k\le r\le v_k$, such that
\md1\label{2.g1}
\lambda(r)=\frac{2\pi j_0}{n_k}-x.
\emd
If  $k\in \ZN$ is odd, then according to the definition of $E_k$ we get
\md0
E_k\supset U_k\supset I=\left(\frac{\pi(2 j_0+5\delta_k)}{n_k},\frac{\pi(2 j_0-5\delta_k)}{n_k}\right).
\emd
Thus, using \e{2.p8}, \e{2.g1} as well as the definition of $n_k$ from \e{2.c5}, we conclude
\md9\label{2.g15}
\Phi_r(x+\lambda(r),\ZI_{E_k})&\ge \int_I\varphi_r(x+\lambda(r)-t)dt\\
&= \int_I\varphi_{r}\left(\frac{2\pi j_0}{n_k}-t\right)dt\\
&=\int_{-5\pi \delta_k/n_k}^{5\pi \delta_k/n_k}\varphi_{r}\left(t\right)dt\\
&\ge \int_{-\delta_k \lambda(u_k)}^{\delta_k \lambda(u_k)}\varphi_{r}(t)dt>\PI^*\cdot(1-2^{-k}).
\emd
From \e{2.p10} and \e{2.c11} it follows that
\md0
\left|\Phi_r\left(t,\ZI_E\right)-\Phi_{r}\left(t,\ZI_{E_k}\right)\right|<2^{-k},\quad t\in\ZT,\quad 0<r<v_k,
\emd
and hence from \e{2.g15} we obtain
\md1\label{2.c8}
\limsup_{r\to 1}\Phi_r\left(x+\lambda(r),\ZI_E\right)\ge \PI^*.
\emd
If $k\in\ZN$ is even, then we have $E_k\cap U_k=\varnothing$ and therefore $E_k\cap I=\varnothing$.
Thus we get
\md2
\Phi_r(x+\lambda(r),\ZI_{E_k})&\le  \int_\ZT\varphi_r(x+\lambda(r)-t)dt-\int_I\varphi_r(x+\lambda(r)-t)dt\\
&\le 1- \int_{-\delta_k \lambda(u_k)}^{\delta_k \lambda(u_k)}\varphi_r\left(t\right)dt\le 1-\PI^*(1-2^{-k})
\emd
and similarly we get
\md1\label{2.c9}
\liminf_{r\to 1}\Phi_r\left(x+\lambda(r),\ZI_E\right)\le1-\PI^*.
\emd
Relations \e{2.c8} and \e{2.c9} complete the proof of the theorem.
\end{proof}

\section{Divergence with Blaschke product}\label{littlewood-blaschke}
The following finite Blaschke products
\md1\label{2.p1}
b(n,\delta,z)=\frac{z^n-\rho^n}{\rho^nz^n-1}=\prod_{k=0}^{n-1}\frac{z-\rho  e^{\frac{2\pi ik}{n}}}{\rho  e^{\frac{2\pi ik}{n}}z-1},\quad \rho=e^{-\sqrt{\delta}/n}.
\emd
play significant role in the proof of \trm{2.T2}. Similar products are used in the proof of theorem \otrm{2.OT1} too. If $z=e^{ix}$, then \e{2.p1} defines a continuous function in $H^\infty(\ZT)$. We will use the set $U(n,\delta)$ defined in \e{2.p4}. The following lemma shows that on $U(n,\delta)$ the function \e{2.p1} is approximative $-1$, and outside of $U(n,\sqrt[4]{\delta})$ is approximative $1$.
\begin{lemma}\label{2.L1}
There exists an absolute constant $C>0$ such that
\md3
&\left|b\left(n,\delta,e^{ix}\right)+1\right|\le C\sqrt{\delta},\quad x\in U(n,\delta),\label{2.p5}\\
&\left|b\left(n,\delta,e^{ix}\right)-1\right|\le C\sqrt[4]{\delta},\quad x\in \ZT\setminus U(n,\sqrt[4]{\delta}).\label{2.p23}
\emd
\end{lemma}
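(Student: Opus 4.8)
The plan is to collapse the finite product into the single Blaschke factor already displayed in \e{2.p1} and analyze it as a function of $w=z^n$. With $z=e^{ix}$ set $w=e^{inx}$ (so $|w|=1$) and $s=\rho^n=e^{-\sqrt{\delta}}$; then $b(n,\delta,e^{ix})=\dfrac{w-s}{sw-1}$. Since $|b|=1$, when $\delta>1$ the two asserted inequalities are trivial (the left sides never exceed $2$), so I may assume $\delta\le 1$, in which case $\tfrac12\sqrt{\delta}\le 1-s\le\sqrt{\delta}$ because $1-e^{-t}\in[t/2,t]$ for $t\in[0,1]$. I will also use the elementary two-sided estimate $\tfrac{2}{\pi}\operatorname{dist}(nx,2\pi\ZZ)\le|w-1|=2|\sin(nx/2)|\le\operatorname{dist}(nx,2\pi\ZZ)$, valid since $\operatorname{dist}(nx,2\pi\ZZ)\le\pi$.

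The computational core is the pair of identities
\md0
b(n,\delta,e^{ix})+1=\frac{(1+s)(w-1)}{sw-1},\qquad b(n,\delta,e^{ix})-1=\frac{(1-s)(w+1)}{sw-1},
\emd
each obtained by clearing denominators. For the first bound I would use that on $U(n,\delta)$ membership means $\operatorname{dist}(nx,2\pi\ZZ)<\pi\delta$, hence $|w-1|<\pi\delta$, while the denominator stays bounded below, $|sw-1|\ge 1-s|w|=1-s\ge\tfrac12\sqrt{\delta}$; together with $1+s\le 2$ this yields $|b+1|<4\pi\sqrt{\delta}$. For the second bound, on $\ZT\setminus U(n,\sqrt[4]{\delta})$ one has $\operatorname{dist}(nx,2\pi\ZZ)\ge\pi\sqrt[4]{\delta}$, so $|w-1|\ge 2\sqrt[4]{\delta}$; writing $sw-1=(w-1)-w(1-s)$ gives $|sw-1|\ge|w-1|-(1-s)\ge 2\sqrt[4]{\delta}-\sqrt{\delta}\ge\sqrt[4]{\delta}$ (using $\sqrt{\delta}\le\sqrt[4]{\delta}$), and since $|w+1|\le 2$ and $1-s\le\sqrt{\delta}$ the second identity gives $|b-1|\le 2\sqrt[4]{\delta}$. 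Choosing $C=4\pi$ serves both inequalities.

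There is no real obstacle here: everything reduces to the one-variable Möbius map $w\mapsto(w-s)/(sw-1)$, which sends the unit circle to itself and, because $s$ is only $O(\sqrt{\delta})$ away from $1$, stays close to $-1$ near $w=1$ and close to $1$ on the rest of the circle. The only points requiring a little attention are matching the sets $U(n,\cdot)$ to the quantity $\operatorname{dist}(nx,2\pi\ZZ)$ (which is precisely how those sets are defined) and checking that the denominator $sw-1$ is never small, which is exactly the content of the lower bounds $|sw-1|\ge 1-s$ and $|sw-1|\ge|w-1|-(1-s)$ used above.
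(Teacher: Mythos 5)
Your proof is correct and follows essentially the same route as the paper: the identities $b+1=\frac{(1+s)(w-1)}{sw-1}$ and $b-1=\frac{(1-s)(w+1)}{sw-1}$ are exactly the factorizations the paper uses, with the same numerator bounds via $|e^{iy}-1|\asymp\operatorname{dist}(y,2\pi\ZZ)$ and the same denominator bounds $|sw-1|\ge 1-s$ and $|sw-1|\ge|w-1|-(1-s)$. Your explicit disposal of the case $\delta>1$ (where $|b\pm1|\le2$ makes the bounds trivial) is a small tidiness gain over the paper, which tacitly assumes $\delta$ small, but it does not change the substance of the argument.
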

\begin{proof}
Deduction of these inequalities based on the inequalities
\md0
\frac{|x|}{2}\le |e^{i x}-1|\le 2|x|, \quad\text{if}\quad |x|\le\pi.
\emd
If  $x\in U(n,\delta)$, then we have
\md9\label{2.p5-}
\left|b\left(n,\delta,e^{ix}\right)+1\right|&=\left|\frac{(e^{inx}-1)(\rho^n+1)}{\rho^ne^{inx}-1}\right|\le \frac{4\pi \delta}{1-e^{-\sqrt{\delta}}},\\
&\le \frac{4e\pi \delta}{e^{\sqrt{\delta}}-1}\le\frac{8e\pi \delta}{\sqrt{\delta}}\le C\sqrt{\delta}.
\emd
If $x\in \ZT\setminus U(n,\sqrt[4]{\delta})$, then $e^{inx}=e^{i\alpha}$ with $\pi\sqrt[4]{\delta}<|\alpha|<\pi $. Thus we obtain
\md9\label{2.p23-}
\left|b\left(n,\delta,e^{ix}\right)-1\right|&= \left|\frac{(e^{inx}+1)(1-\rho^n)}{\rho^ne^{inx}-1}\right|= \frac{2(e^{\sqrt{\delta}}-1)}{|e^{inx}-e^{\sqrt{\delta}}|}\\
&\le \frac{4 \sqrt{\delta}}{|e^{inx}-1|-|e^{\sqrt{\delta}}-1|}\le\frac{4 \sqrt{\delta}}{\pi\sqrt[4]{\delta}/2-2\sqrt{\delta}}\le C\sqrt[4]{\delta}.
\emd
\end{proof}
\begin{proof}[Proof of \trm{2.T2}]
First we choose numbers  $\delta_k$, $u_k$, $v_k$ $(k\in\ZN)$, satisfying \e{2.p9}$-$\e{2.p10} with $\PI^*=1$. Then we
denote
\md1\label{2.p22}
b_k(x)=b(n_k,\delta_k,e^{ix}), \quad n_k=\left[\frac{6\pi}{\lambda(u_k)}\right],\quad k\in \ZN,
\emd
and
\md0
B_k(x)=\prod_{j=1}^kb_j(x),\quad B(x)=\prod_{j=1}^\infty b_j(x).
\emd
The convergence of the infinite product follows from the bound \e{2.p3}, which will be obtained bellow. Observe that in the process of selection of the numbers \e{2.p9} we were free to define $\delta_k>0$ as small as needed. Besides, taking $u_k$ to be close to $1$ we may get $n_k$ as big as needed. Using these notations and \lem{2.L1}, aside of the conditions \e{2.p9}$-$\e{2.p10} we can additionally claim the bounds
\md3
&\omega\left(2\pi/n_k,B_{k-1}\right)=\sup_{|x-x'|<2\pi/n_k}|B_{k-1}(x)-B_{k-1}(x')|<2^{-k},\label{2.p11}\\
&\left|b_k(x)+1\right|< 2^{-k}, \quad x\in U(n_k,6\delta_k),\label{2.p2}\\
&\left|b_k(x)-1\right|<2^{-k},x\in\ZT\setminus U(n_k,\sqrt[4]{\delta_k}). \label{2.p3}
\emd
From \e{2.p3} we get
\md9\label{2.p7}
\left|B(x)-B_k(x)\right|&=\left|\prod_{j\ge k+1}b_j(x)-1\right|\\
&\le \prod_{j\ge k+1}(1+2^{-j})-1<2^{-k+1},\quad x\in \ZT\setminus \bigcup_{j\ge k+1}U\left(n_j,\sqrt[4]{\delta_j}\right).
\emd
 Take an arbitrary $x\in \ZT$. There exists an integer  $1\le j_0\le n_k$ such that
\md0
\frac{2\pi j_0}{n_k}-x\in \left[\frac{2\pi }{n_k},\frac{4\pi }{n_k}\right]\subset  \left[\frac{2\pi }{n_k},\frac{5\pi }{n_k}\right]\subset\left[\frac{\lambda(u_k)}{3},\lambda(u_k)\right]\subset [\lambda(v_k),\lambda(u_k)],
\emd
where the inclusions follow from the definition of $n_k$ (see \e{2.p22}) and from the inequality $3\lambda(v_k)\le \lambda(u_k)<\pi$ coming from \e{2.p9}. Thus, since $\lambda(r)$ is continuous, we may find numbers $u_k\le r'\le r''\le v_k$, such that
\md1\label{2.p30}
\lambda(r')=\frac{2\pi j_0}{n_k}-x,\quad \lambda(r'')=\frac{2\pi j_0}{n_k}+\frac{\pi }{n_k}-x.
\emd
For the set
\md0
e=\bigcup_{j\ge k+1}U\left(n_j,\sqrt[4]{\delta_j}\right),
\emd
we have
\md0
|e|=10\pi \sum_{j\ge k+1}\sqrt[4]{\delta_j}.
\emd
So taking $r\in [u_k,v_k]$, from \e{2.p10} and  \e{2.p7} we conclude
\md9\label{2.p20}
\big|\Phi_r(x,B)&-\Phi_r(x,B_k)\big|\\
&\le \int_e\varphi_r(x-t)|B(t)-B_k(t)|dt+2^{-k+1} \int_{ \ZT\setminus e}\varphi_r(x-t)dt\\
&\le 2\cdot 2^{-k}+2^{-k+1}=4\cdot 2^{-k} ,\quad x\in \ZT.
\emd
If
\md0
t\in I=(-\delta_k\lambda(u_k),\delta_k\lambda(u_k))\subset \left(-\frac{6\pi\delta_k}{n_k},\frac{6\pi\delta_k}{n_k}\right),
\emd
then we have
\md2
&\frac{2\pi j_0}{n_k}-t\in U(n_k,6\delta_k),\\
 &\frac{2\pi j_0}{n_k}+\frac{\pi}{n_k}-t\in\ZT\setminus U(n_k,\sqrt[4]{\delta_k}).
\emd
Then, using these relations, \e{2.p2} and \e{2.p11}, we get
\md9\label{2.p12}
\left|B_k\left(\frac{2\pi j_0}{n_k}-t\right)\right.&+\left. B_{k-1}\left(\frac{2\pi j_0}{n_k}\right)\right|\\
&\le\left|B_{k-1}\left(\frac{2\pi j_0}{n_k}-t\right)\right|\left|b_k\left(\frac{2\pi j_0}{n_k}-t\right)+1\right|\\
&+\left|B_{k-1}\left(\frac{2\pi j_0}{n_k}-t\right)-B_{k-1}\left(\frac{2\pi j_0}{n_k}\right)\right|\\
&< 2^{-k}+2^{-k}=2^{-k+1}
\emd
and
\md9\label{2.p13}
\left|B_k\left(\frac{2\pi j_0}{n_k}+\frac{\pi}{n_k}-t\right)\right.&-\left. B_{k-1}\left(\frac{2\pi j_0}{n_k}\right)\right|\\
&\le\left|B_{k-1}\left(\frac{2\pi j_0}{n_k}+\frac{\pi}{n_k}-t\right)\right|\left|b_k\left(\frac{2\pi j_0}{n_k}+\frac{\pi}{n_k}-t\right)-1\right|\\
&+\left|B_{k-1}\left(\frac{2\pi j_0}{n_k}+\frac{\pi}{n_k}-t\right)-B_{k-1}\left(\frac{2\pi j_0}{n_k}\right)\right|\\
&< 2^{-k}+2^{-k}=2^{-k+1}.
\emd
On the other hand, using \e{2.p8}, \e{2.p30} and \e{2.p12}, we get
\md9\label{2.c15}
&\left|\Phi_{r'}(x+\lambda(r'),B_k) + B_{k-1}\left(\frac{2\pi j_0}{n_k}\right)\right|\\
&\mspace{150mu}= \left|\int_\ZT\varphi_{r'}(t)B_k(x+\lambda(r')-t)dt+B_{k-1}\left(\frac{2\pi j_0}{n_k}\right)\right|\\
&\mspace{150mu}=  \left|\int_\ZT\varphi_{r'}(t)\left[B_k\left(\frac{2\pi j_0}{n_k}-t\right)+B_{k-1}\left(\frac{2\pi j_0}{n_k}\right)\right]dt\right|\\
&\mspace{150mu}\le  \left|\int_I\varphi_{r'}(t)\left[B_k\left(\frac{2\pi j_0}{n_k}-t\right)+B_{k-1}\left(\frac{2\pi j_0}{n_k}\right)\right]dt\right|\\
&\mspace{150mu}+\left|\int_{I^c}\varphi_{r'}(t)\left[B_k\left(\frac{2\pi j_0}{n_k}-t\right)+B_{k-1}\left(\frac{2\pi j_0}{n_k}\right)\right]dt\right|\\
&\mspace{150mu}\le 2^{-k+1}\int_I\varphi_{r'}(t)dt+2\cdot 2^{-k}\le 4\cdot 2^{-k}.
\emd
Similarly, using \e{2.p13}, we conclude
\md1\label{2.c16}
\left|\Phi_{r''}\left(x+\lambda(r''),B_k\right)-B_{k-1}\left(\frac{2\pi j_0}{n_k}\right)\right|\le 4\cdot 2^{-k}.
\emd
From \e{2.p20}, \e{2.c15} and \e{2.c16} it follows that
\md0
\left|\Phi_{r'}(x+\lambda(r'),B)-\Phi_{r''}\left(x+\lambda(r''),B\right)\right|\ge 1-16\cdot 2^{-k},
\emd
which implies the divergence of $ \Phi_{r}(x+\lambda(r),B)$ at a point $x$. The theorem is proved.
\end{proof}

\chapter{Differentiation bases in $\ZR^n$}\label{chapter-rectangle}

\section{Introduction}\label{rectangles-intro}
This chapter is devoted to differentiation bases in $\ZR^n$, which is defined as follows:

\begin{definition}\label{3.def-diff-bases}
A family $\ZM$ of bounded, positively measured sets from $\ZR^n$ is said to be a differentiation basis (or simply basis), if for any point $x\in \ZR^n$ there exists a sequence of sets $E_k\in\ZM$ such that $x\in E_k$,  $k=1,2,\ldots$ and $\diam(E_k)\to 0$ as $k\to\infty$.
\end{definition}

We remind the classical theorems determining the optimal Orlicz space for the functions having almost everywhere differentiable integrals with respect to the bases of rectangles $\zR^n$.
\begin{customtheorem}{\ref{OldJMZ}}[Jessen--Marcinkiewicz--Zygmund, \cite{JMZ}]
$L(1+\log^+ L)^{n-1}(\ZR^n)\subset \ZF(\zR^n).$
\end{customtheorem}

\begin{customtheorem}{\ref{OldSaks}}[Saks, \cite{Saks}] If the convex function $\Psi:\ZR^+\to\ZR^+$ satisfies
\md0
\Psi(t)=o(t\log^{n-1} t)\text { as } t\to\infty,
\emd
then $\Psi(L)(\ZR^n)\not\subset \ZF(\zR^n)$. Moreover, there exists a positive function $f\in \Psi(L)(\ZR^n)$ such that $\delta_{\zR^n}(x,f)=\infty$ everywhere.	
\end{customtheorem}

The optimal Orlicz space remains the same if we consider the basis $\DR^n$ instead of $\zR^n$. The first part follows from the embedding $L(1+\log^+ L)^{n-1}(\ZR^n)\subset \ZF(\zR^n)\subset \ZF(\DR^n)$ and the second can be deduced from the following
\begin{customtheorem}{\ref{zerekidze}}[Zerekidze, \cite{Zer1} (see also \cite{Zer2, Zer3})]
$\ZF^+(\DR^n)= \ZF^+(\zR^n).$
\end{customtheorem}

However, the set of functions having almost everywhere differentiable integrals with respect to these bases can differ. In \sect{dyadic-R2} we prove that the condition $\gamma_\Delta<\infty$ is necessary and sufficient for the full equivalency of rare dyadic basis $\DR^2_\Delta$ and complete dyadic bases $\DR^n$.

\begin{theorem}\label{TX2}
If $\Delta=\{\nu_k\}$ is an increasing sequence of positive integers with
\md0
\gamma_\Delta = \sup_{k\in \ZN}(\nu_{k+1}-\nu_k)<\infty,
\emd
then
\md0
\ZF(\DR^2_\Delta)= \ZF(\DR^2).
\emd
\end{theorem}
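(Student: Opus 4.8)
The inclusion $\ZF(\DR^2)\subseteq\ZF(\DR^2_\Delta)$ is trivial: since $\DR^2_\Delta\subseteq\DR^2$ one has $\delta_{\DR^2_\Delta}(x,f)\le\delta_{\DR^2}(x,f)$ for every $x$ and every $f\in L_{\rm loc}(\ZR^2)$. The content is the opposite inclusion, and the plan is to recover differentiation over $\DR^2$ from differentiation over $\DR^2_\Delta$ by ``filling in the missing scales'', which is exactly what $\gamma_\Delta<\infty$ permits. The elementary input: since consecutive terms of $\Delta$ differ by at most $\gamma_\Delta$, for every large $m$ there are $\nu,\nu'\in\Delta$ with $m\le\nu\le m+\gamma_\Delta$ and $m-\gamma_\Delta\le\nu'\le m$; hence a dyadic interval of length $2^{-m}$ splits into at most $2^{\gamma_\Delta}$ dyadic intervals with exponent in $\Delta$, and sits inside a dyadic interval of length at most $2^{\gamma_\Delta}2^{-m}$ with exponent in $\Delta$. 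Taking products in each coordinate, every $R\in\DR^2$ of sufficiently small diameter is a disjoint union $R=\bigsqcup_{j=1}^{N}R_j$ with $R_j\in\DR^2_\Delta$, $N\le 4^{\gamma_\Delta}$, $4^{-\gamma_\Delta}|R|\le|R_j|$ and $\diam(R)\le 2^{\gamma_\Delta}\diam(R_j)$, and moreover $R\subseteq Q(R)$ for some $Q(R)\in\DR^2_\Delta$ with $|Q(R)|\le 4^{\gamma_\Delta}|R|$ and $\diam(Q(R))\le 2^{\gamma_\Delta}\diam(R)$.

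Now fix $f\in\ZF(\DR^2_\Delta)$ and a point $x$ with $\delta_{\DR^2_\Delta}(x,f)=0$ (a.e.\ $x$). For $R\in\DR^2$ with $x\in R$ write $R=\bigsqcup_j R_j$ as above, let $R_{j_0}$ be the piece containing $x$, and use
\[
\frac1{|R|}\int_R f-f(x)=\sum_{j=1}^{N}\frac{|R_j|}{|R|}\Big(\frac1{|R_j|}\int_{R_j}f-f(x)\Big).
\]
As $\diam(R)\to0$ one has $\diam(R_{j_0})\to0$, so the $j=j_0$ term tends to $0$ by $\delta_{\DR^2_\Delta}(x,f)=0$. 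Since the weights $|R_j|/|R|$ sum to $1$ and there are at most $4^{\gamma_\Delta}$ remaining pieces, each of which is a rectangle of $\DR^2_\Delta$ with $\diam(R_j)\to0$ not containing $x$ but satisfying $\operatorname{dist}(x,R_j)\le\diam(R)\le 2^{\gamma_\Delta}\diam(R_j)$, it suffices to prove the following property $(\star)$: \emph{if $\delta_{\DR^2_\Delta}(\cdot,f)=0$ a.e., then for a.e.\ $x$ and every $\varepsilon>0$ there is $\eta>0$ with $\big|\tfrac1{|R'|}\int_{R'}f-f(x)\big|<\varepsilon$ for all $R'\in\DR^2_\Delta$ with $\diam(R')<\eta$ and $\operatorname{dist}(x,R')\le 2^{\gamma_\Delta}\diam(R')$.} Granting $(\star)$, the off-diagonal terms also vanish and $\delta_{\DR^2}(x,f)=0$ a.e., proving the theorem.

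To prove $(\star)$ I would fix $\varepsilon>0$ and, for small $\eta$, introduce the family $\ZG_\eta$ of ``wasteful'' rectangles $Q\in\DR^2_\Delta$ of diameter $<\eta$ possessing a sub-rectangle $R'\subseteq Q$, $R'\in\DR^2_\Delta$, $|R'|\ge 4^{-2\gamma_\Delta}|Q|$, with $\big|\tfrac1{|R'|}\int_{R'}f-\tfrac1{|Q|}\int_Q f\big|\ge\varepsilon/2$; then $\int_Q\big|f-\tfrac1{|Q|}\int_Q f\big|\ge c(\gamma_\Delta)\,\varepsilon\,|Q|$. If $x$ is a point of $\DR^2_\Delta$-differentiability where $(\star)$ fails at this $\varepsilon$, then pairing each offending $R'_k$ with $Q_k:=Q(R)$ (which contains both $x$ and $R'_k$ and has comparable size, hence $\tfrac1{|Q_k|}\int_{Q_k}f\to f(x)$) shows $Q_k\in\ZG_\eta$ for large $k$ and $x\in Q_k$; so the failure set for $\varepsilon$ lies in $\bigcup_{Q\in\ZG_\eta}Q$ for every $\eta>0$. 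One then estimates the measure of this union, uniformly in $\eta$, by a Calder\'on--Zygmund/stopping-time covering argument: every $Q\in\ZG_\eta$ carries a fixed proportion of mean oscillation of $f$, distributed over only the boundedly many $\Delta$-scales between $R'$ and $Q$, which forces the overlap of $\ZG_\eta$ to be controlled. Letting $\eta\downarrow0$ and then $\varepsilon\downarrow0$ yields $(\star)$.

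I expect this covering/oscillation estimate to be the main obstacle. The delicate point is that $f\in\ZF(\DR^2_\Delta)$ supplies only convergence of \emph{signed} averages, not of $\tfrac1{|R'|}\int_{R'}|f-f(x)|$, so one cannot simply invoke a Lebesgue-point property of $\DR^2_\Delta$; the estimate must be carried out while keeping careful track of the limited range of scales, which is precisely where $\gamma_\Delta<\infty$ is indispensable (and where the hypothesis fails to help once $\gamma_\Delta=\infty$, consistently with Theorem~\ref{TX2.1}). For comparison, the decomposition in the first paragraph also shows that $\DR^2_\Delta$ and $\DR^2$ are quasi-equivalent subbases of the density basis $\DR^2$ (Definition~\ref{3.def-quasi-equiv-bases}), whence $\ZF^+(\DR^2_\Delta)=\ZF^+(\DR^2)$ by Theorem~\ref{quasi-equiv}; but passing from non-negative functions to arbitrary $f$ still requires the signed analysis described above.
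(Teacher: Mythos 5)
Your opening decomposition (every small $R\in\DR^2$ is a disjoint union of at most $4^{\gamma_\Delta}$ rectangles of $\DR^2_\Delta$, each of measure $\ge 4^{-\gamma_\Delta}|R|$) and the convex-combination identity are correct, and they coincide with the combinatorial core of the paper's proof. But the argument is not complete: everything has been pushed into property $(\star)$, and $(\star)$ is exactly where the difficulty lives. Your sketch of its proof does not establish it. To conclude that the failure set of $(\star)$ is null you would need a genuine measure estimate, e.g.\ that $\bigl|\bigcup_{Q\in\ZG_\eta}Q\bigr|$ tends to $0$ (or is small) as $\eta\downarrow 0$; the only hypothesis available is a.e.\ convergence of the \emph{signed} averages along $\DR^2_\Delta$, which carries no maximal inequality and no summable control of the mean oscillations $\int_Q|f-\tfrac{1}{|Q|}\int_Q f|$ over small rectangles (rectangle bases do not differentiate $L^1$, so $\tfrac{1}{|Q|}\int_Q|f-f(x)|$ need not even tend to $0$ at a.e.\ $x$). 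The phrases ``boundedly many $\Delta$-scales'' and ``the overlap of $\ZG_\eta$ is controlled'' do not substitute for such an estimate, and you yourself flag this step as the main obstacle; as written, the proof has a genuine gap at precisely this point.

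The paper closes this gap without any covering or stopping-time machinery, by a contradiction argument that transfers the evaluation point rather than proving a pointwise statement like $(\star)$. Assuming $f\in\ZF(\DR^2_\Delta)$ but $\delta_{\DR^2}(x,f)>\alpha$ on a set of positive measure, one first uniformizes (Egorov-style) the $\DR^2_\Delta$-convergence to get a scale $\delta>0$, and then slices by the values of $f$ into intervals of length $\alpha/2$ to obtain a set $G$ of positive measure on which simultaneously $\bigl|\tfrac{1}{|R|}\int_R f-f(x)\bigr|<\alpha/2$ for all $x\in R\in\DR^2_\Delta$ with $\length(R)<\delta$, and $\sup_{x,y\in G}|f(x)-f(y)|\le\alpha/2$. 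At a point $x_0\in G$ of density of $G$ relative to $\DR^2$, one picks $R'\in\DR^2$ containing $x_0$ with $\length(R')<\delta$, deviation $\bigl|\tfrac{1}{|R'|}\int_{R'}f-f(x_0)\bigr|>\alpha$, and $|R'\cap G|>(1-4^{-\gamma_\Delta})|R'|$. Your pigeonhole step then yields a sub-rectangle $R''\in\DR^2_\Delta$ with $|R''|\ge 4^{-\gamma_\Delta}|R'|$ and deviation $>\alpha$; the density threshold $1-4^{-\gamma_\Delta}$ forces $R''\cap G\ne\varnothing$, and re-evaluating at a point $x_1\in R''\cap G$ (allowed because $f$ oscillates by at most $\alpha/2$ on $G$) contradicts the uniform $\DR^2_\Delta$-estimate on $G$. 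This transfer of the base point to a nearby point of the good set $G$ is the idea missing from your proposal: it is what makes the signed-average hypothesis usable on the sub-rectangles not containing $x$, and it is the natural way to repair your argument (prove $(\star)$, or the theorem directly, along these lines rather than via a Calder\'on--Zygmund covering).
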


\begin{theorem}\label{TX2.1}
If $\Delta=\{\nu_k\}$ is an increasing sequence of positive integers with $\gamma_\Delta=\infty$, then there exists a function $f\in\ZF(\DR^2_\Delta)$ such that
\md0
\limsup_{\length(R)\to 0,\, x\in R\in\DR^2}\left|\frac{1}{|R|}\int_R f(t)\,dt\right| = \infty
\emd
for any $x\in\ZR^n$.
\end{theorem}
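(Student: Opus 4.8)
(Here $\ZR^n$ is to be read as $\ZR^2$.) The plan is to produce $f$ as a superposition $f=\sum_{j\ge 1}c_j\ZI_{E_j}$ of periodic indicator functions, one for each of the unboundedly many large gaps of $\Delta$, each $E_j$ being ``invisible'' to the rare basis $\DR^2_\Delta$ yet producing a fixed--size average over some small dyadic rectangle. First I would fix the scales: since $\gamma_\Delta=\infty$, for every $m$ there are infinitely many $k$ with $\nu_{k+1}-\nu_k>m$ (otherwise all but finitely many gaps are $\le m$ and $\gamma_\Delta<\infty$), so I may choose indices $k_1<k_2<\dots$ with $\nu_{k_j}\to\infty$ and with $N_j:=\nu_{k_j+1}-\nu_{k_j}$ growing as fast as will be needed. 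The interval $(\nu_{k_j},\nu_{k_j+1})$ contains no element of $\Delta$, hence each side--scale of a rectangle of $\DR^2_\Delta$ is either $\le\nu_{k_j}$ or $\ge\nu_{k_j+1}$.

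The geometric core is the construction, for each $j$, of a set $E_j\subset\ZR^2$ that is periodic with respect to the lattice $2^{-\nu_{k_j}}\ZZ^2$ and which, inside each dyadic cell of side $2^{-\nu_{k_j}}$, is a union of dyadic rectangles all of whose side--scales lie strictly between $\nu_{k_j}$ and $\nu_{k_j+1}$, arranged in the ``hyperbolic staircase'' pattern that underlies the failure of the weak $(1,1)$ inequality for the strong maximal function, so that: \textbf{(i)} $\varepsilon_j:=|E_j\cap[0,1)^2|\to 0$, as fast as needed; \textbf{(ii)} every $x\in\ZR^2$ lies in a dyadic rectangle $R=R_x^{(j)}\in\DR^2\setminus\DR^2_\Delta$ with both sides $\le 2^{-\nu_{k_j}}$ and $\frac1{|R|}\int_R\ZI_{E_j}\ge\beta_j$, where the ``boost ratio'' $\beta_j/\varepsilon_j\to\infty$; \textbf{(iii)} there is a set $G_j\supset E_j$ with $|G_j\cap[0,1)^2|\le C\varepsilon_j$ such that $M_{\DR^2_\Delta}\ZI_{E_j}\le\gamma_j$ off $G_j$, with $\gamma_j\to 0$ fast. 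The point of \textbf{(iii)}, and the place where rareness enters, is that a rectangle of $\DR^2_\Delta$ is, in each coordinate, either coarser than $2^{-\nu_{k_j}}$ --- so it only averages the marginal density of $E_j$, which the staircase keeps $\lesssim\varepsilon_j$ at every level --- or finer than $2^{-\nu_{k_j+1}}$ --- so it meets $E_j$ only inside its $2^{-\nu_{k_j+1}}$--neighbourhood $G_j$; it can never sit ``across the gap'' and hence never resolves the structure that yields the boost in \textbf{(ii)}.

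Finally I would choose $c_j>0$ with $c_j\to\infty$, $\sum_j c_j\varepsilon_j<\infty$, $\sum_j c_j\gamma_j<\infty$ and $c_j\beta_j\to\infty$ --- all simultaneously feasible once $N_j$ (hence $\beta_j/\varepsilon_j$ and $\gamma_j^{-1}$) grows fast enough --- and set $f=\sum_j c_j\ZI_{E_j}\in L_{\mathrm{loc}}^1(\ZR^2)$, which is $\ZZ^2$--periodic. For every $x$ and every $j$ one has $\frac1{|R_x^{(j)}|}\int_{R_x^{(j)}}f\ge c_j\beta_j$ with the sides of $R_x^{(j)}$ tending to $0$, giving the asserted everywhere divergence over $\DR^2$. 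For $f\in\ZF(\DR^2_\Delta)$ I split $f=f_{<J}+f_{\ge J}$: $f_{<J}$ is bounded, so $\delta_{\DR^2_\Delta}(x,f_{<J})\le\delta_{\zR^2}(x,f_{<J})=0$ a.e.\ by Theorem~\ref{OldJMZ} (equivalently, because $\DR^2_\Delta$, a subbasis of the density basis $\zR^2$, differentiates $L^\infty$); by subadditivity of $\delta$ and \textbf{(iii)}, $\delta_{\DR^2_\Delta}(x,f)\le M_{\DR^2_\Delta}f_{\ge J}(x)+f_{\ge J}(x)\le\sum_{j\ge J}c_j\gamma_j+\sum_{j\ge J}c_j\ZI_{G_j}(x)+f_{\ge J}(x)$ for a.e.\ $x$, where the first term is a constant tending to $0$ and the last two have $L^1([0,1)^2)$--norm $\le C\sum_{j\ge J}c_j\varepsilon_j\to 0$; hence $\bigl|\{x:\delta_{\DR^2_\Delta}(x,f)>\lambda\}\bigr|=0$ for every $\lambda>0$, i.e.\ $f\in\ZF(\DR^2_\Delta)$.

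The main obstacle is the construction in \textbf{(ii)}--\textbf{(iii)}: designing the staircase $E_j$ so that the full dyadic basis boosts the average by an unbounded factor while the $\Delta$--basis, barred from every scale inside the gap, cannot boost it at all except on a set comparable in measure to $E_j$ itself. Everything else --- the choice of the $c_j$, the reduction to $[0,1)^2$ by $\ZZ^2$--periodicity, and the splitting estimate --- is soft once that construction is available.
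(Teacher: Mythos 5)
Your soft reduction (the splitting $f=f_{<J}+f_{\ge J}$, the choice of the $c_j$, the periodization, and the everywhere divergence over $\DR^2$ from (ii)) is fine, but the theorem lives entirely in the construction you postpone, and the one quantitative claim you make about it is not correct, so there is a genuine gap. In the mixed case --- a $\Delta$-rectangle with one side equal to (or coarser than) the period $2^{-\nu_{k_j}}$ and the other side finer than $2^{-\nu_{k_j+1}}$ --- the average of $\ZI_{E_j}$ is the full-period marginal (slice) density of $E_j$ at the given height, and for the hyperbolic staircase pattern you invoke this is \emph{not} ``$\lesssim\varepsilon_j$ at every level'': the rows that carry the concentrated mass producing the boost have slice density comparable to $\beta_j$ itself, and the set of such heights has measure of order $\beta_j$, not $\varepsilon_j$. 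Since your bookkeeping requires $\sum_j c_j|G_j|<\infty$ while $c_j\beta_j\to\infty$, an exceptional set of measure $\sim\beta_j$ is fatal. More generally, for any nonnegative construction the average of the slice density over heights equals $\varepsilon_j$, and the covering requirement in (ii) forces $\varepsilon_j\gtrsim\beta_j/\log(1/\beta_j)$ (the sharp $L\log L$ behaviour of the strong maximal operator), so $\gamma_j$ can at best be a logarithm below $\beta_j$ and you must in addition prove a concentration-type estimate that slice densities exceed $C\varepsilon_j$ only on a set of measure much smaller than $\beta_j$. Nothing in the sketch addresses this; it is exactly the hard point, and it is unclear whether conditions (ii)--(iii) with $|G_j|\le C\varepsilon_j$ are realizable at all for indicator functions.

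The paper sidesteps this difficulty by abandoning nonnegativity: Lemma \ref{L-4} builds, on each cell of a grid at scale $2^{-l_k}$ placed inside the gap $(\nu_{p_k},\nu_{p_k+1})$, a signed bounded function whose one-dimensional marginals vanish, so that by Lemma \ref{L-3} its integral over \emph{every} dyadic rectangle not contained in the cell is exactly zero. Hence every $\DR^2_\Delta$-rectangle of length $\ge 2^{-l_k}$ sees $F_k$ exactly as $0$ (there is no small-but-positive marginal to control), while finer $\Delta$-rectangles are, thanks to the lower bound on $\width(\supp F_k)$, either contained in or disjoint from $\supp F_k$, a set of measure $\le 2/\beta(L_k)$; this yields $\delta_{\DR^2_\Delta}(x,F)=0$ at a.e.\ point exactly, as membership in $\ZF(\DR^2_\Delta)$ demands, and the everywhere boost comes from the large amplitude $(n+1)2^{n-2}$ on tiny corner squares (plus the $v_\omega$ pieces on the leftover squares), not from an indicator average. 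To repair your plan you should either import this cancellation --- replace $c_j\ZI_{E_j}$ by signed, mean-zero blocks supported in the gap scales --- or supply the missing estimate on the measure of rows and columns with large marginal density; as written, the central construction is absent and its stated properties fail for the very pattern you name.
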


In \sect{quasi-equivalent} we prove that two quasi-equivalent subbases of some density basis differentiate the same class of non-negative functions. In \sect{applications} we apply this theorem for bases formed of rectangles.

\begin{definition}\label{3.def-density-bases}
A basis $\ZM$ is said to be density basis if $\ZM$ differentiates the integral of any characteristic function $\ZI_E$ of measurable set $E$:
\md0
\delta_\ZM(x,\ZI_E)=0 \text{ at almost every } x\in\ZR^n.
\emd
We will say that the basis $\ZM$ differentiates a class of functions $\ZF$, if basis $\ZM$ differentiates the integrals of all functions of $\ZF$.
\end{definition}

\begin{definition}\label{3.def-quasi-equiv-bases}
Let $\ZM_1,\ZM_2\subseteq\ZM$ be subbases. We will say that basis $\ZM_2$ is quasi-coverable by basis $\ZM_1$ (with respect to basis $\ZM$) if for any $R\in\ZM_2$ there exist $R_k\in\ZM_1,\,k=1, 2, \ldots, p$ and $R'\in\ZM$ such that
\md3
& R\subseteq \tilde{R} \subseteq R',\quad \tilde{R}=\bigcup_{k=1}^p R_k\label{qc1}\\
& \diam(R')\le c\cdot\diam(R),\quad |R'|\le c|R_k|,\quad k=1, 2, \ldots, p,\label{qc2}\\
& \sum_{k=1}^p|R_k|\le c|\tilde{R}|,\quad |\tilde{R}|\le c|R|,\label{qc0}
\emd
where constant $c\ge1$ depends only on bases $\ZM_1, \ZM_2$ and $\ZM$. We will say two bases are quasi-equivalent if they are quasi-coverable with respect to each other.
\end{definition}

\begin{theorem}\label{quasi-equiv}
Let $\ZM_1$ and $\ZM_2$ be subbases of density basis $\ZM$ formed of open sets from $\ZR^n$. If the bases $\ZM_1$ and $\ZM_2$ are quasi-equivalent with respect to $\ZM$ then
\md0
\ZF^+(\ZM_1)= \ZF^+(\ZM_2).
\emd
\end{theorem}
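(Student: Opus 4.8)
The plan is to establish $\ZF^+(\ZM_1)\subseteq\ZF^+(\ZM_2)$; the reverse inclusion follows by exchanging $\ZM_1$ and $\ZM_2$, since quasi-equivalence is symmetric. Fix $f\in\ZF^+(\ZM_1)$. Since for any basis the quantity $\delta_{\ZM}(x,\cdot)$ depends only on the values of a function in an arbitrarily small neighbourhood of $x$, I would first localize: fix a cube $Q_0$ and a bounded open neighbourhood $Q_0^*\supset Q_0$, and set $\bar f=f\cdot\ZI_{Q_0^*}\in L^1(\ZR^n)$; then $\bar f\ge0$ and $\delta_{\ZM_i}(x,f)=\delta_{\ZM_i}(x,\bar f)$ for every $x\in Q_0$, so it is enough to prove $\delta_{\ZM_2}(x,\bar f)=0$ for a.e.\ $x\in Q_0$ and then to let $Q_0$ exhaust $\ZR^n$. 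Throughout, I use that $\ZM_1$ and $\ZM_2$, being subbases of the density basis $\ZM$, are themselves density bases, hence differentiate $L^\infty(\ZR^n)$.

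The key preparatory step is a truncation. For $\lambda>0$ write $\bar f=h_\lambda+g_\lambda$ with $h_\lambda=\bar f\,\ZI_{\{\bar f\le\lambda\}}$ (bounded) and $g_\lambda=\bar f\,\ZI_{\{\bar f>\lambda\}}$. Since each $\ZM_i$ differentiates $h_\lambda$, we get $\delta_{\ZM_1}(x,g_\lambda)=0$ a.e., and, for a.e.\ $x$ and every rational $\lambda>\bar f(x)$ (so that $g_\lambda(x)=0$),
\md0
\delta_{\ZM_2}(x,\bar f)\le\delta_{\ZM_2}(x,g_\lambda)=\overline D_\lambda(x):=\limsup_{\diam R\to0,\ x\in R\in\ZM_2}\frac1{|R|}\int_Rg_\lambda(t)\,dt .
\emd
As $g_\lambda$, hence $\overline D_\lambda$, is non-increasing in $\lambda$, the theorem reduces to showing that for every $\eta>0$
\md0
\bigl|\{x\colon\overline D_\lambda(x)>\eta\}\bigr|\longrightarrow0\qquad(\lambda\to\infty):
\emd
this yields $\lim_{\lambda\to\infty}\overline D_\lambda(x)=0$ a.e., whence $\delta_{\ZM_2}(x,\bar f)\le\inf_{\lambda>\bar f(x)}\overline D_\lambda(x)=0$ a.e.

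Quasi-coverability of $\ZM_2$ by $\ZM_1$ enters in proving the displayed claim. Given $R\in\ZM_2$, choose $R_1,\dots,R_p\in\ZM_1$ and $R'\in\ZM$ as in Definition~\ref{3.def-quasi-equiv-bases}. Using $g_\lambda\ge0$, $R\subseteq\bigcup_kR_k\subseteq R'$ and $\sum_k|R_k|\le c|\tilde R|\le c^2|R|$,
\md0
\frac1{|R|}\int_Rg_\lambda\le\frac1{|R|}\sum_{k=1}^p\int_{R_k}g_\lambda\le\frac{\sum_k|R_k|}{|R|}\max_k\frac1{|R_k|}\int_{R_k}g_\lambda\le c^2\max_k\frac1{|R_k|}\int_{R_k}g_\lambda .
\emd
Hence, if $\overline D_\lambda(x)>\eta$, then for arbitrarily small $\sigma>0$ there are $S\in\ZM_1$ and $R'\in\ZM$ with $x\in R'$, $S\subseteq R'$, $\diam R'<\sigma$ (so $\diam S<\sigma$), $|S|\ge|R'|/c$ and $\frac1{|S|}\int_Sg_\lambda>\eta/c^2$. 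Setting
\md0
G_\sigma=\Bigl\{y\colon\tfrac1{|S|}\int_Sg_\lambda>\eta/c^2\text{ for some }S\in\ZM_1\text{ with }y\in S,\ \diam S<\sigma\Bigr\}
\emd
(open, since $\ZM_1$ consists of open sets), we have $S\subseteq G_\sigma\subseteq G_\tau$ for every fixed $\tau\ge\sigma$, so $|R'\cap G_\tau|\ge|S|\ge|R'|/c$ with $\diam R'<\sigma$; letting $\sigma\to0$ gives $\limsup_{\diam R'\to0,\,x\in R'\in\ZM}|R'\cap G_\tau|/|R'|\ge1/c$, and since $\ZM$ differentiates $\ZI_{G_\tau}$ this limsup equals $\ZI_{G_\tau}(x)$ a.e., forcing $x\in G_\tau$. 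This holds for every $\tau>0$, so $\{\overline D_\lambda>\eta\}\subseteq\bigcap_{\tau>0}G_\tau$ up to a null set. Finally, $y\in\bigcap_\tau G_\tau$ means exactly $\limsup_{\diam S\to0,\,y\in S\in\ZM_1}\frac1{|S|}\int_Sg_\lambda\ge\eta/c^2$, which (since $\delta_{\ZM_1}(y,g_\lambda)=0$ a.e.\ makes this limsup equal $g_\lambda(y)$ a.e.) describes, up to a null set, the set $\{g_\lambda\ge\eta/c^2\}=\{\bar f>\lambda\}$ once $\lambda\ge\eta/c^2$. Therefore $|\{\overline D_\lambda>\eta\}|\le|\{\bar f>\lambda\}|\to0$, as needed.

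I expect the real obstacle to be the constant $c\ge1$ of Definition~\ref{3.def-quasi-equiv-bases}: transferring the a.e.\ identity ``$\limsup$ of $\ZM_1$-averages of $\bar f$ equals $\bar f(x)$'' directly to $\ZM_2$ only produces bounds of the form $\le c^2\bar f(x)$, which are useless. The truncation $\bar f=h_\lambda+g_\lambda$ defeats this: the bounded part is differentiated by any density basis at no cost, so only the tail $g_\lambda$ has to be transferred, and for it one needs merely $\overline D_\lambda\to0$ rather than a sharp value, so the constant $c$ is harmless; the measure bound $|\{\overline D_\lambda>\eta\}|\le|\{\bar f>\lambda\}|$ then comes out of the density-basis property of $\ZM$ through the sets $G_\sigma$. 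A minor technical point is the measurability of $\overline D_\lambda$ and of the $G_\sigma$, which is exactly why $\ZM_1$ and $\ZM_2$ are assumed to consist of open sets.
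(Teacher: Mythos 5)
Your proof is correct, and it rests on the same three ingredients as the paper's argument --- splitting off a bounded part that any density (sub)basis differentiates, the quasi-covering inequality that transfers an average over $R\in\ZM_2$ to an average over some $R_k\in\ZM_1$ at the cost of a factor $c^2$, and differentiation of an indicator function by the ambient basis $\ZM$ to force almost every relevant point into a set where the $\ZM_1$-averages are controlled --- but it organizes them genuinely differently. The paper argues by contradiction at a single truncation level $\gamma$: it extracts a positive-measure set where $\delta_{\ZM_2}(\cdot,f)>\alpha$ and $f\le\gamma$, uniformizes the smallness of the $\ZM_1$-averages of $f^{\gamma}$ on an Egorov-type set $G=\{\delta(x)\ge\delta\}$, fixes one $\ZM$-density point $x_0$ of $G$, quasi-covers a single $R\in\ZM_2$ carrying a large average, shows each $R_k$ meets $G$, and contradicts the choice $\eta<\alpha/c^2$. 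You instead prove, for every level $\lambda$, the distributional estimate $|\{\overline D_\lambda>\eta\}|\le|\{\bar f>\lambda\}|$ (up to null sets) and let $\lambda\to\infty$; the uniformization and the single density point are replaced by the open sets $G_\tau$, the a.e.\ differentiation of $\ZI_{G_\tau}$ by $\ZM$, and the a.e.\ identity between the $\ZM_1$-limsup of averages of $g_\lambda$ and $g_\lambda$ itself. This yields a more quantitative, weak-type-flavoured statement and avoids the contradiction framework; the price is the localization $\bar f=f\,\ZI_{Q_0^*}$, which the paper does not need because its argument is purely local around $x_0$, and which you need so that $|\{\bar f>\lambda\}|\to0$. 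Two small points to tighten: take $Q_0^*$ to be a cube (or any neighbourhood with boundary of measure zero), since on $\partial Q_0^*$ you control neither $\delta_{\ZM_1}(\cdot,\bar f)$ nor $\delta_{\ZM_2}(\cdot,\bar f)$ and an arbitrary bounded open set may have boundary of positive measure; and when passing to $\bigcap_{\tau}G_\tau$, use a countable sequence $\tau_j\downarrow 0$ (harmless by monotonicity of $G_\tau$) so that only countably many exceptional null sets accumulate.
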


\section{Some definitions and auxiliary lemmas}\label{rectangles-lemmas}
Denote by  $\overline{E}$ and $\mathring E$ the closure and the interior of a set $E\subset \ZR^2$ respectively, $\ZI_E$ denotes the indicator function of $E$. For a given rectangle  $R\in\zR^2$ we denote by  $\length(R)$ the length of the bigger side of $R$. A set $E\subset \ZR^2$ is said to be simple, if it can be written as a union of squares of the form
\md0
\left[\frac{i-1}{2^n},\frac{i}{2^n}\right)\times \left[\frac{j-1}{2^n},\frac{j}{2^n}\right),\quad i,j,n\in\ZZ.
\emd
If $n$ is the minimal integer with this relation, then we write $\width(E)=2^{-n}$. Note that if $E$ is a dyadic rectangle, then $\width(E)$ coincides with the length of the smaller side of $E$. If $E$ is a square, then $\length(E)=\width(E)$.
Denote
\md3
E_{ij}(n)
&= \bigcup_{k=0}^{n-1}\left[\frac{i}{2},\frac{i}{2}+\frac{1}{2^{k+1}}\right)\times\left[\frac{j}{2},\frac{j}{2}+\frac{1}{2^{n-k}}\right),\label{a-4}\\
F_{ij}(n)
&= \left[\frac{i}{2},\frac{i}{2}+\frac{1}{2^{n}}\right)\times \left[\frac{j}{2},\frac{j}{2}+\frac{1}{2^{n}}\right)\nonumber\\
&=\bigcap_{k=0}^{n-1}\left[\frac{i}{2},\frac{i}{2}+\frac{1}{2^{k+1}}\right)\times\left[\frac{j}{2},\frac{j}{2}+\frac{1}{2^{n-k}}\right)\subset E_{ij}(n),\quad i,j=0,1,\nonumber
\emd
and define the sets
\md3
E(n) &= E_{00}(n)\cup E_{01}(n) \cup E_{10}(n) \cup E_{11}(n),\label{a-5}\\
F(n) &= F_{00}(n)\cup F_{01}(n) \cup F_{10}(n) \cup F_{11}(n)\subset E(n).
\emd
Introduce the functions
\md2
u(x,n) &= (n+1)2^{n-2}\left(\ZI_{F_{00}(n)}(x)+\ZI_{F_{11}(n)}(x)-\ZI_{F_{10}(n)}(x)-\ZI_{F_{01}(n)}(x)\right),\quad n\in\ZN,\\
v(x) &= \ZI_{(0,1/2)\times(0,1/2)}(x)+\ZI_{(1/2,1)\times(1/2,1)}(x)-\ZI_{(0,1/2)\times(1/2,1)}(x)-\ZI_{(1/2,1)\times(0,1/2)}(x).
\emd
Let $\omega\in \ZQ^2$ be an arbitrary square and $\phi_\omega$ be the linear transformation of $\ZR^2$ taking $\omega$ onto unit square $[0,1)^2\subset \ZR^2$. For an arbitrary function $f(x)$ defined on $[0,1)^2$ and for a set $E\subset [0,1)^2$ we define
\md0
f_\omega(x)=f(\phi_\omega(x)),\quad E_\omega=(\phi_\omega)^{-1}(E)\subset \omega.
\emd
We have
\md3
&\supp(u_\omega(x,n))=F_\omega(n),\label{x1}\\
&\supp(v_\omega(x))=\omega,\label{x2}\\
&|E_\omega(n)|=\frac{(n+1)|\omega|}{2^n},\quad |F_\omega(n)|=\frac{|\omega|}{4^{n-1}},\label{u3}\\
&\width\left(E_\omega(n)\right)=\width\left(F_\omega(n)\right)=\width(\omega)\cdot 2^{-n}.\label{x5}
\emd
Simple calculations show that
\md3
&\|u_\omega(x,n)\|_1=|E_\omega(n)|=\frac{n+1}{2^n}|\omega|,\label{a-6}\\
&\|v_\omega(x)\|_1=|\omega|.\label{aa-6}
\emd
Then observe that, if $\omega\in\DQ^2$ is a dyadic square, then for any point $x\in E_\omega(n)$ there exists a dyadic rectangle $R(x)\in \DR^2$ with
\md3
&\frac{1}{|R(x)|}\left|\int_{R(x)} u_\omega(x,n)dx\right|=\frac{n+1}{2},\quad x\in R(x)\subset E_\omega(n),\label{a-7}\\
&\width(R(x))=\width(\omega)\cdot 2^{-n}.
\emd
Moreover, the rectangle $R(x)$ coincides with $(\phi_\omega)^{-1}$-image of one of the representation rectangles from \e{a-4}. Similarly, if $\omega\in\DQ^2$, then
\md3
&\frac{1}{|R(x)|}\left|\int_{R(x)} v_\omega(x)dx\right|=1,\quad x\in R(x)\subset \omega,\label{a-8}\\
&\width(R(x))=\frac{\width(\omega)}{2}.
\emd
for some square $R(x)$ with $|R(x)|=|\omega|/4$. In this case $R(x)$ coincides with one of the four squares forming $\omega$.

The following simple lemma has been proved in \cite{KarKar, KarKarSaf}.
\begin{lemma}\label{L-3}
Let $Q\in \DQ^2$ be an arbitrary dyadic square, $f(x)=f(x_1,x_2)\in L^1(\ZR^2)$ be a function with $\supp f(x)\subset Q$ and
\md3
\int_\ZR f(x_1,t)\,dt=\int_\ZR f(t,x_2)\,dt=0, \quad x_1,x_2\in \ZR.\label{b-1}
\emd
Then for any dyadic rectangle $R\in\DR^2$ satisfying $\mathring{R}\not\subset Q$ we have
\md1\label{1-21}
\int_R f(x)\,dx=0 .
\emd
\end{lemma}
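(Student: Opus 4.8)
The plan is to reduce the statement to the nesting dichotomy for dyadic intervals. I would write $R=I_1\times I_2$ and $Q=J_1\times J_2$, where $I_1,I_2,J_1,J_2$ are dyadic intervals and $|J_1|=|J_2|$. Since $\mathring R=\mathring I_1\times\mathring I_2$, the hypothesis $\mathring R\not\subset Q$ means $\mathring I_1\not\subset J_1$ or $\mathring I_2\not\subset J_2$; as the two coordinates play symmetric roles in \e{b-1}, it suffices to treat the first alternative, say $\mathring I_1\not\subset J_1$. For half-open dyadic intervals $\mathring I_1\subset J_1$ is equivalent to $I_1\subseteq J_1$ (the boundaries being null), and any two dyadic intervals are either nested or have disjoint interiors, so $\mathring I_1\not\subset J_1$ leaves exactly two possibilities: (i) $\mathring I_1\cap\mathring J_1=\varnothing$, or (ii) $J_1\subsetneq I_1$.

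In case (i) the rectangles $R$ and $Q$ have disjoint interiors, hence $|R\cap Q|=0$; since $\supp f\subset Q$ this immediately gives $\int_R f=\int_{R\cap Q}f=0$.

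In case (ii) we have $I_1\cap J_1=J_1$, so $R\cap Q=J_1\times(I_2\cap J_2)$. Using $\supp f\subset Q$ together with Fubini's theorem (legitimate since $f\in L^1(\ZR^2)$), I would write
\[
\int_R f=\int_{R\cap Q}f=\int_{I_2\cap J_2}\left(\int_{J_1}f(x_1,x_2)\,dx_1\right)dx_2 .
\]
For almost every $x_2$ the slice $x_1\mapsto f(x_1,x_2)$ vanishes outside $J_1$, so its integral over $J_1$ coincides with its integral over all of $\ZR$, which is $0$ by the first identity in \e{b-1}; therefore $\int_R f=0$ (and trivially so if $I_2\cap J_2=\varnothing$). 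The case $\mathring I_2\not\subset J_2$ is handled identically after interchanging the two variables.

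I do not expect a genuine obstacle here: the argument is essentially bookkeeping on dyadic intervals. The only points that warrant a line of care are the equivalence $\mathring I_1\subset J_1\iff I_1\subseteq J_1$ for half-open dyadic intervals and the appeal to Fubini, both of which are routine given $f\in L^1(\ZR^2)$.
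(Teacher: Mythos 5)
Your argument is correct and follows essentially the same route as the paper's proof: both reduce to the dyadic nesting dichotomy (either $R$ and $Q$ are essentially disjoint, or one coordinate projection of $Q$ is contained in the corresponding projection of $R$) and then conclude by Fubini, the support condition $\supp f\subset Q$, and the vanishing marginal integrals \e{b-1}. The only cosmetic difference is the organization of cases; the key computation is identical.
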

\begin{proof}
We suppose
\md0
Q=[\alpha_1,\beta_1)\times[\alpha_2,\beta_2),\quad R=[a_1,b_1)\times[a_2,b_2).
\emd
If $R\cap Q=\varnothing$, then \e{1-21} is trivial. Otherwise we will have either $[\alpha_1,\beta_1)\subset[a_1,b_1)$ or $[\alpha_2,\beta_2)\subset[a_2,b_2)$. In the first case, using \e{b-1}, we get
\md8
\int_R f(x)\,dx
&=\int_{a_2}^{b_2}\int _{a_1}^{b_1}f(x_1,x_2)\,dx_1\,dx_2\\
&=\int_{a_2}^{b_2}\int _{\alpha_1}^{\beta_1}f(x_1,x_2)\,dx_1\,dx_2\\
&=\int_{a_2}^{b_2}\left(\int _\ZR f(x_1,x_2)\,dx_1\right)\,dx_2=0.
\emd
The second case is proved similarly.
\end{proof}
\begin{lemma}\label{L0}
Let $m$ be a positive integer and $Q$ be a dyadic square. Then for any simple set $E\varsubsetneq[0,1)^2$, there exists a finite family $\Omega$ of dyadic squares $\omega\subset Q$ such that
\md5
E_\omega\cap E_{\omega'}=\varnothing,\quad \omega\neq\omega',\label{u0}\\
\min_{\omega\in\Omega}\width(\omega)= \width(Q)\cdot (\width(E))^{m},\label{u1}\\
\left|Q\setminus \bigcup_{\omega\in \Omega}E_{\omega}\right|= |Q|\left(1-|E|\right)^m.\label{u2}
\emd
\end{lemma}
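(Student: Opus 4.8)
The plan is to construct $\Omega$ by a layered iteration. The one fact I would use repeatedly is that, since $E$ is simple and $E\varsubsetneq[0,1)^2$, the complement $[0,1)^2\setminus E$ is a nonempty union of dyadic squares of side $\width(E)$; consequently, for every dyadic square $\sigma$ the set $\sigma\setminus E_\sigma$ is a nonempty union of dyadic squares of side $\width(\sigma)\cdot\width(E)$, because the affine map $\phi_\sigma^{-1}$ carries the standard $\width(E)$-grid of $[0,1)^2$ onto the portion of the global dyadic grid of mesh $\width(\sigma)\width(E)$ that subdivides $\sigma$. I will also use that $|E|<1$ (a proper union of grid squares), so $1-|E|>0$, and that $\width(E)\le 1/2$.

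Write $w=\width(E)$. First I would build a decreasing chain of simple sets $Q=U_1\supseteq U_2\supseteq\cdots\supseteq U_{m+1}$ together with finite families $\mathcal D_1,\dots,\mathcal D_m$ of pairwise disjoint dyadic squares as follows: $\mathcal D_1$ is the tiling of $Q$ by dyadic squares of side $\width(Q)w$; and given $\mathcal D_k$, which will be a tiling of $U_k$ by dyadic squares of side $\width(Q)w^k$, set $U_{k+1}=\bigcup_{\omega\in\mathcal D_k}(\omega\setminus E_\omega)$ and let $\mathcal D_{k+1}$ be the family of dyadic squares of side $\width(Q)w^{k+1}$ tiling $U_{k+1}$ (well defined by the fact above applied to each $\omega\in\mathcal D_k$). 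Since the copies $E_\omega$, $\omega\in\mathcal D_k$, are pairwise disjoint and contained in $U_k$, we have $U_{k+1}=U_k\setminus\bigcup_{\omega\in\mathcal D_k}E_\omega$. Finally I put $\Omega=\mathcal D_1\cup\cdots\cup\mathcal D_m$; all its members are dyadic squares contained in $Q$.

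It then remains to verify the three listed properties, which is pure bookkeeping. For \e{u0}: two members of the same layer are disjoint squares, hence have disjoint $E$-copies; and if $\omega\in\mathcal D_j$, $\omega'\in\mathcal D_k$ with $j<k$, then $\omega'\subseteq U_k\subseteq U_{j+1}=U_j\setminus\bigcup_{\sigma\in\mathcal D_j}E_\sigma\subseteq U_j\setminus E_\omega$, so $E_{\omega'}\cap E_\omega=\varnothing$. For \e{u2}: since $\sum_{\omega\in\mathcal D_k}|E_\omega|=|E|\sum_{\omega\in\mathcal D_k}|\omega|=|E|\,|U_k|$ and $U_{k+1}=U_k\setminus\bigcup_{\omega\in\mathcal D_k}E_\omega$, induction gives $|U_k|=|Q|(1-|E|)^{k-1}$; the sets $U_k\setminus U_{k+1}=\bigcup_{\omega\in\mathcal D_k}E_\omega$, $k=1,\dots,m$, are pairwise disjoint with union $Q\setminus U_{m+1}$, whence $Q\setminus\bigcup_{\omega\in\Omega}E_\omega=U_{m+1}$ and $|U_{m+1}|=(1-|E|)|U_m|=|Q|(1-|E|)^m$. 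For \e{u1}: layer $\mathcal D_k$ consists entirely of squares of side $\width(Q)w^k$, these sizes strictly decrease in $k$ (as $w<1$), and $\mathcal D_m\neq\varnothing$ because $|U_m|=|Q|(1-|E|)^{m-1}>0$; hence $\min_{\omega\in\Omega}\width(\omega)=\width(Q)w^m=\width(Q)(\width(E))^m$.

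There is no genuine difficulty here; the two points that demand attention are: (i) insisting at every stage on the tiling by squares of the exact side length $\width(Q)w^{k}$ rather than a coarser cover, so that the minimum width comes out to be exactly $\width(Q)(\width(E))^m$ and not merely an upper bound; and (ii) checking that every layer, in particular $\mathcal D_m$, is nonempty, for which the strict inclusion $E\varsubsetneq[0,1)^2$ — equivalently $|E|<1$ together with $\sigma\setminus E_\sigma\neq\varnothing$ — is precisely what is used.
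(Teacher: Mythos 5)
Your construction is correct and is essentially the paper's own proof: the same layered iteration in which each stage places pairwise disjoint affine copies $E_\omega$ inside the current dyadic tiles and retiles the complement by dyadic squares whose side shrinks by the factor $\width(E)$, with the same cross-layer disjointness argument and the same telescoping identity $|U_{k+1}|=(1-|E|)\,|U_k|$. The only difference is bookkeeping, and it works in your favour: by starting the first layer already at scale $\width(Q)\cdot\width(E)$ and stopping after exactly $m$ layers you obtain \e{u1} and \e{u2} with the stated exponent $m$ simultaneously, whereas the paper's indexing (first layer $\Omega_1=\{Q\}$ and layers up to $m+1$) carries a harmless off-by-one between these two equalities.
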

\begin{proof}
Define a sequence of sets $G_k$, $k=1,2,\ldots,m$, with
\md1\label{c-8}
Q=G_1\supset G_2\supset \ldots\supset G_m,
\emd
and finite families of dyadic squares $\Omega_k\subset \DQ^2$, $k=1,2,\ldots, m+1$, such that
\md3
\width(\omega) &= \width(Q)\cdot  (\width(E))^{k-1},\quad \omega \in \Omega_k,\quad k=1,2,\ldots, m+1,\label{d43}\\
G_k &= \bigcup_{\omega\in\Omega_k}\omega,\quad k=1,2,\ldots, m+1,\label{c-6}\\
G_k &= G_{k-1}\setminus \bigcup_{\omega\in \Omega_{k-1}}E_\omega=\bigcup_{\omega\in \Omega_{k-1}}\left(\omega\setminus E_\omega\right),\quad k=2,\ldots, m+1.\label{c-7}
\emd
We do it by induction. For the first step of induction we take just $G_1=Q$ and let $\Omega_1$ consist of a single rectangle $Q$. Suppose we have already chosen the sets $G_k$ and the families $\Omega_k$  for $k=1,2,\ldots,p$, satisfying \e{c-8}$-$\e{c-7}. Set
\md0
G_{p+1}=G_p\setminus  \bigcup_{\omega\in \Omega_{p}}E_\omega=\bigcup_{\omega\in \Omega_{p}}\left(\omega\setminus E_\omega\right).
\emd
From the induction hypothesis of \e{d43} it follows that
\md0
\width\left(\omega\setminus E_\omega\right)=\width(\omega)\cdot \width(E)= \width(Q)\cdot  (\width(E))^p.
\emd
Hence we conclude that $G_{p+1}$ is a union of dyadic squares with side lengths $\width(Q)\cdot  (\width(E))^p$ and we define the family $\Omega_{p+1}$ as a collection of these squares. Thus we get $G_{p+1}$ and $\Omega_{p+1}$ satisfying the conditions \e{c-8}$-$\e{c-7} for $k=p+1$, that completes the induction process.
Applying \e{a-6}, \e{c-6} and \e{c-7} we obtain
\md0
|G_{k}|=|G_{k-1}|-\left|\bigcup_{\omega\in \Omega_{k-1}  }E_\omega\right|=|G_{k-1}|-|E||G_{k-1}|=\left(1-|E|\right)|G_{k-1}|
\emd
and therefore
\md1\label{1-11}
\left|G_{m+1}\right|=\left(1-|E|\right)^m|Q|.
\emd
Obviously the family of squares $\Omega=\cup_{k=1}^{m+1}\Omega_k$ satisfies the hypothesis of the lemma. Indeed, suppose $\omega,\omega'\in \Omega$ are arbitrary squares. If $\omega,\omega'\in\Omega_k$ for some $k$, then according to \e{d43} we have $\omega\cap\omega'=\varnothing$ and so \e{u0}. If $\omega\in \Omega_k$, $\omega'\in \Omega_{k'}$ and $k< k'$, then
\md2
&E_{\omega'}\subset \omega'\subset G_{k'},\\
&E_{\omega}\subset G_{k}\setminus G_{k+1} \Rightarrow E_\omega\cap G_{k'}=\varnothing.
\emd
Thus we again get \e{u0}. The condition \e{u1} immediately follows from \e{d43},  and \e{u2} follows from \e{1-11} and from the relation
\md0
\left|\bigcup_{\omega\in \Omega}E_{\omega}\right|=\left|\bigcup_{k=1}^{m+1}\bigcup_{\omega\in\Omega_k}E_{\omega}\right|=\left|\bigcup_{k=1}^{m+1}G_{k}\setminus G_{k+1}\right|=\left|Q\setminus G_{m+1}\right|=|Q|(1-\left(1-|E|\right)^m).
\emd
\end{proof}
\begin{lemma}\label{L-4}
Let $L>1$ be a positive integer and $Q\in\DQ^2$ be a dyadic square.
Then there exist a function $f\in L^\infty(\ZR^2)$, numbers $\alpha(L)\in \ZN$ and $\beta(L)>0$, depended on $L$, such that
\md3
&\supp f\subset Q, \label{c-1}\\
&\|f\|_\infty\le \beta(L),\label{c-2}\\
&|\supp f|\le\frac{ 2|Q|}{\beta(L)},\label{x10}\\
&\width(\supp f)\ge \width(Q)\cdot 2^{-\alpha(L)}, \label{x11}\\
&\int_R f(x)dx=0,\quad R\in \DR^2,\quad \mathring{R}\not\subset Q,\label{c-4}
\emd
and for any point $x\in Q$ there exists a rectangle $R(x)\subset Q$ satisfying
\md3
&\width(R(x))\ge \width(Q)\cdot 2^{-\alpha(L)} ,\label{x15}\\
&\frac{1}{|R(x)|}\left|\int_{R(x)} f(t)dt\right|\ge L.\label{c-5}
\emd
\end{lemma}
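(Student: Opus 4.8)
The plan is to glue together rescaled copies of the two model functions $u(\cdot,n)$ and $v$ over a family of dyadic subsquares of $Q$ furnished by \lem{L0}, using the $u$-type blocks to manufacture the large short-rectangle averages and keeping the $v$-type blocks in reserve only to mop up the leftover region. First I would fix the scale $n=n(L)$ with $(n+1)/2\ge L$ --- say $n=2L-1$, which for $L\ge2$ also gives $n\ge2$ and $(n+1)2^{n-2}\ge L$. Then $E(n)$ from \e{a-5} is a simple proper subset of $[0,1)^2$ with $\width(E(n))=2^{-n}$ and $|E(n)|=(n+1)2^{-n}$, and $|E_\omega(n)|=|\omega||E(n)|$ for every dyadic square $\omega$ by \e{u3}. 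Applying \lem{L0} to $E=E(n)$ with a depth $m=m(L)$ to be fixed below, I get a finite family $\Omega$ of dyadic squares $\omega\subset Q$ with pairwise disjoint $E_\omega(n)$, with $\min_{\omega\in\Omega}\width(\omega)=\width(Q)2^{-nm}$, and with leftover $B:=Q\setminus\bigcup_{\omega\in\Omega}E_\omega(n)$ of measure $|Q|(1-|E(n)|)^m$. By \e{x5} the set $\bigcup_{\omega\in\Omega}E_\omega(n)$, hence also $B$, is a union of dyadic squares of side $\ge\width(Q)2^{-n(m+1)}$; write $B$ as a disjoint union of dyadic squares $\beta_j$ (each of side $\width(B)\ge\width(Q)2^{-n(m+1)}$), set $\beta(L):=(n+1)2^{n-2}$ and $\alpha(L):=n(m+1)+1$, and define
\md0
f=\sum_{\omega\in\Omega}u_\omega(\cdot,n)+L\sum_{j}v_{\beta_j}.
\emd

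All the summands have pairwise disjoint supports contained in $Q$, since $\supp u_\omega(\cdot,n)=F_\omega(n)\subset E_\omega(n)$, the $E_\omega(n)$ are disjoint, the $\beta_j\subset B$ are disjoint, and $B$ is disjoint from each $E_\omega(n)$. Hence \e{c-1} holds and $\|f\|_\infty=\max\{(n+1)2^{n-2},L\}=\beta(L)$, which is \e{c-2}. For \e{x10} I estimate $|\supp f|=\sum_\omega|F_\omega(n)|+|B|=4^{-(n-1)}\sum_\omega|\omega|+|Q|(1-|E(n)|)^m$; since the disjoint sets $E_\omega(n)\subset Q$ give $\sum_\omega|\omega|=|E(n)|^{-1}\sum_\omega|E_\omega(n)|\le|E(n)|^{-1}|Q|=|Q|2^n/(n+1)$, the first term is $\le4|Q|/((n+1)2^n)$, and taking $m=m(L)$ large enough that $(1-|E(n)|)^m\le4/((n+1)2^n)$ yields $|\supp f|\le 8|Q|/((n+1)2^n)=2|Q|/\beta(L)$ --- this is what pins down $m$. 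The finest support pieces are the $F_\omega(n)$ with $\width=\width(\omega)2^{-n}\ge\width(Q)2^{-n(m+1)}$ and the squares $\beta_j$ with $\width\ge\width(Q)2^{-n(m+1)}$, which gives \e{x11}. Finally $u$ and $v$ integrate to zero over every horizontal and every vertical line, hence so do all the affine rescalings $u_\omega(\cdot,n)$ and $v_{\beta_j}$; \lem{L-3} applied on each supporting square (each contained in $Q$) then gives $\int_R g=0$ for every summand $g$ and every $R\in\DR^2$ with $\mathring R\not\subset Q$, so \e{c-4} follows by summation.

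It remains to produce, for every $x\in Q$, a rectangle $R(x)$ fulfilling \e{x15}--\e{c-5}. Since $Q$ is partitioned into $\bigcup_\omega E_\omega(n)$ and $B$, either $x\in E_{\omega_0}(n)$ for a unique $\omega_0\in\Omega$, or $x\in\beta_j$ for a unique $j$. In the first case \e{a-7} supplies a dyadic $R(x)$ with $x\in R(x)\subset E_{\omega_0}(n)\subset Q$ and $\width(R(x))=\width(\omega_0)2^{-n}\ge\width(Q)2^{-n(m+1)}\ge\width(Q)2^{-\alpha(L)}$; every other summand vanishes on $E_{\omega_0}(n)\supset R(x)$, so $\frac1{|R(x)|}\bigl|\int_{R(x)}f\bigr|=\frac1{|R(x)|}\bigl|\int_{R(x)}u_{\omega_0}(\cdot,n)\bigr|=\tfrac{n+1}2\ge L$. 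In the second case $f$ agrees on $\beta_j$ with $Lv_{\beta_j}$, and \e{a-8} gives a dyadic subsquare $R(x)\ni x$ with $R(x)\subset\beta_j\subset Q$, $\width(R(x))=\width(\beta_j)/2\ge\width(Q)2^{-n(m+1)-1}=\width(Q)2^{-\alpha(L)}$, and $\frac1{|R(x)|}\bigl|\int_{R(x)}f\bigr|=L$. In both cases \e{x15}--\e{c-5} hold, completing the construction.

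The main obstacle is exactly this last point. The $u$-blocks force the average above $L$ over a rectangle that is very short (hence of width $\ge\width(Q)2^{-\alpha(L)}$), but they only control $\bigcup_\omega E_\omega(n)$, whose complement in $Q$ stays of positive measure at every finite depth $m$ of the \lem{L0} iteration; so the construction must be closed off on $B$ by the full-support blocks $Lv_{\beta_j}$, and the tension between their full support on $B$ and the support bound \e{x10} is precisely what forces $m=m(L)$ to be taken large (and hence $\alpha(L)$ large). Everything else --- the disjointness bookkeeping that lets $\|f\|_\infty$, $|\supp f|$ and $\int_{R(x)}f$ behave as though a single block were present, and the vanishing line integrals needed for \lem{L-3} --- is routine.
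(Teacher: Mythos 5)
Your proposal is correct and follows essentially the same route as the paper's proof: the same blocks $u_\omega(\cdot,n)$ on the disjoint copies $E_\omega(n)$ produced by \lem{L0}, the same mopping-up of the leftover set by rescaled $v$-blocks, and the same verifications via \e{a-7}, \e{a-8} and \lem{L-3}. The only deviations are harmless parameter choices (you take $n=2L-1$ and coefficient $L$ on the $v$-blocks, giving averages exactly $L$, where the paper takes $n=2L$ and coefficient $\beta(L)$; you fix $m$ implicitly by the support bound rather than by the paper's explicit formula), so the argument is sound as written.
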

\begin{proof}
Let $n=2L$ and denote
\md3
&\alpha(L)=n(2^n+1),\quad \beta(L)=(n+1)2^{n-2},\label{u4}\\
&m=m(L)=\left[\frac{2^n(\ln (n+1)+(n-2)\ln 2)}{n+1}\right]+1<2^n.\label{u5}
\emd
Let $E=E(n)$ be the set defined in \e{a-5}. We have $|E(n)|=(n+1)/2^n$ and $\width(E(n))=2^{-n}$. Applying \lem{L0}, we may find family $\Omega$ of dyadic squares $\omega\subset Q$ with properties \e{u0}$-$\e{u2}.
Set
\md1\label{u6}
G=\bigcup_{\omega\in\Omega}E_\omega(n),\quad G_1=Q\setminus G.
\emd
According to \e{u2}, \e{u4} and \e{u5}, we have
\md0
|G_1|=\left(1-|E(n)|\right)^m|Q|=\left(1-\frac{n+1}{2^n}\right)^m|Q|<\frac{|Q|}{\beta(L)}
\emd
From \e{u1} and \e{u5} it follows that
\md0
G_1=\bigcup_{\omega\in \Omega_1}\omega,
\emd
where $\Omega_1$ is a family of squares with
\md1\label{u7}
\min_{\omega\in\Omega_1}\width(\omega) =\min_{\omega\in\Omega}\width(\omega)=\width(Q)\cdot (\width(E(n)))^{m}
\ge \width(Q)\cdot 2^{-n\cdot 2^n}.
\emd
Define
\md0
f(x)=\sum_{\omega\in \Omega}u_{\omega}(x,n)+\beta(L)\sum_{\omega\in \Omega_1}v_{\omega}(x)=g(x)+g_1(x).
\emd
Clearly this function satisfies \e{c-1} and \e{c-2}. Then, we have
\md2
\supp g &= \bigcup_{\omega\in \Omega}F_\omega(n)\subset G, \quad \supp g_1=G_1,\\
\supp f &= \supp g\bigcup \supp g_1.
\emd
This together with \e{u3} and \e{u6}  implies
\md2
|\supp f|
&=\bigcup_{\omega\in \Omega}|F_\omega(n)|+|G_1|\\
&=\frac{1}{(n+1)2^{n-2}}\sum_{\omega\in \Omega}|E_\omega(n)|+|G_1|\\
&=\frac{1}{(n+1)2^{n-2}}|G|+|G_1|\le \frac{2|Q|}{\beta(L)}
\emd
and therefore we get \e{x10}. Using \e{u7}, we obtain
\md2
\width(\supp g) &\ge \min_{\omega\in\Omega}\width(\omega)\cdot \width(F(n))= \width(Q)\cdot 2^{-n(2^n+1)}= \width(Q)\cdot 2^{-\alpha(L)},\\
\width(\supp g_1) &\ge \min_{\omega\in\Omega_1}\width(\omega)\ge \width(Q)\cdot 2^{-n\cdot 2^n}> \width(Q)\cdot 2^{-\alpha(L)},
\emd
and therefore we get \e{x11}. The condition \e{c-4} follows from \lem{L-3}, since $f(x)$ satisfies the condition \e{b-1} according the definitions of functions $u_\omega(x,n)$ and $v_\omega(x)$. To prove \e{c-5} we take an arbitrary point $x\in Q$. We have either $x\in G$ or $x\in G_1$. In the first case we will have $x\in E_{\omega}(n)$ for some square $\omega\in\Omega$. By \e{a-7} there exists a dyadic rectangle $R=R(x)$, $x\in R\subset E_{\omega}(n)$, such that
\md0
\frac{1}{|R|}\left|\int_Rf(t)dt\right|=\frac{1}{|R|}\left|\int_R u_{\omega}(t,n)dt\right|=\frac{n+1}{2}>L.
\emd
In the second case from \e{a-8} we obtain
\md0
\frac{1}{|R|}\left|\int_{R}f(t)dt\right|=\frac{\beta(L)}{|R|}\left|\int_R v_{\omega}(t)dt\right|\ge 2^n>L
\emd
for some square $R=R(x)$, $x\in R\subset \omega$. Obviously in any case $R(x)$ satisfies \e{x15}.  Lemma is proved.
\end{proof}

\section{Dyadic rectangles in $\ZR^2$}\label{dyadic-R2}
\begin{proof}[Proof of \trm{TX2}]
Let $\Delta=\{\nu_k\}$ be a sequence with $\gamma_\Delta<\infty$. Suppose conversely, we have
\md0
\ZF(\DR^2_\Delta)\setminus \ZF(\DR^2)\neq\varnothing.
\emd
That means there exist a function $f\in L_{\rm loc}(\ZR^2)$, a number $\alpha>0$ and a set $E\subset\ZR^n$ with $|E|>0$ such that
\md3
\delta_{\DR^2_\Delta}(x, f) &= 0,\quad\text{ a.e.},\label{xd5}\\
\delta_{\DR^2}(x, f) &> \alpha,\quad x\in E,\label{d4}
\emd
According to \e{xd5} for almost any $x\in \ZR^2$ one can choose a number $\delta(x)>0$ such that
the conditions
\md0
x\in R\in \DR^2_\Delta, \quad \length(R)<\delta(x),
\emd
imply
\begin{equation}\label{cond}
\left|\frac{1}{|R|}\int_R f -f(x) \right|<\frac{\alpha}{2}.
\end{equation}
For some $\delta>0$ the set $F=\{x\in E\colon
\delta(x)\ge\delta\}\subset E$ has positive measure.  Then, using the representation
\begin{equation*}
F=\bigcup_{j\in\ZZ }\left\{x\in F \colon \frac{j\alpha}{2}\le f(x) <
\frac{(j+1)\alpha}{2} \right\},
\end{equation*}
we find a set
\md1\label{x112}
G=\left\{x\in F \colon  \frac{j_0\alpha}{2}\le f(x) <\frac{(j_0+1)\alpha}{2}\right\}\subset F
\emd
having positive measure. Combining \e{d4}, \e{cond} and \e{x112}, we will have
\md3
&\delta_{\DR^2}(x, f) >\alpha,\quad x\in  G\label{E-1},\\
&\left|\frac{1}{|R|}\int_R f -f(x) \right|<\frac{\alpha}{2},\text{ if } x\in R\cap G,\, R\in \DR^2(\Delta),\,
\length(R)<\delta,\label{E-2}\\
&\sup_{x,y\in G}|f(x)-f(y)|\le\frac{\alpha}{2}.\label{E-3}
\emd
Since almost all points of  $G$ are density points, we may fix $x_0\in G$ with
\begin{equation*}
\lim_{\length(R)\to 0,\, x_0\in R\in\DR^2} \frac{|R\cap
G|}{|R|} = 1.
\end{equation*}
Using this relation and  (\ref{E-1}), we find a rectangle
\md0
R'=\left[\frac{p-1}{2^{n}},\frac{p}{2^{n}}\right)\times\left[\frac{q-1}{2^{m}},\frac{q}{2^{m}}\right),
\emd
such that
\md3
&x_0\in R'\in \DR^2,\quad \length(R')<\delta,\label{d10}\\
&\left|\frac{1}{|R'|}\int_{R'} f - f(x_0)\right|>\alpha,\label{xd11}\\
&|R'\cap G| > (1-4^{-\gamma_\Delta})|R'|,\label{xd12}
\emd
Besides, we may suppose
\md1\label{x113}
\nu_{k_t-1}<n\le \nu_{k_t},\quad \nu_{k_s-1}<m\le \nu_{k_s},
\emd
for some integers $t$ and $s$. This and the definition of $\gamma_\Delta$ imply that $R'$ is a union of rectangles of the form
\md0
\left[\frac{i-1}{2^{\nu_{k_{t}}}},\frac{i}{2^{\nu_{k_{t}}}}\right)\times\left[\frac{j-1}{2^{\nu_{k_s}}},\frac{j}{2^{\nu_{k_s}}}\right)\in  \DR^2_\Delta,
\emd
and from (\ref{xd11}) it follows that at least for one of these rectangles, say $R''$, we have
\md1\label{x114}
\left|\frac{1}{|R''|}\int_{R''} f - f(x_0)\right|>\alpha.
\emd
From the definition of $\gamma_\Delta$ and \e{x113} we get
\md0
|R''|=\frac{1}{2^{ \nu_{k_t}+ \nu_{k_s}}}\ge \frac{1}{2^{\nu_{k_{t}}+\nu_{k_{s}}-\nu_{k_t-1}-\nu_{k_s-1}}}\cdot \frac{1}{2^{n+m}}\ge |R'|\cdot 4^{-\gamma_\Delta}.
\emd
From this and (\ref{xd12}) we obtain $R''\cap G\neq\varnothing$. Take a point
$x_1\in R''\cap G$. From (\ref{E-3}) and \e{x114} we get
\md1\label{xd20}
\left|\frac{1}{|R''|}\int_{R''} f - f(x_1)\right| >\left|\frac{1}{|R''|}\int_{R''} f - f(x_0)\right| -|f(x_1)-f(x_0)|>\frac{\alpha}{2}.
\emd
On the other hand we have $x_1\in R''\cap G$, $R''\in \DR^2_\Delta$,
$\length(R'')\le\length(R')<\delta_0$, and therefore by (\ref{E-2}) we obtain
\md0
\left|\frac{1}{|R''|}\int_{R''} f - f(x_1) \right|<\alpha/2.
\emd
The last relation together with \e{xd20} gives a contradiction, which completes the proof of the theorem.
\end{proof}

\begin{proof}[Proof of \trm{TX2.1}]
Now we suppose $\gamma_\Delta=\infty$, which means there exists a sequence of integers $p_k\nearrow\infty$ such that
\md1\label{d16}
\lim_{k\to\infty}(\nu_{p_k+1}-\nu_{p_k})=\infty.
\emd
Using this relation, we may find sequences of integers $L_k$ and $l_k$, $k=1,2,\ldots $, such that
\md3
&l_{k+1}> l_k+\alpha(L_k),\quad k=1,2,\ldots,\label{d25}\\
&\nu_{p_k}<l_k<l_k+\alpha(L_k)<\nu_{p_k+1},\quad k=1,2,\ldots,\label{d26}\\
&L_{k+1}>2^k\cdot (\beta(L_k)+k)\quad k=1,2,\ldots,\label{d28}
\emd
where $\alpha(L)$ and $\beta(L)$ are the constants taken from \lem{L-4}. Applying \lem{L-4} for the numbers $L=L_k$, $l=l_k$ and for the square
\md0
Q=Q_{ij}^k=\left[\frac{i-1}{2^{l_k}},\frac{i}{2^{l_k}}\right)\times\left[\frac{j-1}{2^{l_k}},\frac{j}{2^{l_k}}\right),\quad 1\le i,j\le 2^{l_k},
\emd
we get functions $f_{ij}^k\in L^\infty(\ZR^2)$ satisfying the conditions
\md3
&\supp f_{ij}^k\subset Q_{ij}^k, \label{xc-1}\\
&\|f_{ij}^k\|_\infty\le \beta(L_k),\label{xc-2}\\
&|\supp f_{ij}^k|\le \frac{2|Q_{ij}^k|}{\beta(L_k)},\label{xx10}\\
&\width(\supp f_{ij}^k)\ge 2^{-l_k-\alpha(L_k)}, \label{xx11}\\
&\int_R f_{ij}^k(x)dx=0,\quad R\in \DR^2,\quad \mathring{R}\not\subset Q_{ij}^k,\label{xc-4}
\emd
and for any point $x\in Q_{ij}^k$ there exists a dyadic rectangle $R_k(x)\subset Q_{ij}^k$ with
\md3
&\width(R_k(x))\ge 2^{-l_k-\alpha(L_k)},\label{xx15-}\\
&\frac{1}{|R_k(x)|}\left|\int_{R_k(x)} f_{ij}^k(t)dt\right|\ge L_k.\label{xxxc-5}
\emd
Define the function
\md0
F_k(x)=\sum_{i,j=1}^{2^{l_k}}f_{ij}^k(x).
\emd
From the relations \e{xc-1}$-$\e{xxxc-5} we conclude
\md3
&|\supp F_k|\le  \frac{2}{\beta(L_k)},\label{d19}\\
&\width(\supp F_k)\ge 2^{-l_k-\alpha(L_k)}, \label{d20}\\
&\|F_k\|_\infty\le \beta(L_k),\label{d21}\\
&\int_R F_k(x)dx=0,\quad R\in \DR^2,\quad \length(R)\ge 2^{-l_k},\label{d23}
\emd
and for any point $x\in [0,1)^2$ there exists a dyadic rectangle $R_k(x)\subset [0,1)^2$ such that
\md3
&2^{-l_k}>\length(R_k(x))\ge \width(R_k(x))\ge  2^{-l_k-\alpha(L_k)},\label{xx15}\\
&\frac{1}{|R_k(x)|}\left|\int_{R_k(x)} F_k(t)dt\right|\ge L_k.\label{xc-5}
\emd
Denote
\md1\label{d29}
F(x)=\sum_{k=1}^\infty \frac{F_k(x)}{2^k}.
\emd
From \e{d19} and \e{d26} it follows that $\|F_k\|_1\le 2$ and so $\|F\|_1\le 2$. Let $x\in [0,1)^2$  be an arbitrary point. From the relations \e{d25} and \e{xx15} we get
$\length(R_k(x))\ge  2^{- l_{k+1}}\ge 2^{-l_j}$ if $j>k$. Thus, using \e{d23}, we obtain
\md1\label{dd30}
\int_{R_k(x)} F_j(t)dt=0,\quad j>k.
\emd
On the other hand the relations \e{d21} and \e{d28} imply
\md1\label{d31}
\left|\frac{1}{|R_k(x)|}\int_{R_k(x)} \sum_{j=1}^{k-1}\frac{F_j(t)}{2^j}dt\right|\le \beta(L_{k-1})<\frac{L_k}{2},\quad k\ge 2.
\emd
From \e{xc-5}, \e{dd30} and \e{d31} we get the inequality
\md0
\left|\frac{1}{|R_k(x)|}\int_{R_k(x)} F(t)dt\right|\ge \frac{1}{|R_k(x)|}\left|\int_{R_k(x)} F_k(t)dt\right|-\frac{L_k}{2}>\frac{L_k}{2},
\emd
which yields
\md1\label{d40}
\limsup_{\length(R)\to 0,\, x\in R\in\DR^2}\left|\frac{1}{|R|}\int_R F(t)dt\right|=\infty,\quad x\in [0,1)^2.
\emd
Now take an arbitrary rectangle $R\in \DR^2_\Delta$. We have
\md1\label{d32}
\length(R)=2^{-\nu_k}\ge \width(R)=2^{-\nu_t}.
\emd
From \e{d23} we get
\md1\label{d33}
\int_{R} F_j(t)dt=0\,\text { if  } l_j\ge \nu_k.
\emd
On the other hand if $l_j<\nu_k$, then from \e{d26} it follows that
\md0
l_j+\alpha(L_j)<\nu_k
\emd
and therefore by \e{d20} we get
\md1\label{d35}
\width(\supp(F_j))\ge 2^{-l_j-\alpha(L_j)}\ge 2^{-\nu_k} .
\emd
Thus, using  simple properties of dyadic rectangles, we conclude that
\md1\label{d34}
l_j<\nu_k,\, R\not\subset \supp(F_j)  \Rightarrow  R\cap \supp(F_j)=\varnothing.
\emd
Consider the sets
\md2
&G_1=\{x\in [0,1)^2:\, \delta_\zR(x,F_k)=0,\,k=1,2,\ldots \},\\
&G_2=\bigcup_{k=1}^\infty\bigcap_{j:\,l_j\ge \nu_k}^\infty \bigg([0,1)^2\setminus \supp(F_j)\bigg),\\
&G=G_1\cap G_2.
\emd
Since $F_k(x)$ is bounded, the equality $\delta_\zR(x,F_k)=0$ holds almost everywhere and so  $|G_1|=1$. From \e{d19} it follows that $|G_2|=1$ and therefore we get $|G|=1$. Take an arbitrary point $x\in G$. We have
\md1\label{d36}
x\not\in \supp(F_j),\quad j>k_0,
\emd
for some $k_0$. Consider the rectangle $R\in \DR^2_\Delta$ such that $x\in R$. Suppose we have \e{d32} and $k>k_0$. Then form \e{d34} and \e{d36} we get
\md1\label{d37}
R\cap \supp(F_j)=\varnothing,\text { if } j>k_0\text { and } l_j<\nu_k.
\emd
From \e{d33} and \e{d37} we conclude
\md0
\frac{1}{|R|}\int_{R} F(t)dt=\sum_{j=1}^{k_0} \frac{1}{2^j\cdot |R|}\int_{R} F_j(t)dt.
\emd
Thus we obtain
\md1\label{d41}
\lim_{\length(R)\to 0,\, x\in R\in\DR^2_\Delta}\frac{1}{|R|}\int_{R} F(t)dt=\sum_{j=1}^{k_0}  \frac{F_j(x)}{2^j}.
\emd
On the other hand \e{d36} implies
\md1\label{d42}
F(x)=\sum_{j=1}^{k_0}  \frac{F_j(x)}{2^j}.
\emd
From \e{d41} and \e{d42} we conclude that $F\in \ZF(\DR^2_\Delta)$ and $\supp F\subset[0,1)^2$. To have a function $f$ defined on entire $\ZR^2$ we set
\md0
f(x) = f(x_1,x_2) = F\left(\{x_1\},\{x_2\}\right),\quad x\in\ZR^2.
\emd
Clearly $f\in \ZF(\DR^2_\Delta)$ and \e{d40} holds for any $x\in\ZR^2$.
\end{proof}

\section{Quasi-equivalent bases in $\ZR^n$}\label{quasi-equivalent}
\begin{proof}[Proof of \trm{quasi-equiv}]
First, let us suppose that
\md0
\ZF^+(\ZM_1)\setminus\ZF^+(\ZM_2)\neq\varnothing.
\emd

That means there exists a non-negative function $f\in L_{\rm loc}(\ZR^n)$ such that
\md3
&\delta_{\ZM_1}(x, f) = 0,\quad \text{a.e.},\label{xd5-}\\
&\delta_{\ZM_2}(x, f) > 0,\quad x\in E_1,\label{d4-}
\emd
where $|E_1|>0$. From \e{d4-} it follows that there exist such positive numbers $\alpha$ and $\gamma$ that the set
\md1
E_2 = \{x\in\ZR^n\colon \delta_{\ZM_2}(x,f) > \alpha,\, 0\le f(x)\le \gamma\}
\emd
has positive measure. Set $f=f_{\gamma}+f^{\gamma}$, where
\md0
f_{\gamma}(x) = 
\begin{cases}
f(x),& \text{ if } 0\le f(x)\le\gamma,\\
0,& \text{ if } f(x)>\gamma.
\end{cases}
\emd
Since $\ZM_2\subseteq\ZM$ is density basis, then it differentiates $L^{\infty}$ and therefore differentiates $f_{\gamma}\in L^{\infty}$, namely we have $\delta_{\ZM_2}(x,f_{\gamma})=0$ almost everywhere. Denote by $E_3$ the subset of $E_2$ where $\delta_{\ZM_2}(x,f_{\gamma})=0$. Clearly $|E_3|=|E_2|>0$. From this we can deduce that if $x\in E_3\subset E_2$ then $\delta_{\ZM_2}(x,f) = \delta_{\ZM_2}(x,f^{\gamma})$ and $f^{\gamma}(x)=0$, since $0\le f(x)\le\gamma$. Furthermore, using \e{xd5-}, we get set $E_4\subset E_3$ of positive measure such that for any $x\in E_4$
\md3
&\delta_{\ZM_2}(x,f^{\gamma}) > \alpha,\quad f^{\gamma}(x)=0,\label{d42--}\\
&\delta_{\ZM_1}(x,f^{\gamma}) = 0,\label{d42-}
\emd
According to \e{d42-} for any  $x\in E_4$ one can choose a number $\delta(x)>0$ such that
the conditions
\md0
x\in R\in \ZM_1, \quad \diam(R)<\delta(x),
\emd
imply
\md0
\intmean{f^{\gamma}}{R} < \eta,
\emd
where $\eta>0$ will be conveniently chosen later. For some $\delta>0$ the set $G=\{x\in E_4\colon \delta(x)\ge\delta\}$ has positive measure. Thus, we have transformed \e{d42--} and \e{d42-} into
\md5
\delta_{\ZM_2}(x,f^{\gamma}) > \alpha,\, f^{\gamma}(x)=0,\quad \text{if } x\in G,\label{E-1-}\\
\intmean{f^{\gamma}}{R} < \eta,\text{ if } R\cap G\neq\varnothing,\, R\in \ZM_1,\, \diam(R)<\delta.\label{E-2-}
\emd
Since $\ZM$ differentiates $\ZI_G$, hence we may fix $x_0\in G$ with
\begin{equation*}
\lim_{\diam(R)\to 0,\, x_0\in R\in\ZM} \frac{|R\cap G|}{|R|} = 1,
\end{equation*}
which means that for any $\varepsilon>0$ there exists $\sigma(\varepsilon)$ such that $\diam(R)<\sigma(\varepsilon)$ and $x_0\in R\in \ZM$  imply $|R\cap G|>(1-\varepsilon)|R|$.
Using this relation and  \e{E-1-}, we can fix such $R$ that
\md5
x_0\in R\in \ZM_2,\quad \diam(R)<\frac{1}{c}\min\left(\sigma\left(\frac{1}{c}\right), \delta\right),\label{d10-}\\
\intmean{f^{\gamma}}{R} > \alpha,\label{xd11-}
\emd
As we have that basis $\ZM_2$ is quasi-coverable with $\ZM_1$, then for $R\in\ZM_2$ we can fix $R'\in\ZM$ and $R_k\in\ZM_1,\,k=1, 2, \dots, p$ such that \e{qc1},\e{qc2} and \e{qc0} hold. From this and \e{d10-} we get
\md0
x_0\in R'\in\ZM, \quad \diam(R')<\sigma\left(\frac{1}{c}\right),
\emd
which implies
\md1
|R'\cap G|> \left(1-\frac{1}{c}\right)|R'|\label{d13}.
\emd
which together with \e{qc2} gives that there exists $x_k\in R_k\cap G,\, k=1, 2, \dots, p$.
Now, since each $R_k$ contains some point from $G$, we can use \e{E-2-} and come to contradiction against \e{xd11-}. Namely, combining \e{E-2-}, \e{d10-} and \e{qc2} we have $x_k\in R_k\cap G,\, R_k\in\ZM_1,\,\diam(R_k)<\delta$ and therefore
\md0
\intmean{f^{\gamma}}{R_k}<\eta,\quad k=1, 2, \dots, m
\emd
which together with \e{qc0} implies
\md2
\int_{\tilde{R}}f^{\gamma}(u)\,du &\le \int_{\tilde{R}}\sum_{k=1}^p \ZI_{R_k}(u) f^{\gamma}(u)\,du\\
&= \sum_{k=1}^p \int_{R_k} f^{\gamma}(u)\,du < \eta\sum_k |R_k| \le \eta c |\tilde{R}|
\emd
and
\md0
\intmean{f^{\gamma}}{\tilde{R}} < \eta c.
\emd
On the other hand, from non-negativity of function $f^{\gamma}$ and from \e{xd11-},\e{qc0} it follows
\md0
\intmean{f^{\gamma}}{\tilde{R}}\ge \frac{|R|}{|\tilde{R}|}\cdot\intmean{f^{\gamma}}{R} > \frac{\alpha}{c},
\emd
which is impossible if choose $\eta<\frac{\alpha}{c^2}$. Thus we have proved that $\ZF^+(\ZM_1)\subset \ZF^+(\ZM_2)$.

In the same way we can prove the inverse inclusion $\ZF^+(\ZM_2)\subset \ZF^+(\ZM_1)$. Therefore the theorem is proved.
\end{proof}

\section{Applications}\label{applications}
In this section we give several corollaries from \trm{quasi-equiv} for bases formed of rectangles. First of all, notice that if we change the sets of some basis $\ZM$ by arbitrary sets of measure zero, then we get a new basis $\tilde{\ZM}$ with the same differentiation properties as $\ZM$. In particular, $\delta_{\ZM}(x,f)=\delta_{\tilde\ZM}(x,f)$ for any $x\in\ZR^n$ and $f\in L_{\rm loc}(\ZR^n)$. The reason for this is that we use Lebesgue integral, which is consistent if we modify the domain of integration by a set of measure zero. Hence, we can extend \trm{quasi-equiv} for bases formed of bounded sets, which are open up to a set of measure zero.

It is well known that the basis of all rectangles $\MR^n$ differentiates $L^\infty(\ZR^n)$, i.e. it is a density basis. Therefore we can apply \trm{quasi-equiv} for $\ZM=\MR^n$ and get a criteria for two bases formed of rectangles differentiating the same class of non-negative functions:
\begin{corollary}
If bases $\MR_1$ and $\MR_2$ formed of rectangles in $\ZR^n$ are quasi-equivalent, then $\ZF^+(\MR_1) = \ZF^+(\MR_2)$.
\end{corollary}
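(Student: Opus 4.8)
The plan is to read this off directly from \trm{quasi-equiv} by taking the ambient density basis to be the full rectangle basis $\MR^n$ itself. The first step is to assemble the hypotheses of that theorem: by the classical fact recalled just above, $\MR^n$ differentiates $L^\infty(\ZR^n)$ and is therefore a density basis, and by construction $\MR_1,\MR_2\subseteq\MR^n$ are subbases of it. The word ``quasi-equivalent'' in the statement is read in the sense of Definition~\ref{3.def-quasi-equiv-bases} with reference basis $\ZM=\MR^n$: for every $R\in\MR_2$ there exist $R_1,\dots,R_p\in\MR_1$ and $R'\in\MR^n$ satisfying \e{qc1}--\e{qc0}, and symmetrically with the roles of $\MR_1$ and $\MR_2$ interchanged.

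The one technical point is that \trm{quasi-equiv} is stated for subbases \emph{formed of open sets}, whereas the rectangles here are taken half-open. This is precisely what the opening remark of this section provides: the quantity $\delta_\ZM(x,f)$ depends on the sets of $\ZM$ only up to Lebesgue-null modifications, and a half-open rectangle differs from its interior only by its boundary, which is null; hence $\ZF^+(\ZM)$ is unchanged under such modifications, and the conclusion of \trm{quasi-equiv} remains valid for subbases of a density basis consisting of bounded sets that are open up to a null set. Since every rectangle is of this form, the extended version of \trm{quasi-equiv} covers $\MR^n$ together with its subbases $\MR_1$ and $\MR_2$.

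Applying this extended version with $\ZM=\MR^n$, $\ZM_1=\MR_1$, $\ZM_2=\MR_2$ then yields $\ZF^+(\MR_1)=\ZF^+(\MR_2)$, which is the assertion. I do not expect a genuine obstacle here: the corollary is a direct specialization of \trm{quasi-equiv}, and the only care required is the open-versus-half-open bookkeeping, which is dispatched by the measure-zero remark, together with the (harmless) identification of the reference basis in the quasi-equivalence hypothesis with the complete rectangle basis $\MR^n$.
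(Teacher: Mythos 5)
Your proof is correct and is essentially identical to the paper's: the corollary is obtained by applying \trm{quasi-equiv} with $\ZM=\MR^n$, which is a density basis since it differentiates $L^\infty(\ZR^n)$, and the half-open versus open issue is handled exactly by the measure-zero remark at the start of \sect{applications}. Nothing further is needed.
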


Let $\Omega=\{\omega_k^i\}_{i,k=1}^{n,\infty}$ be finite family of sequences with
\md1\label{Omega}
\omega_k^i\to 0 \text{ as } k\to \infty \text{ for } i=1, 2, \ldots,n.
\emd
Define the basis $\MR_\Omega$ as a family of rectangles of the form
\md0
\prod_{i=1}^n \left[(m_i-1) \omega_{k_i}^i,m_i \omega_{k_i}^i\right),\quad m_i\in\ZZ,\, k_i\in\ZN,\,i=1, 2, \ldots, n.
\emd

and the basis $\tilde{\MR}_\Omega$ as a family of rectangles with side lengths $l_i, i=1, 2, \ldots, n$ satisfying
\md0
c_1\cdot\omega_{k_i}^i \le l_i \le c_2\cdot\omega_{k_i}^i,\quad k_i\in\ZN,\, i=1, 2, \ldots, n.
\emd

Then, it can be shown that the bases $\MR_\Omega$ and $\tilde{\MR}_\Omega$ are quasi-equivalent. Therefore
\begin{corollary}
For any $\Omega$ with \e{Omega}
\md0
\ZF^+(\tilde{\MR}_\Omega)= \ZF^+(\MR_\Omega).
\emd
\end{corollary}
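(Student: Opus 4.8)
The plan is to obtain the corollary as a direct application of \trm{quasi-equiv}. I would take $\ZM=\MR^n$, the basis of all rectangles in $\ZR^n$, which is a density basis because it differentiates $L^\infty(\ZR^n)$; both $\MR_\Omega$ and $\tilde{\MR}_\Omega$ are evidently subbases of $\ZM$. Their members are half-open rectangles rather than open ones, but by the observation at the beginning of this section one may replace each by its interior without altering any differentiation property, so the form of \trm{quasi-equiv} valid for bases that are open up to a null set applies. It then remains to verify that $\MR_\Omega$ and $\tilde{\MR}_\Omega$ are quasi-equivalent with respect to $\MR^n$, that is, that each is quasi-coverable by the other with a constant depending only on $n$, $c_1$, $c_2$.

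One direction I expect to be straightforward: each $R\in\MR_\Omega$, having side lengths $\omega^i_{k_i}$, is covered by a bounded number of rectangles from $\tilde{\MR}_\Omega$ of comparable size (by a single one when $c_1\le1\le c_2$, since then $R\in\tilde{\MR}_\Omega$), and \e{qc1}, \e{qc2}, \e{qc0} then follow with a constant depending only on $n$, $c_1$, $c_2$.

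The substance is the opposite direction. Given $R\in\tilde{\MR}_\Omega$ with side lengths $l_i$, where $c_1\omega^i_{k_i}\le l_i\le c_2\omega^i_{k_i}$ for some indices $k_1,\dots,k_n$, I would let $\tilde R=R'$ be the smallest rectangle containing $R$ whose $i$-th edge is a union of consecutive cells of the mesh of size $\omega^i_{k_i}$ --- i.e.\ snap each face of $R$ outward to the nearest mesh hyperplane. Then $\tilde R$ is a disjoint union of $p\le(c_2+2)^n$ rectangles $R_k\in\MR_\Omega$ (all with the chosen parameters $k_1,\dots,k_n$), one has $R\subseteq\tilde R=R'$, and the $i$-th side of $\tilde R$ has length at most $l_i+2\omega^i_{k_i}\le(1+2/c_1)l_i\le(c_2+2)\omega^i_{k_i}$. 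Taking products, together with the identity $\sum_k|R_k|=|\tilde R|$, yields $|\tilde R|\le(1+2/c_1)^n|R|$, $|R'|\le(c_2+2)^n|R_k|$, and $\sum_k|R_k|=|\tilde R|$, while $\diam R'\le\sqrt n\,(1+2/c_1)\diam R$ because $\diam R\ge\max_i l_i$ and $\diam R'\le\sqrt n\,\max_i(l_i+2\omega^i_{k_i})$. Hence \e{qc1}, \e{qc2}, \e{qc0} hold with $c=\max\bigl((c_2+2)^n,(1+2/c_1)^n,\sqrt n\,(1+2/c_1),1\bigr)$, which depends only on $n$, $c_1$, $c_2$.

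Combining the two directions, $\MR_\Omega$ and $\tilde{\MR}_\Omega$ are quasi-equivalent with respect to $\MR^n$; condition \e{Omega} guarantees that both are genuine differentiation bases (arbitrarily small diameters through every point), so \trm{quasi-equiv}, in its open-up-to-null-set form, delivers $\ZF^+(\tilde{\MR}_\Omega)=\ZF^+(\MR_\Omega)$. I do not anticipate a real obstacle beyond the elementary snap-to-grid estimates in the third paragraph; essentially all the force of the statement is carried by \trm{quasi-equiv}.
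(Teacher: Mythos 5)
Your proposal follows exactly the paper's route: the paper simply asserts that $\MR_\Omega$ and $\tilde{\MR}_\Omega$ are quasi-equivalent and invokes \trm{quasi-equiv} with $\ZM=\MR^n$ (using the section's opening remark to handle the half-open rectangles), which is precisely your argument, with your snap-to-grid estimates supplying the verification the paper leaves unwritten. The verification is correct, so this is essentially the same proof, just with more detail.
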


\begin{corollary}
If the family of sequences $\Omega$ satisfies
\md1\label{gamma}
\max_{1\le i\le n}\sup_{k\in\ZN} \frac{\omega_k^i}{\omega_{k+1}^i} < \infty,
\emd
then 
\md1\label{corOmega}
\ZF^+(\MR_\Omega) = \ZF^+(\zR^n).
\emd
\end{corollary}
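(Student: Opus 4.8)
The plan is to prove \e{corOmega} by deducing it from \trm{quasi-equiv} — more precisely, from the corollary above which says that two quasi-equivalent bases of rectangles differentiate the same non-negative functions. Concretely, I would show that $\MR_\Omega$ and $\zR^n$ are quasi-equivalent with respect to the basis $\MR^n$ of all rectangles, which is a density basis because it differentiates $L^\infty(\ZR^n)$. By the remark opening this section, \trm{quasi-equiv} still applies to bases of sets that are open up to a null set, so half-open rectangles cause no trouble. Furthermore, since $\delta_\ZM(x,f)$ only involves sets of diameter tending to $0$, I may replace each basis by the subbasis of its members of diameter less than $\delta_0:=\min_{1\le i\le n}\omega_1^i$ without changing $\ZF^+$; hence it suffices to check the conditions of Definition~\ref{3.def-quasi-equiv-bases} for rectangles of diameter $<\delta_0$. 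Note also that one inclusion, $\ZF^+(\zR^n)\subseteq\ZF^+(\MR_\Omega)$, is trivial anyway, because $\MR_\Omega\subseteq\zR^n$ forces $\delta_{\MR_\Omega}\le\delta_{\zR^n}$ pointwise.

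Set $C:=\max_{1\le i\le n}\sup_{k\in\ZN}\omega_k^i/\omega_{k+1}^i$, finite by \e{gamma}. The elementary observation driving the argument is that, because of \e{gamma}, for each $i$ and each $l\in(0,\omega_1^i)$ there is an index $k$ with
\md0
\omega_k^i\le l<C\,\omega_k^i ,
\emd
namely the minimal $k$ with $\omega_k^i\le l$: it satisfies $k\ge 2$ and $l<\omega_{k-1}^i\le C\,\omega_k^i$. Given then $R=\prod_{i=1}^n[a_i,b_i)\in\zR^n$ with $\diam(R)<\delta_0$, write $l_i=b_i-a_i$, choose $k_i$ as above for $l=l_i$, and let $\tilde R$ be the smallest rectangle of the grid $\prod_i\omega_{k_i}^i\ZZ$ containing $R$. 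Then $R\subseteq\tilde R$, the set $\tilde R$ is the disjoint union of $p$ grid cells $R_1,\dots,R_p$, each of which is of the form $\prod_i[(m_i-1)\omega_{k_i}^i,m_i\omega_{k_i}^i)\in\MR_\Omega$ and has diameter $<\delta_0$, while each side of $\tilde R$ has length in $[\,l_i,(C+2)l_i\,]$. Consequently $p\le(C+2)^n$, $|\tilde R|\le(C+2)^n|R|$, $\diam(\tilde R)\le(C+2)\diam(R)$, and, taking $R':=\tilde R\in\MR^n$, also $|R'|=|\tilde R|\le(C+2)^n|R_k|$ and $\sum_k|R_k|=|\tilde R|$; so \e{qc1}, \e{qc2}, \e{qc0} hold with $c=(C+2)^n$, a constant depending only on $\Omega$ and $n$. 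In the opposite direction $\MR_\Omega\subseteq\zR^n$, so each $R\in\MR_\Omega$ is quasi-covered by $\zR^n$ trivially ($p=1$, $R_1=\tilde R=R'=R$). Hence $\MR_\Omega$ and $\zR^n$ are quasi-equivalent, and \trm{quasi-equiv} yields \e{corOmega}.

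An alternative, shorter route bypasses the grid construction by invoking the preceding corollary $\ZF^+(\tilde{\MR}_\Omega)=\ZF^+(\MR_\Omega)$: choosing there $c_1=1$ and $c_2=C$, condition \e{gamma} forces $\tilde{\MR}_\Omega$ to contain every rectangle of diameter $<\delta_0$ (and $\tilde{\MR}_\Omega\subseteq\zR^n$ always), so $\ZF^+(\tilde{\MR}_\Omega)=\ZF^+(\zR^n)$ by the same locality remark, and \e{corOmega} follows at once.

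The only genuine difficulty is the uniformity of the constant $c$: one must ensure that the number $p$ of grid cells covering $R$ and the volume ratio $|\tilde R|/|R|$ stay bounded independently of $R$, and this is exactly where the no-large-gaps hypothesis \e{gamma} is indispensable — without it both quantities blow up along sequences of rectangles whose side lengths fall into the gaps of the sequences $\{\omega_k^i\}$, which is precisely the mechanism exploited in \trm{TX2.1}. The reduction from "all rectangles" (as literally demanded by Definition~\ref{3.def-quasi-equiv-bases}) to "rectangles of small diameter" is cost-free, since $\delta_\ZM$ is insensitive to sets of non-vanishing diameter.
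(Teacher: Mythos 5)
Your proof is correct. The paper's own proof is exactly your ``alternative, shorter route'': it denotes by $\gamma$ the supremum in \e{gamma}, chooses $c_1=1$, $c_2=\gamma+1$ in the definition of $\tilde{\MR}_\Omega$, observes that then $\ZF^+(\tilde{\MR}_\Omega)=\ZF^+(\zR^n)$ (every sufficiently small rectangle belongs to $\tilde{\MR}_\Omega$), and concludes \e{corOmega} from the preceding corollary $\ZF^+(\tilde{\MR}_\Omega)=\ZF^+(\MR_\Omega)$ --- whose underlying quasi-equivalence the paper only asserts (``it can be shown''). Your main argument takes a genuinely different and more self-contained route: you verify directly that $\MR_\Omega$ and $\zR^n$ are quasi-equivalent with respect to the density basis $\MR^n$ of all rectangles, taking for a given $R$ the grid hull $\tilde R$ built from the minimal indices $k_i$ with $\omega_{k_i}^i\le l_i<C\,\omega_{k_i}^i$, and bounding the number of grid cells, $\diam(\tilde R)/\diam(R)$ and the volume ratios by $(C+2)^n$; the preliminary reduction to rectangles of diameter less than $\delta_0$ is legitimate because $\delta_\ZM(x,f)$ is a $\limsup$ as $\diam\to0$, the restricted families remain differentiation bases, and they are still subbases of $\MR^n$, so \trm{quasi-equiv} applies unchanged. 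What your direct construction buys is precisely the covering estimate the paper leaves implicit (it is in substance the omitted verification that $\MR_\Omega$ and $\tilde{\MR}_\Omega$ are quasi-equivalent), at the price of the explicit grid bookkeeping; the paper's route buys brevity by leaning on the two corollaries stated just before.
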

\begin{proof}
Denote by $\gamma$ the finite quantity of the left hand side of \e{gamma}. Then for coefficients $c_1 = 1$ and $c_2 = \gamma+1$ we have $\ZF^+(\tilde{\MR}_\Omega) = \ZF^+(\zR^n)$. Hence from the theorem we deduce \e{corOmega}.
\end{proof}

Finally, if we take $\omega_k^i=2^{-\nu_k},\,k\in\ZN,\,i=1, 2, \ldots, n$, where $\Delta=\{\nu_k:\,k\ge1\}$ is an increasing sequence of positive integers, the basis $\MR_\Omega$ becomes the basis of all dyadic rectangles $\DR^n_\Delta$ corresponding to the sequence $\Delta$.
\begin{corollary}
If the sequence $\Delta=\{\nu_k\}$ satisfies $\gamma_\Delta<\infty$, then
\md0
\ZF^+(\DR^n_\Delta) = \ZF^+(\zR^n).
\emd
Particularly, if we take $\Delta = \ZN$, we get \otrm{zerekidze}.
\end{corollary}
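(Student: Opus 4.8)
The plan is to deduce the statement directly from the preceding corollary (the one involving condition \e{gamma}) by specializing the family of sequences $\Omega$. First I would take $\omega_k^i = 2^{-\nu_k}$ for every coordinate $i=1,\dots,n$ and every $k\ge 1$. Since $\{\nu_k\}$ is strictly increasing, $\omega_k^i\to 0$ as $k\to\infty$, so \e{Omega} holds and the basis $\MR_\Omega$ is well defined; moreover, with this choice a rectangle of $\MR_\Omega$ is exactly a product $\prod_{i=1}^n[(m_i-1)2^{-\nu_{k_i}},m_i 2^{-\nu_{k_i}})$ with $m_i\in\ZZ$, $k_i\in\ZN$, which is precisely the general element of $\DR^n_\Delta$ as given by \e{0.100} with exponents in $\Delta$. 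Hence $\MR_\Omega=\DR^n_\Delta$.

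Next I would check that this $\Omega$ satisfies the hypothesis \e{gamma}. For each $i$ and each $k$ one has $\omega_k^i/\omega_{k+1}^i = 2^{\nu_{k+1}-\nu_k}\le 2^{\gamma_\Delta}$, so the left-hand side of \e{gamma} is at most $2^{\gamma_\Delta}<\infty$. Applying the preceding corollary then gives $\ZF^+(\MR_\Omega)=\ZF^+(\zR^n)$, that is $\ZF^+(\DR^n_\Delta)=\ZF^+(\zR^n)$, which is the first assertion. For the last sentence, take $\Delta=\ZN$, so that $\nu_k=k$ and $\gamma_\Delta=\sup_k(\nu_{k+1}-\nu_k)=1<\infty$; the first part applies and yields $\ZF^+(\DR^n)=\ZF^+(\zR^n)$, which is exactly \otrm{zerekidze}.

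I do not expect any genuine obstacle here: all the substantive work has already been carried out in \trm{quasi-equiv} and in the two corollaries immediately preceding this one (quasi-equivalence of $\MR_\Omega$ and $\tilde{\MR}_\Omega$, and the comparison of $\tilde{\MR}_\Omega$ with $\zR^n$ under \e{gamma}). The only points that require a moment of care are bookkeeping: recognizing that $\MR_\Omega$ built from the coordinate-independent geometric sequences $\omega_k^i=2^{-\nu_k}$ is literally the rare dyadic basis $\DR^n_\Delta$, and translating the ratio bound in \e{gamma} into the gap bound $\gamma_\Delta<\infty$ through the identity $\omega_k^i/\omega_{k+1}^i=2^{\nu_{k+1}-\nu_k}$.
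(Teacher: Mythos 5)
Your proposal is correct and follows essentially the same route as the paper: the paper also obtains this corollary by taking $\omega_k^i=2^{-\nu_k}$, observing that $\MR_\Omega$ then coincides with $\DR^n_\Delta$ and that $\gamma_\Delta<\infty$ yields the ratio bound \e{gamma} via $\omega_k^i/\omega_{k+1}^i=2^{\nu_{k+1}-\nu_k}$, and invoking the preceding corollary; the specialization $\Delta=\ZN$ giving Zerekidze's theorem is likewise immediate in both treatments.
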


\chapter*{Conclusion}
\addcontentsline{toc}{chapter}{Conclusion}
The thesis comprises three chapters.

In Chapter 1, it is investigated generalizations of the theorem of Fatou for convolution type integral operators with general approximate identities. It is introduced $\lambda(r)-$convergence, which is a generalization of non-tangential convergence in the unit disc. The connections between general approximate identities and optimal convergence regions for such operators are described in different functional spaces.

\begin{itemize}
\item[1.] It is found a necessary and sufficient condition on $\lambda(r)$ that ensures almost everywhere $\lambda(r)-$convergence for convolution type integral operators in both spaces of bounded measures and integrable functions. Moreover, in the case of bounded measures, the convergence occurs at any point where the measure is differentiable. In the case of integrable functions, the convergence occurs at any Lebesgue point of the function.
\item[2.] It is discovered a necessary and sufficient condition on $\lambda(r)$ that provides almost everywhere $\lambda(r)-$convergence for the same convolution type integral operators in the space of essentially bounded functions. Additionally, the convergence occurs at any Lebesgue point of the function.
\end{itemize}

In Chapter 2, it is studied some generalizations of the theorem of Littlewood, which makes an important complement to the theorem of Fatou, constructing analytic function possesing almost everywhere divergent property along a given tangential curve. The same convolution type integral operators are considered with more general kernels than approximate identities. Two kinds of generalizations of the theorem of Littlewood are obtained possessing everywhere divergent property.

\begin{itemize}
\item[3.] Under general assumptions, it is constructed a characteristic function such that the convolution with general kernels possesses everywhere divergent property along a given tangential curve. Particularly, it is proved that there exists a bounded harmonic function having everywhere strong divergent property along a given tangential curve.
\item[4.] Under general assumptions, it is constructed a bounded function, which is the boundary values of some Blaschke product, such that the convolution with general kernels owns everywhere divergent property along a given tangential curve.
\end{itemize}

Chapter 3 is devoted to some questions of equivalency of differentiation bases in $\ZR^n$. The full equivalence of basis of rare dyadic rectangles and the basis of complete dyadic rectangles in $\ZR^2$ is investigated. It is introduced quasi-equivalence between two differentiation bases in $\ZR^n$ and is considered the set of functions that such bases differentiate.

\begin{itemize}
\item[5.] It is found a necessary and sufficient condition for the full equivalence of basis of rare dyadic rectangles and the basis of complete dyadic rectangles in $\ZR^2$.
\item[6.] It is proved that two quasi-equivalent bases of some density basis in $\ZR^n$ differentiate the same set of non-negative functions.
\end{itemize}

\cleardoublepage
\addcontentsline{toc}{chapter}{\bibname}
\thebibliography{99}

\bibitem{Aik1}
Aikawa H., \emph{Harmonic functions having no tangential limits}, Proc. Amer. Math. Soc., 1990, vol. 108, no. 2, 457--464.

\bibitem{Aik2}
Aikawa H., \emph{Harmonic functions and Green potential having no tangential limits}, J. London Math. Soc., 1991, vol. 43, 125--136.

\bibitem{Aik3}
Aikawa H., \emph{Fatou and Littlewood theorems for Poisson integrals with respect to non-integrable kernels}, Complex Var. Theory Appl., 2004, 49(7--9), 511–528.

\bibitem{Bar}
Brannan D. A. and Clunie J. G., \emph{Aspects of contemporary complex analysis}, Academic Press, 1980.

\bibitem{Ben}
Benedetto J. J., \emph{Harmonic Analysis and Applications}, Birkhäuser Boston, 2006.

\bibitem{Bru}
Brundin M., \emph{Boundary behavior of eigenfunctions for the hyperbolic Laplacian}, Ph.D. thesis, Department of Mathematics, Chalmers University, 2002.

\bibitem{Car}
Carlsson M., \emph{Fatou-type theorems for general approximate identities}, Math. Scand., 2008, 102, 231--252.

\bibitem{DiBi}
Di Biase F., \emph{Tangential curves and Fatou's theorem on trees }, J. London Math. Soc., 1998, vol. 58, no. 2, 331--341.

\bibitem{BSSW}
Di Biase F., Stokolos A., Svensson O. and Weiss T.,  \emph{On the sharpness of the Stolz approach}, Annales Acad. Sci. Fennicae, 2006, vol. 31,  47--59.

\bibitem{Fat}
Fatou P., \emph{S\'{e}ries trigonom\'{e}triques et s\'{e}ries de Taylor, Acta Math.},  1906, vol. 30, 335--400.

\bibitem{Guz}
Guzman M., \emph{Differentiation of integrals in $\ZR^n$}, Springer-Verlag, 1975.

\bibitem{Hag}
Hagelstein P. A., \emph{A note on rare maximal functions}. Colloq. Math., 2003, vol. 95, no. 1, 49--51.

\bibitem{HaSi}
Hakim M. and Sibony  N., \emph{Fonctions holomorphes born\'{e}es et limites tangentielles}, Duke Math. J., 1983, vol. 50, no. 1, 133--141.

\bibitem{HarSto}
Hare K. and Stokolos A., \emph{On weak type inequalities for rare maximal functions}, Colloq. Math., 2000, vol. 83, no. 2, 173--182.

\bibitem{Hira}
Hirata K., \emph{Sharpness of the Kor\'anyi approach region}, Proc. Amer. Math. Soc., 2005, vol. 133, no. 8, 2309--2317.

\bibitem{JMZ}
Jessen B., Marcinkiewicz J., Zygmund A., \emph{Note of differentiability of multiple integrals}, Fund. Math., 1935, 25, 217--237.

\bibitem{Kar}
Karagulyan G. A., \emph{On equivalency of martingales and related problems}, Journal of Contemporary Mathematical Analysis, 2013, vol. 48, no. 2, 51--65.

\bibitem{KarKarSaf}
Karagulyan G. A., Karagulyan  D. A., Safaryan M. H., \emph{On an equivalence for differentiation bases of dyadic rectangles}, Colloq. Math., 2017, 3506, 295--307.

\bibitem{KarSaf2}
Karagulyan G. A., Safaryan M. H., \emph{On a theorem of Littlewood}, Hokkaido Math J., 2017, vol. 46, no. 1, 87--106.

\bibitem{KarSaf1}
Karagulyan G. A., Safaryan  M. H., \emph{On generalizations of Fatou\a s theorem for the integrals with general kernels}, Journal of Geometric Analysis, 2014, Volume 25, Issue 3, pp. 1459--1475.

\bibitem{Katz}
Katznelson Y., \emph{An introduction to Harmonic Analysis}, Cambridge University Press, 2004.

\bibitem{Kro}
Katkovskaya I. N. and Krotov  V. G., \emph{Strong-Type Inequality for Convolution with Square Root of the Poisson Kernel}, Mathematical Notes, 2004, vol. 75, no. 4, 542--552.

\bibitem{Kor}
Kor\'anyi A., \emph{Harmonic functions on Hermitian hyperbolic space}, Trans. Amer. Math. Soc., 1969, 135, 507--516.

\bibitem{Kro2}
Krotov V. G., \emph{Tangential boundary behavior of functions of several variables}, Mathematical Notes, 2000, vol. 68, no. 2, 201--216.

\bibitem{Kro3}
Krotov V. G., Smovzh L. V., \emph{Weighted estimates for tangential boundary behaviour}, Sb. Math., 2006, vol. 197, no. 2, 193--211.

\bibitem{Kro4}
Krotov V. G., Katkovskaya I. N., \emph{On nontangential boundary behaviour of potentials}, Proceedings of the Institute of Mathematics NAS of Belorus, 1999, vol. 2, 63--72.

\bibitem{Lit}
Littlewood J. E., \emph{On a theorem of Fatou}, Journal of London Math. Soc., 1927, vol. 2, 172--176.

\bibitem{LoPi}
Lohwater A. J. and Piranian  G., \emph{The boundary behavior of functions analytic in unit disk}, Ann. Acad. Sci. Fenn., Ser A1, 1957, vol. 239, 1--17.

\bibitem{MizShi}
Mizuta Y. and Shimomura T., \emph{Growth properties for modified Poisson integrals in a half space}, Pacific J. Math., 2003, 212, 333--346.

\bibitem{NaSt}
Nagel A. and Stein E. M., \emph{On certain maximal functions and approach regions}, Adv. Math., 1984, vol. 54, 83--106.

\bibitem{OniZer}
Oniani G., Zerekidze T., \emph{On differential bases formed of intervals}, Georgian Math. J., 1997, vol. 4, no. 1, 81--100

\bibitem{Ron1}
R\"{o}nning J.-O., \emph{Convergence results for the square root of the Poisson kernel}, Math. Scand., 1997, vol. 81, no. 2, 219--235.

\bibitem{Ron2}
R\"{o}nning J.-O., \emph{On convergence for the square root of the Poisson kernel in symmetric spaces of rank 1}, Studia Math., 1997, vol. 125, no. 3, 219--229.

\bibitem{Ron3}
R\"{o}nning J.-O.,  \emph{Convergence results for the square root of the Poisson kernel in the bidisk }, Math. Scand., 1999, vol. 84, no. 1, 81--92.

\bibitem{Sae}
Saeki S.,  \emph{On Fatou-type theorems for non radial kernels, Math. Scand.}, 1996, vol. 78, 133--160.

\bibitem{Saf}
Safaryan M. H., \emph{On an equivalency of rare differentiation bases of rectangles}, Journal of Contemporary Math. Anal., 2018, vol. 53, no. 1, pp. 57--61.

\bibitem{Saf2}
Safaryan M. H., \emph{On Generalizations of Fatou's Theorem in $L^p$ for Convolution Integrals with General Kernels}, J. Geom. Anal., 31:3280–3299, 2021.

\bibitem{Saks}
Saks S., \emph{Remark on the differentiability of the Lebesgue indefinite integral}, Fund. Math., 1934, 22, 257--261.

\bibitem{Sog1}
Sj\"{o}gren P.,  \emph{Une remarque sur la convergence des fonctions propres du laplacien \`{a} valeur propre critique, Th\'{e}orie du potentiel (Orsay, 1983)}, Lecture Notes in Math., vol. 1096, Springer, Berlin, 1984, pp. 544--548.

\bibitem{Sog2}
Sj\"{o}gren P.,  \emph{Convergence for the square root of the Poisson kernel}, Pacific J. Math., 1988, vol. 131, no. 2, 361--391.

\bibitem{Sog3}
Sj\"{o}gren P.,  \emph{Approach regions for the square root of the Poisson kernel and bounded functions}, Bull. Austral. Math. Soc., 1997, vol. 55, no. 3, 521--527.

\bibitem{Ste}
Stein E. M., \emph{Harmonic Analysis}, Princeton University Press, 1993.

\bibitem{Sto}
Stokolos A., \emph{On weak type inequalities for rare maximal function in $\ZR^n$}, Colloq. Math., 2006, vol. 104, no. 2, 311--315.

\bibitem{Sue}
Sueiro J., \emph{On Maximal Functions and Poisson-Szeg\"{o} Integrals}, Transactions of the American Mathematical Society, 1986, vol. 298, no. 2, pp. 653--669.

\bibitem{Zer1}
Zerekidze T. Sh., \emph{Convergence of multiple Fourier-Haar series and strong differentiability of integrals}, Trudy Tbiliss.Mat. Inst. Razmadze Akad. Nauk Gruzin. SSR, 1985, 76, 80--99 (Russian).

\bibitem{Zer2}
Zerekidze T. Sh., \emph{On some subbases of a strong differential basis}, Semin. I. Vekua Inst. Appl. Math. Rep., 2009, 35, 31--33.

\bibitem{Zer3}
Zerekidze T. Sh., \emph{On the equivalence and nonequivalence of some differential bases}, Proc. A. Razmadze Math. Inst., 2003, 133, 166--169.

\bibitem{Zyg}
Zygmund A., \emph{On a theorem of Littlewood}, Summa Brasil Math., 1949, vol. 2, 51--57.


\end{document}